\newcommand{\DB}[1]{\marginnote{\color{blue}\setlength{\fboxrule}{1.5pt}%
    \tiny\sffamily\fbox{\parbox{1.3cm}{\raggedright{DB: #1}}}}}
\numberwithin{equation}{section}
\theoremstyle{plain}
\newtheorem{theorem}{Theorem}[section]
\newtheorem{lemma}[theorem]{Lemma}
\newtheorem{corollary}[theorem]{Corollary}
\newtheorem{proposition}[theorem]{Proposition}
\newtheorem{defn}[theorem]{Definition}
\theoremstyle{remark}
\newtheorem{remark}[theorem]{Remark}
\newcommand{\bs}{\bm{\sigma}}
\newcommand{\bJ}{\bm{J}}
\newcommand{\bmm}{\bm{m}}
\newcommand{\bh}{\bm{h}}
\newcommand{\E}{\mathbb{E}}	
\newcommand{\cM}{ {\rm{Plef}}_N(\bQ,\beta) }
\newcommand{\cF}{\mathcal{F}}
\renewcommand{\P}{\mathbb{P}}
\newcommand{\R}{\mathbb{R}}
\renewcommand{\epsilon}{\varepsilon}
\newcommand{\iid}{\text{i.i.d. }}
\DeclareMathOperator{\pP}{\mathbb{P}}
\DeclareMathOperator{\1}{\mathbbm{1}}
\newcommand{\bt}{\bm{\tau}}
\newcommand{\bb}{\bm{\beta}}
\newcommand{\bbm}{\bm{m}}
\newcommand{\bu}{\bm{u}}
\newcommand{\bQ}{\bm{Q}}
\newcommand{\bA}{\bm{A}}
\newcommand{\bO}{\bm{O}}
\newcommand{\bL}{\bm{\Lambda}}
\newcommand{\bD}{\bm{\Delta}}
\newcommand{\bd}{\bm{D}}
\newcommand{\bU}{\bm{U}}
\newcommand{\bC}{\bm{C}}
\newcommand{\bX}{\bm{X}}
\newcommand{\bY}{\bm{Y}}
\newcommand{\bB}{\bm{B}}
\newcommand{\bM}{\bmm \bmm^{\Trans}}
\newcommand{\bI}{\bm{I}}
\newcommand{\diag}{\mathrm{diag}}
\DeclareMathOperator{\supp}{\mathrm{supp}}
\newcommand{\Trans}{\mathsf{T}}
\newcommand{\Tr}{\mathrm{Tr}}
\newcommand\numberthis{\addtocounter{equation}{1}\tag{\theequation}}
\newcommand{\Var}{\operatorname{Var}}
\newcommand{\TAP}{F_{\mathrm{TAP}}}
\newcommand{\tTAP}{\tilde F_{\mathrm{TAP}}}
\begin{document}
	
	\title[
	TAP variational principle for the constrained multiple spherical SK model]
	{
	TAP variational principle for the constrained overlap multiple spherical Sherrington-Kirkpatrick model}
	
	\author{David Belius$^{*,**}$, Leon Fr\"ober$^{*}$, Justin Ko$^\dagger$}	
	
	\email{david.belius@cantab.net}
	\email{leon.froeber@unibas.ch}
	\email{justin.ko@ens-lyon.fr}
	\thanks{* Supported by SNSF grant 176918.}
        \thanks{** Supported by SNSF grant 206148.} 
        \thanks{$\dagger$ Supported by ERC Project LDRAM:  ERC-2019-ADG}

	\begin{abstract}
		Spin glass models involving multiple replicas with constrained overlaps have been studied in \cite{replicaonandoff,PTSPHERE,PVS}. For the spherical versions of these models
		\cite{kocs,kosphere} showed that the limiting free energy is given by a Parisi type minimization.
		In this work we show that for Sherrington-Kirkpatrick (i.e. $2$-spin) interactions, it can also be expressed in terms of a Thouless-Andersson-Palmer (TAP) variational principle. This is only the second spin glass model where a mathematically rigorous TAP computation of the free energy at all temperatures and external fields has been achieved. The variational formula we derive here also confirms that the model is replica symmetric, a fact which is natural but not obviously deducible from its Parisi formula.
	\end{abstract}
	
	\maketitle
	
	\section{Introduction}
	We study the free energy of the constrained multiple replica spin glass model of \cite{replicaonandoff,PTSPHERE,PVS}, also called the vector spin model. In physics this free energy is known as the Franz--Parisi potential \cite{replicaonandoff}. The model involves multiple replicas with constrained overlaps and was originally introduced to study metastable states of standard one replica spin glasses \cite{replicaonandoff}, and has since been used to study several other of properties of one replica models \cite{PanchenkoChaos,ChenChaos2,JagSpectralGap,JagSphericalSpectralGap,JagMetastable,FlorentAhmedFranzParisi,Franzlargedeviations,JagStatisticalThresholds}.
    
    We introduce a new approach to studying the model by adapting the Thouless-Andersson-Palmer (TAP) approach of \cite{belius-kistler} to the model's spherical Sherrington-Kirkpatrick (SK; i.e. $2$-spin) version. We prove a variational formula for its free energy in terms of a TAP free energy, and compute a formula for the maximal TAP free energy, thus yielding a concrete formula for the original free energy. After \cite{belius-kistler} this represents only the second setting where the free energy of a spin glass model has been computed at all temperatures and external fields using a mathematically rigorous TAP approach.
    
	
	We now formally introduce the model. The $2$-spin SK Hamiltonian is a Gaussian process of the form
	\begin{equation}\label{eq: hamiltonian def}
		H_N(\sigma) = \sqrt{N} \sum_{i,j = 1}^N J_{ij} \sigma_i \sigma_j
	\end{equation}
            indexed by $\sigma \in \R^N$, where $J_{ij}$ are \iid standard Gaussian random variables. For $n \geq 1$, we consider the multiple spin configuration of $n$ replicas denoted by the matrix
	\[
	\bm{\sigma} = (\sigma^1, \dots, \sigma^n) \in \R^{n \times N},
	\]
	where each $\sigma^k \in \R^N$ denotes the $k$-th row of $\bs$ and $\sigma_{i}^k$ the entry in the $i$-th column and $k$-th row.
	Let
	$$
	\mathcal{S}_{N-1} = \{\sigma \in \R^N: \sigma_1^2+...+\sigma_N^2 =1\}
	$$
	denote the unit sphere in $\R^N$.
	Let $\bh = (h^1, \dots , h^n) \in \R^{n \times N}$ and $\beta = (\beta_1, \dots, \beta_n)$ denote the external fields and inverse temperatures of each replica. Furthermore, assume that $|h^k| = h_k \in \R$ and let $h = (h_1, \dots, h_n) \in \R^n$. Let $\bQ \in \mathbb{R}^{n\times n}$ be a positive semi-definite matrix with $1$'s along the diagonal giving a constraint on the overlaps of the replicas.
	For a matrix $\bA$ let $\|\bA\|_\infty$ denote the $\sup$-norm $\max_{k,l}|A_{k,l}|$, and for $\varepsilon>0$ let
	\begin{equation}\label{eq: eps nbhood}
		\bQ_\varepsilon = \{ \bs: \|\bs \bs^\Trans - \bQ \|_\infty \leq \varepsilon \},
	\end{equation}
	denote the set of replicas with overlaps close to $\bQ$.
	
	Our goal is to compute the limit of the replica constrained free energy
	\begin{equation}\label{def:FE}
		F^\varepsilon_N(\beta,\bh,\bQ) = \frac{1}{N} \log \int_{\bQ_\varepsilon} e^{\sum_{k = 1}^n \beta_k H_N(\sigma^k) + N h^k \cdot \sigma^k } d\bs
	\end{equation}
	for fixed model parameters $\bQ, \bh, \beta$,
	where $d\bs = (d \sigma)^{\otimes n}$ is the product of uniform measures $d\sigma$ on the sphere $\mathcal{S}_{N-1}$.
	Note that the integral can not be trivially reduced to a one replica integral using Fubini's theorem because the replica overlaps are constrained to the corresponding values of $\bQ$. Note further that each replica shares the same disorder $J_{ij}$ but can be subject to different inverse temperatures $\beta_k$ and external fields $h^k$.
	
	The TAP free energy we derive for this model is given by
	\begin{equation}\label{eq:TAP}
		\TAP(\bbm) =   \frac{N}{2} \log| \bQ - \bM| + \sum_{k = 1}^n \beta_k H_N ( m^k ) + N \sum_{k = 1}^n h^k \cdot m^k  + \frac{N}{2} \beta^\Trans (\bQ - \bM)^{\odot 2} \beta,
	\end{equation}
	where $\bbm = (m^1,\dots, m^n) \in \R^{n \times N}$ are magnetization vectors, $| \cdot |$ denotes the determinant, and $\bA^{\odot 2} = \bA \odot \bA = (A_{k,l}^2)_{k,l=1,\ldots,n}$ denotes the Hadamard square of the entries of $\bA$.
	We further introduce a Plefka condition \cite{thoulessSolutionSolvableModel1977,plefka1982convergence} for the vector spin model given by $\bmm \in {\rm{Plef}}_N(\bQ,\beta)$ for
	\begin{equation}\label{eq:Plefkadefn}
	\begin{array}{rcl}
	    {\rm{Plef}}_{n}(\bQ,\beta) &=& 
	    \left\{ \tilde{\bm{Q}} \in [-1,1]^{n \times n} :   \bm{0} \leq  \tilde{\bm{Q}} < \bQ ,\, \| \bb^{\frac{1}{2}} (\bQ - \tilde{\bm{Q}}) \bb^{\frac{1}{2}} \|_{2} \leq \frac{1}{\sqrt{2}} \right\},
	    \\
		{\rm{Plef}}_N(\bQ,\beta)&=& \left\{ \bbm \in \R^{n \times N} :  \bbm \bbm^{\Trans} \in {\rm{Plef}}_{n}(\bQ,\beta) \right\},
	\end{array}
	\end{equation}
	where $\bb = \diag(\beta) \in \R^{n \times n}$, $\| \cdot \|_2$ denotes the spectral norm (largest eigenvalue for symmetric positive semi-definite matrices) and $\leq$ is the Loewner partial order on matrices (so that $\bA \ge 0$ for $\bA \in \mathbb{R}^{n \times n}$ means that $\bA$ is positive semi-definite).
	
	Our main theorem is a TAP variational principle giving the limiting free energy of the model as a supremum over $\bbm \in {\rm{Plef}}_{n}(\bQ,\beta)$.
	\begin{theorem}[TAP Variational Principle]\label{thm:TAPformula}
		Let $n\ge1$ and $\bQ \in [-1,1]^{n\times n}$ be positive definite with $Q_{k,k}=1$ for $k=1,\ldots,n$. It holds that
		\begin{equation}\label{eq: main thm}
			\lim_{\varepsilon \to 0} \limsup_{N\to \infty} | F^\varepsilon_N(\beta,\bh,\bQ) - \sup_{\bbm \in \cM} \frac{1}{N} \TAP(\bbm) |=0,
		\end{equation}
		where the limits are in probability.
	\end{theorem}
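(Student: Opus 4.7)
The strategy is to adapt the TAP framework of \cite{belius-kistler} to the multi-replica setting: decompose each spherical configuration as a macroscopic magnetization plus a microscopic orthogonal fluctuation, then evaluate the resulting constrained Gaussian integral over the fluctuations. Matching upper and lower bounds then yield \eqref{eq: main thm}.

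\emph{Decomposition and slicing.} I would partition $\bQ_\varepsilon$ according to a fine grid of magnetizations $\bmm=(m^1,\ldots,m^n)\in\R^{n\times N}$, writing each $\sigma^k=m^k+\tau^k$ with $\tau^k\perp m^k$ and $|\tau^k|^2=1-|m^k|^2$. The constraint $\bs\bs^\Trans\approx\bQ$ then translates, up to $O(\varepsilon)$, into the orthogonal overlap constraint $\bm{\tau}\bm{\tau}^\Trans\approx\bQ-\bM$. The Jacobian of this change of variables, combined with the product uniform measure on each sphere and evaluated on the constrained overlap shell, gives a factor whose logarithm is $\tfrac{N}{2}\log|\bQ-\bM|+o(N)$ at leading order, matching the first term of $\TAP$.

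\emph{Gaussian evaluation of fluctuations.} The exact identity
\begin{equation*}
H_N(m^k+\tau^k)=H_N(m^k)+2\sqrt{N}\,(m^k)^\Trans \bJ\,\tau^k+H_N(\tau^k),
\end{equation*}
valid for the $2$-spin Hamiltonian, makes the $\bm{\tau}$-dependence Gaussian and quadratic. Conditioning on $\{H_N(m^k),\bJ m^k\}_k$ separates the linear drift from the quadratic fluctuation piece $\sum_k \beta_k H_N(\tau^k)$. A constrained multi-replica spherical-integral computation on the orthogonal complement of $\mathrm{span}(\bmm)$, subject to $\bm{\tau}\bm{\tau}^\Trans\approx\bQ-\bM$, should then produce the leading contribution $\tfrac{N}{2}\beta^\Trans(\bQ-\bM)^{\odot2}\beta+o(N)$, precisely the last term of $\TAP$. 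The drift and external-field contributions combine with $\sum_k\beta_kH_N(m^k)+N\sum_k h^k\cdot m^k$ into the remaining summands of $\TAP(\bmm)$. The requirement that the quadratic form in $\bm{\tau}$ be sufficiently positive definite for this Gaussian asymptotic to be valid is exactly the Plefka condition $\|\bb^{1/2}(\bQ-\bM)\bb^{1/2}\|_2\le 1/\sqrt{2}$ appearing in \eqref{eq:Plefkadefn}, thereby justifying restriction of the supremum to $\cM$.

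\emph{Matching bounds and main obstacle.} Summing over the grid and applying a union bound yields $F^\varepsilon_N\le\sup_{\bmm\in\cM}N^{-1}\TAP(\bmm)+o(1)$, once the contribution from $\bmm\notin\cM$ (where the fluctuation quadratic form degenerates) is shown to be subdominant via Loewner monotonicity of the exponent. For the matching lower bound I would choose a near-maximizer $\bmm^*\in\cM$ (measurable in $\bJ$), localize the integral to a small tube around $\bmm^*$, and run the same Gaussian computation as a lower bound, using Gaussian concentration and strict interior of Plefka. The principal technical obstacle, I expect, is the precise evaluation of the constrained multi-replica Gaussian spherical integral over $\bm{\tau}$ weighted by $\exp(\sum_k\beta_k H_N(\tau^k))$: the replicas remain coupled through the overlap constraint, so the single-replica spherical-integral (HCIZ-type) asymptotics used in \cite{belius-kistler} do not apply directly. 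One likely needs either a joint orthogonal-frame spherical-integral computation (e.g. after whitening by $(\bQ-\bM)^{-1/2}$) or a careful conditional-Gaussian decoupling across replicas. A secondary subtlety is establishing sharpness of the Plefka threshold and uniformity of the fluctuation asymptotic over the grid in $\bmm$, so that slice-wise bounds sum to the claimed supremum after first $N\to\infty$ and then $\varepsilon\to0$.
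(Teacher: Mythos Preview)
Your decomposition and the lower-bound plan are essentially what the paper does: recenter around a fixed $\bmm\in\cM$, kill the effective external field by restricting to a thin slice orthogonal to it, and evaluate the remaining fluctuation integral (the paper does this via a second-moment computation in Section~\ref{subsec: sec mom} rather than a direct HCIZ computation, but the whitening idea you mention is exactly Lemma~\ref{lem: from I to Q}). Your identification of the constrained multi-replica spherical integral as the main technical point is accurate.

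The genuine gap is in your upper bound. You write that the contribution from $\bmm\notin\cM$ ``is shown to be subdominant via Loewner monotonicity of the exponent''. This is where the argument would fail as stated. For $\bmm\notin\cM$ the effective inverse temperature $\bb^{1/2}(\bQ-\bM)\bb^{1/2}$ is \emph{not} in the high-temperature region, so the fluctuation integral is \emph{strictly larger} than the Onsager term $\tfrac{N}{2}\beta^\Trans(\bQ-\bM)^{\odot2}\beta$; there is no monotonicity that makes the $\TAP$ expression an upper bound at such $\bmm$. Hence summing over the grid and bounding each slice by $\TAP(\bmm)$ is not valid outside $\cM$, and you have no a~priori reason that those slices do not dominate.

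The paper's resolution is indirect and is the key idea you are missing. It replaces the true fluctuation free energy by the rank-$n$ HCIZ asymptotic $\sum_k\mathcal F_K(\tilde\beta_k(\bmm))$ (Lemma~\ref{lem:sperhicalintgeneral}), which for finite $K$ is smooth and valid for \emph{all} $\bmm$, giving an upper bound in terms of a \emph{modified} TAP free energy $\tilde F^K_{\rm TAP}$ (Proposition~\ref{prop:upperbound}). One then shows, by computing the Hessian of $\tilde F^K_{\rm TAP}$ (Lemma~\ref{lem: hessian}) and using eigenvalue interlacing plus properties of the Stieltjes-transform inverse $v_{\mu_K}$ (Lemma~\ref{lem:critptcond}), that any maximizer of $\tilde F^K_{\rm TAP}$ \emph{must} satisfy Plefka's condition. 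Only after localizing the maximizer to $\cM$ does one invoke $\mathcal F_K(\tilde\beta)\to\tilde\beta^2/2$ under Plefka (Lemma~\ref{lem:Plefka}) to recover the original Onsager term. So the restriction to $\cM$ is an \emph{output} of a critical-point analysis of a surrogate functional, not an input justified by monotonicity.
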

	If $\bQ$ is not positive definite then $\lim_{\varepsilon \to 0} \lim_{N\to \infty} F^\varepsilon_N(\beta,\bh,\bQ) = -\infty$ (see \eqref{lem: small eval}).
	
	We also compute the supremum in \eqref{eq: main thm} when $h^1,\ldots,h^n$ are multiples of a single vector. To this end we let for any $\beta$, $h \in \mathbb{R}^n$ and positive definite $n\times n$ constraint matrix $\tilde \bQ$
	\begin{equation}\label{eq:Lfunctopt}
		{\rm{GSE}}(\beta,h,\tilde{\bm{Q}}) = \sqrt{2} \Tr \left( \sqrt{ \Big( \frac{1}{2}  h h^\Trans  + \bb \tilde{\bm{Q}} \bb\Big)^{\frac{1}{2}} \tilde \bQ \Big( \frac{1}{2}  h h^\Trans  + \bb \tilde{\bm{Q}} \bb\Big)^{\frac{1}{2}} }   \right).
	\end{equation}
    Note that the trace on the right-hand side is the sum of the singular values of $( \frac{1}{2}  h h^\Trans  + \bb \tilde{\bm{Q}})^{1/2} \tilde{\bm{Q}}^{1/2}$. The ground state of the energy over magnetizations $\bbm$ with constrained overlaps converges to this limit:
	\begin{theorem}[Ground state energy]\label{thm:groundstate} Assume that $h^i = h_i u$ for a sequence of unit vectors $u \in \mathbb{R}^N$ for $i=1,\ldots,n$. For all $\beta,h$ and positive definite $\tilde{\bQ}$
		\begin{equation}\label{eq: max TAP free energy}
		 \lim_{N \to \infty} \sup_{\bbm \bbm^\Trans = \tilde{\bm{Q}}} \bigg( \sum_{k = 1}^n \frac{1}{N} \beta_k H_N ( m^k) + \sum_{k = 1}^n m^k \cdot h^k  \bigg) = {\rm{GSE}}(\beta,h,\tilde{\bm{Q}}),
		\end{equation}
  where the limit is in probability.
	\end{theorem}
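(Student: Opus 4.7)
The plan is to convert the supremum into a quadratic program over $\bA := \bbm^\Trans \in \R^{N \times n}$ subject to $\bA^\Trans \bA = \tilde\bQ$, solve it by Lagrangian duality, and identify the limiting dual with $\mathrm{GSE}$. Symmetrising $J$ through $G := (J + J^\Trans)/\sqrt 2$ (a GOE matrix with $\mathcal N(0,2)$ diagonal and $\mathcal N(0,1)$ off-diagonal entries) reduces the objective in \eqref{eq: max TAP free energy} to $\tfrac{1}{\sqrt{2N}}\Tr(\bb \bA^\Trans G \bA) + u^\Trans \bA h$. For any symmetric $\Lambda \in \R^{n \times n}$ such that $K_N := \Lambda \otimes I_N - \tfrac{1}{\sqrt{2N}}\bb \otimes G$ is positive definite, introducing a Lagrange multiplier for the constraint and completing the square in $\bA$ yields
\[
\sup_{\bA^\Trans \bA = \tilde\bQ}\bigl[\tfrac{1}{\sqrt{2N}}\Tr(\bb \bA^\Trans G \bA) + u^\Trans \bA h\bigr] \;\leq\; \Tr(\Lambda \tilde\bQ) + \tfrac{1}{4}(h \otimes u)^\Trans K_N^{-1}(h \otimes u).
\]
Diagonalising $G = \bU D \bU^\Trans$, GOE orthogonal invariance implies $v := \bU^\Trans u$ is (in distribution) a uniform unit vector, independent of the spectrum, and the right-hand side becomes $\Tr(\Lambda\tilde\bQ) + \tfrac{1}{4}\sum_i v_i^2\, h^\Trans(\Lambda - \tfrac{\lambda_i}{\sqrt{2N}}\bb)^{-1} h$. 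Combining Wigner's theorem with concentration of the weighted resolvent, for every $\Lambda > \sqrt 2\,\bb$ (strict Loewner, assuming $\bb > 0$) one obtains the limiting upper bound $\limsup_N (\text{primal}) \leq \Phi(\Lambda)$ in probability, where
\[
\Phi(\Lambda) := \Tr(\Lambda\tilde\bQ) + \tfrac{1}{4}\int h^\Trans(\Lambda-\mu\bb)^{-1} h\,\tilde\rho(\mu)\,d\mu, \qquad \tilde\rho(\mu) := \tfrac{1}{\pi}\sqrt{2-\mu^2}\,\1_{\abs{\mu}\leq\sqrt 2}.
\]

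To identify $\inf_{\Lambda > \sqrt 2\bb} \Phi(\Lambda)$ with $\mathrm{GSE}(\beta,h,\tilde\bQ)$, substitute $\Gamma := \bb^{-1/2}\Lambda\bb^{-1/2}$ and introduce the matrix-valued semicircle Stieltjes transform $Y := \Gamma - \sqrt{\Gamma^2 - 2 I}$. By functional calculus on $\Gamma$'s spectrum, $\int(\Gamma - \mu I)^{-1}\tilde\rho(\mu)\,d\mu = Y$ and $\Gamma = Y/2 + Y^{-1}$. Setting further $Z := \bb^{-1/2} Y \bb^{-1/2}$ collapses $\Phi$ to
\[
\Phi(Z) = \tfrac{1}{2}\Tr(Z A) + \Tr(Z^{-1} \tilde\bQ),\qquad A := \tfrac{1}{2} h h^\Trans + \bb \tilde\bQ \bb,
\]
at which point the matrix AM--GM identity
\[
\min_{Z > 0}\bigl[\Tr(Z X) + \Tr(Z^{-1} Y)\bigr] = 2\,\Tr\sqrt{X^{1/2} Y X^{1/2}},
\]
with minimiser $Z_\star = X^{-1/2}(X^{1/2} Y X^{1/2})^{1/2} X^{-1/2}$ (verified after the substitution $Z = X^{-1/2} U X^{-1/2}$ by setting $\nabla_U = 0$), delivers $\inf \Phi = \sqrt 2\,\Tr\sqrt{\tilde\bQ^{1/2} A \tilde\bQ^{1/2}} = \mathrm{GSE}(\beta, h, \tilde\bQ)$.

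For the matching lower bound I would construct an explicit trial $\bA^\star$ from the primal first-order conditions at the optimal $\Lambda^\star$: in the $G$-eigenbasis, set the $i$-th row of $\bU^\Trans \bA^\star$ equal to $\tfrac{v_i}{2}(\Lambda^\star - \tfrac{\lambda_i}{\sqrt 2}\bb)^{-1} h$. The same semicircle concentration shows $\bA^{\star\Trans}\bA^\star \to \tilde\bQ$ in probability (this is precisely $\nabla_\Lambda \Phi(\Lambda^\star) = 0$), and that the objective evaluated at $\bA^\star$ tends to $\mathrm{GSE}$; a small projection of size $O(\|\bA^{\star\Trans}\bA^\star - \tilde\bQ\|)$ returns $\bA^\star$ exactly to the constraint manifold without affecting the limit. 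The main technical hurdle is this concentration step: resolvent estimates uniform in $\Lambda$ in a neighbourhood of $\Lambda^\star$ are required, together with careful handling of the spectral edge where $(\Lambda - \mu\bb)^{-1}$ develops singularities. One also must verify that the minimiser $Z_\star$ lies strictly inside $Z < \sqrt 2\,\bb^{-1}$ (equivalently $\Lambda^\star > \sqrt 2\,\bb$), generic under $\tilde\bQ > 0$ and $\bb > 0$, so that strong duality holds on the interior of the dual feasible set and closes the gap with the constructed lower bound.
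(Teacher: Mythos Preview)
Your approach is correct and structurally the same as the paper's: both set up the constrained optimisation as a Lagrangian saddle-point problem, solve the inner maximisation in closed form, pass to the semicircle limit in the resulting resolvent sum, and then identify the remaining dual infimum with $\mathrm{GSE}$. There are, however, two genuine differences in execution worth noting.

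First, for the lower bound the paper does not construct a trial configuration and project. Instead it replaces $H_N$ by the deterministic $\tilde H_N$ built from the classical eigenvalue locations $\theta_{i/N}$ (via eigenvalue rigidity), and then proves \emph{at fixed $N$} that the Lagrangian has an interior saddle point, so that $\sup_{\bbm\bbm^\Trans=\tilde\bQ} f(\bbm) = \inf_{\bL>\sqrt2\bI}\sup_{\bbm} f(\bbm,\bL)$ holds exactly before any limit is taken. The interior location of the minimising $\bL$ is forced by the blow-up of the resolvent term as $\bL\to\sqrt2\bI$, using that $\tilde h_N\neq 0$ almost surely (this is where the hypothesis $h_k\neq 0$ enters). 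This completely sidesteps the edge-concentration and projection issues you flag; your route is workable but genuinely harder at the technical level.

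Second, your identification of $\inf_\Lambda\Phi(\Lambda)$ via the substitution $Z=\bb^{-1/2}Y\bb^{-1/2}$ and the matrix AM--GM identity $\min_{Z>0}\bigl[\Tr(ZX)+\Tr(Z^{-1}Y)\bigr]=2\Tr\sqrt{X^{1/2}YX^{1/2}}$ is cleaner than the paper's. The paper uses the same parametrisation $\bL=\tfrac{1}{\sqrt2}(\bX+\bX^{-1})$ but then solves the critical-point equation $\bX^{-1}\bB\bX^{-1}=2\bA+\bB$ by writing $\bX=(2\bA+\bB)^{-1/2}\bU\bB^{1/2}$ and choosing the orthogonal $\bU$ from the SVD of $\bB^{1/2}(2\bA+\bB)^{1/2}$ to enforce symmetry, before reducing the value to the singular-value trace. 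Your argument reaches the same endpoint with less bookkeeping.

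One point you leave implicit that the paper makes explicit: the whole argument is run under $\beta_k>0$ and $h_k\neq 0$ for all $k$, and the general case is recovered at the end by continuity of both sides in $(\beta,h)$ (the left-hand side is $\sqrt2$-Lipschitz in $\beta$ and $1$-Lipschitz in $h$ uniformly over the constraint; the right-hand side is manifestly continuous from the formula for $\mathrm{GSE}$). You should state this reduction, since your interior-minimiser verification (``generic under $\tilde\bQ>0$ and $\bb>0$'') can fail when some $h_k=0$.
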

	
	The next theorem expresses the limiting maximum TAP free energy as a lower dimensional optimizion, namely as one of $n\times n$ (so bounded in $N$) rather than $n\times N$ dimensions.
	It follows immediately from \eqref{eq:TAP} and Theorem \ref{thm:groundstate}.
	
	\begin{corollary}[Low Dimensional Variational Principle]\label{cor:varprinc} Assume that $h^i = h_i u$ for a sequence of unit vectors $u \in \mathbb{R}^N$ for $i=1,\ldots,n$. For all $\beta,h$ and positive definite $\bQ$ it holds that
		\begin{equation}\label{eq:TAPFEnoH in terms of m}
			\lim_{N \to \infty} \sup_{\bbm \in \cM} \frac{1}{N} \TAP(\bbm) = \sup_{\tilde{\bQ} \in {\rm{Plef}}_n(\bQ,\beta)} \bigg( {\rm{GSE}}(\beta,h,\tilde{\bQ}) + \frac{1}{2}\log | \bQ - \tilde{\bQ}|  + \frac{1}{2}   \beta^\Trans (\bQ - \tilde{\bQ})^{\odot 2} \beta  \bigg) 
			,
		\end{equation}
		where the limit is in probability.
	\end{corollary}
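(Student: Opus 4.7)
My approach is to factor $\TAP(\bbm)$ from \eqref{eq:TAP} into a deterministic ``shape'' part depending only on the Gram matrix $\bbm\bbm^\Trans$ and a random ``energy'' part, split the supremum accordingly, and invoke Theorem \ref{thm:groundstate} for the inner optimization.

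Concretely, I would write $\tfrac{1}{N}\TAP(\bbm) = \Phi(\bbm\bbm^\Trans) + \Psi_N(\bbm)$, where
\begin{equation*}
\Phi(\tilde{\bQ}) := \tfrac12 \log|\bQ - \tilde{\bQ}| + \tfrac12 \beta^\Trans (\bQ - \tilde{\bQ})^{\odot 2}\beta, \qquad \Psi_N(\bbm) := \sum_{k=1}^n \frac{\beta_k H_N(m^k)}{N} + \sum_{k=1}^n m^k \cdot h^k,
\end{equation*}
so that $\Phi$ is deterministic and continuous on $\{\tilde{\bQ} : \bQ - \tilde{\bQ} > 0\}$, while $\Psi_N$ is precisely the quantity maximized on the left of \eqref{eq: max TAP free energy}. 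Since $\bbm \in \cM$ is equivalent to $\bbm\bbm^\Trans \in {\rm{Plef}}_n(\bQ,\beta)$, I would split
\begin{equation*}
\sup_{\bbm \in \cM} \tfrac{1}{N}\TAP(\bbm) \;=\; \sup_{\tilde{\bQ} \in {\rm{Plef}}_n(\bQ,\beta)} \bigl( \Phi(\tilde{\bQ}) + G_N(\tilde{\bQ}) \bigr), \qquad G_N(\tilde{\bQ}) := \sup_{\bbm\bbm^\Trans = \tilde{\bQ}} \Psi_N(\bbm),
\end{equation*}
after which Theorem \ref{thm:groundstate} gives the pointwise in probability convergence $G_N(\tilde{\bQ}) \to {\rm{GSE}}(\beta,h,\tilde{\bQ})$.

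The ``$\ge$'' direction of \eqref{eq:TAPFEnoH in terms of m} is immediate: plug a near-maximizer $\tilde{\bQ}^\ast$ of the RHS (restricting to $\bQ - \tilde{\bQ}^\ast \ge \delta I$ is harmless, since $\log|\bQ - \tilde{\bQ}|\to -\infty$ on the singular boundary), then invoke Theorem \ref{thm:groundstate} at $\tilde{\bQ}^\ast$ together with continuity of $\Phi$ and ${\rm{GSE}}$. The main obstacle is the ``$\le$'' direction, which requires exchanging $\sup$ and $\lim$ and hence uniform convergence of $G_N$ to ${\rm{GSE}}$ on a compact domain. For this I would: (i) discard a $\delta$-neighborhood of the singular boundary of $\Phi$ using the $\log$ term; (ii) establish a uniform modulus of continuity for $G_N(\tilde{\bQ})$ via the parametrization $\bbm = \tilde{\bQ}^{1/2} U$ with $U$ on the Stiefel manifold of $n\times N$ matrices with orthonormal rows, exploiting that $\Psi_N$ is $\sO(1)$-Lipschitz in $\bbm$ with high probability (since $\|J\|_{\mathrm{op}} = \sO(\sqrt{N})$ makes the gradient $\sqrt{N}^{-1}(J+J^\Trans)m^k$ uniformly bounded) and that $\tilde{\bQ}^{1/2}$ is Lipschitz in $\tilde{\bQ}$ away from the singular boundary of the PSD cone; and (iii) upgrade pointwise Theorem \ref{thm:groundstate} to uniform convergence on the (now compact) parameter set via a finite $\epsilon$-net, then pass $\sup$ through the limit by continuity of $\Phi$ and ${\rm{GSE}}$.
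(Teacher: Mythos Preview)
Your approach is correct and coincides with the paper's: the paper simply states that the corollary ``follows immediately from \eqref{eq:TAP} and Theorem~\ref{thm:groundstate}'', and what you have written is precisely the routine argument needed to make that sentence rigorous (splitting $\tfrac{1}{N}\TAP$ into a deterministic function of $\bbm\bbm^\Trans$ plus the ground-state energy, then passing the supremum through the limit via compactness and equicontinuity). One small caveat: Theorem~\ref{thm:groundstate} is stated only for \emph{positive definite} $\tilde{\bQ}$, so besides excluding a neighborhood of the boundary $\{\bQ-\tilde{\bQ}\text{ singular}\}$ via the $\log$ term, you should also handle the boundary $\{\tilde{\bQ}\text{ singular}\}$ of ${\rm Plef}_n(\bQ,\beta)$ by continuity of ${\rm GSE}$ and of $G_N$ (your H\"older/Lipschitz estimate for $G_N$ via $\bbm=\tilde{\bQ}^{1/2}U$ already gives this, since H\"older-$\tfrac12$ continuity of the square root suffices for the $\varepsilon$-net argument on a compact set).
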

	
	It follows immediately from Theorems \ref{thm:TAPformula} and 
 Corollary \ref{cor:varprinc} that also the limiting free energy is given by the same low dimensional optimization problem.
	\begin{corollary}  Assume that $h^i = h_i u$ for a sequence of unit vectors $u \in \mathbb{R}^N$ for $i=1,\ldots,n$. For all $\beta,h$ and positive definite $\bQ$ the limit of the free energy is
	    \begin{equation}\label{eq:TAPFEnoH}
			\lim_{\varepsilon \to 0} \lim_{N \to \infty} F^\varepsilon_N(\beta,\bh, \bQ) = \sup_{\tilde{\bm{Q}} \in {\rm{Plef}}_n(\bQ,\beta)} \bigg( {\rm{GSE}}(\beta,h,\tilde{\bm{Q}}) + \frac{1}{2}\log | \bQ - \tilde{\bm{Q}}| + \frac{1}{2}   \beta^\Trans (\bQ - \tilde{\bm{Q}})^{\odot 2} \beta  \bigg),
		\end{equation}
  where the limits are in probability.
	\end{corollary}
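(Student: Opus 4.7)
The plan is to simply combine Theorem \ref{thm:TAPformula} with Corollary \ref{cor:varprinc} via the triangle inequality, as suggested by the excerpt. Write
\[
G_N := \sup_{\bbm \in \cM} \frac{1}{N}\TAP(\bbm), \qquad L := \sup_{\tilde{\bm{Q}} \in {\rm{Plef}}_n(\bQ,\beta)} \Big( {\rm{GSE}}(\beta,h,\tilde{\bm{Q}}) + \tfrac{1}{2}\log | \bQ - \tilde{\bm{Q}}| + \tfrac{1}{2} \beta^\Trans (\bQ - \tilde{\bm{Q}})^{\odot 2} \beta \Big).
\]
Then for any $\delta > 0$ the triangle inequality gives
\[
\P\!\left( \abs{F^\varepsilon_N(\beta,\bh,\bQ) - L} > \delta \right)
\;\leq\;
\P\!\left( \abs{F^\varepsilon_N(\beta,\bh,\bQ) - G_N} > \tfrac{\delta}{2} \right)
+
\P\!\left( \abs{G_N - L} > \tfrac{\delta}{2} \right).
\]

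\textbf{Order of limits.} The first step is to take $\limsup_{N \to \infty}$ on both sides. The second summand on the right has no $\varepsilon$-dependence, so by Corollary \ref{cor:varprinc} (which asserts $G_N \to L$ in probability) it vanishes. The first summand, after taking $\limsup_{N \to \infty}$ and then $\lim_{\varepsilon \to 0}$, vanishes by Theorem \ref{thm:TAPformula}. Hence
\[
\lim_{\varepsilon \to 0} \limsup_{N \to \infty} \P\!\left( \abs{F^\varepsilon_N(\beta,\bh,\bQ) - L} > \delta \right) = 0,
\]
which is precisely convergence in probability of $F^\varepsilon_N(\beta,\bh,\bQ)$ to the deterministic constant $L$ in the iterated-limit sense $\lim_{\varepsilon \to 0} \lim_{N \to \infty}$. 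In particular the iterated limit exists and equals $L$, which is the stated identity \eqref{eq:TAPFEnoH}.

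\textbf{Remarks on obstacles.} Since both ingredients are already in hand, there is essentially no obstacle: the proof is a one-line application of the triangle inequality together with the union bound for convergence in probability. The only minor point worth emphasizing is that $L$ is a \emph{deterministic} quantity (the Plefka region ${\rm{Plef}}_n(\bQ,\beta)$, the ground-state energy ${\rm{GSE}}$ and the remaining entropic and Onsager-like terms are all non-random functions of $\beta$, $h$ and $\bQ$), so the conclusion is genuinely a law-of-large-numbers type statement for $F^\varepsilon_N$ with an explicit formula for the limit. The hypothesis that $h^i = h_i u$ for a common unit vector $u$ is required only because Corollary \ref{cor:varprinc} depends on it through Theorem \ref{thm:groundstate}; no further restriction needs to be imposed here.
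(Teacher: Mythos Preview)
Your proposal is correct and matches the paper's approach: the paper does not give a separate proof of this corollary, stating only that it follows immediately from Theorem~\ref{thm:TAPformula} and Corollary~\ref{cor:varprinc}, and your triangle-inequality argument is precisely the one-line justification that makes this precise.
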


	\begin{remark}\label{ex:1}
    When $n=1$ we recover the results of \cite{belius-kistler}. Indeed the only valid constraint is $\bQ=1$, and with this constraint $F_{\rm{TAP}}$ coincides with $H_{\rm{TAP}}$ of  \cite{belius-kistler}, and Theorem \ref{thm:TAPformula} coincides with \cite[Theorem 1]{belius-kistler}.
    The functional \eqref{eq:Lfunctopt} is
		\begin{equation}\label{eq: case n=1}
		{\rm{GSE}}(\beta,h,\tilde{q})= \sqrt{2 \beta^2 \tilde{q}^2 + h^2 \tilde{q} },
		\end{equation}
		cf. \cite[(1.6) and Lemma 20]{belius-kistler}. Corollary~\ref{cor:varprinc} says
		\[
		\lim_{N \to \infty} F^\varepsilon_N(\beta, h, 1) = \sup_{m : \beta(1-\tilde{q}) \leq \frac{1}{\sqrt{2}}} \bigg( \sqrt{2 \beta^2 \tilde{q}^2 + h^2 \tilde{q} } + \frac{\beta^2}{2}  (1 - \tilde{q})^2 + \frac{1}{2} \log | 1 - \tilde{q}|  \bigg),
		\]
		for all $\varepsilon>0$, cf. \cite[Lemma~2]{belius-kistler}.
	\end{remark}

    \subsection{Discussion}
    \,
	The most important result about one replica ($n=1$) spin glass models \cite{sherrington1975solvable}\footnote{See \cite{kosterlitzSphericalModelSpinGlass1976,derridaRandomEnergyModelLimit1980,grossSimplestSpinGlass1984,crisantiSphericalpspinInteractionSpin1992,talagrandMultipleLevelsSymmetry2000,crisantiSphericalSpinglassModel2004,talagrandFreeEnergySpherical2006} for the various generalizations of the original Ising type $2$-spin SK model.} is the Parisi formula \cite{parisi1980sequence,parisiInfiniteNumberOrder1979,mezard1987spin} for the limiting free energy which has been proved rigorously using the methods of Guerra, Aizenman--Sims--Starr, Talagrand and Panchenko \cite{guerra,ASS,CASS,talagrand,talagrandFreeEnergySpherical2006,PUltra,panchenkoParisiFormulaMixed2014}. The TAP approach is an attractive proposal \cite{thoulessSolutionSolvableModel1977} of an alternative framework to compute the free energy which is under active investigation, with at least three projects underway to implement it mathematically rigorously (\cite{bolthausenIterativeConstructionSolutions2014,bolthausenMoritaTypeProof2018,brennecke2022replica}, \cite{subagGeometryGibbsMeasure2017,subagFreeEnergyLandscapes2020,chenGeneralizedTAPFree2022,subagFreeEnergySpherical2021}, \cite{belius-kistler, BeliusUpperBound}).
	
	Concerning constrained multiple spin glass models ($n\ge1$; \cite{replicaonandoff,PTSPHERE,PVS}) an upper bound for the free energy of spherical models was proved in \cite{PTSPHERE} using the Guerra interpolation scheme. The matching lower bound for this model was proved in \cite{kosphere,kocs} by adapting the synchronization property derived for constrained multiple spin models with respect to product measures by Panchenko in \cite{PVS,PPotts} and the Aizenman--Sims--Starr scheme.
	In this article, we investigate the $2$-spin constrained multiple spherical spin model using the TAP approach of \cite{belius-kistler} (see also \cite{BeliusUpperBound}) and derive the new variational expression \eqref{eq:TAPFEnoH} for the limiting free energy.
	The variational formula is expressed as the maximum of a functional defined on $n \times n$ matrices. It is much simpler than the $2$-spin version of the Parisi variational formula from \cite{kocs} defined in terms of matrix paths \cite[Theorem~1 and Theorem~3]{kocs}. After \cite{belius-kistler} our results represents only the second setting where the free energy of a spin glass model has been computed at all temperatures and external fields using a mathematically rigorous TAP approach (\cite{subagFreeEnergySpherical2021} uses a different version of the TAP approach to compute the free energy for pure $p$-spin spherical spin glasses without external field at all temperatures). We hope that in the future a further improvement of the present TAP approach can be extended to a wider class of spin glass models.
	
	A well-known property of the classical ($n=1$) spherical $2$-spin model is that it is replica symmetric at any inverse temperature and external field, as can be verified by studying the Parisi formula for the model \cite[Section~2]{talagrandFreeEnergySpherical2006}. For the constrained multiple spin model, \cite{tucaproperties} gives a zero temperature Parisi formula for the ground state and shows that the minimizer is replica symmetric in the case of $2$-spin interaction \cite[Proposition~7]{tucaproperties}. A similar computation at positive temperature seems infeasible, so presently one can not deduce that the free energy of the constrained multiple $2$-spin model is replica symmetric from its Parisi formula. Since we use the TAP approach we do not directly study the Parisi formula for the model, instead obtaining the different formula \eqref{eq:TAPFEnoH}. However the formula \eqref{eq:TAPFEnoH} expresses that the free energy is replica symmetric, since the maximization is over only one matrix $\tilde{\bQ}$.	
    
    \subsection{Outline of proof}\label{subsec: proof outline}

    The starting point of the proof is the computation of the free energy at high temperature in the absence of external field. When $n=1$ (with the unique possibility $\bQ=1$ as the constraint) the annealed free energy is $\frac{1}{2}\beta^2$, and this is also the quenched free energy if the Hamiltonian is at high temperature, which is the case if $\beta \le \frac{1}{\sqrt{2}}$. When $n\ge2$ with a constraint $\bQ$ the annealed free energy turns out to be
    $\frac{1}{2} \beta^\Trans \bQ^{\odot 2} \beta$ (after subtracting the normalizing factor  $\frac{1}{2}\log |\bQ|$ corresponding to log-scale volume of spin vectors that satisfy the constraint; see Lemmas \ref{lem:volumeconstraint}, \ref{lem:firstmoment}). Similarly this is also the quenched free energy if the Hamiltonian is at high temperature, which turns out to be the case if $\|\bb^{\frac{1}{2}} \bQ \bb^{\frac{1}{2}} \|_2 \le \frac{1}{\sqrt{2}}$. As is well-known, these properties of the model with $n=1$ can be verified using a second moment method \cite[Section 2.2]{talagrand2003spin}. In this paper we find that a second moment computation also gives the aforementioned properties of the model with $n\ge2$, though the second moment computation is more challenging (see Lemmas \ref{lem:firstmoment}, \ref{lem:secondmoment} and Propositions \ref{prop:sec_mom_maximized}, \ref{prop:freenergy-whileplefka}). As an aside, note that the aforementioned claim about the quenched free energy is the special case $h=0$ and $\|\bb^{\frac{1}{2}} \bQ \bb^{\frac{1}{2}} \|_2 \le \frac{1}{\sqrt{2}}$ of \eqref{eq:TAPFEnoH}, in which it can be seen that the maximizer is $\tilde{\bQ}=0$.

    Armed with this knowledge of the high temperature phase, the proof of Theorem \ref{thm:TAPformula} splits into a lower and an upper bound for $F^\varepsilon_N = F^\varepsilon_N(\beta,\bh, \bQ)$, both of which proceed by estimating the partition function integral restricted to certain subsets of $\mathcal{S}_{N-1}^n$ that are neighborhoods of a magnetization vector $\bm{m}$. That is, for each such subset $A(\bm{m}) \subset \mathcal{S}_{N-1}^n$ we estimate $\int_{A(\bm{m}) \cap \bQ_\varepsilon} e^{f(\bm{m})} d\bs$ where $f(\bs) = \sum_{k=1}^{n}\left(\beta_k H_{N}\left(\sigma^{k}\right)+ N h^{k}\cdot\sigma^{k}\right)$. We normalize the integral, subtract the centering term $f(\bm{m})$ and take the $\log$ to obtain  

\begin{align*}
\begin{array}{ccccccc}
\rlap{\textrm{   (I)}}
&&&{\color{white}....}\log\int_{A(\bm{m}) \cap \bQ_\varepsilon}e^{f\left(\sigma\right)}d\bs=&&&{\color{white}\Bigg|}
\\
\rlap{$\overbrace{\hspace{0.95\textwidth}}$}&&&&&&
\\
& \log \int_{A(\bm{m}) \cap \bQ_\varepsilon}1d\bs &  + 
    &  f\left(\bm{m}\right) &
     + 
    & \log \tfrac{ \int_{A(\bm{m}) \cap \bQ_\varepsilon}e^{f\left(\bs\right)-f\left(\bm{m}\right)}
    \ d\bs}{\int_{A(\bm{m}) \cap \bQ_\varepsilon} 1 d\bs} & 
\\
\rlap{These terms each give rise to one of the terms of $\TAP(\bm{m})$, through the approximations}{\color{white}\frac{\bigg|}{\bigg|}}&&&&&&
\\
&{\color{white}|}{\begin{rotate}{90}$\approx$\end{rotate}}& &{\color{white}|}{\begin{rotate}{90}$=$\end{rotate}}& &{\color{white}|}{\begin{rotate}{90}$\approx$\end{rotate}}& \\
& \frac{N}{2}\log|\bQ-\bm{m}\bm{m}^{T}| & + 
& \sum_{k=1}^{n}\left(\beta_k H_{N}\left(m^{k}\right)+N h^{k}\cdot m^{k}\right) & 
+ & \frac{N}{2}\beta^{\Trans}\left(\bQ-\bm{m}\bm{m}^{T}\right)^{\odot2}\beta & 
\\
\rlap{$\underbrace{\hspace{0.95\textwidth}}$}&&&&&&
\\
\rlap{\textrm{   (II)}}
&&&{\color{white}.....}=F_{{\rm TAP}}\left(\bm{m}\right)&&&{\color{white}\Bigg|}
\\
\rlap{Furthermore each term has the natural interpretation}
\vspace{-20pt}
{\color{white}\frac{|}{\bigg|}}&&&&&&
\\
& \text{Entropy} & + 
& \text{Local mean energy} & 
+ &
\begin{array}{c}
     \text{Local free energy} \\
     \text{(Onsager term)} 
\end{array}
&
\\ 
\vspace{-20pt}
\end{array}
\end{align*}
as we now explain. 

Indeed the first term in \textrm{(I)} is precisely the $\log$-volume of $\bs$ that lie in $A(\bm{m})$ and satisfy the constraint given by $\bQ$, and is thus an entropy. The neighborhood $A(\bm{m})$ is chosen essentially as a subset of the ``slice'' passing through $\bm{m}$, i.e. the hyperplane with normal $\bm{m}$ passing through $\bm{m}$ intersected with $\mathcal{S}_{N-1}^n$. Such a slice turns out to have $\log$-volume approximately given by $\tfrac{N}{2}\log|\bQ - \bbm\bbm^\Trans|$ (i.e. by the first term in \textrm{(II)}), and the subset we choose retains enough of the volume of the slice to have approximately the same $\log$-volume.

The centering term $f(\bm{m})=\sum_{k=1}^{n}\left(\beta H_{N}\left(m^{k}\right)+N h^{k}\cdot m^{k}\right)$ of \textrm{(I)}, \textrm{(II)} represents the ``local'' mean energy on $A(\bm{m})$.

For the last term of \textrm{(I)} we use the knowledge of the high temperature phase of the first paragraph of this subsection. The identity
$$H_{N}({\sigma})=H_{N}({m})+\nabla H_{N}({m})\cdot(\sigma-m)+ H_{N}(\sigma-m)$$
valid for all $m,\sigma\in \mathbb{R}^N$ implies that
$$
f\left(\bs\right)-f\left(\bm{m}\right)=\sum_{k=1}^{N}\underset{\text{effective external fields}}{\underbrace{(\beta_k\nabla H_{N}({m^{k}})+h^{k})}}\cdot(\sigma^{k}-m^{k})+\underset{\text{effective Hamiltonian}}{\sum_{k=1}^{N}\underbrace{\beta_k H_{N}(\sigma^{k}-m^{k})}}.
$$
From this one sees that the last term in \textrm{(I)} can be interpreted as the free energy of an effective Hamiltonian on the spin configuration space $A(\bm{m}) \cap \bQ_\varepsilon$ subject to effective external fields. In the proof we construct the sets $A(\bm{m})$ so that the effective external field term vanishes for $\sigma \in A(\bm{m})$ (in the easiest case, simply by intersecting the slice with a hyperplane with normal given by the effective external field). Furthermore after normalizing $\sigma^k-m^k$ it turns out that the recentered Hamiltonian is essentially the original Hamiltonian with an effective constraint $\hat{Q}(\bm{m})_{ij}=(\bQ - (\bmm\bbm^\Trans))_{ij}/(\sqrt{1-|m^i|^2}\sqrt{1-|m^j|^2})$ subject to an effective temperature $\bb_{\bbm} = (\beta_1(1-|m^1|^2),...,\beta_n(1-|m^n|^2))$. Therefore applying the approximations for the high temperature free energy in the first paragraph of the subsection one obtains that if $\|\bb_{\bm{m}} \hat{\bQ}(\bm{m})\bb_{\bm{m}}\|_2 \le \frac{1}{\sqrt{2}}$ then the third term of \textrm{(I)} can be approximated by $\tfrac{N}{2}\beta_{\bm{m}}^{\Trans}\hat{\bQ}(\bm{m})^{\odot 2}\beta_{\bm{m}}$. Since $\|\bb_{\bm{m}} \hat{\bQ}(\bm{m})\bb_{\bm{m}}\|_2=\| \bb^{\frac{1}{2}} (\bQ - \tilde{\bm{Q}}) \bb^{\frac{1}{2}} \|_{2}$ the former condition is precisely Plefka's condition, and since $\tfrac{N}{2}\beta_{\bm{m}}^{\Trans}\hat{\bQ}(\bm{m})^{\odot 2}\beta_{\bm{m}}=\frac{N}{2}\beta^{\Trans}\left(\bQ-\bm{m}\bm{m}^{T}\right)^{\odot2}\beta$ the latter is precisely the approximation of the last term of \textrm{(I)} by the Onsager term in \textrm{(II)}.

This justifies the approximation $\log\int_{A(\bm{m}) \cap Q_{\varepsilon}} e^{f(\bs)}d\bs \approx \TAP(\bm{m})$ provided Plefka's condition holds for $\bm{m}$.

Finally, it turns out that only $\bm{m}$ satisfying Plefka's condition are relevant. Indeed, in Section~\ref{section: lower bound} we prove the lower bound for $F_N^{\varepsilon}$ by simply only considering $\bm{m}$ that satisfy Plefka's condition, and deduce that $F_N^{\varepsilon}$ is lower bounded by $\TAP(\bbm)$ for any $\bm{m}$ that satisfies the condition.

The central difficulty in proving the upper bound for $F_N^{\varepsilon}$ in Section~\ref{section: upper bound} is that we cannot a priori ignore $\bm{m}$ that do not satisfy Plefka's condition. Instead we approximate the recentered Hamiltonian by one that is in some sense always at high temperature, even when Plefka's condition is not satisfied. This gives rise to an upper bound of $F_N^{\varepsilon}$ in terms of a modified TAP free energy which has a different Onsager term. We then show that any maximizer of this modifed TAP free energy in fact must satisfy Plefka's condition, and that in this case its Onsager term is close to the usual Onsager term.

The above constitutes a multidimensional ($n\ge2$) adaption of the method (for $n=1$) in \cite{belius-kistler} (elements of the above ideas are also used by TAP \cite{thoulessSolutionSolvableModel1977}, Bolthausen \cite{bolthausenIterativeConstructionSolutions2014, bolthausenMoritaTypeProof2018} and Subag and collaborators \cite{subagGeometryGibbsMeasure2017,subagFreeEnergyLandscapes2020,chenGeneralizedTAPFree2022,subagFreeEnergySpherical2021}).

Lastly in Section~\ref{section: solution variational problem}, we express the ground state of the Hamiltonian as a finite dimensional variational problem over positive semi-definite matrices using the method of Lagrange multipliers. The resulting variational problem can be solved explicitly yielding the closed form representation in Theorem~\ref{thm:groundstate}.

    \section{Preliminaries}
    We denote constants, whose value may change from line to line or even in the same expression, by $c$. They may depend on the number of replicas $n$, but are independent of all other parameters unless otherwise stated.
    
    At certain points in the proof we will use the standard fact that
	\begin{equation}\label{eq: hamilt UB}
	    \lim_{N\to\infty}\mathbb{P}\left( \sup_{m \in \mathbb{R}^N:|m|\le1} |H_N(m)| \le cN, \sup_{m \in \mathbb{R}^N:|m|\le1} |\nabla H_N(m)| \le cN\right)=1.
	\end{equation}
	This follows for instance by writing $\bJ = (J_{i,j})_{i,j=1,\ldots,N}$ so that $\frac{\bJ+\bJ^{\Trans}}{2}$ is a GOE random matrix and $H_N(m)=m^{\Trans} \frac{\bJ+\bJ^{\Trans}}{2} m$, and noting that $\nabla H_N(m)=(\bJ+\bJ^{\Trans})m$ and 
	 \begin{equation}\label{eq: GOE spec norm}
	    \lim_{N\to\infty}\mathbb{P}\left( \|\bJ+\bJ^{\Trans}\|_2 \le cN \right)=1.
	\end{equation}
	We will also use that writing $N\lambda_1<\ldots<N\lambda_N$ for the eigenvalues of $\frac{\bJ+\bJ^{\Trans}}{2}$ we have
	\begin{equation}\label{eq: gap}
	    \max_{i=1,\ldots,N-1} |\lambda_{i+1}-\lambda_i| \overset{\mathbb{P}}{\to} 0\text{ as }N\to\infty.
	\end{equation}
	Finally for the upper bound we will use that 
	\begin{equation}\label{eq: eval class location}
	\max_{i=1,\ldots,N} |\lambda_i-\theta_{i/N}| \overset{\mathbb{P}}{\to} 0\text{ as }N\to\infty,
	\end{equation}
	(see \cite[Theorem 2.2]{ev_rigidity}) where $\theta_{i/N}$ are the classical locations
	\begin{equation}\label{eq:classical}
		\theta_{i/N} = \inf \bigg\{ \theta : \int_{-\sqrt{2}}^\theta d\mu_{\rm{sc}}(x) = \frac{i}{N} \bigg\},
	\end{equation}
	defined in terms of the semi-circle distribution
	\begin{equation}\label{eq:semicircledistribtuion}
		d\mu_{\rm{sc}}(x) = \frac{1}{\pi} \sqrt{2  - x^2} \1_{ [-\sqrt{2}, \sqrt{2}] }(x) dx.
	\end{equation}
    It follows from \eqref{eq:classical} that
	\begin{equation}\label{eq:classical error}
	   \lim_{N\to\infty}\lim_{\varepsilon\to0}\sup_{\left|i-j\right|\le\varepsilon N}\left|\theta_{i/N}-\theta_{j/N}\right|=0.
	\end{equation}
	Note that \eqref{eq: GOE spec norm} and \eqref{eq: gap} are 
 consequences of \eqref{eq: eval class location}-\eqref{eq:classical error}.

	\section{Lower bound}\label{section: lower bound}
 
	In this section, we will prove the following lower bound of the free energy.

	\begin{proposition}[TAP lower bound]\label{prop: TAP LB} Let $n\ge1$ and $\bQ \in [-1,1]^{n\times n}$ be positive definite with $Q_{k,k}=1$ for $k=1,\ldots,n$.	Let $h_1,\ldots,h_n\in[0,\infty)$ and $h^1,\ldots,h^n$ be a sequence of vectors with $h^k \in \R^N$ and $|h^k| = h_k$. Then there exists a 
    $c=c(n,h_1,\ldots,h_n,\bQ)>0$ 
    such that for all $\varepsilon>(0,c^{-1})$
		\begin{equation}\label{eq: LB claim}
    		\lim_{N\rightarrow\infty}
    		\mathbb{P} \left( 
    		F_N^\varepsilon(\beta, \bh, \bQ) \ge 
    		\frac{1}{N} \sup_{\bbm \in {\rm{Plef}}_N(\bQ,\beta) } \TAP(\bbm) - c \sqrt{\varepsilon} \right) = 1.
		\end{equation}
	\end{proposition}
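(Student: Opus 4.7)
The plan is to implement the program of Section~\ref{subsec: proof outline}: fix an arbitrary $\bm{m}\in\cM$, lower-bound the partition function by restricting the integral to a carefully chosen neighborhood $A(\bm{m})\subset\mathcal{S}_{N-1}^n$ of $\bm{m}$, and match each of the three terms in the decomposition (I)--(II) of the outline with a term of $\TAP(\bm{m})$. Writing $\tau^k := \sigma^k - m^k$, I would take $A(\bm{m})$ to consist of those $\bs$ for which each $\tau^k$ is orthogonal to $\mathrm{span}\{m^1,\dots,m^n\}$ and to the effective external field $\beta_k\nabla H_N(m^k)+h^k$. The first orthogonality reduces the overlap constraint $\bs\bs^\Trans = \bQ$ exactly to the Gram condition $\tau^k\cdot\tau^l=(\bQ-\bmm\bmm^\Trans)_{kl}$ (up to the $\varepsilon$-slack of $\bQ_\varepsilon$); the second, combined with the quadratic identity $H_N(\sigma)-H_N(m)=\nabla H_N(m)\cdot(\sigma-m)+H_N(\sigma-m)$, forces
\begin{equation*}
f(\bs)-f(\bm{m}) \;=\; \sum_{k=1}^{n}\beta_k H_N(\tau^k) \qquad\text{for all }\bs\in A(\bm{m}).
\end{equation*}

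Next I would carry out the three-term decomposition. The centering $f(\bm{m})=\sum_k(\beta_k H_N(m^k)+Nh^k\cdot m^k)$ is by construction exactly the mean-energy term of $\TAP(\bm{m})$. For the entropy, $\log\int_{A(\bm{m})\cap\bQ_\varepsilon}1\,d\bs$ is the log-volume of $n$-tuples of vectors in an $(N-O(n))$-dimensional subspace with Gram matrix close to $\bQ-\bmm\bmm^\Trans$, and a standard Gram/Gaussian volume computation yields $\tfrac{N}{2}\log|\bQ-\bmm\bmm^\Trans|+o(N)$, with the $\varepsilon$-slack contributing error at most $cN\sqrt\varepsilon$. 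For the Onsager term, I would rescale $\tau^k=\sqrt{1-|m^k|^2}\,\hat\sigma^k$; since $H_N$ is homogeneous of degree $2$, $H_N(\tau^k)=(1-|m^k|^2)H_N(\hat\sigma^k)$, and the residual integral becomes the partition function of a constrained multiple SK model on unit vectors $\hat\sigma^k$ in an $(N-O(n))$-dimensional subspace, with effective inverse temperatures $\bb_{\bm{m}}$ and effective constraint $\hat{\bQ}(\bm{m})$ as in the outline. Plefka's condition on $\bm{m}$ is precisely $\|\bb_{\bm{m}}^{1/2}\hat{\bQ}(\bm{m})\bb_{\bm{m}}^{1/2}\|_2\le\tfrac{1}{\sqrt 2}$, so the effective model sits in its high-temperature phase, and the high-temperature second-moment results referenced in Section~\ref{subsec: proof outline} (Lemmas~\ref{lem:firstmoment}--\ref{lem:secondmoment} and Proposition~\ref{prop:freenergy-whileplefka}) identify its log-partition function with the annealed value $\tfrac{N}{2}\bb_{\bm{m}}^\Trans\hat{\bQ}(\bm{m})^{\odot 2}\bb_{\bm{m}} = \tfrac{N}{2}\beta^\Trans(\bQ-\bmm\bmm^\Trans)^{\odot 2}\beta$, producing the Onsager term.

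Summing the three estimates gives $\log\int_{A(\bm{m})\cap\bQ_\varepsilon}e^{f(\bs)}d\bs\ge \TAP(\bm{m})-cN\sqrt\varepsilon$ with high probability for each fixed $\bm{m}\in\cM$. To pass to the supremum over $\cM$ I would exploit that $\bm{m}\mapsto\TAP(\bm{m})$ is Lipschitz with slope at most $cN$ on the event $\|\bJ+\bJ^\Trans\|_2\le cN$ of~\eqref{eq: GOE spec norm}, and use a $\delta$-net argument: evaluating the pointwise bound at a near-optimizer lying in a net of $\cM$ and invoking Lipschitz continuity yields the full supremum bound~\eqref{eq: LB claim}. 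The central difficulty I anticipate is the high-temperature second-moment step for the effective model, since $\hat{\bQ}(\bm{m})$, $\bb_{\bm{m}}$ and the constraint subspace all depend on $\bm{m}$ and partly on the disorder through $\nabla H_N(m^k)$. To decouple the randomness I would condition on $(\nabla H_N(m^k))_{k=1}^n$: by Gaussian projection, the restriction of $H_N$ to the subspace orthogonal to all $m^l$ and all $\nabla H_N(m^k)$ is independent of the conditioning and distributed (up to a $\sqrt{N/(N-O(n))}$ scaling) as an SK Hamiltonian on the smaller sphere, which is the setting in which the high-temperature lemmas can be applied uniformly in $\bm{m}$.
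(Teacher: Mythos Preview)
Your overall architecture---recentering around $\bm{m}$, killing the effective external field by restricting to a slice orthogonal to the $m^l$ and to $\beta_k\nabla H_N(m^k)+h^k$, and matching the three resulting terms to those of $\TAP(\bm{m})$---is exactly what the paper does. The gap is in the passage to the supremum. Your plan is to establish the bound pointwise for each fixed $\bm{m}$ (via Gaussian conditioning on $(\nabla H_N(m^k))_k$, which does give the effective Hamiltonian the conditional law of a fresh SK model) and then cover $\cM$ by a $\delta$-net. But $\cM\subset\mathbb{R}^{n\times N}$, so any such net has cardinality of order $(c/\delta)^{nN}$, while the second-moment/concentration machinery yields failure probability only $\exp(-cN\varepsilon)$ for each fixed $\bm{m}$ at the target deviation $c\sqrt\varepsilon$. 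The union bound would therefore require $\varepsilon\gtrsim n\log(1/\delta)$, which is incompatible with the Lipschitz step (you need $\delta\lesssim\sqrt\varepsilon$); the argument cannot deliver error $c\sqrt\varepsilon$ for small $\varepsilon$. The root cause is that your conditioning produces a genuinely different random Hamiltonian for each $\bm{m}$, and there is no reason these should all satisfy the high-temperature estimate on a common event.

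The paper's device for uniformity is not conditioning but eigenvalue interlacing (Lemma~\ref{lem:dimensiondrop}). On the single high-probability event where the eigenvalue gaps of $\tfrac12(\bJ+\bJ^\Trans)$ are $o(1)$ (equation~\eqref{eq: gap}), Cauchy interlacing forces every $(N-2n)\times(N-2n)$ principal minor to have spectrum within $o(1)$ of the full spectrum; consequently the constrained free energy on \emph{any} codimension-$2n$ subspace equals the full constrained free energy up to $o(1)$, deterministically on that event. Combined with the uniformity over $(\hat{\bQ},\beta_{\bm{m}})$ already built into Proposition~\ref{prop:freenergy-whileplefka} (a low-dimensional net suffices there), this gives the Onsager-term estimate simultaneously for all $\bm{m}$ with no net over $\cM$ and no Gaussian conditioning. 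A smaller omission in your sketch: one must first restrict (Lemma~\ref{lem: small eval}) to $\bm{m}$ with $\hat{\bQ}(\bm{m})\ge\delta\bI$ and $\max_k|m^k|^2\le1-\delta$, since otherwise the volume estimate (Lemma~\ref{lem:volumeconstraint}) and the high-temperature estimate lack uniform constants.
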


        To prove this we first compute the free energy at high temperature in the absence of external field using the second moment method in Subsection \ref{subsec: sec mom}. Then in Subsection \ref{subsec:lwbdextfield} we consider the model with external field  at arbitrary temperature, and as described in Subsection \ref{subsec: proof outline} proceed by fixing a $\bm{m}$ that satisfies Plefka's condition, constructing a set $A(\bm{m})$ (see \eqref{eq: A def}) that is ``centered around'' $\bm{m}$, recentering the Hamiltonian around this $\bm{m}$ (see \eqref{eq:recentering with ext field}) and estimating the free energy of the recentered Hamilontian on the set $A(\bm{m})$ (see \eqref{eq: recent Hamilt LB}).

	\subsection{Free energy without external field}\label{subsec: sec mom}
	
	\noindent
	Let us define
	\begin{equation}\label{eq: part func}
		Z^\varepsilon_N(\beta, \bh, \bQ)
		=
		\int_{\bQ_\varepsilon} e^{\sum_{k = 1}^n \beta_k H_N(\sigma^k) + N  h^k \sigma^k} \, d\bs \overset{\eqref{def:FE}}{=} e^{N F_N^\varepsilon(\beta, \bh, \bQ)}.
	\end{equation}
	The goal of this subsection is to use the second moment method on $Z^\varepsilon_N(\beta, 0, \bQ)$ to show that it concentrates as $N\rightarrow\infty$.
	In the first lemma of this section we show that the volume of the $\bQ$-constrained $n$-fold product of spheres is approximately $\frac{1}{2} \log |\bQ|$ at exponential scale, with which we can calculate the moments of $Z^\varepsilon_N(\beta, 0, \bQ)$.
	\\ \\
	Recall that $\|\bA\|_2$ denotes the spectral norm of $\bA$.
	\begin{lemma}[Constrained volume]\label{lem:volumeconstraint}
		Let $n\ge1$. There is a constant $c>0$ such that for all symmetric positive semi-definite $\bQ \in [-1,1]^{n\times n}$ with $1$'s on the diagonal and all $\varepsilon \in (0,c^{-1})$ and $N\ge c\varepsilon^{-1}$ we have 
		\begin{equation}\label{eq: constrained volume main}
		\bigg |
		\frac{1}{N} \log \int 
		    \1_{\left\{\|\bs \bs^\Trans - \bQ\|_\infty \leq \varepsilon\right\}}
		    \, d \bs
		-
		\frac{1}{2} \log |\bQ|   
		\bigg |
		\le c (1+\|\bQ^{-1}\|_2) \varepsilon,
		\end{equation}
		and
		\begin{equation}\label{eq: constrained volume small eval}
		    \frac{1}{N} \log \int \1_{\{\|\bs \bs^\Trans - \bQ\|_\infty \leq \varepsilon\}} \, d \bs \le \frac{1}{2}\log\left| \varepsilon \bI + \bQ \right|+c.
		\end{equation}
	\end{lemma}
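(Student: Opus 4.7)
The plan is to derive the density of the Gram matrix $\bs\bs^\Trans$ under the uniform measure $d\bs$ and then integrate this density over the $\varepsilon$-neighborhood of $\bQ$. I would establish the density by sequential conditioning on the rows of $\bs$: given $\sigma^1, \ldots, \sigma^{k-1}$ with almost-surely positive-definite Gram $\bm{G}^{(k-1)}$, the projection of the next uniform spin $\sigma^k$ onto their span has density proportional to $(1 - |y|^2)^{(N-k-1)/2}$ on the unit ball in that span, a standard property of uniform measure on the sphere. After a linear change of variables $z = \bU^\Trans y$, where $\bU^\Trans \bU = \bm{G}^{(k-1)}$, this becomes a density for the overlap vector $(\sigma^k \cdot \sigma^i)_{i < k}$; the Schur complement identity $1 - z^\Trans (\bm{G}^{(k-1)})^{-1} z = |\bm{G}^{(k)}|/|\bm{G}^{(k-1)}|$ combined with telescoping over $k = 2, \ldots, n$ then yields a density of the form
\[
f(\bR) = C_{n,N}\, |\bR|^{(N-n-1)/2}
\]
on the elliptope of $n \times n$ positive semi-definite matrices with unit diagonal (with respect to Lebesgue measure on the $\binom{n}{2}$ off-diagonal entries), with $\log C_{n,N} = O(\log N)$ by Stirling's formula.

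For the main two-sided bound \eqref{eq: constrained volume main}, I would write
\[
\int \1_{\|\bs\bs^\Trans - \bQ\|_\infty \le \varepsilon}\, d\bs = C_{n,N} \int_{\substack{\|\bR - \bQ\|_\infty \le \varepsilon,\\ \bR \succeq 0,\ R_{kk} = 1}} |\bR|^{(N-n-1)/2}\, d\bR
\]
and linearize $\log|\bR|$ around $\bR = \bQ$ via $\log|\bR| = \log|\bQ| + \log|\bI + \bQ^{-1}(\bR - \bQ)|$, using $\|\bR - \bQ\|_2 \le n\|\bR - \bQ\|_\infty \le n\varepsilon$ to obtain a log-determinant error of $O(\|\bQ^{-1}\|_2 \varepsilon)$ when $\bQ$ is positive-definite and $\|\bQ^{-1}\|_2\varepsilon$ is sufficiently small. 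For the lower bound I would restrict to the sub-ball of radius $\min(\varepsilon, (2n\|\bQ^{-1}\|_2)^{-1})$, which is contained in the PSD cone. Dividing $\log V$ by $N$ yields $\tfrac{1}{2}\log|\bQ|$ as the leading term; the $O(\|\bQ^{-1}\|_2\varepsilon)$ error matches the statement, and sub-leading corrections of size $\binom{n}{2}|\log(2\varepsilon)|/N + O((\log N)/N) + O((\log \|\bQ^{-1}\|_2)/N)$ are absorbed into $c(1+\|\bQ^{-1}\|_2)\varepsilon$ using the hypothesis $N \ge c\varepsilon^{-1}$ and the elementary inequality $\log x \le x$. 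For the weaker upper bound \eqref{eq: constrained volume small eval}, which does not assume positive-definiteness of $\bQ$, one simply uses that $\|\bR - \bQ\|_\infty \le \varepsilon$ implies $\bR \preceq \bQ + n\varepsilon \bI$ as positive-semi-definite matrices; hence $|\bR| \le |\bQ + n\varepsilon \bI| \le n^n |\bQ + \varepsilon \bI|$, and combining this with the polynomial-in-$N$ factor $C_{n,N}$ and the box volume $(2\varepsilon)^{\binom{n}{2}}$ gives the claim up to an additive $O(1)$ constant.

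The main obstacle is tracking the dependence on $\|\bQ^{-1}\|_2$ when $\bQ$ is close to the boundary of the positive-semi-definite cone: the linearization of $\log|\bR|$ around $\bQ$ has Lipschitz constant proportional to $\|\bQ^{-1}\|_2$, and simultaneously the valid neighborhood must be shrunk to radius $\sim 1/\|\bQ^{-1}\|_2$ to remain in the PSD cone. These two competing effects must be balanced against the target error $(1+\|\bQ^{-1}\|_2)\varepsilon$, and the additional logarithmic contributions from the polynomial-in-$\varepsilon$ box volume and the polynomial-in-$N$ normalization $C_{n,N}$ must be absorbed via the assumption $N \ge c\varepsilon^{-1}$.
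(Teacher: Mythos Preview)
Your approach via the explicit Gram-matrix density $f(\bR)=C_{n,N}\,|\bR|^{(N-n-1)/2}$ on the elliptope is sound and genuinely different from the paper's. The paper instead writes $\sigma^k=u^k/|u^k|$ with the $u^k$ built from i.i.d.\ standard Gaussians, reduces the constrained volume to a probability of the form $\mathbb{P}\big(\max_{k,l}|u^k\cdot u^l-NQ_{k,l}|\le c\varepsilon N\big)$, and then runs a Cram\'er-type argument: an exponential Chebyshev (tilting) bound with $\bL=\tfrac{1}{2}(\bI-\bQ^{-1})$ for the upper bound of \eqref{eq: constrained volume main}, the tilt $\bL=\tfrac{1}{2}(\bI-(\bQ+\varepsilon\bI)^{-1})$ for \eqref{eq: constrained volume small eval}, and a change of measure to centred Gaussians with covariance $\bQ$ followed by ordinary Chebyshev for the lower bound. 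Your density method is more direct and makes the leading term $\tfrac{1}{2}\log|\bQ|$ appear transparently; the paper's large-deviations route is less explicit but produces the error terms in exactly the form the lemma states.

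There is one step in your outline that does not deliver the stated bound. You assert that the residuals $\binom{n}{2}|\log(2\varepsilon)|/N$ (from the box volume in the lower bound) and $(\log C_{n,N})/N\sim\binom{n}{2}(\log N)/(2N)$ (from the normalising constant in the upper bound) are absorbed into $c(1+\|\bQ^{-1}\|_2)\varepsilon$ using $N\ge c\varepsilon^{-1}$ and $\log x\le x$. But at $N=c\varepsilon^{-1}$ these terms are of order $\varepsilon|\log\varepsilon|$, not $\varepsilon$: the inequality $\log x\le x$ applied to $1/(2\varepsilon)$ only gives $|\log(2\varepsilon)|/N\le 1/(2\varepsilon N)\le 1/(2c)$, which is bounded but not $O(\varepsilon)$, and for $\bQ=\bI$ the target $c(1+\|\bQ^{-1}\|_2)\varepsilon=2c\varepsilon$ cannot dominate $\varepsilon|\log\varepsilon|$ uniformly over $\varepsilon\in(0,c^{-1})$. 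Your argument therefore yields the slightly weaker error $c(1+\|\bQ^{-1}\|_2)\varepsilon+c\varepsilon|\log\varepsilon|$, which is enough for every downstream use in the paper but is not literally \eqref{eq: constrained volume main}. The paper's tilting argument sidesteps this entirely: the exponential bound and the change of measure each produce an error of the form $c(1+\|\bQ^{-1}\|_2)\varepsilon N+O(1)$ directly, with no polynomial-in-$N$ or polynomial-in-$\varepsilon$ prefactors whose logarithms would have to be controlled.
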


	\begin{proof}
		
		Let $u_{i,k},k=1,\ldots,n,i=1,\ldots,N$ be \iid standard normal
		random variables, and let $u^{k}=\left(u_{1,k},\ldots,u_{n,k}\right)\in\mathbb{R}^{N}$
		and $u_{i}=\left(u_{i,1},\ldots,u_{i,n}\right)\in\mathbb{R}^{n}$.
		Note that the conditional law of $u^{k}/\left|u^{k}\right|$ on the event $|\frac{|u^k|}{\sqrt{N}} - 1| < \varepsilon$ is the same as the law of $\sigma_{k}$ for $k=1,\ldots,n$, so
		\begin{align*}
			&\phantom{{}={}} \frac{1}{N} \log \int \1_{\{\|\bs \bs^\Trans - \bQ\|_\infty \leq \varepsilon\}} \, d \bs\\
			&=\frac{1}{N} \log \mathbb{P}\left(\max_{k,l}\left|\frac{u^{k}\cdot u^{l}}{\left|u^{k}\right|\left|u^{l}\right|}-Q_{k,l}\right|\le\varepsilon \;\middle|\; \sup_{k} \left|\frac{|u^k|}{\sqrt{N}} - 1\right| < \varepsilon \right)\\
			&	=\frac{1}{N} \log \mathbb{P}\left(\max_{k,l}\left|\frac{u^{k}\cdot u^{l}}{\left|u^{k}\right|\left|u^{l}\right|}-Q_{k,l}\right|\le\varepsilon,\sup_{k} \left|\frac{|u^k|}{\sqrt{N}} - 1\right| < \varepsilon \right) - \frac{1}{N} \log \mathbb{P}\left( \sup_{k} \left|\frac{|u^k|}{\sqrt{N}} - 1\right| < \varepsilon  \right).
		\end{align*}
		By the Chebyshev inequality $\mathbb{P}\left( \left|\frac{|u^k|}{\sqrt{N}} - 1\right| > \varepsilon \right) \le \frac{c}{\varepsilon^2N^2}$, which implies the last term is bounded by $c \varepsilon$ if $c$ is large enough and $N\ge c\varepsilon^{-1}$,
		so it suffices to control the probability of the event
		\[
		A = \bigg\{ \max_{k,l}\left|\frac{u^{k}\cdot u^{l}}{\left|u^{k}\right|\left|u^{l}\right|}-Q_{k,l}\right|\le\varepsilon,\sup_{k} \left|\frac{|u^k|}{\sqrt{N}} - 1\right| < \varepsilon  \bigg\}.
		\]
		We apply the standard proof of Cram\'er's theorem to the \iid vectors $u_1$,\ldots,$u_N$, taking care to obtain a bound that is uniform in $\bQ$. 
		There exists constants $c_{1},c_{2}$ such that for all $\varepsilon$ smaller than some constant and all $\bQ$ with $\|\bQ\|_\infty\le1$
		\[
		\left\{ \max_{k,l}\left|u^{k}\cdot u^{l}-NQ_{k,l}\right|\le c_{1}\varepsilon N\right\} \subseteq A \subseteq \left\{\max_{k,l} \left|u^{k}\cdot u^{l}-NQ_{k,l}\right|\le c_{2}\varepsilon N\right\}.
		\]
		
		We begin with the upper bound. For all symmetric $n\times n$ matrices $\bL$
		\[
		\mathbb{P}\left(\max_{k,l}\left|u^{k}\cdot u^{l}-NQ_{k,l}\right|\le c_{2}\varepsilon N\right)\le\mathbb{E}\left[\exp\left(\sum_{k,l = 1}^n \Lambda_{k,l} u^{k}\cdot u^{l}\right)\right]e^{-N\sum_{k,l = 1}^n \Lambda_{k,l} Q_{k,l} + c_{2}\varepsilon n^2 \|\bL\|_\infty N}.
		\]
		If $2\bL<\bI$, it holds that
		\[
		\mathbb{E}\left[\exp\left(\sum_{k,l = 1}^n \Lambda_{k,l}u^{k}\cdot u^{l}\right)\right]=\mathbb{E}\left[\exp\left(\sum_{i = 1}^N \left(u_{i}\right)^{\Trans}\bL u_{i}\right)\right]=\left(\mathbb{E}\left[\exp\left(\left(u_{1}\right)^{\Trans}\bL u_{1}\right)\right]\right)^{N}=\left|\bI-2\bL\right|^{-\frac{N}{2}}.
		\]
		Thus for all such $\bL$
		\begin{equation}\label{eq:upboundlambda}
		\mathbb{P}\left(\max_{k,l}\left|u^{k}\cdot u^{l}-NQ_{k,l}\right|\le c_{2}\varepsilon N\right)\le\exp\left(-\frac{N}{2}\log\left|\bI-2\bL\right|-N\sum_{kl}\Lambda_{k,l}Q_{k,l}+c_2 \varepsilon n^2 \| \bL \|_\infty  N\right).
		\end{equation}
		The non-error terms on the r.h.s are minimized by choosing $\bL=\frac{\bI-\bQ^{-1}}{2}$, for
		which $-\frac{1}{2}\log\left|\bI-2\bL\right|=\frac{1}{2}\log\left|\bQ\right|$
		and 
		\begin{equation}\label{eq: Lambda Q}
		\sum_{kl}\Lambda_{k,l}Q_{k,l}=\text{Tr}\left(\bL \bQ\right)=\text{Tr}\left(\frac{\bQ-\bI}{2}\right)=0.
		\end{equation}
		Thus we have that
		\[
		\mathbb{P}\left(\max_{k,l}\left|u^{k}\cdot u^{l}-NQ_{k,l}\right|\le c_{2}\varepsilon N\right)\le\exp\left(\frac{N}{2}\log\left|\bQ\right| + c (1+ \|\bQ^{-1}\|_2) \varepsilon  N\right),
		\]
		since $c_2 n^2 \| \bL \|_\infty \le c (1+ \|\bQ^{-1}\|_2)$ for a large enough $c$ depending only on $n$. This proves the upper bound of \eqref{eq: constrained volume main}.
		
		To obtain \eqref{eq: constrained volume small eval} let $\bL=\frac{\bI - (\bQ+\varepsilon \bI)^{-1}}{2}$ and note that then $\sum_{kl}\Lambda_{k,l}Q_{k,l}=\text{Tr}\left(\bL \bQ\right)=\text{Tr}\left(\frac{\bQ-(\bQ + \varepsilon \bI)^{-1}\bQ}{2}\right)\ge-\frac{n}{2}$ and $\| \bL \|_\infty \le c\varepsilon^{-1}$.
		
		For the lower bound of \eqref{eq: constrained volume main} we use the change of measure 
		\[
		\frac{d\mathbb{Q}}{d\mathbb{P}}=\prod_{i=1}^N\frac{\exp\left(\sum_{kl}\Lambda_{k,l}u^{k}\cdot u^{l}\right)}{\left|\bI-2\bL\right|^{-1/2}}= \prod_{i=1}^N\frac{\exp\left(\sum_{i}\left(u_{i}\right)^{\top}\bL u_{i}\right)}{\left|\bI-2\bL\right|^{-1/2}}
		\]
		for $\bm{\bL}=\frac{\bI-\bQ^{-1}}{2}$. Under the measure $\mathbb{Q}$ the $u_{i}$
		are i.i.d. centered Gaussian vectors in $\mathbb{R}^{n}$ with covariance
		$\bQ$. We have
		\begin{equation}
			\begin{array}{l}
				\mathbb{P}\left(\left|u^{k}\cdot u^{l}-NQ_{k,l}\right|\le c_{1}\varepsilon N\right)\\
				=  \mathbb{Q} \left(\1_{\{ \left|u^{k}\cdot u^{l}-NQ_{k,l}\right|\le c_{1}\varepsilon N \}}\frac{d\mathbb{P}}{d\mathbb{Q}}\right)\\
				\ge  \mathbb{Q}\left(\max_{k,l}\left|u^{k}\cdot u^{l}-NQ_{k,l}\right|\le c_{1}\varepsilon N\right)\frac{\exp\left(-\sum_{kl}\Lambda_{k,l}Q_{k,l}-c_1 n^2 \varepsilon \|\bL\|_\infty N\right)}{\left|\bI-2\bm{\bL}\right|^{N/2}}\\
				\overset{\eqref{eq: Lambda Q}}{\ge} \mathbb{Q}\left(\max_{k,l} \left|\frac{1}{N}\sum_{i}u_{i}^{k}u_{i}^{l}-Q_{k,l}\right|\le c_{1}\varepsilon\right)\exp\left(\frac{N}{2}\log\left|\bQ\right|-c (1+ \|\bQ^{-1}\|_2) \varepsilon N\right).
			\end{array}\label{eq: lower bound}
		\end{equation}
		Using a union bound and the Chebyshev inequality (recall $\mathbb{Q}\left(u_{i}^{k}u_{i}^{l}\right)=Q_{k,l}$)
		we obtain
		\begin{equation}
			\begin{array}{ccl}
				\mathbb{Q}\left(\left\{\max_{k,l}\left|\frac{1}{N}\sum_{i}u_{i}^{\Trans}u_{i}-Q_{k,l}\right|\le c_{1}\varepsilon\right\}^{c}\right) & \le & \sum_{k,l}\mathbb{Q}\left(\left|\frac{1}{N}\sum_{i}u_{i}^{k}u_{i}^{l}-Q_{k,l}\right|\ge c_{1}\varepsilon\right)\\
				& \le & \sum_{k,l}\frac{\text{Var}_{\mathbb{Q}}\left(\frac{1}{N}\sum_{i}u_{i}^{k}u_{i}^{l}\right)}{c_{1} 2 \varepsilon^2}\\
				& = & \sum_{k,l}\frac{\text{Var}_{\mathbb{Q}}\left(u_{1}^{k}u_{1}^{l}\right)}{c_{1}^2\varepsilon^2 N}.
			\end{array}\label{eq: chebyshev}
		\end{equation}
		Now crudely bounding
		\[
		\text{Var}_{\mathbb{Q}}\left(u_{1}^{k}u_{1}^{l}\right)\le \mathbb{Q}\left(\left(u_{1}^{k}u_{1}^{l}\right)^{2}\right)\le\sqrt{\mathbb{Q}\left(\left(u_{1}^{k}\right)^{4}\right)}\sqrt{\mathbb{Q}\left(\left(u_{1}^{l}\right)^{4}\right)}\le c,
		\]
		where the last constant is independent of $\bQ$ since $u_{1}^{k}$
		is Gaussian with variance $Q_{kk}=1$ under $\mathbb{Q}$, for all
		$k$. Thus provided $\varepsilon$ is smaller than some constant depending
		only on $n$ the r.h.s. of (\ref{eq: chebyshev}) is at most $\frac{1}{2}$,
		and so from (\ref{eq: lower bound}) it follows that
		\[
		\mathbb{P}\left(\left|u^{k}\cdot u^{l}-NQ_{k,l}\right|\le c_{1}\varepsilon N\right)\ge\exp\left(\frac{N}{2}\log\left|\bQ\right|-c (1+ \|\bQ^{-1}\|_2) \varepsilon N-c\right),
		\]
		giving the lower bound of \eqref{eq: constrained volume main}.
	\end{proof}

	We now compute the first moment, or equivalently the annealed free energy. Recall that $\bA \ge \delta \bI$ means that all eigenvalues of $\bA$ are greater than $\delta$. We also use the notation $\bA^{\odot 2} = \bA \odot \bA = (A_{k,l}^2)_{k,l=1,\ldots,n}$ to denote the Hadamard square of the entries of $\bA$.
	\begin{lemma}[First moment; Annealed free energy in absence of external field]\label{lem:firstmoment}
		Let $n\ge1$. For all $\delta\in(0,1), C>0$ there exists a constant $c=c(\delta, C)>0$ such that for all $\varepsilon$ less than a universal constant, $|\beta| \le C$ and $N\ge c(\delta,\varepsilon)$ we have
		\begin{equation}\label{eq:firstmomentlemma}
			\sup_{\bQ \ge \delta \bI}
			\bigg |
			\frac{1}{N} \log \mathbb{E}[Z^\varepsilon_N(\beta, 0, \bQ)]
			- 
			\left(\frac{1}{2}\beta^\Trans \bQ^{\odot 2} \beta
			+ \frac{1}{2} \log |\bQ|\right)
			\bigg |	
			\le  c \varepsilon,
		\end{equation}    
	    where the supremum is taken over all symmetric $\bQ \in [-1,1]^{n\times n}$ with $1$'s on the diagonal.
	\end{lemma}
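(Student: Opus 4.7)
The plan is to compute $\mathbb{E}[Z^\varepsilon_N(\beta,0,\bQ)]$ directly using Fubini's theorem and the Gaussian moment generating function. Since the disorder enters linearly in the exponent for fixed $\bs$, Tonelli/Fubini give
\[
\mathbb{E}[Z^\varepsilon_N(\beta, 0, \bQ)] = \int_{\bQ_\varepsilon} \mathbb{E}\bigl[e^{\sum_{k=1}^n \beta_k H_N(\sigma^k)}\bigr]\, d\bs,
\]
and for each fixed $\bs$ the random variable $\sum_k \beta_k H_N(\sigma^k)$ is centered Gaussian. Reading off from \eqref{eq: hamiltonian def} that $\Cov(H_N(\sigma), H_N(\sigma')) = N(\sigma\cdot\sigma')^2$, its variance equals $N \beta^\Trans (\bs\bs^\Trans)^{\odot 2} \beta$, and the Gaussian MGF then yields
\[
\mathbb{E}\bigl[e^{\sum_k \beta_k H_N(\sigma^k)}\bigr] = \exp\Bigl(\tfrac{N}{2}\, \beta^\Trans (\bs\bs^\Trans)^{\odot 2} \beta\Bigr).
\]

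Next I would replace $(\bs\bs^\Trans)^{\odot 2}$ by $\bQ^{\odot 2}$ uniformly on $\bQ_\varepsilon$. Since all entries of $\bs\bs^\Trans$ and of $\bQ$ lie in $[-1,1]$, the elementary identity $a^2 - b^2 = (a-b)(a+b)$ gives $\|(\bs\bs^\Trans)^{\odot 2} - \bQ^{\odot 2}\|_\infty \le 2\varepsilon$ on $\bQ_\varepsilon$, so the quadratic form differs by at most $2 n^2 C^2 \varepsilon$ under the assumption $|\beta|\le C$. Consequently
\[
\mathbb{E}[Z^\varepsilon_N(\beta, 0, \bQ)] = e^{\frac{N}{2}\beta^\Trans \bQ^{\odot 2}\beta + O(N\varepsilon)} \int_{\bQ_\varepsilon} d\bs,
\]
with the implicit constant depending only on $n$ and $C$.

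Finally I apply Lemma~\ref{lem:volumeconstraint} to compute the remaining volume. The hypothesis $\bQ \ge \delta \bI$ gives $\|\bQ^{-1}\|_2 \le 1/\delta$, so \eqref{eq: constrained volume main} yields
\[
\frac{1}{N} \log \int_{\bQ_\varepsilon} d\bs = \tfrac{1}{2} \log |\bQ| + O(\varepsilon/\delta)
\]
as soon as $N \ge c(\delta,\varepsilon)$. Taking $\frac{1}{N}\log$ of the expression above and collecting the two error terms proves the lemma with $c = c(\delta, C)$.

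This argument is essentially routine, combining a short Gaussian computation with the volume estimate of Lemma~\ref{lem:volumeconstraint}, so there is no serious obstacle. It is however worth emphasizing where the two hypotheses enter: the bound $|\beta|\le C$ controls the error from replacing $(\bs\bs^\Trans)^{\odot 2}$ by $\bQ^{\odot 2}$ in the quadratic form, and the lower bound $\bQ \ge \delta\bI$ is needed precisely to control the $\|\bQ^{-1}\|_2$ factor appearing in the error term of Lemma~\ref{lem:volumeconstraint} — this is the reason the final constant must depend on $\delta$.
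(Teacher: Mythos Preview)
Your proposal is correct and follows essentially the same approach as the paper: swap expectation and integral by Fubini, evaluate the Gaussian moment generating function using $\Cov(H_N(\sigma),H_N(\sigma'))=N(\sigma\cdot\sigma')^2$, replace $(\bs\bs^\Trans)^{\odot 2}$ by $\bQ^{\odot 2}$ on $\bQ_\varepsilon$ with an $O(\varepsilon)$ error, and invoke Lemma~\ref{lem:volumeconstraint} for the remaining volume. Your explicit identification of where the hypotheses $|\beta|\le C$ and $\bQ\ge\delta\bI$ enter is a nice addition.
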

	\begin{proof}
		We have
		\begin{equation}
			\mathbb{E}[Z^\varepsilon_N(\beta, 0, \bQ)]
			\overset{\eqref{eq: part func}}{=}
			\mathbb{E}  \int_{\bQ_\varepsilon} e^{\sum_{k = 1}^n \beta_k H_N(\sigma^k)} d\bs 
			= \int_{\bQ_\varepsilon}  \E\left[\exp\left(
			\sum_{k=1}^n \beta_k H_N(\sigma^k)
			\right)\right] d \bs.
			\numberthis\label{firstmoment}
		\end{equation}
		Since the Hamiltonian is a sum of Gaussians for fixed $\bs$ we have for $\bs \in \bQ_\varepsilon$
		
		\begin{equation}\label{eq: expectation}
		    \E\left[\exp\left(
				\sum_{k=1}^n \beta_k H_N(\sigma^k)
				\right)\right]
		    =	\exp\left(\frac{1}{2}\Var\left(
				\sum_{k=1}^n \beta_k H_N(\sigma^k)
				\right)\right).
	    \end{equation}
	Since
	$\E[ H_N(\sigma)H_N(\sigma') ] = N (\sigma \cdot \sigma')^2$
	we have
	\begin{equation}\label{eq: var ident}
		\Var\left( \sum_{k=1}^n \beta_k H_N(\sigma^k) \right) 
		= \sum_{k,\ell=1}^n \beta_k \beta_\ell (\sigma_k \cdot \sigma_l)^2.
	\end{equation}
	For $\bs \in Q_\varepsilon$ we have
	$|\sum_{k,\ell=1}^n \beta_k \beta_\ell (\sigma_k \cdot \sigma_l)^2 - \sum_{k,\ell=1}^n \beta_k \beta_\ell Q_{k,l}^2|\le c \varepsilon N$ for a constant $c$ depending only on $C$ and $n$, and $\sum_{k,\ell=1}^n \beta_k \beta_\ell Q_{k,l}^2 = \beta^\Trans \bQ^{\odot 2} \beta$.
	Therefore for all $\bs \in \bQ_\varepsilon$
	$$
	\left|\log \E\left[\exp\left(
				\sum_{k=1}^n \beta_k H_N(\sigma^k)
				\right)\right] - \frac{1}{2}\beta^\Trans \bQ^{\odot 2} \beta \right| \le c\varepsilon N.
	$$
		Lemma \ref{lem:volumeconstraint} implies that for $\bQ \ge \delta \bI$
		$$
		\bigg |
		\log \int_{\bQ_\varepsilon}  1 d \bs - 
		\frac{N}{2} \log |\bQ|
		\bigg |
		\le  c \varepsilon N,
		$$
		which completes the proof.
	\end{proof}
	Next we compute the second moment.
	\begin{lemma}[Second moment]\label{lem:secondmoment} Let $n\ge1$. For all $\delta, C>0$ there exists a constant $c=c(\delta, C)>0$ so that for all $\varepsilon \in(0,c^{-1}),|\beta|\le C$ and all $N\ge c(\varepsilon,\delta)$ we have
		\begin{equation}\label{eq:secondmomentlemma}
			\sup_{\bQ \ge \delta \bI}
			\bigg (
			\frac{1}{N} \log \mathbb{E}[Z^\varepsilon_N(\beta, 0, \bQ)^2] 
			-
			\left(\beta^\Trans \bQ^{\odot 2} \beta +
			\sup_{\bA} V(\bA)\right)
			\bigg )
			\le
			c \varepsilon  , 
		\end{equation}    
		where the supremum is taken over all symmetric $\bQ \in [-1,1]^{n\times n}$ with $1$'s on the diagonal and
		\begin{equation}\label{eq:Vfunc}
			V(\bA) = \beta^\Trans \bA^{\odot 2} \beta + \frac{1}{2} \log \begin{vmatrix}
				\bQ &\bA\\
				\bA^\Trans &\bQ
			\end{vmatrix}.
		\end{equation} 
	\end{lemma}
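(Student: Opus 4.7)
The plan is to mirror the first moment computation on two copies $\bs, \bt \in \bQ_\varepsilon$, with the only free parameter to be integrated out being the cross-overlap matrix $\bA := \bs\bt^\Trans$. First I would square, apply Fubini, and compute the Gaussian moment generating function of the centered Gaussian $\sum_{k=1}^n\beta_k(H_N(\sigma^k)+H_N(\tau^k))$. Using $\E[H_N(\sigma)H_N(\sigma')]=N(\sigma\cdot\sigma')^2$, its variance decomposes as
\[
N\sum_{k,\ell=1}^n\beta_k\beta_\ell\bigl[(\sigma^k\cdot\sigma^\ell)^2+(\tau^k\cdot\tau^\ell)^2+2(\sigma^k\cdot\tau^\ell)^2\bigr],
\]
and on $\bQ_\varepsilon \times \bQ_\varepsilon$ the first two pieces each equal $N\beta^\Trans\bQ^{\odot 2}\beta + O(\varepsilon N)$, while the third is $2N\beta^\Trans\bA^{\odot 2}\beta$.

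Next I would discretize $\bA$: cover $[-1,1]^{n\times n}$ by an $\varepsilon$-net $\{\bA_\alpha\}$ of cardinality $O(\varepsilon^{-n^2})$, giving
\[
\E[Z^\varepsilon_N(\beta,0,\bQ)^2]\le e^{c\varepsilon N}\sum_\alpha e^{N(\beta^\Trans\bQ^{\odot 2}\beta+\beta^\Trans\bA_\alpha^{\odot 2}\beta)}\int\mathbf{1}_{\bs\in\bQ_\varepsilon,\,\bt\in\bQ_\varepsilon,\,\|\bs\bt^\Trans-\bA_\alpha\|_\infty\le\varepsilon}\,d\bs\,d\bt.
\]
The three overlap constraints together pin the full $2n\times 2n$ Gram matrix of the collection $(\sigma^1,\ldots,\sigma^n,\tau^1,\ldots,\tau^n)$ to within sup-norm $\varepsilon$ of the block matrix
\[
\bm{M}(\bA_\alpha):=\begin{pmatrix}\bQ&\bA_\alpha\\\bA_\alpha^\Trans&\bQ\end{pmatrix}.
\]
This is precisely the setting of Lemma~\ref{lem:volumeconstraint} applied in dimension $2n$. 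When $\bm{M}(\bA_\alpha)\ge\delta'\bI$ for some $\delta'>0$, \eqref{eq: constrained volume main} bounds the log-volume by $\tfrac{N}{2}\log|\bm{M}(\bA_\alpha)|+c\varepsilon N/\delta'$; in the degenerate regime \eqref{eq: constrained volume small eval} gives instead $\tfrac{N}{2}\log|\varepsilon\bI+\bm{M}(\bA_\alpha)|+cN$.

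Summing, taking $\tfrac{1}{N}\log$, and using that the polynomial cardinality of the net contributes only $o_N(1)$, one arrives at
\[
\tfrac{1}{N}\log\E[Z^\varepsilon_N(\beta,0,\bQ)^2]\le\beta^\Trans\bQ^{\odot 2}\beta+\sup_{\bA}\Bigl(\beta^\Trans\bA^{\odot 2}\beta+\tfrac{1}{2}\log|\varepsilon\bI+\bm{M}(\bA)|\Bigr)+c\varepsilon.
\]
The main obstacle will be to replace $\log|\varepsilon\bI+\bm{M}(\bA)|$ by $\log|\bm{M}(\bA)|$ uniformly in $\bA$, so as to recover the functional $V$. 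The key observation is that $V(\bA)\to-\infty$ as $\bm{M}(\bA)$ approaches the singular locus (the log-determinant diverges while $\beta^\Trans\bA^{\odot 2}\beta$ stays bounded), so by compactness and continuity $\sup V$ is attained at an interior point with $\bm{M}(\bA^*)\ge\delta'\bI$ for some $\delta'=\delta'(\delta,C)>0$. On the set $\{\bm{M}(\bA)\ge\delta'\bI\}$ one has $\log|\varepsilon\bI+\bm{M}(\bA)|\le\log|\bm{M}(\bA)|+c\varepsilon/\delta'$, so the perturbed supremum there is at most $\sup V+c\varepsilon$; on the complement a small perturbation of $\bA$ into the interior changes the other terms by $O(\varepsilon)$ while preventing the log-determinant from blowing up, giving the same bound. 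This yields the stated upper bound.
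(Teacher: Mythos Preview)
Your overall strategy is exactly the paper's: square, Fubini, compute the Gaussian MGF to extract $N\beta^\Trans\bQ^{\odot2}\beta + N\beta^\Trans\bA^{\odot2}\beta$, discretize the cross-overlap $\bA$ on an $\varepsilon$-net, and apply Lemma~\ref{lem:volumeconstraint} in dimension $2n$ to the block matrix $\bm{M}(\bA)=\begin{pmatrix}\bQ&\bA\\\bA^\Trans&\bQ\end{pmatrix}$. Up through the net step everything is fine.

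The gap is in how you dispose of the near-singular $\bA$. Two related problems:
\begin{itemize}
\item The displayed intermediate bound with $\tfrac{1}{2}\log|\varepsilon\bI+\bm{M}(\bA)|$ and error $+c\varepsilon$ is not what you actually get. On the degenerate set you invoked \eqref{eq: constrained volume small eval}, whose error is $+c$, not $+c\varepsilon$; this constant has to be absorbed somewhere, and your argument never does so.
\item Your ``small perturbation of $\bA$ into the interior'' does not work. You defined $\delta'$ as a lower bound on $\lambda_{\min}(\bm{M}(\bA^*))$ at the maximizer of $V$. An $O(\varepsilon)$ perturbation of $\bA$ moves $\lambda_{\min}(\bm{M}(\bA))$ by at most $O(\varepsilon)$, so it cannot carry a nearly singular $\bm{M}(\bA)$ into $\{\lambda_{\min}\ge\delta'\}$. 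And even if it could, you would be comparing $\tfrac{1}{2}\log|\varepsilon\bI+\bm{M}(\bA)|$ to $\tfrac{1}{2}\log|\bm{M}(\bA')|$, which are not close when $\bm{M}(\bA)$ is nearly singular.
\end{itemize}

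The paper's resolution is to reverse the logic: instead of letting $\delta'$ be dictated by the maximizer of $V$, one \emph{chooses} a threshold $\tilde\delta=\tilde\delta(\delta,C,n)$ small enough that on $\{\lambda_{\min}(\bm{M}(\bA))\le\tilde\delta\}$ the crude estimate \eqref{eq: constrained volume small eval} together with \eqref{eq: mat det general} forces
\[
\beta^\Trans\bA^{\odot2}\beta+\tfrac{1}{2}\log|\varepsilon\bI+\bm{M}(\bA)|+c
\;\le\; n^2C^2+\tfrac{1}{2}\log\bigl(2\tilde\delta\,(4n)^{2n-1}\bigr)+c
\;\le\; n\log\delta
\;\le\; V(0)\;\le\;\sup_{\bA}V(\bA).
\]
This simultaneously absorbs the $+c$ from \eqref{eq: constrained volume small eval} and bounds the degenerate contribution by $\sup V$. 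On the complement $\{\lambda_{\min}>\tilde\delta\}$ one uses \eqref{eq: constrained volume main} with error $c\tilde\delta^{-1}\varepsilon$, giving $V(\bA)+c(\delta,C)\varepsilon\le\sup V+c\varepsilon$. Combining the two cases yields \eqref{eq:secondmomentlemma}. The key point you missed is that the threshold must be chosen small \emph{a priori} to kill the additive constant, rather than read off from properties of the maximizer of $V$.
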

	\begin{proof} 
		We have
		\begin{align*}
			\E[Z^\varepsilon_N(\beta, 0, \bQ)^2] 
			& =
			\mathbb{E} \left[
			\int_{\bQ_\varepsilon}\int_{\bQ_\varepsilon}  e^{\sum_{k = 1}^n \beta_k H_N(\sigma^k)}
			e^{\sum_{k = 1}^n \beta_\ell H_N(\tau^\ell)} d\bs d\bt
			\right]
			\\
			& = 
			\int_{\bQ_\varepsilon}\int_{\bQ_\varepsilon}
			\E\left[ 
			e^{\sum_{k=1}^n \beta_k H_N(\sigma^k) + \sum_{\ell=1}^n \beta_\ell H_N(\tau^\ell)}
			\right]
			d\bs d\bt.
		\end{align*}
		Similarly to in the proof of the previous lemma the inner expectation is a Gaussian exponential moment that satisfies
		$$ 
		    \left| \log \E\left[ 
			e^{\sum_{k=1}^n \beta_k H_N(\sigma^k) + \sum_{\ell=1}^n \beta_\ell H_N(\tau^\ell)}
			\right] - N\beta^\Trans \bQ^{\odot 2} \beta - N \sum_{k,\ell=1}^n \beta_k \beta_\ell \left(\sigma^k\cdot\tau^\ell\right)^2 \right| \le N c \varepsilon ,
		$$
		for all $\sigma,\tau\in \bQ_\varepsilon$, where $c$ depends only on $C,n$.
		
		It remains to prove that 
		\begin{equation}\label{eq: suff to show sec mom}
		\int_{\bQ_\varepsilon}\int_{\bQ_\varepsilon}
		e^{
			N \sum_{k,\ell=1}^n \beta_k \beta_\ell \left(\sigma^k\cdot\tau^\ell\right)^2
		}
		d\bs d\bt
		\le
		\exp\left( 
		N \sup_{\bA} V(\bA) 
		+
		N c \varepsilon
		\right).
		\end{equation}
		By partitioning the space $\left[-1,1\right]^{n\times n}$ into at most $\lceil\frac{1}{2\varepsilon}\rceil^{n^2}$
        subsets of diameter of order $\varepsilon$ one obtains that for all $\varepsilon>0$
        \begin{equation}\label{eq: sup A}
        \begin{array}{l}
            \int_{\bQ_{\varepsilon}}\int_{\bQ_{\varepsilon}}e^{
			N \sum_{k,\ell=1}^n \beta_k \beta_\ell \left(\sigma^k\cdot\tau^\ell\right)^2
		}d\bs d\bt\\
		\le
		\exp\left( N\sup_{A\in\left[-1,1\right]^{n\times n}}\left\{ \beta^{\Trans}\bA^{\odot2}\beta+\frac{1}{N}\log\int_{\bQ_{\varepsilon}}\int_{\bQ_{\varepsilon}}\1_{\{ \|\bs\bt^{\Trans}-\bA\|_{\infty}\le\varepsilon \}} d\bs d\bt 
		\right\}
		+N c \varepsilon
		\right),
	    \end{array}
        \end{equation}
        for a $c$ depending on $C$, for all $N\ge c(\varepsilon)$.
        Note that the second term in the supremum equals
        \[
        \frac{1}{N}\log\int\1_{\left\{ \left\|\bm{\nu}\bm{\nu}^{\Trans}-\left(\begin{matrix}\bQ & \bA\\
        \bA^{\Trans} & \bQ
        \end{matrix}\right)\right\|_{\infty}\le\varepsilon \right\}}d\bm{\nu},
        \]
        where the integral is over $\bm{\nu} \in \mathcal{S}_{N-1}^{2n}$. 
        
        Note that for any matrix $\bB\in\mathbb{R}^{n\times n}$ such that $\|\bB\|_\infty\le1$, we have
        \begin{equation}\label{eq: mat det general}
        \begin{array}{c}
        \|\bB\|_2\le n \text{ and } \delta^{n}\le\left|\bB\right|\le\delta n^{n-1}\text{ if }\delta\ge0\text{ is }\bB\text{'s smallest eigenvalue}.
        \end{array}
        \end{equation}
        
        Fix a $\tilde{\delta}\in\left(0,\frac{1}{2}\right)$ small enough depending
        only on $\delta,n,C$ such that
        \[
        \frac{1}{4}\log(2\tilde{\delta})+\frac{1}{2}\log\left(4n\right)^{2n-1}+n^{2}\max_{i}\beta_{i}^{2}\le n \log \delta \overset{\eqref{eq: mat det general}}{\le} V\left(0\right).
        \]
        Then if $\bA$ is s.t. $\left(\begin{matrix}\bQ & \bA\\
        \bA^{\Trans} & \bQ
        \end{matrix}\right)$ has an eigenvalue smaller or equal to $\tilde{\delta}$ and $\varepsilon \in (0,\tilde{\delta})$ then by \eqref{eq: constrained volume small eval} and \eqref{eq: mat det general}
        \begin{equation}\label{eq: small eval bound}
        \frac{1}{N}\log\int\1_{\left\{ \left\|\bm{\nu}\bm{\nu}^{\Trans}-\left(\begin{matrix}\bQ & \bA\\
        \bA^{\Trans} & \bQ
        \end{matrix}\right)\right\|_{\infty}\le\varepsilon \right\}}d\bm{\nu}\le V\left(0\right)-N\beta^{\Trans}\bA^{\odot2}\beta.
        \end{equation}
        (after possible decreasing $\tilde{\delta}$ further depending on
        the constant in \eqref{eq: constrained volume small eval}). If on the other hand $\left(\begin{matrix}\bQ & \bA\\
        \bA^{\Trans} & \bQ
        \end{matrix}\right)>\delta'\bI$ then \eqref{eq: constrained volume main} implies that
        \begin{equation}\label{eq: no small eval bound}
        \frac{1}{N}\log\int\1_{\left\{ \left\|\bm{\nu}\bm{\nu}^{\Trans}-\left(\begin{matrix}\bQ & \bA\\
        \bA^{\Trans} & \bQ
        \end{matrix}\right)\right\|_{\infty}\le\varepsilon \right\}}d\bm{\nu}\le 
        \frac{1}{2}\log
        \left|\begin{matrix}\bQ & \bA\\
        \bA^{\Trans} & \bQ
        \end{matrix}\right|
        +
        c\tilde{\delta}^{-1}\varepsilon.
        \end{equation}
        The bounds \eqref{eq: sup A}, \eqref{eq: small eval bound} and \eqref{eq: no small eval bound} imply \eqref{eq: suff to show sec mom}.
	\end{proof}
	
	\noindent
	We will show that $ V(\bA)$ is maximized at zero for $\beta$ that lie in
	\begin{equation}\label{eq:HT-condition}
	{\rm{HT}}(\bQ) :=
		\left\{\beta\in\R^n : \|\bb^{\frac{1}{2}} \bQ \bb^{\frac{1}{2}} \|_2 \leq \frac{1}{\sqrt{2}}  \right\},
	\end{equation}
	which is the high temperature region of the model (recall that $\bb = \diag(\beta_1,\dots, \beta_n)$). 
    Together with Lemma \ref{lem:secondmoment} this will imply
	$$
	\frac{1}{N} \log \mathbb{E}[Z^\varepsilon_N(\beta, 0, \bQ)^2] -
	\frac{1}{N} \log \mathbb{E}[Z^\varepsilon_N(\beta, 0, \bQ)]^2
	\le c \varepsilon 
	$$
    for $\beta \in {\rm{HT}}(\bQ)$, with which we can use a second moment method to prove concentration of $Z^\varepsilon_N(\beta, 0, \bQ)$ for such $\beta$.

    In the computation showing that $V(\bA)$ is maximized at zero a different form of the high temperature condition naturally appears.  The next lemma shows that this form is equivalent to the condition in \eqref{eq:HT-condition}. 
	\begin{lemma}[Equivalence of the two forms of high temperature condition]
		For any positive definite symmetric matrix $\bQ$,
		\begin{equation}\label{eq:HTOPform}
		{\rm{HT}}(\bQ) =
		\left\{\beta\in\R^n : \sup_{\|\bB\|_F = 1} 	\| \bb^{\frac{1}{2}} \bQ^{\frac{1}{2}} \bB \bQ^{\frac{1}{2}} \bb^{\frac{1}{2}} \|_F \leq \frac{1}{\sqrt{2}} \right\},
		\end{equation}
  where $\|\cdot\|_F$ denotes the Frobenius norm.
	\end{lemma}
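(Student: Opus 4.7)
The plan is to reduce the claim to a single operator-norm identity: letting $\bA=\bb^{1/2}\bQ^{1/2}$ (using that $\beta_k\ge 0$ so $\bb^{1/2}$ makes sense, and $\bQ^{1/2}$ exists because $\bQ$ is positive definite), one has $\bA\bA^\Trans=\bb^{1/2}\bQ\bb^{1/2}$, so the defining inequalities of the two sets in \eqref{eq:HTOPform} both reduce to
\[
\bigl\|\bA\bA^\Trans\bigr\|_2\le\frac{1}{\sqrt 2}
\qquad\Longleftrightarrow\qquad
\sup_{\|\bB\|_F=1}\bigl\|\bA\bB\bA^\Trans\bigr\|_F\le\frac{1}{\sqrt 2},
\]
once I show that the Frobenius operator norm of the linear map $T:\bB\mapsto\bA\bB\bA^\Trans$ on $\R^{n\times n}$ equals $\|\bA\bA^\Trans\|_2$.

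First I would introduce the SVD $\bA=\bU\bm\Sigma\bV^\Trans$ with $\bm\Sigma=\diag(\sigma_1,\dots,\sigma_n)$, $\sigma_1\ge\dots\ge\sigma_n\ge0$, and perform the substitution $\bB'=\bV^\Trans\bB\bV$. Since conjugation by an orthogonal matrix is a Frobenius isometry, $\|\bB'\|_F=\|\bB\|_F$, and
\[
\bigl\|\bA\bB\bA^\Trans\bigr\|_F=\bigl\|\bU\bm\Sigma\bB'\bm\Sigma\bU^\Trans\bigr\|_F=\bigl\|\bm\Sigma\bB'\bm\Sigma\bigr\|_F.
\]
Next I compute entrywise: $(\bm\Sigma\bB'\bm\Sigma)_{ij}=\sigma_i\sigma_j B'_{ij}$, so
\[
\bigl\|\bm\Sigma\bB'\bm\Sigma\bigr\|_F^2=\sum_{i,j=1}^n (\sigma_i\sigma_j)^2 (B'_{ij})^2\le \sigma_1^4\sum_{i,j}(B'_{ij})^2=\sigma_1^4\|\bB'\|_F^2,
\]
with equality attained at $\bB'=e_1e_1^\Trans$. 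Hence $\sup_{\|\bB\|_F=1}\|\bA\bB\bA^\Trans\|_F=\sigma_1^2=\|\bA\|_2^2=\|\bA\bA^\Trans\|_2$, which is the required identity.

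Finally, substituting $\bA\bA^\Trans=\bb^{1/2}\bQ\bb^{1/2}$ gives
\[
\sup_{\|\bB\|_F=1}\bigl\|\bb^{1/2}\bQ^{1/2}\bB\bQ^{1/2}\bb^{1/2}\bigr\|_F=\bigl\|\bb^{1/2}\bQ\bb^{1/2}\bigr\|_2,
\]
so the two sets on either side of \eqref{eq:HTOPform} coincide. There is no serious obstacle here: the only mild subtlety is using symmetry of $\bb^{1/2}$ and $\bQ^{1/2}$ to identify $\bA^\Trans=\bQ^{1/2}\bb^{1/2}$ and to get $\bA\bA^\Trans=\bb^{1/2}\bQ\bb^{1/2}$. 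The SVD-diagonalization step is the only place where one has to be careful, but since $T$ decouples into one-dimensional modes after orthogonal conjugation, the supremum computation is immediate.
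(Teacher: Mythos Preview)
Your proof is correct and follows essentially the same strategy as the paper: reduce the Frobenius-operator-norm computation to a diagonal entrywise maximization and identify the result with the spectral norm of $\bb^{1/2}\bQ\bb^{1/2}$. The only cosmetic difference is that the paper first uses trace cyclicity to rewrite $\|\bb^{1/2}\bQ^{1/2}\bB\bQ^{1/2}\bb^{1/2}\|_F^2=\|\bB\,\bQ^{1/2}\bb\bQ^{1/2}\|_F^2$ and then diagonalizes the symmetric matrix $\bQ^{1/2}\bb\bQ^{1/2}$, whereas you keep the two-sided form and use the SVD of $\bA=\bb^{1/2}\bQ^{1/2}$; since $\bA^\Trans\bA=\bQ^{1/2}\bb\bQ^{1/2}$, your $\bV$ is precisely the paper's diagonalizing basis, and the two arguments coincide from that point on.
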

	\begin{proof}
	    
The claim follows once we have shown that
\begin{equation}
\sup_{\|\bB\|_{F}^{2}=1}\|\bb^{1/2}\bQ^{1/2}\bB\bQ^{1/2}\bb^{1/2}\|_{F}=\|\bb^{\frac{1}{2}} \bQ \bb^{\frac{1}{2}} \|_2.\label{eq: suff to show}
\end{equation}
To this end note that
\[
\begin{array}{ccl}
\|\bb^{1/2}\bQ^{1/2}\bB\bQ^{1/2}\bb^{1/2}\|_{F}^{2} & = & \text{Tr}\left(\bb^{1/2}\bQ^{1/2}\bB\bQ^{1/2}\bb^{1/2}\left(\bb^{1/2}\bQ^{1/2}\bB\bQ^{1/2}\bb^{1/2}\right)^{\Trans}\right)\\
 & = & \text{Tr}\left(\bB\bQ^{1/2}\bb \bQ^{1/2}\bB^{\Trans}\bQ^{1/2}\bb \bQ^{1/2}\right)\\
 & = & \|\bB\bQ^{1/2}\bb \bQ^{1/2}\|_{F}^{2}.
\end{array}
\]
Let $\tilde{\bb}$ be the diagonal matrix of eigenvalues of $\bQ^{1/2}\bb \bQ^{1/2}$,
and let $\tilde{\bB}$ denote $\bB$ in the (orthogonal) diagonalizing
basis of $\bQ^{1/2}\bb \bQ^{1/2}$. Then $\|\tilde{\bB}\|_{F}^{2}=\|\bB\|_{F}^{2}$
and $\|\bB\bQ^{1/2}\bb \bQ^{1/2}\|_{F}^{2}=\|\tilde{\bB}\tilde{\bb}\|_{F}^{2}$,
so
\[
\sup_{\|\bB\|_{F}^{2}=1}\|\bb^{1/2}\bQ^{1/2}\bB\bQ^{1/2}\bb^{1/2}\|_{F}^{2}=\sup_{\|\tilde{\bB}\|_{F}^{2}=1}\|\tilde{\bB}\tilde{\bb}\|_{F}^{2}.
\]
Since $\|\tilde{\bB}\tilde{\bb}\|_{F}^{2}=\sum_{i}\left(\sum_{j}\tilde{B}_{i,j}^{2}\right)\tilde{\beta}_{i,i}^{2}$
the r.h.s. clearly equals $\max_{i}\tilde{\beta}_{i,i}^{2}$. Since $AB$ and $BA$ have the same eigenvalues for any square matrices $A,B$, also
\begin{equation}\label{eq: QbQ bQb same eigenvalues}
    \bQ^{1/2}\bb \bQ^{1/2}
    \ \text{ and } \
    \bb^{1/2}\bQ\bb^{1/2}
    \ \text{ have the same eigenvalues,}
\end{equation}
and this proves (\ref{eq: suff to show}).
\end{proof}
	We are now ready to show that $V(\bA)$ is maxmized for $A=0$ when $\beta \in {\rm{HT}}(\bQ)$.
	\begin{proposition}\label{prop:sec_mom_maximized}For any positive definite $\bQ$ and $\beta \in  {\rm{HT}}(\bQ)$ it holds that
		\begin{equation}
			\sup_{\bA} V(A) = V(0).
		\end{equation}
	\end{proposition}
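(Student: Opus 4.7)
The plan is to reduce $V(\bA) \leq V(0)$ to a scalar inequality via the Schur complement. Since $\bQ$ is positive definite, the block determinant identity gives
\[
\begin{vmatrix} \bQ & \bA \\ \bA^\Trans & \bQ \end{vmatrix} = |\bQ|\,|\bQ - \bA^\Trans \bQ^{-1}\bA|.
\]
Introducing $\bC = \bQ^{-1/2}\bA\bQ^{-1/2}$, one has $\bQ - \bA^\Trans\bQ^{-1}\bA = \bQ^{1/2}(\bI - \bC^\Trans \bC)\bQ^{1/2}$, so $|\bQ - \bA^\Trans\bQ^{-1}\bA| = |\bQ|\,|\bI - \bC^\Trans\bC|$ and therefore
\[
V(\bA) - V(0) = \beta^\Trans \bA^{\odot 2}\beta + \tfrac{1}{2}\log|\bI - \bC^\Trans\bC|.
\]
Whenever $\bI - \bC^\Trans\bC$ fails to be positive definite the second term is either $-\infty$ or not a real number, so one may restrict attention to $\bA$ for which the eigenvalues $\mu_1,\dots,\mu_n$ of $\bC^\Trans\bC$ all lie in $[0,1)$.

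Next, I would rewrite $\beta^\Trans\bA^{\odot2}\beta = \sum_{k,\ell}\beta_k\beta_\ell A_{k,\ell}^2$ as a Frobenius norm. A direct computation gives $\beta^\Trans \bA^{\odot 2}\beta = \|\bb^{1/2}\bA\bb^{1/2}\|_F^2 = \|\bb^{1/2}\bQ^{1/2}\bC\bQ^{1/2}\bb^{1/2}\|_F^2$. The equivalent form \eqref{eq:HTOPform} of the high-temperature condition, applied to $\bB = \bC/\|\bC\|_F$ (the bound is trivial if $\bC=0$), then yields
\[
\beta^\Trans \bA^{\odot 2}\beta \leq \tfrac{1}{2}\|\bC\|_F^2 = \tfrac{1}{2}\sum_{i=1}^n \mu_i.
\]
This is the single place where the hypothesis $\beta \in {\rm{HT}}(\bQ)$ is used, and is the main content of the proposition.

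Finally, applying the elementary scalar inequality $x \leq -\log(1-x)$ for $x \in [0,1)$ to each eigenvalue $\mu_i$ of $\bC^\Trans\bC$ yields $\sum_i \mu_i \leq -\sum_i \log(1-\mu_i) = -\log|\bI - \bC^\Trans\bC|$. Combining this with the previous bound,
\[
V(\bA) - V(0) \leq \tfrac{1}{2}\sum_{i=1}^n \mu_i + \tfrac{1}{2}\log|\bI - \bC^\Trans\bC| \leq 0,
\]
with equality at $\bA = 0$. No step here is a real obstacle: the genuinely nontrivial ingredient — the equivalence between $\|\bb^{1/2}\bQ\bb^{1/2}\|_2 \leq \tfrac{1}{\sqrt 2}$ and the Frobenius-norm version of the HT condition used above — has already been established in the preceding lemma, and everything else reduces to the Schur complement identity and the one-dimensional inequality $x + \log(1-x) \leq 0$.
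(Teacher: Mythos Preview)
Your proof is correct and follows essentially the same route as the paper: Schur complement to extract $\log|\bI - \bC^\Trans\bC|$, the substitution $\bA = \bQ^{1/2}\bC\bQ^{1/2}$, and then the Frobenius-norm form \eqref{eq:HTOPform} of the high-temperature condition to bound $\beta^\Trans\bA^{\odot 2}\beta \le \tfrac{1}{2}\|\bC\|_F^2$. The only cosmetic difference is in the last step: the paper fixes a direction, sets $v(t) = V(\sqrt{t}\bB)$, observes that $v$ is concave in $t$ with $v'(0) \le 0$, and concludes $v(t)\le v(0)$; you instead apply the scalar inequality $x + \log(1-x) \le 0$ directly to the eigenvalues of $\bC^\Trans\bC$. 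These are equivalent — your version is perhaps the more direct of the two.
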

	\begin{proof}
		Using the Schur complement formula
		\[
		\begin{vmatrix}
			\bQ &\bA\\
			\bA^\Trans &\bQ
		\end{vmatrix} = |\bQ| |\bQ - \bA^\Trans \bQ^{-1} \bA|.
		\]
		We have
		$$ | \bQ - \bA^\Trans \bQ^{-1} \bA| =  | \bQ - \bA^\Trans \bQ^{-\frac{1}{2}} \bQ^{-\frac{1}{2}} \bA|
			=  | \bQ - (\bQ^{-\frac{1}{2}} \bA)^\Trans ( \bQ^{-\frac{1}{2}} \bA)|.$$
		By the matrix determinant lemma this equals
		\begin{equation}
			|\bQ| | \bI - (\bQ^{-\frac{1}{2}} \bA) \bQ^{-1} ( \bQ^{-\frac{1}{2}} \bA)^\Trans|
			 =  |\bQ| | \bI - (\bQ^{-\frac{1}{2}} \bA \bQ^{-\frac{1}{2}}) ( \bQ^{-\frac{1}{2}} \bA^\Trans \bQ^{-\frac{1}{2}})^{\Trans}|.
		\end{equation}
		Thus
		\[
		V(\bA) = \beta^\Trans \bA^{\odot 2} \beta + \log | \bQ| + \frac{1}{2} \log| \bI - (\bQ^{-\frac{1}{2}} \bA \bQ^{-\frac{1}{2}}) ( \bQ^{-\frac{1}{2}} \bA^\Trans \bQ^{-\frac{1}{2}})^{\Trans}|.
		\]
		Now make the change of variables $\bB = \bQ^{-\frac{1}{2}} \bA^\Trans \bQ^{-\frac{1}{2}} \Leftrightarrow \bQ^{\frac{1}{2}} \bB^\Trans \bQ^{\frac{1}{2}} =  \bA$ to obtain	\begin{equation}\label{eq: rewritten VA}
			V(\bA) = \beta^\Trans (\bQ^{\frac{1}{2}} \bB^\Trans \bQ^{\frac{1}{2}})^{\odot 2} \beta
			+ \log|\bQ|
			+ \frac{1}{2} \log| \bI - \bB^\Trans \bB|.
		\end{equation}
		It thus suffices to show that the right-hand side is maximized for $\bB=0$.
		
		To this end we first optimize along rays by fixing $\bB$ and considering
		\[
		v(t) = V( \sqrt{t} \bB) = t \beta^\Trans (\bQ^{\frac{1}{2}} \bB^\Trans \bQ^{\frac{1}{2}})^{\odot 2} \beta + \log|\bQ| + \frac{1}{2} \log| \bI - t \bB^\Trans \bB|, t\ge0.
		\]
		The functional $v(t)$ is clearly concave in $[0,\infty)$ because the first term is linear in $t$, the second term is constant, and the last term is concave in $t$ (for instance by diagonalizing $\bB \bB^{\Trans}$). Thus to show that $v(t)$ has a global maximum at $0$, it suffices to show that $v'(0) \leq 0$.
		
		We have
		$$v'(0) =  \beta^\Trans (\bQ^{\frac{1}{2}} \bB^\Trans \bQ^{\frac{1}{2}})^{\odot 2} \beta + \frac{1}{2} \frac{d}{dt} \log| \bI - t \bB^\Trans \bB| \Big|_{t = 0}.$$
		Since
		$$ \frac{d}{dt} \log| \bI - t \bB^\Trans \bB| \Big|_{t = 0} =- \Tr (\bI - t \bB^\Trans \bB)^{-1} \bB^\Trans \bB) \Big|_{t = 0} = -\Tr(\bB^\Trans \bB) = -\|\bB\|_F^2,$$
		and $w^\Trans \bA^{\odot 2}w = \| \diag(w)^{\frac{1}{2}} \bA \diag(w)^{\frac{1}{2}}\|_F^2$ for any vector $w$ and matrix $\bA$
		we obtain
		$$ v'(0) = \| \bb^{1/2} \bQ^{\frac{1}{2}} \bB \bQ^{\frac{1}{2}} \bb^{1/2} \|_F^2 - \frac{1}{2} \| \bB\|_F^2 = \|\bB\|_F^2 \left(\| \bb^{1/2} \bQ^{\frac{1}{2}} \bm{\hat{B}} \bQ^{\frac{1}{2}} \bb^{1/2} \|_F^2 - \frac{1}{2}\right),$$
		where $\bm{\hat{B}}=\bB/\|\bB\|_F$. If $\beta$ satisfies the high temperature condition \eqref{eq:HTOPform} then the r.h.s. is non-negative, so $v'(0)\le0$ and indeed $v(t),t\in [0,\infty)$ is maximized at $t=0$. 
		
		But since this holds for any $\bB$, it must be that the r.h.s. of \eqref{eq: rewritten VA} is maximized when $\bB = \bm0$, so $\bA = \bQ^{\frac{1}{2}}\bB^\Trans \bQ^{\frac{1}{2}} = \bm0$ is the global maximizer of the functional $V(\bA)$. 
	\end{proof}
	\begin{remark}
		Note that for $v$ in the previous proof $v'(0)\le 0$ for all $\bB$ \emph{only if} the condition \eqref{eq:HT-condition} is satisfied, so the reverse of the implication of the Proposition also holds (though we do not need this fact).
	\end{remark}
	
	For the lower bound of the free energy with the second moment method one needs the standard exponential concentration inequality.
	
	\begin{lemma}[Exponential concentration for free energy]\label{lem: exponential concentration}Let $n\ge1$ and $C>0$. There exists a $c=c(C)>0$ such that for all $\varepsilon>0$, $|\beta|\le C$ and $\bQ > 0$ 
		\begin{align*}
			\pP\left(
			\bigg |F^\varepsilon_N(\beta,0,\bQ) - \mathbb{M}\left[F^\varepsilon_N(\beta,0,\bQ)\right] \bigg | \ge t\right)
			\le
			\exp\left(-c t^2 N \right)
			\numberthis\label{eq:concentrationofmeasure},
		\end{align*}
		where $\mathbb{M}$ denotes the median.
	\end{lemma}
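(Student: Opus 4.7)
The plan is to exhibit $F^\varepsilon_N(\beta,0,\bQ)$ as a Lipschitz function of the underlying Gaussian vector of disorder variables $(J_{ij})_{i,j=1,\ldots,N}$, and then invoke the standard Borell--Tsirelson--Ibragimov--Sudakov (Gaussian) concentration inequality for Lipschitz functions of Gaussians about their median. Since Borell--TIS gives a bound of the form $\mathbb{P}(|f-\text{Med}(f)|\ge t)\le 2e^{-t^2/(2L^2)}$ for any $L$-Lipschitz function $f$ of a standard Gaussian vector, it is enough to show that $F^\varepsilon_N$ is Lipschitz in $(J_{ij})$ with constant $L\le c(C)/\sqrt{N}$.

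To establish the Lipschitz constant I will compute the gradient of $F^\varepsilon_N$ with respect to the $J_{ij}$. From \eqref{eq: hamiltonian def} we have $\partial_{J_{ij}} H_N(\sigma)=\sqrt{N}\,\sigma_i\sigma_j$, so differentiating the log-partition function and then the rescaling yields
\begin{equation*}
\frac{\partial F^\varepsilon_N}{\partial J_{ij}}
=\frac{1}{\sqrt{N}}\left\langle \sum_{k=1}^{n}\beta_k \sigma_i^k \sigma_j^k \right\rangle_{\!\!\bQ_\varepsilon},
\end{equation*}
where $\langle\cdot\rangle_{\bQ_\varepsilon}$ is the Gibbs expectation corresponding to the measure with density proportional to $e^{\sum_k \beta_k H_N(\sigma^k)}$ restricted to $\bQ_\varepsilon\subset \mathcal{S}_{N-1}^n$. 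Squaring and summing over $i,j$ and then applying Jensen's inequality gives
\begin{equation*}
\sum_{i,j=1}^{N}\left(\frac{\partial F^\varepsilon_N}{\partial J_{ij}}\right)^{\!2}
\le\frac{1}{N}\left\langle \sum_{k,\ell=1}^{n}\beta_k\beta_\ell\,(\sigma^k\!\cdot\sigma^\ell)^2\right\rangle_{\!\!\bQ_\varepsilon}.
\end{equation*}
Since the spin configurations lie on $\mathcal{S}_{N-1}^n$ we have $|\sigma^k\cdot\sigma^\ell|\le1$, so the right-hand side is at most $\frac{1}{N}\bigl(\sum_k|\beta_k|\bigr)^{\!2}\le c(C)/N$, uniformly in $\bQ>0$ and $\varepsilon>0$. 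Thus $F^\varepsilon_N$ is Lipschitz in $(J_{ij})$ with constant $L\le c(C)/\sqrt{N}$.

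Plugging this Lipschitz constant into Borell--TIS (applied to the standard Gaussian vector $(J_{ij})$) yields
\begin{equation*}
\mathbb{P}\bigl(|F^\varepsilon_N(\beta,0,\bQ)-\mathbb{M}[F^\varepsilon_N(\beta,0,\bQ)]|\ge t\bigr)\le 2\exp\!\left(-\frac{t^2 N}{2c(C)}\right),
\end{equation*}
which is precisely \eqref{eq:concentrationofmeasure} after absorbing the factor of $2$ and the denominator into a single constant $c=c(C)>0$. I do not foresee a substantive obstacle: the proof is entirely routine once the Lipschitz bound above is written down, the only care needed being the uniformity of the bound in $\bQ>0$ and $\varepsilon>0$, which is automatic because the constraint enters only through the domain of integration and not through any norm of $\bQ$.
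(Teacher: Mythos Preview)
Your proposal is correct and follows essentially the same route as the paper: both compute $\partial_{J_{ij}} F^\varepsilon_N = N^{-1/2}\langle \sum_k \beta_k \sigma_i^k\sigma_j^k\rangle$, bound $|\nabla_J F^\varepsilon_N|^2$ by $c(C)n/N$ via Jensen and the sphere constraint, and invoke Gaussian Lipschitz concentration about the median. The only cosmetic difference is that the paper bounds $\sum_{i,j}(\sum_k \beta_k\langle\sigma_i^k\sigma_j^k\rangle)^2$ via Cauchy--Schwarz in $k$ and then Jensen termwise, whereas you first pull the square inside the Gibbs average and recognize the resulting sum as $\sum_{k,\ell}\beta_k\beta_\ell(\sigma^k\cdot\sigma^\ell)^2$; both arrive at the same $O(N^{-1/2})$ Lipschitz constant uniformly in $\bQ,\varepsilon$.
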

	\begin{proof}
		This follows by Gaussian concentration \cite[Theorem 10.17]{concentration-inequalities}, since for all $i,j,\beta,\bQ$
		\[
		\partial_{J_{ij}}F_{N}^{\varepsilon}\left(\beta,0,\bQ\right) \overset{\eqref{def:FE}}{=} \frac{1}{\sqrt{N}} \sum_{k=1}^{n}\beta_{k}\left\langle \sigma_{i}^{k}\sigma_{j}^{k}\right\rangle ,
		\]
		where $\langle\cdot\rangle = {\int_{\bQ_\varepsilon} \cdot \ e^{\sum_{k=1}^n \beta_k H_N(\sigma^k)}d\bs / {\int_{\bQ_\varepsilon}e^{\sum_{k=1}^n \beta_k H_N(\sigma^k)}d\bs}}$ denotes the expectation over the Gibbs measure, so that when $\left|\beta\right|\le C$
		\[
		\left|\nabla_{J}F_{N}^{\varepsilon}\left(\beta,0,\bQ\right)\right|^{2}\le \frac{C^{2}}{N}\sum_{i,j}\sum_{k=1}^{n}\left\langle \sigma_{i}^{k}\sigma_{j}^{k}\right\rangle ^{2}\le \frac{C^{2}}{N} \sum_{i,j}\sum_{k=1}^{n}\left\langle \left(\sigma_{i}^{k}\right)^{2}\left(\sigma_{j}^{k}\right)^{2}\right\rangle = \frac{C^{2}n}{N},
		\]
		implying that the map $J\to F_{N}^{\varepsilon}\left(\beta,0,\bQ\right)$
		is Lipschitz with Lipschitz constant $N^{-1/2}Cn^{1/2}$.
	\end{proof}

	To obtain an estimate for the free energy uniformly over $\beta$ and $\bQ$ (see \eqref{eq:freenergy-withplefka}) we will use the next result.
	
	\begin{lemma}[Lipschitz property of the free energy]\label{lem:lipschitz}
	Let $n\ge1,C>0,\varepsilon > 0$. There exists a $L = L(C)>0$ such that 
		\begin{align*}
			\lim_{N\to\infty}\mathbb{P}\left(
			\forall \bQ > 0 \text{ and } |\beta^1|,|\beta^2| \le C: \
			\left|
			F^\varepsilon_N(\beta^1, 0, \bQ)
			-
			F^\varepsilon_N(\beta^2, 0, \bQ)
			\right|
			\le
			L
			\left|
			\beta^1 - \beta^2
			\right|
			\right)=1.
		\end{align*}
	\end{lemma}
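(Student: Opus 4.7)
The plan is to bound $\nabla_\beta F^\varepsilon_N(\beta,0,\bQ)$ uniformly in $\beta$ with $|\beta|\le C$ and in $\bQ > 0$, then conclude by the mean value theorem. First I differentiate the definition \eqref{def:FE}. Since the domain $\bQ_\varepsilon$ does not depend on $\beta$, a direct computation gives
\[
\partial_{\beta_k} F^\varepsilon_N(\beta,0,\bQ) \;=\; \frac{1}{N} \, \frac{\int_{\bQ_\varepsilon} H_N(\sigma^k)\, e^{\sum_{j=1}^n \beta_j H_N(\sigma^j)} \, d\bs}{\int_{\bQ_\varepsilon} e^{\sum_{j=1}^n \beta_j H_N(\sigma^j)}\, d\bs} \;=\; \frac{1}{N} \langle H_N(\sigma^k) \rangle,
\]
where $\langle \cdot \rangle$ denotes the Gibbs expectation on $\bQ_\varepsilon$.

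The second step uses the uniform Hamiltonian bound \eqref{eq: hamilt UB}: since each $\sigma^k \in \mathcal{S}_{N-1}$ satisfies $|\sigma^k|=1$, on an event of probability tending to one as $N \to \infty$ we have $|H_N(\sigma^k)| \le cN$ for every $\sigma^k$ in the domain of integration. Taking the Gibbs average preserves this bound, so on that event
\[
|\partial_{\beta_k} F^\varepsilon_N(\beta,0,\bQ)| \;\le\; c, \qquad k=1,\ldots,n,
\]
uniformly in $|\beta|\le C$ and in \emph{every} positive definite $\bQ$ (the integration domain $\bQ_\varepsilon$ drops out of the estimate because the bound on $|H_N|$ is a pointwise bound on the unit sphere).

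The third and final step applies the mean value theorem along the segment from $\beta^1$ to $\beta^2$: on the same event,
\[
|F^\varepsilon_N(\beta^1,0,\bQ) - F^\varepsilon_N(\beta^2,0,\bQ)| \;\le\; \sup_{\beta' \in [\beta^1,\beta^2]} |\nabla_\beta F^\varepsilon_N(\beta',0,\bQ)| \cdot |\beta^1 - \beta^2| \;\le\; c\sqrt{n}\, |\beta^1-\beta^2|,
\]
uniformly in $\bQ > 0$. Setting $L = c\sqrt{n}$ yields the claim.

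There is essentially no obstacle here: the only subtlety is that the Lipschitz constant must be uniform in $\bQ > 0$, and this is automatic because the uniform control of $|H_N|$ over the unit sphere provided by \eqref{eq: hamilt UB} depends only on the sphere, not on the constraint set $\bQ_\varepsilon$. The event on which the bound holds has probability tending to one, matching the form of the conclusion.
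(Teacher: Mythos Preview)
Your proof is correct and follows essentially the same approach as the paper: differentiate $F_N^\varepsilon$ in $\beta$, express the derivative as a Gibbs average of $\tfrac{1}{N}H_N(\sigma^k)$, bound it using the uniform Hamiltonian estimate \eqref{eq: hamilt UB}, and conclude via the mean value theorem. Your derivative formula is in fact cleaner than the paper's (which contains an extraneous $\beta_k$ factor), and you correctly observe that the resulting Lipschitz constant does not actually depend on $C$.
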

	\begin{proof}
		We have for any $k \in \{1,...,n\}$ that
		\begin{equation}
			\frac{\partial}{\partial \beta^k} F_N^\varepsilon(\beta,0,\bQ)
			\overset{\eqref{def:FE}}{=}
			\langle
			\tfrac{1}{N}\beta_k H_N(\sigma^k)
			\rangle
		\end{equation}
		where $\langle\cdot\rangle$ denotes the expectation over the Gibbs measure as in the previous lemma. Thus by \eqref{eq: hamilt UB} we have for $L=L(C)$ large enough
		$$
		\mathbb{P}\left( \sup_{\bQ,|\beta|\le C}\left|
		\frac{\partial}{\partial \beta^k} F_N^\varepsilon(\beta,0,\bQ)
		\right| \le L \right) \to 1.
		$$
		This implies that $F_N^\varepsilon(\beta,0,\bQ)$ is Lipschitz continuous in $\beta$ with probability tending to one.
	\end{proof}
	
	The next proposition will now combine all previous arguments to show that $F_N^\varepsilon$ concentrates as $N\rightarrow\infty$ if the external field $\bh$ is zero and $\beta$ lies in the high temperature region.
	
	\begin{proposition}[Free energy at high temperature]\label{prop:freenergy-whileplefka}
		Let $n\ge 1$. Let $\delta, C > 0$ be some constants. There exists a $c=c(\delta, C)>0$ such that for all $\varepsilon \in (0,c^{-1})$
		\begin{equation}\label{eq:freenergy-withplefka}
			\lim_{N\to\infty} \P \left( 
			\sup_{\bQ \ge \delta \bI} \sup_{\substack{\beta \in {\rm{HT}}(\bQ) \\ |\beta| \le C}} \bigg | F^\varepsilon_N(\beta, 0, \bQ) - \frac{1}{2} (\beta^\Trans \bQ^{\odot 2} \beta + \log |\bQ| ) \bigg |
			\le c \sqrt{\varepsilon}
			\right) =1,
		\end{equation}
		where the supremum is taken over all symmetric $\bQ \in [-1,1]^{n\times n}$ with $1$'s on the diagonal.
	\end{proposition}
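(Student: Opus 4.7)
The plan combines a pointwise second moment method plus Gaussian concentration with a net argument for uniformity in $(\beta, \bQ)$.

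Fix first $\beta$ with $|\beta|\le C$ and $\bQ \ge \delta \bI$ such that $\beta \in {\rm{HT}}(\bQ)$. Combining Lemmas \ref{lem:firstmoment} and \ref{lem:secondmoment} with Proposition \ref{prop:sec_mom_maximized} (which gives $\sup_\bA V(\bA) = V(0) = \log|\bQ|$ in the high-temperature phase), one gets
\[
\frac{\mathbb{E}[Z^\varepsilon_N(\beta,0,\bQ)]^2}{\mathbb{E}[Z^\varepsilon_N(\beta,0,\bQ)^2]} \ge e^{-cN\varepsilon}.
\]
The Paley--Zygmund inequality then yields $\mathbb{P}(F^\varepsilon_N \ge \tfrac{1}{N}\log \mathbb{E}[Z^\varepsilon_N] - \tfrac{\log 2}{N}) \ge \tfrac{1}{4} e^{-cN\varepsilon}$. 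Were the median $\mathbb{M}[F^\varepsilon_N]$ lower than $\tfrac{1}{N}\log \mathbb{E}[Z^\varepsilon_N] - t$, Lemma \ref{lem: exponential concentration} would force $\tfrac{1}{4}e^{-cN\varepsilon} \le e^{-c't^2 N}$, which fails for $t = c''\sqrt{\varepsilon}$ and $N$ large. Combined with Lemma \ref{lem:firstmoment}, this gives $\mathbb{M}[F^\varepsilon_N] \ge \tfrac{1}{2}(\beta^\Trans \bQ^{\odot 2}\beta + \log|\bQ|) - c\sqrt{\varepsilon}$. The matching upper bound follows from Jensen ($\mathbb{E}[F^\varepsilon_N] \le \tfrac{1}{N}\log \mathbb{E}[Z^\varepsilon_N]$) combined with one more application of Lemma \ref{lem: exponential concentration} to control $|\mathbb{M}-\mathbb{E}|$. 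A final application of Lemma \ref{lem: exponential concentration} converts the two-sided median estimate into a high probability bound on $F^\varepsilon_N$ itself, which is the pointwise version of \eqref{eq:freenergy-withplefka}.

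For uniformity, cover the compact parameter set $\{(\beta,\bQ) : |\beta| \le C,\, \bQ \ge \delta \bI,\, Q_{k,k}=1,\, \bQ \in [-1,1]^{n\times n}\}$ by an $\eta$-net $\mathcal{N}$ with $\eta = c^*\sqrt{\varepsilon}$. Its cardinality is polynomial in $1/\eta$, so the exponentially small concentration failures from the pointwise step survive a union bound over $\mathcal{N}$. For arbitrary $(\beta, \bQ)$ and a nearby $(\beta', \bQ') \in \mathcal{N}$, Lemma \ref{lem:lipschitz} absorbs the change from $\beta$ to $\beta'$ at cost $L\eta$. For $\bQ$, the inclusions $\bQ'_{\varepsilon - \eta} \subseteq \bQ_\varepsilon \subseteq \bQ'_{\varepsilon + \eta}$, valid when $\|\bQ - \bQ'\|_\infty \le \eta < \varepsilon$, give
\[
F^{\varepsilon - \eta}_N(\beta',0,\bQ') \le F^\varepsilon_N(\beta,0,\bQ) \le F^{\varepsilon + \eta}_N(\beta',0,\bQ') + L\eta,
\]
while the target $\tfrac{1}{2}(\beta^\Trans \bQ^{\odot 2}\beta + \log |\bQ|)$ is Lipschitz on the parameter set. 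The pointwise estimate applied at $(\beta', \bQ')$ with $\varepsilon$ replaced by $\varepsilon \pm \eta$, combined with these two estimates, yields \eqref{eq:freenergy-withplefka} with error of order $\sqrt{\varepsilon}$.

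The main subtlety I anticipate is the $\bQ$-uniformity: Lemma \ref{lem:lipschitz} only controls $\beta$-variations, and $\bQ$ enters the domain of integration $\bQ_\varepsilon$ in a way not accessible to any deterministic Lipschitz-in-$J$ estimate. The monotonicity-in-$\varepsilon$ trick above resolves this at the cost of requiring $\eta < \varepsilon$, which is automatic for $\eta = c^*\sqrt{\varepsilon}$ once $\varepsilon$ is small enough.
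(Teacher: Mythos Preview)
Your approach is essentially the paper's: pointwise second moment plus concentration, then a net in $(\beta,\bQ)$ with the inclusion trick $\bQ'_{\varepsilon'} \subset \bQ_\varepsilon \subset \bQ'_{\varepsilon''}$ for the $\bQ$-uniformity. The paper uses Markov's inequality directly for the upper bound rather than Jensen plus concentration, and it uses multiplicative shifts ($\varepsilon/2$ and $2\varepsilon$) rather than your additive $\varepsilon\pm\eta$, but these are cosmetic differences.

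There is one genuine slip: you claim that $\eta = c^*\sqrt{\varepsilon} < \varepsilon$ ``is automatic once $\varepsilon$ is small enough,'' but the inequality $c^*\sqrt{\varepsilon} < \varepsilon$ is equivalent to $\varepsilon > (c^*)^2$, so it \emph{fails} for small $\varepsilon$. Fortunately there is no reason to take $\eta$ of order $\sqrt{\varepsilon}$: the Lipschitz costs in $\beta$ and in the target function are $O(\eta)$, and the pointwise bound at level $\varepsilon\pm\eta$ contributes $O(\sqrt{\varepsilon\pm\eta})$, so taking $\eta = \varepsilon/2$ (as the paper effectively does) keeps every error at $O(\sqrt{\varepsilon})$ while making the inclusion $\bQ'_{\varepsilon-\eta}\subset \bQ_\varepsilon$ nondegenerate. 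With that correction your argument goes through. (A minor point: your displayed sandwich should also carry a $-L\eta$ on the left, since the Lipschitz-in-$\beta$ step is applied before comparing to the net point at both ends.)
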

	\begin{proof}
	    Let $\bA^i\in[-1,1]^{n\times n},i=1,\ldots,M,$ with $M \le \lceil \frac{1}{2\varepsilon} \rceil^{n^2}$ such that for all $\bQ \in[-1,1]^{n\times n}$ there exists an $i$ such that
    	\begin{equation}\label{eq:A-Q small}
            \|\bA^{i}-\bQ\|_{\infty}\le\varepsilon, \quad \|(\bA^i)^{\odot2}-\bQ^{\odot2}\|_{\infty}\le2\varepsilon, \quad \left|\left|\bA^i\right|-\left|\bQ\right|\right|\le n^{n+1}\varepsilon.
        \end{equation}
        For this $i$ we have 
		${\bA^i}_{\frac{1}{2}\varepsilon} \subset \bQ_{\varepsilon} \subset {\bA^i}_{2\varepsilon}$
		and thus for all $\beta$
		\begin{equation}\label{eq: inclusion}
		F_N^{\frac{1}{2}\varepsilon}(\beta, 0,\bA^i)
		\ \le \ 
		F_N^\varepsilon(\beta, 0, \bQ) 
		\ \le \ 
		F_N^{2\varepsilon}(\beta, 0,\bA^i).
		\end{equation}
		We also construct a finite sequence $\beta^1,...,\beta^{L}$ with $L \le \lceil \frac{C}{\varepsilon} \rceil^{n}$ such that for each $\beta\in \{b \in \mathbb{R}^n: |b|\le C\}$ there is a $j\le L$ with $|\beta-\beta^j|\le\varepsilon$. Then with probability tending to one  by Lemma \ref{lem:lipschitz} there is for each $\beta$ with $|\beta|\le C$ some $j$ such that
		\begin{equation}\label{eq: F(beta)-F(beta^j)}
		\left| 
		F_N^\varepsilon(\beta, 0, \bQ)  -
		F_N^\varepsilon(\beta^j, 0, \bQ) 
		\right| \le c(C) \varepsilon \ \text{ for all }\bQ,
		\end{equation}
		and for each $\bQ$ with $\bQ>\delta\bI$ and the $i$ such that \eqref{eq: inclusion} holds
		\begin{equation}\label{eq: F(Q)-F(A)}
		    \left|\beta^{\top}\bQ^{\odot2}\beta+\log\left|\bQ\right|-(\beta^j)^{\top}\bA^{i\odot2}\beta^j-\log\left|\bA^i\right|\right|\le c\left(\delta,n,C\right)\varepsilon,
		\end{equation}
provided $\varepsilon$ is small enough depending on $\delta$.
		
		\noindent
		\textit{Upper bound:}
		This implies that if the constant $c$ is chosen large enough depending on $\delta,C,n$ then
		\begin{align*}
			& \mathbb{P}\left(
			\forall \bQ \ge \delta \bI, \ |\beta|\le C: \ Z^\varepsilon_N(\beta,0,\bQ) > \exp\left(
			\frac{N}{2}\left(\beta^\Trans \bQ^{\odot 2} \beta + \log |\bQ| + c\varepsilon\right)
			\right) 
			\right)
			\\
			\le &
			\mathbb{P}\left(
			\exists i=1,...,M , \ j=1,...,L: Z^{2\varepsilon}_N(\beta^j,0,\bA^i) > \exp\left(
			\frac{N}{2}\left((\beta^j)^\Trans (\bA^i)^{\odot 2} \beta^j + \log |\bA^i| + \frac{c}{2}\varepsilon\right)
			\right) 
			\right),
		\end{align*}
		where by Markov's inequality and Lemma \ref{lem:firstmoment} the r.h.s. is bounded by
		\begin{align*}
			&\sum_{i=1}^M \sum_{j=1}^L
			\frac{\mathbb{E}\left[
				Z^{2\varepsilon}_N(\beta^j,0,\bA^i)\right]}{\exp\left(
				\frac{N}{2}\left((\beta^j)^\Trans (\bA^i)^{\odot 2} \beta^j + \log |\bA^i| + \frac{c}{2}\varepsilon\right)
				\right)}
			\le 
			\exp\left(-N\frac{c}{4}\varepsilon\right),
		\end{align*}
		and thus
		\begin{align*}
			\mathbb{P}\left( 
			\forall \bQ \ge \delta \bI, \ |\beta|\le C: \ 
			F_N^\varepsilon(\beta,0,\bQ)  \le \frac{1}{2}(\beta^\Trans \bQ^{\odot 2} \beta) + \frac{1}{2} \log |\bQ| 
			+ c \varepsilon\right) \to 1.
		\end{align*}
		\\
		\textit{Lower bound:}        By the Paley-Zygmund inequality and Lemmas \ref{lem:firstmoment} and \ref{lem:secondmoment} we have for any large enough $c$
        depending on $\delta,C,n$ that for all $\varepsilon\in(0,c^{-1})$
        and $N\ge c\left(\varepsilon,\delta\right)$ and $i,j$
		\begin{equation}
			\begin{array}{ccl}
				& & 
				\pP\left(
				F^{\frac{\varepsilon}{2}}_N(\beta^j,0,\bA^i) > 
        			\frac{1}{2}(\beta^j)^\Trans (\bA^i)^{\odot 2} \beta^j + \frac{1}{2} \log |\bA^i| - \frac{c}{8}\varepsilon\right)
				\\
				& \ge & 
				\mathbb{P} \left( Z_N^{{\frac{\varepsilon}{2}}}(\beta^j,0,\bA^i) >  \frac{1}{2} \mathbb{E}[Z_N^{\frac{\varepsilon}{2}}(\beta^j,0,\bA^i)] \right) 
				\\
				& \ge &
				\frac{1}{4} \frac{\mathbb{E}[Z_N^{\frac{\varepsilon}{2}}(\beta^j,0,\bA^i)]^2}{\mathbb{E}[Z_N^{\frac{\varepsilon}{2}}(\beta^j,0,\bA^i)^2]} 
				\\
				& \ge & \frac{1}{4} e^{-c\varepsilon N}.
			\end{array}
		\end{equation}
		
		Since otherwise there is a contradiction by Lemma \ref{lem: exponential concentration} (after possibly enlarging $c$) this implies that $\mathbb{M}\left(F^{\frac{\varepsilon}{2}}_N(\beta^j,0,\bA^i)\right)\ge \frac{1}{2}(\beta^j)^\Trans (\bA^i)^{\odot 2} \beta^j + \frac{1}{2} \log |\bA^i|-\frac{c}{4}\sqrt{\varepsilon}$
        for all $\varepsilon\in(0,c^{-1})$ and $N\ge N\left(\varepsilon,\delta\right)$, and then another use of Lemma \ref{lem: exponential concentration} implies that
        \[
        \lim_{N\to\infty}\pP\left(
				F^{\frac{\varepsilon}{2}}_N(\beta^j,0,\bA^i) <
        			\frac{1}{2}(\beta^j)^\Trans (\bA^i)^{\odot 2} \beta^j + \frac{1}{2} \log |\bA^i| - \frac{c}{2}\sqrt{\varepsilon}\right)=0\text{ for all }\varepsilon\in(0,c^{-1}),i,j.
        \]
        Then (possibly enlarging $c$ again) we have
		\begin{align}
			& \mathbb{P}\left( \exists \bQ \ge \delta \bI, \ |\beta|\le C: \ 
			F_N^\varepsilon(\beta,0,\bQ)  <\frac{1}{2}(\beta^\Trans \bQ^{\odot 2} \beta) + \frac{1}{2} \log |\bQ| 
			- c \sqrt{\varepsilon}\right) 
			\\
			\le &
			\sum_{i=1}^M \sum_{j=1}^L
			\mathbb{P}\left(
			F^{\frac{\varepsilon}{2}}_N(\beta^j,0,\bA^i) < \exp\left(
			\frac{1}{2} (\beta^j)^\Trans (\bA^i)^{\odot 2} \beta^j + \frac{1}{2} \log |\bA^i| - \frac{c}{2}\sqrt{\varepsilon}\right)
			\right)\to0
		\end{align}
		for all $\varepsilon\in(0,c^{-1})$,	which gives the lower bound.
	\end{proof}

	\subsection{With external field}\label{subsec:lwbdextfield}
	
	We now prove the lower bound at all temperatures in the presence of an external field. We will follow the proof of \cite[Lemma 5]{belius-kistler}. We start by showing that Lemma~\ref{lem:secondmoment} still holds if we restrict the integral in the parition function to the intersection of the product of unit spheres with hyperplanes of high dimension. 

	In the following it will be convenient to denote the integral $\int\cdot d\sigma$
	over the sphere $\mathcal{S}_{N-1}$ and the integral $\int\cdot d{\bs}$
	over $\mathcal{S}_{N-1}^{n}$ by $E\left[\cdot\right]$. For a subspace $U \subset \mathbb{R}^{n \times N}$ let us write $E^{U}$ to denote the expectation/integral with respect to $\bs$ conditioned on $\bs\in U$. 
	
	\begin{lemma}\label{lem:dimensiondrop}
		For all $\delta>0$ it holds that
		\begin{equation}\label{eq: dimensiondrop}
		\mathbb{P}\left(
		{\sup_{\bQ \ge \delta \bI}}
		\sup_{\substack{\beta \in {\rm{HT}}(\bQ)
		}} 
		\sup_{U}
		\Bigg | 
		\frac{1}{N}\log 
		E^{U^\perp} \left[ \1_{\bQ_{\varepsilon}}
		e^{\sum_{k=1}^n \beta_k H_N({\sigma^k})} 
		\right]    
		- \bigg( \frac{\beta^\Trans \bQ^{\odot 2} \beta}{2} + \frac{1}{2} \log|\bQ| \bigg) \Bigg |
		\le c \sqrt{\varepsilon} \right) \to 1,
		\end{equation}
		as $N\to\infty$, where the innermost supremum is over all subspaces of dimension $N-2n$  and the outermost supremum is taken over all symmetric $\bQ \in [-1,1]^{n\times n}$ with $1$'s on the diagonal.
	\end{lemma}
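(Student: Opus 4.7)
The plan is to adapt the proof of Proposition \ref{prop:freenergy-whileplefka} to the restricted integrals $E^{U^\perp}[\cdot]$, which are expectations under the uniform probability measure on $(U^\perp\cap\mathcal{S}_{N-1})^n$, a set that is isometric to $\mathcal{S}_{N-2n-1}^n$. The key observation is that every ingredient in the first- and second-moment computations of Lemmas \ref{lem:volumeconstraint}--\ref{lem:secondmoment} either depends on the configuration only through the overlap matrix $\bs\bs^\Trans$ (which is unchanged by restricting to a subspace of $\mathbb{R}^N$), or is the volume of a constrained set on a product of spheres, which by rotational invariance depends only on the dimension of the ambient sphere. Thus those lemmas transfer verbatim to the restricted setting, with $N$ replaced by $N-2n$ in the volume prefactors; the resulting change $\tfrac{N-2n}{2}\log|\bQ|-\tfrac{N}{2}\log|\bQ|=-n\log|\bQ|$ is $O(1)$, hence negligible after dividing by $N$. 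In particular these first- and second-moment bounds are \emph{uniform in $U$}.

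With the moment bounds in hand, the per-$U$ argument then mirrors the proof of Proposition \ref{prop:freenergy-whileplefka} directly. For $\beta\in{\rm{HT}}(\bQ)$, Proposition \ref{prop:sec_mom_maximized} gives $V(\bA)\le V(0)$, so the second moment is comparable to the square of the first moment up to $e^{O(N\varepsilon)}$. Paley--Zygmund (lower bound), Markov (upper bound), and the exponential concentration of Lemma \ref{lem: exponential concentration}---whose Lipschitz constant $O(1/\sqrt{N})$ in the Frobenius norm of $J$ transfers unchanged to $F^{U^\perp}_N$ since the Gibbs average is still supported on unit vectors---yield $|F^{U^\perp}_N(\beta,0,\bQ)-\mu|\le c\sqrt{\varepsilon}$ with probability $\ge 1-e^{-c\varepsilon N}$ at each fixed $(U,\bQ,\beta)$, where $\mu=\tfrac12(\beta^\Trans\bQ^{\odot 2}\beta+\log|\bQ|)$. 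Uniformity in the finite-dimensional parameters $\bQ$ and $\beta$ is then obtained by nets of size independent of $N$ exactly as in Proposition \ref{prop:freenergy-whileplefka}.

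The main obstacle is uniformity over the Grassmannian of $(N-2n)$-dimensional subspaces, whose own dimension $2n(N-2n)$ grows linearly in $N$, so that a naive union bound over an $\eta$-net is on the wrong side of the $e^{-c\varepsilon N}$ concentration rate. The approach is to exploit Lipschitz dependence in $U$: by rotational invariance, $F^{U^\perp}_N(J)=F^{V^\perp}_N(O^\Trans J O)$ for any orthogonal $O$ with $OV=U$, so the Lipschitz constant of $F^{U^\perp}_N$ in $J$ combined with the bound on $\|J-O^\Trans J O\|_F$ in terms of the Grassmannian distance produces Lipschitz of $F^{U^\perp}_N$ in $U$. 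Equivalently, since $\sup_U F^{U^\perp}_N$ is itself Lipschitz in $J$ with the same $O(1/\sqrt{N})$ constant, it concentrates around its median with the same rate, and it remains only to bound this median near $\mu+O(\sqrt{\varepsilon})$. The delicate point is to balance the Lipschitz constant in $U$ (naively of order $\sqrt{N}$ via the Frobenius bound on $\|J-O^\Trans J O\|_F$) against the net scale and the concentration rate, which we expect to handle via a chaining argument over $G$, refined using that the annealed free energy $\tfrac{1}{N}\log\mathbb{E}[Z^{U^\perp}_N]$ is strictly constant in $U$ by rotational invariance, so that the $U$-variation is entirely a fluctuation phenomenon of $J$ along restricted directions. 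Once this uniformity in $U$ is established, combining with the uniformities in $\bQ$ and $\beta$ yields the claim.
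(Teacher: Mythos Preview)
Your moment-method reductions are fine: rotational invariance of the Gaussian disorder indeed makes the first and second moments of $Z^{U^\perp}_N$ independent of $U$, and the Lipschitz constant in $J$ (hence the concentration rate) is the same as in the unrestricted case. The problem is exactly the obstacle you name and do not resolve: uniformity over the Grassmannian. Your Lipschitz constant of $F^{U^\perp}_N$ in $U$ is genuinely of order $\sqrt{N}$ (since $\|J-O^\Trans J O\|_F\lesssim \|O-\bI\|_2\,\|J\|_F$ and $\|J\|_F\asymp N$), while the concentration rate is $e^{-c\varepsilon N}$ and the Grassmannian has dimension $\asymp N$. A Dudley-type chaining with these parameters gives fluctuations of order $\sqrt{N}\cdot\sqrt{N/N}=\sqrt{N}$, not $o(1)$; the observation that the annealed quantity is constant in $U$ does not repair this, because the second-moment/Paley--Zygmund step only controls the median at each \emph{fixed} $U$, not the median of $\inf_U F^{U^\perp}_N$. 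So as written the proposal has a real gap at the step ``we expect to handle via a chaining argument''.

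The paper avoids this entirely by a random-matrix argument that makes the $U$-dependence disappear \emph{deterministically} on a high-probability event, rather than by union-bounding over $U$. Writing $H_N(\sigma)=\sigma^\Trans\tfrac{\bJ+\bJ^\Trans}{2}\sigma$, the restriction of $H_N$ to $U^\perp$ is governed by an $(N-2n)\times(N-2n)$ principal minor $\bA$ of $\tfrac{\bJ+\bJ^\Trans}{2}$ in a basis adapted to $U$. By Cauchy interlacing, the ordered eigenvalues $a_i$ of $\bA$ satisfy $\lambda_i\le a_i\le \lambda_{i+2n}$, and the same holds for the minor $\bB$ taken in the \emph{standard} basis. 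Since $\max_i|\lambda_{i+1}-\lambda_i|\to0$ in probability (eigenvalue rigidity, \eqref{eq: gap}), one gets $\sup_U\sup_i|a_i-b_i|\to0$, hence
\[
\sup_U\sup_{\bs}\Big|\sum_k\beta_k H_N^{U^\perp}(\sigma^k)-\sum_k\beta_k (\sigma^k)^\Trans\tfrac{\sqrt{N-2n}}{\sqrt{N}}\bB\,\sigma^k\Big|=o(N)
\]
uniformly in $U$. This reduces the entire supremum over $U$ to a \emph{single} $(N-2n)$-dimensional model with fixed disorder $\bB$, to which Proposition~\ref{prop:freenergy-whileplefka} (with $N-2n$ in place of $N$) applies directly. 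The key idea you are missing is this interlacing step; once you have it, no chaining or union bound over subspaces is needed.
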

	\begin{proof}
		Define an orthonormal basis $w_{1},...,w_{N}$ of $\R^N$ such that
		$$
		U = 
		\langle w_{N-2n},\ldots,w_{N}\rangle.
		$$
		Let $\bA$ be the top left $(N-2n) \times (N-2n)$-minor of $\frac{\bJ+\bJ^{\Trans}}{2}$ when written the basis $w_{1},...,w_{N}$. 
		For $\sigma \in U^\perp$ we have $H_N({\sigma}) = \sum_{i,j=1}^{N-2n} {\tilde{\sigma}_i} {\tilde{\sigma}_j} A_{ij}= N \sum_{i=1}^{N-2n} a_i ({\sigma_i})^2$ where $\tilde{\sigma}$ is $\sigma$ in the basis $w_1,\ldots,w_N$, and $N a_1<...<N a_{N-2n}$ are the eigenvalues of $\bA$. 
		Thus
		\[
		E^{U^\perp} \left[ \1_{\bQ_{\varepsilon}}\exp\left(\sum_{k=1}^n \beta_k H_N(\sigma^k)\right) \right]
		=
		E^{N-2n} \left[ \1_{\bQ_{\varepsilon}}\exp\left(N \sum_{k=1}^n \beta_k \sum_{i=1}^{N-2n} a_i ({\sigma_i^k})^2\right)  \right],
		\numberthis\label{eq:Ldrop1}
		\]
		where $E^{N-2n}$ is the expectation over $\bs$ uniform on $\mathcal{S}_{N-2n-1}^{n}$.
		Let $\bB$ be the top left $(N-2n) \times (N-2n)$-minor of $\frac{\bJ+\bJ^{\Trans}}{2}$ when written in standard basis and let $N b_1<...<Nb_{N-2n}$ be its eigenvalues. Recalling that $N \lambda_1<\ldots<N \lambda_N$ are the eigenvalues of $\frac{\bJ+\bJ^{\Trans}}{2}$ so by Cauchy's eigenvalue interlacing inequality (see \cite[Theorem 10.1.1]{symmetric-eigenvalue-problem}) we have $\lambda_i < a_i,b_i < \lambda_{i+2n}$. Thus \eqref{eq: gap} implies that
		\begin{equation}\label{eq: a and b}
		\sup_{U} \sup_{\bs \in \mathcal{S}_{N-2n-1}^n} \Bigg | \sum_{k=1}^n \beta_k \sum_{i=1}^{N-2n} a_i (\sigma_i^k)^2 - \sum_{k=1}^n \beta_k \sum_{i=1}^{N-2n} \frac{\sqrt{N-2n}}{\sqrt{N}} b_i ({\sigma_i^k})^2\Bigg | \overset{\mathbb{P}}{\longrightarrow}  0  \mbox{ as } N \rightarrow\infty,
		\end{equation}
		and so
		\begin{equation}\label{eq: Ldrop2}
		E^{U^\perp} \left[ \1_{\bQ_{\varepsilon}}\exp\left(\sum_{k=1}^n \beta_k H_N(\sigma^k)\right) \right] = 	E^{N-2n} \left[\1_{\bQ_{\varepsilon}} \exp\left( \sum_{k=1}^n \beta_k (\sigma^k)^\Trans \frac{\sqrt{N-2n}}{\sqrt{N}} \bB \sigma^k \right) \right]e^{o(N)},
		\end{equation}
		uniformly in $U$. By applying Proposition \ref{prop:freenergy-whileplefka} with $N-2n$ in place of $N$ the r.h.s. equals
		\begin{equation}
			e^{\frac{N-2n}{2}(\beta^\Trans \bQ^{\odot 2} \beta + \log|\bQ|) (1+\mathcal{O}(\sqrt{\varepsilon}))},
		\end{equation}
		for all $\bQ \ge \delta \bI$ and $\beta \in {\rm HT}(\bQ)$ (note that ${\rm HT}(\bQ)$ is a bounded set), proving the claim
	\end{proof}

        After ``recentering around $\bm{m}$ '' in the proof of Proposition \ref{prop: TAP LB} below the ``effective constraint matrix'' $\hat{\bQ}(\bbm) \in \mathbb{R}^{n \times n}$ given by
	\begin{equation}\label{def: Q hat}
		\hat{Q}_{k,\ell} = \frac{Q_{k,\ell} - m^k \cdot m^\ell}{\sqrt{1-|m^k|^2}\sqrt{1-|m^\ell|^2}},
	\end{equation}
        will appear (see \eqref{eq: eff constr 1}-\eqref{eq: replace Q with Q hat}).
    The next lemma will be used in the proof of Proposition \ref{prop: TAP LB} to exclude $\bmm$ such that $\hat{\bQ}(\bbm)$ has a small eigenvalue or $\bmm \bmm^{\Trans}$ has an entry close to $1$.
	\begin{lemma}\label{lem: small eval}
        For any $\beta,(h_1,\ldots,h_n)$ and positive symmetric $\bQ\in[-1,1]^{n\times n}$ with $1's$ on the diagonal, there is a $\delta\in(0,1)$  such that
	    $$ \P\left( \sup_{ \stackrel{\bbm \in {\rm{Plef}}_N(\bQ,\beta): \hat{\bQ}(\bbm)>0,\hat{\bQ}(\bbm)\text{ has eval. }<\delta}
	    {\text{ or } \|\bmm \bmm^{\Trans}\|_\infty >1-\delta}
	    }
	    \TAP(\bbm) \le \sup_{\bbm \in {\rm{Plef}}_N(\bQ,\beta): \hat{\bQ}(\bbm)\ge\delta\bI,\|\bmm \bmm^{\Trans}\|_\infty \le1-\delta} \TAP(\bbm)\right)\to1.$$
	 \end{lemma}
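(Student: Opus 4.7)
The plan is to show that $\TAP$ on the ``bad'' set is dominated by a term of the form $\tfrac{N}{2}\log\delta$ that can be made arbitrarily negative, while on the ``good'' set there is an explicit point at which $\TAP \ge -c' N$ for some constant $c'$ independent of $\delta$. Picking $\delta$ small enough that $\tfrac{1}{2}\log\delta + c < -c'$ will then give the lemma.

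The key identity is that setting $\bm{D}(\bm{m}) = \diag(\sqrt{1-|m^k|^2})$ we have from \eqref{def: Q hat}
\[
\bQ - \bm{m}\bm{m}^\Trans = \bm{D}(\bm{m})\,\hat{\bQ}(\bm{m})\,\bm{D}(\bm{m}),\qquad |\bQ - \bm{m}\bm{m}^\Trans| = \prod_{k=1}^n (1 - |m^k|^2)\cdot |\hat{\bQ}(\bm{m})|.
\]
I would use this to bound $|\bQ - \bm{m}\bm{m}^\Trans| \le c(n)\,\delta$ on the bad set. If $\|\bm{m}\bm{m}^\Trans\|_\infty > 1-\delta$, then some diagonal of the positive semi-definite matrix $\bQ - \bm{m}\bm{m}^\Trans$ is below $\delta$, so by Hadamard's inequality $|\bQ - \bm{m}\bm{m}^\Trans| \le \delta$. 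If instead $\hat{\bQ}(\bm{m}) > 0$ has some eigenvalue below $\delta$, then since $\hat{Q}_{k,k} = 1$ we have $\mathrm{Tr}(\hat{\bQ}(\bm{m})) = n$; the other $n-1$ positive eigenvalues sum to at most $n$, so by AM--GM their product is at most $(n/(n-1))^{n-1}$, giving $|\hat{\bQ}(\bm{m})|\le c(n)\,\delta$ and hence the claim. The remaining terms of $\TAP(\bm{m})$ are bounded uniformly in $\bm{m}$ with $\|\bm{m}\bm{m}^\Trans\|_\infty\le 1$ by a constant times $N$ using \eqref{eq: hamilt UB} for the Hamiltonian terms, $|h^k\cdot m^k|\le h_k$ for the external field terms, and the boundedness of the entries of $\bQ - \bm{m}\bm{m}^\Trans$ for the Onsager term. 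Combined, with probability tending to one,
\[
\sup_{\text{bad}} \TAP(\bm{m}) \le \tfrac{N}{2}\log\delta + c\,N,
\]
where $c$ depends only on $n, \beta, h_1,\ldots,h_n, \bQ$ and \emph{not} on $\delta$.

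For the matching lower bound I would exhibit an explicit point in the good set. Set $\alpha_* = \max\bigl(0,\, 1 - (\sqrt{2}\,\|\bb^{1/2}\bQ\bb^{1/2}\|_2)^{-1}\bigr) \in [0,1)$ and take any $\bm{m}_*\in\mathbb{R}^{n\times N}$ with $\bm{m}_*\bm{m}_*^\Trans = \alpha_*\bQ$ (such $\bm{m}_*$ exists for $N\ge n$ since $\alpha_*\bQ\ge 0$). A direct computation gives $\alpha_*\bQ \in {\rm{Plef}}_n(\bQ,\beta)$, $\hat{\bQ}(\bm{m}_*) = \bQ \ge \lambda_{\min}(\bQ)\,\bI$ (using that the scalar $(1-\alpha_*)$ cancels in both numerator and denominator of \eqref{def: Q hat}), and $\|\bm{m}_*\bm{m}_*^\Trans\|_\infty = \alpha_* < 1$. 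Thus for any $\delta \in \bigl(0,\min(\lambda_{\min}(\bQ),\, 1 - \alpha_*)\bigr)$ the point $\bm{m}_*$ lies in the good set, and by \eqref{eq: hamilt UB} and the boundedness of the remaining summands we get $\TAP(\bm{m}_*)\ge -c'N$ with probability tending to one. Choosing $\delta$ small enough that $\tfrac{1}{2}\log\delta + c < -c'$ then guarantees that the bad supremum is below $\TAP(\bm{m}_*)$ and hence below the good supremum.

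I do not anticipate any serious obstacle; the main care needed is in tracking which constants may depend on which parameters and in simultaneously enforcing Plefka's condition, the spectral bound on $\hat{\bQ}(\bm{m}_*)$, and the diagonal bound on $\bm{m}_*\bm{m}_*^\Trans$. The only probabilistic input beyond deterministic linear algebra is \eqref{eq: hamilt UB}.
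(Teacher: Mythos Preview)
Your proposal is correct and follows essentially the same strategy as the paper's proof: both exhibit a scalar multiple $\alpha\bQ$ (the paper's $(1-\eta)\bQ$ is your $\alpha_*\bQ$) as an explicit point in the good set with $\hat\bQ=\bQ$, and both bound $\TAP$ on the bad set by $\tfrac{N}{2}\log\delta$ plus a $\delta$-independent constant via a determinant bound on $\bQ-\bmm\bmm^{\Trans}$. The only cosmetic difference is in the second bad-set case: the paper transfers the small eigenvalue of $\hat\bQ(\bmm)$ to a small eigenvalue of $\bQ-\bmm\bmm^{\Trans}$ using that the conjugating diagonal has entries $\ge 1$, whereas you bound $|\hat\bQ(\bmm)|$ directly via $\Tr\hat\bQ(\bmm)=n$ and AM--GM and then use $|\bQ-\bmm\bmm^{\Trans}|=\prod_k(1-|m^k|^2)\,|\hat\bQ(\bmm)|\le|\hat\bQ(\bmm)|$; both are equally short.
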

	 \begin{proof}
	    Choose $\eta \in (0,1)$ satisfying
		$ \eta \le (\sqrt{2} \|\bb^{\frac{1}{2}}\bQ\bb^{\frac{1}{2}}\|_2)^{-1}$ and let $\tilde{\bm{Q}} = (1-\eta) \bQ$, so that $\tilde{\bm{Q}} \in {\rm{Plef}}_n(\bQ,\beta)$ by \eqref{eq:Plefkadefn}. Let $\tilde{m}^k_i = (\tilde{\bm{Q}}^{1/2})_{k,i}$ for $k,i=1,\ldots,n$ and $\tilde{m}^k_i=0$ otherwise and $\tilde{\bmm}=(\tilde{m}^1,\ldots,\tilde{m}^n)$, so that $\tilde{\bmm} {\tilde{\bmm}}^\Trans = \tilde{\bm{Q}}$ and so $\tilde{\bmm} \in {\rm{Plef}}_N(\bQ,\beta)$. Then $\|\tilde{\bmm} \tilde{\bmm}^{\Trans}\|_\infty=1-\eta$. Let $\tilde{\delta}$ be the minimum of $\eta > 0$ and the smallest eigenvalue of $\hat{\bQ}(\tilde{\bmm})$. We then have
		\begin{equation}\label{eq: eta Q plefka bound}
		    \begin{array}{l}
			\displaystyle{ \frac{1}{N}\sup_{\bbm \in {\rm{Plef}}_N(\bQ,\beta): \hat{\bQ}(\bbm) \ge \tilde{\delta}\bI, \|\bmm \bmm^{\Trans}\|_\infty \le 1-\tilde{\delta}}} \TAP(\bbm)
			\\
			\overset{\eqref{eq:TAP},\eqref{eq: hamilt UB}}{\ge}
			-cn \max_{i}(|\beta_i|+ h_i)
			+ \frac{\eta}{2} \beta^\Trans \bQ^{\odot 2} \beta + \frac{n}{2}\log\eta+\frac{1}{2}\log\left|\bQ\right|
			\end{array}
		\end{equation}
		with probability going to $1$.
		
	    On the other hand assume that $\delta\in(0,1)$ and $\bmm \in {\rm{Plef}}_N(\bQ,\beta)$ and $\|\bmm \bmm^{\Trans}\|_\infty \ge 1-\delta$. Then $(\bmm \bmm^{\Trans})_{k,k}\ge1-\delta$ for some $k$, and because $|\bA|\le \prod_k A_{k,k}$ for any positive semi-definite $\bA$ we have $|\bQ-\bM|\le \delta$ and
		\begin{equation}\label{eq: delta dependent trivial tap bound 1}
			\frac{1}{N}\TAP(\bbm) \overset{\eqref{eq:TAP},\eqref{eq: hamilt UB}}{\le}
		    cn \max_{i}(|\beta_i|+ h_i) + n^2 \max_{i} \beta_i^2 + \frac{1}{2} \log \delta.
		\end{equation}
		
		Assume now instead that $\bmm \in {\rm{Plef}}_N(\bQ,\beta)$ and the smallest eigenvalue of $\hat{\bQ}(\bbm)$ is less than $\delta$. Let $\bm{S}=\text{Diag}((1-|m^1|^2)^{-1/2},...,(1-|m^n|^2)^{-1/2})$. Since $\hat \bQ(\bbm)  = \bm{S} (\bQ - \bM) \bm{S} $ and $\bm{S}_{ii} > 1 $, the 
	  smallest eigenvalue of $\bQ-\bM$ is bounded above by the smallest eigenvalue of $\hat{\bQ}(\bbm)$.  Thus if $\bmm$ is such that the smallest eigenvalue of $\hat{\bQ}(\bbm)$ is less than $\delta$, then the smallest eigenvalue of $\bQ-\bM$ is also less than $\delta$, so with probability tending to $1$ all such $\bbm$ satisfy
		\begin{equation}\label{eq: delta dependent trivial tap bound 2}
			\frac{1}{N}\TAP(\bbm) \overset{\eqref{eq:TAP},\eqref{eq: hamilt UB},\eqref{eq: mat det general}}{\le}
		    cn \max_{i}(|\beta_i|+ h_i) + n^2 \max_{i} \beta_i^2 + \frac{1}{2}(n-1)\log n
			+ \frac{1}{2} \log \delta.
		\end{equation}
	     If $\delta$ is picked small enough depending on $n,\beta,\bQ,(h_1,\ldots,h_n)$  then the r.h.s of both \eqref{eq: delta dependent trivial tap bound 1} and \eqref{eq: delta dependent trivial tap bound 2} are less than the bottom line of \eqref{eq: eta Q plefka bound}, and if we also ensure that $\delta\le \tilde{\delta}$ this proves the claim.
	\end{proof}
	We are now ready to prove the TAP lower bound Proposition \ref{prop: TAP LB}. The idea of the proof revolves around recentering the $\bs$ around some vector $\bbm$ and then restricting the partition function integral to a set where the contribution of the external field is negligible, which enables us to use the results of the previous subsection about the free energy without an external field.
	\begin{proof}[Proof of Proposition \ref{prop: TAP LB}]
	    By Lemma \ref{lem: small eval} there is a $\delta\in(0,1)$ such that \eqref{eq: LB claim} follows once we have shown that
	    \begin{equation}\label{eq: LB suff to show}
		\lim_{N\rightarrow\infty}
		\mathbb{P} \left( 
		F_N^\varepsilon(\beta, \bh, \bQ) \ge 
		\frac{1}{N} \sup_{\bbm \in {\rm{Plef}}_N(\bQ,\beta): \hat{\bQ}(\bbm)\ge\delta\bI,  \|\bmm \bmm^{\Trans}\|_\infty\le 1-\delta} \TAP(\bbm) - c \sqrt{\varepsilon} \right) = 1,
	    \end{equation}
	    for all $\varepsilon \in (0,c^{-1})$.

	    Fix some ${m^{1}},...,{m^{n}} \in\R^N$ with $ \|\bmm \bmm^{\Trans}\|_\infty \le 1-\delta$, $\hat{\bQ}(\bmm)\ge \delta \bI$ and $\bmm \bmm^\Trans \in {\rm{Plef}}_N(\bQ,\beta)$.
		By definition \eqref{eq: hamiltonian def} it follows that for all $\sigma, m \in \mathbb{R}^N$
		\begin{equation}\label{eq:centering}
			H_N(\sigma)
			= 
			H_N(m) + \nabla H_N(m) \cdot (\sigma-m) +  H_N(\sigma-m),
		\end{equation}
		so that for all $k$
			\begin{equation}\label{eq:recentering with ext field}
			\beta_k H_N({\sigma}^k) + N h^{k} \cdot {\sigma}^k 
			=
			\beta_k H_N({m^{k}}) + N h^{k} \cdot {m^{k}} + 
			N h^{m,k} \cdot (\sigma^k-m^k) + \beta_k H_N(\sigma^k-m^k),
		\end{equation}
		where 
		\begin{equation}\label{eq: eff ext field}
		      h^{m,k} = \frac{\beta_k}{N} \nabla H_N({m^{k}}) + h^{k}, \ k=1,\ldots,n,
		\end{equation}
		is the \textit{effective external field}.
		Using this we obtain
		\begin{equation}\label{eq:LB-recentering}
			Z^\varepsilon_N(\beta,\bh,\bQ) 
			=
			e^{\sum_{k=1}^n \left( \beta_k H_N({m^{k}}) + N h^{k} \cdot {m^{k}}\right)} \
			E\left[
			\1_{\bQ_\varepsilon}
			e^{
				\sum_{k=1}^n
				\left( \beta_k H_N(\sigma^k-m^k) +
				N h^{m,k} \cdot (\sigma^k-m^k) \right)
			}
			\right].
		\end{equation}
		Let $U$ be a $2n$-dimensional space whose span includes $m^k, h^{m,k}$ for $k=1,\ldots,n$.	We will now bound the expectation on the r.h.s. from below by inserting another indicator $\1_{A}$ given by
		\begin{equation}\label{eq: A def}
		A :=
		\left\{
		\bs: |P^{U} (\sigma^k-m^k)|\le \frac{\varepsilon}{4} \text{ for }k=1,\ldots,n\right\}.
		\end{equation}
		Note that on the event \eqref{eq: hamilt UB} for {$\bs\in A$}
		\[
		|(\sigma^k-m^k) \cdot m^\ell| \le \frac{\varepsilon}{4}
		\quad \text{and} \quad
		|(\sigma^k-m^k) \cdot h^{m,\ell}| \overset{\eqref{eq: hamilt UB}}{\le} c\varepsilon,
		\numberthis\label{eq:LB-slicebounds}
		\]
		for all $k,\ell \in \{1,...,n\}$. Therefore we obtain
		\begin{align*}
			E\left[
			\1_{\bQ_\varepsilon \cap A}
			e^{
				\sum_{k=1}^n
				\left( 
				\beta_k H_N(\sigma^k-m^k) +
				N h^{m,k} \cdot (\sigma^k-m^k)
				\right)
			}
			\right]
			\ge
			e^{-c \varepsilon N}
			E\left[
			\1_{\bQ_\varepsilon \cap A}
			e^{
				\sum_{k=1}^n
				\beta_k H_N(\sigma^k-m^k)
			}
			\right].
		    \numberthis\label{eq:LB-killing-the-external-field}
		\end{align*}
		Define the normalised projection of $\sigma^k-m^k$ onto $U^\perp$ by 
		$$
		\hat{\sigma}^k = \frac{P^{U^\perp}\left(\sigma^k-m^k\right)
		}{
			\big |
			P^{U^\perp}\left(\sigma^k-m^k\right)
			\big |
			}, k=1,\ldots,n.
		$$
		For $\bs\in A$ and $k$ it holds that
		\[
		|\sigma-m^{k}|^2 = |{\sigma}|^2 - |{m^{k}}|^2 - 2 (\sigma^k-m^k) \cdot {m^{k}} = 1 - |{m^{k}}|^2 + \mathcal{O}(\varepsilon)
		\numberthis\label{eq:LB-recentermagnitude}
		\]
		and so
        \[
        \left||P^{U^{\bot}}\left(\sigma-m^k\right)|^{2}-(1-\left|m^{k}\right|^{2})\right|\le c\varepsilon.
        \]
        Thus using that $H_{N}$ is $2$-homogeneous we obtain that on the
event \eqref{eq: hamilt UB}
\[
H_{N}\left(\sigma-m^{k}\right)\ge(1-\left|m^{k}\right|^{2})H_{N}\left(\hat{\sigma}\right)-c\varepsilon,
\]
        for all $k$, and
        
        \begin{equation}\label{eq: replace with sigma hat}
        E\left[
			\1_{\bQ_\varepsilon \cap A}
			e^{
				\sum_{k=1}^n
				\beta_k H_N(\sigma^k-m^k)
			}
			\right]
			\ge
			e^{- c \varepsilon N}
			E\left[
			\1_{\bQ_\varepsilon \cap A}
			e^{
				\sum_{k=1}^n
				\beta_k (1-|m^k|^2) H_N(\hat{\sigma}^k)
			}
			\right].
		\end{equation}
		To replace $\1_{\bQ_\varepsilon}$ with an indicator that is a function only of the $\hat{\bs} = (\hat{\sigma}^1,...,\hat{\sigma}^n)$,
		note that if
		\begin{equation}\label{eq: eff constr 1}
    		\big |
    		\hat{\sigma}^k\cdot\hat{\sigma}^\ell - \hat{Q}_{k,\ell}
    		\big |
    		\overset{\eqref{def: Q hat}}{=}
    		\bigg | 
    		\hat{\sigma}^k\cdot\hat{\sigma}^\ell
    		-
    		\frac{Q_{k,\ell} - m^k \cdot m^\ell}{\sqrt{1-|m^k|^2}\sqrt{1-|m^\ell|^2}}
    		\bigg |
    		\le 
    		\frac{\varepsilon}{2},
    		\ \ \forall k,\ell = 1,...,n
    	\end{equation}
		then
		\begin{equation}
		\begin{array}{rcl}
			|\sigma^k \cdot \sigma^\ell - Q_{k,\ell}|
			& \le &
			|(\sigma^k-m^k) \cdot (\sigma^\ell-m^\ell) - (Q_{k,\ell} - m^k \cdot m^\ell)|
			+
				|m^k \cdot (\sigma^\ell-m^\ell) + m^\ell \cdot (\sigma^k-m^k)|
			\\
			&\overset{\eqref{eq:LB-slicebounds}}{\le}  &
			\sqrt{1-|m^k|^2}\sqrt{1-|m^\ell|^2} \frac{\varepsilon}{2} + \frac{\varepsilon}{2}
			\le
			\varepsilon.
		\end{array}
		\end{equation}
		Thus we obtain
		$$
		\{\bs: \hat{\bs} \in \hat{\bQ}(\bbm)_{\frac{\varepsilon}{2}}\}
		\subset
		\{\bs: \bs \in \bQ_{\varepsilon}\}
		$$
		and
		\begin{equation}\label{eq: replace Q with Q hat}
			E\left[
			\1_{\bQ_\varepsilon \cap A}
			e^{
				\sum_{k=1}^n
				\beta_k (1-|m^k|^2) H_N(\hat{\sigma}^k)
			}
			\right]\ge
		E\left[ 
		\1_{A}
		\1_{\{\hat{\bs}  \in \hat{\bQ}(\bbm)_{\frac{\varepsilon}{2}}\}}
		e^{
			\sum_{k=1}^n
			\beta_k (1-|m^k|^2) H_N({\hat{\sigma}^{k}})
		}
		\right].
		\end{equation}
		
		Let $\mathcal{A}$ be the $\sigma$-algebra generated by $P^{U}\sigma^k$ for $k=1,\ldots,n$. Note that the $\hat{\sigma}^k$ are independent and uniform on $\mathcal{S}_{N-1}\cap U^{\bot}$ under $P[\cdot|\mathcal{A}]$. Thus
		\begin{equation}\label{eq:LB-conditioning}
		\begin{array}{l}
		E\left[ 
		\1_{A}
		\1_{\{\hat{\bs}  \in \hat{\bQ}(\bbm)_{\frac{\varepsilon}{2}}\}}
		e^{
			\sum_{k=1}^n
			\beta_k (1-|m^k|^2) H_N({\hat{\sigma}^{k}})
		}
		\right]\\
		=
		E\left[ 
		\1_{A}E\left[
		\1_{\{\hat{\bs}  \in \hat{\bQ}(\bbm)_{\frac{\varepsilon}{2}}\}}
		e^{
			\sum_{k=1}^n
			\beta_k (1-|m^k|^2) H_N({\hat{\sigma}^{k}})
		}
		\bigg| \mathcal{A}
		\right]
		\right]\\
		=
		E\left[ 
		\1_{A}E^{U^{\bot}}\left[
		\1_{\hat{\bQ}(\bbm)_{\frac{\varepsilon}{2}}}
		e^{
			\sum_{k=1}^n
			\beta_k (1-|m^k|^2) H_N(\sigma)
		}
		\right]
		\right]
		\end{array}
		\end{equation}
        Note that by letting
        \begin{equation}\label{def: beta_m}
            {(\beta_{\bbm})}_k = \beta_k (1-|m^k|^2), k=1,...,n
            \quad \text{and} \quad 
            \bb_m = \diag(\beta_{\bbm}) \in\mathbb{R}^{n \times n},
		\end{equation}
        we have
        $$ \|\bb^{\frac{1}{2}} \bQ \bb^{\frac{1}{2}} \|_2 = \|\bb_{\bbm}^{\frac{1}{2}} \hat{\bQ}(\bbm) \bb_{\bbm}^{\frac{1}{2}} \|_2, $$
        so that        
        \begin{equation}\label{eq: mmT plefka equivalence}
        \begin{array}{rcccl}
        \bbm\bbm^{\Trans}\in{\rm {Plef}}_{n}(\bQ,\beta) & \overset{\eqref{eq:Plefkadefn}}{\Leftrightarrow} & \|\bb^{\frac{1}{2}}\bQ\bb^{\frac{1}{2}}\|_{2}\leq\frac{1}{\sqrt{2}} & \Leftrightarrow & \|\bb_{\bbm}^{\frac{1}{2}}\hat{\bQ}(\bbm)\bb_{\bbm}^{\frac{1}{2}}\|_{2}\leq\frac{1}{\sqrt{2}}\\
         &  &  & \overset{\eqref{eq:HT-condition}}{\Leftrightarrow} & \beta_{\bbm}\in{\rm {HT}}(\hat{\bQ}(\bbm)).
        \end{array}
        \end{equation}
        Therefore Lemma \ref{lem:dimensiondrop} implies that on the event in \eqref{eq: dimensiondrop} the quantity \eqref{eq:LB-conditioning} is equal to
		\begin{equation}\label{eq: HT fe on A}
		\exp 
		\left(\frac{{\beta_{\bbm}}^\Trans \hat{\bQ}(\bbm)^{\odot 2} {\beta_{\bbm}}}{2} + \frac{1}{2} \log|\hat{\bQ}(\bbm)| + O(\sqrt{\varepsilon}) \right).
            \end{equation}
		Thus on that event \eqref{eq:LB-conditioning} is bounded below by
		
		\begin{align*}
			E\left[ 
			\1_{A}
			\right] \
			\exp\left(
			N \left(\frac{{\beta_{\bbm}}^\Trans \hat{\bQ}(\bbm)^{\odot 2} {\beta_{\bbm}}}{2} + \frac{1}{2} \log|\hat{\bQ}(\bbm)|\right) -  N c \sqrt{\varepsilon}
			\right).
		\numberthis\label{eq:converting-condional-expectation}
		\end{align*}
		We also have
		\begin{align*}
			E \left[ 
			\1_{A}
			\right] 
			\ge 
			 \prod_{k=1}^n \left( c \varepsilon^{2n} (1- |{m^{k}}|^2 - c\varepsilon)^{\frac{N-2n-2}{2}}\right)\ge \exp\left( \frac{N}{2} \sum_{k=1}^n \log(1-|m^k|^2) - c\varepsilon N\right),
			\numberthis\label{eq:A-volume}
		\end{align*}
		for a constant $c$ depending on $\delta$ and $N\ge c$, since $|m^k|^2 = ({\bmm \bmm^{\Trans}})_{k,k}\le 1-\delta$ (see \cite[(2.9)]{belius-kistler}). Combining \eqref{eq:LB-killing-the-external-field}, \eqref{eq: replace with sigma hat}, \eqref{eq: replace Q with Q hat}, \eqref{eq:LB-conditioning}, \eqref{eq:converting-condional-expectation}, \eqref{eq:A-volume} we obtain that
  \[
\begin{array}{l}
E\left[\1_{\bQ_{\varepsilon}\cap A}e^{\sum_{k=1}^{n}\left(\beta_{k}H_{N}(\sigma^{k}-m^{k})+Nh^{m,k}\cdot(\sigma^{k}-m^{k})\right)}\right]\\
\ge\exp\left(\sum_{k=1}^{n}\frac{N}{2}\log\left(1-|m^{k}|^{2}\right)+N\left(\frac{{\beta_{\bbm}}^{\Trans}\hat{\bQ}(\bbm)^{\odot2}{\beta_{\bbm}}}{2}+\frac{1}{2}\log|\hat{\bQ}(\bbm)|\right)-Nc\sqrt{\varepsilon}\right)
\end{array}
\]
Recall \eqref{def: Q hat} and \eqref{def: beta_m}, which imply that
\begin{equation}\label{eq: determinant transformation Q hat to Q-M}
\sum_{k=1}^n \log\left(1-|m^k|^2\right) + \log|\hat{\bQ}(\bbm)|
    = 
    \log\left(
    |\hat{\bQ}(\bbm)| \
    \prod_{k=1}^n \left(1-|m^k|^2\right)
    \right) = \log |\bQ - \bM|,
\end{equation}
and
\begin{equation}\label{eq: Onsager terms equivalence}
    {\beta_{\bbm}}^{\Trans}\hat{\bQ}(\bbm)^{\odot2}{\beta_{\bbm}}
    =
    \beta^{\Trans} \left(\bQ - \bM\right)^{\odot2} \beta.
\end{equation}
This implies that
\begin{equation}\label{eq: recent Hamilt LB}
\begin{array}{l}
E\left[\1_{\bQ_{\varepsilon}\cap A}e^{\sum_{k=1}^{n}\left(\beta_{k}H_{N}(\sigma^{k}-m^{k})+Nh^{m,k}\cdot(\sigma^{k}-m^{k})\right)}\right]\\
\ge\exp\left(\frac{N}{2}\beta^{\Trans}(\bQ-\bM)^{\odot2}\beta+\frac{N}{2}\log|\bQ-\bM|-Nc\sqrt{\varepsilon}\right).
\end{array}
\end{equation}
  
  Combining this with \eqref{eq:LB-recentering} we obtain that
		\begin{align*}
			Z^\varepsilon_N(\beta,\bh,\bQ) 
			\ge
			e^{\sum_{k=1}^{n}\left(\beta_{k}H_{N}({m^{k}})+Nh^{k}\cdot{m^{k}}\right)+\frac{N}{2}\beta^{\Trans}(\bQ-\bM)^{\odot2}\beta+\frac{N}{2}\log|\bQ-\bM|-Nc\sqrt{\varepsilon}
			},
		\numberthis\label{eq:LB-pre-cleaning-up}
		\end{align*}
		for all $\bmm$ with $\|\bmm \bmm^{\Trans}\| \le 1-\delta$, $\hat{\bQ}(\bmm)\ge \delta \bI$ and $\bmm \bmm^\Trans \in {\rm{Plef}}_N(\bQ,\beta)$, with probability tending to one.
		Recalling \eqref{eq:TAP}, we see that \eqref{eq:LB-pre-cleaning-up} is equivalent to
		\begin{align*}
			Z^\varepsilon_N(\beta,\bh,\bQ) 
			\ge
			e^{
				\TAP(\bbm) - N c \sqrt{\varepsilon}
			}.
		\end{align*}
        This proves \eqref{eq: LB suff to show}, so completes the proof of Proposition \ref{prop: TAP LB}.

	\end{proof}

	\bigskip

	\section{Upper bound}\label{section: upper bound}
	In this section we prove the following upper bound on the free energy.

	\begin{proposition}[The TAP Upper Bound]\label{prop:TAP_UB}
		Let $\bQ,\beta,h$ be as in Proposition \ref{prop: TAP LB}. For any ${{{\eta}}}>0$ there is a $c=c(\beta,\bh,\bQ,{{{\eta}}})$ such that for all $\varepsilon \in (0,c)$
        \begin{equation}
		\label{eq:TAP_UB}
    		\lim_{N\to\infty} \P \left( F_N^\varepsilon(\beta, \bh, \bQ) \le \frac{1}{N} \sup_{\bbm \in {\rm{Plef}}_N(\bQ,\beta)} \TAP(\bbm)  + {{{\eta}}} \right)=1.
        \end{equation}
	\end{proposition}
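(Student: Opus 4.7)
The plan is to follow the outline of Section~\ref{subsec: proof outline}, mirroring the net-plus-recentering strategy of Proposition~\ref{prop: TAP LB} but in the opposite direction: I cover the integration domain by strata, on each of which I replace the recentered Hamiltonian by a proxy that always falls in the high temperature regime, so that Lemma~\ref{lem:dimensiondrop} can be applied whether or not the center of the stratum satisfies Plefka's condition. Concretely, I would first fix a polynomial-sized $\varepsilon$-net $\{\bmm_j\}_{j=1}^{M} \subset \mathcal{S}_{N-1}^n$ together with discrete data specifying $\bmm_j \bmm_j^\Trans$ and the coordinates of $\bs-\bmm_j$ on a distinguished low-dimensional subspace $V^\star(\bmm)\subset \R^N$. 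The subspace $V^\star(\bmm)$ is spanned by $\{m^k, h^{m,k}: k\le n\}$ together with the top-order eigenspace of $\frac{\bJ+\bJ^\Trans}{2}$, i.e.\ the ``dangerous'' directions where the recentered Hamiltonian would be out of the high temperature regime. Since $\log M = o(N)$, the union bound over strata contributes negligibly and reduces the problem to estimating $\log \int_{A(\bmm)\cap \bQ_\varepsilon} e^{f(\bs)}d\bs$ for a single stratum $A(\bmm)$ in which the coordinates of $\bs - \bmm$ in $V^\star(\bmm)$ are prescribed up to error $\varepsilon$.

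Inside one such stratum I would apply the recentering identity \eqref{eq:recentering with ext field}. The ``energy at $\bmm$'' term contributes $\sum_k \beta_k H_N(m^k) + N h^k \cdot m^k$; the slice volume gives at most $\frac{N}{2}\log|\bQ-\bM| + o_\varepsilon(N)$ via the appropriate analogue of Lemma~\ref{lem:volumeconstraint} applied to the effective constraint matrix $\hat\bQ(\bmm)$; and the linear effective-field contribution $\sum_k N h^{m,k}\cdot(\sigma^k-m^k)$ is $O(\varepsilon N)$ by construction of $A(\bmm)$, since each $h^{m,k}$ lies in $V^\star(\bmm)$, along which the coordinates of $\sigma^k-m^k$ are pinned to be small.

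The heart of the argument is an upper bound on the remaining quadratic term $\sum_k \beta_k H_N(\sigma^k-m^k)$ that is valid regardless of whether $\bmm$ satisfies Plefka's condition. I would introduce a truncated Hamiltonian $\tilde H_N$ obtained by splitting off the top $k^\star$ eigenvalues of $\frac{\bJ+\bJ^\Trans}{2}$ at a threshold chosen so that the residual model is at high temperature uniformly in $\bmm$. Writing $H_N-\tilde H_N$ as a quadratic form supported on the top eigenspace and absorbing this correction into the $V^\star(\bmm)$-parameterization gives $\sum_k \beta_k H_N(\sigma^k-m^k) \le \sum_k \beta_k \tilde H_N(\sigma^k-m^k) + C(\text{discretized data})$, where the correction depends only on the finite-dimensional data. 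Since $\tilde H_N$ restricted to $V^\star(\bmm)^\perp$ is at high temperature for every effective $\bb_\bmm$, Lemma~\ref{lem:dimensiondrop} yields $\log \int_{A(\bmm)\cap\bQ_\varepsilon} e^{f(\bs)}d\bs \le \tTAP(\bmm) + c\sqrt{\varepsilon} N$, where $\tTAP$ is a modified TAP free energy whose Onsager term is the truncated analogue of $\frac{N}{2}\beta^\Trans(\bQ-\bM)^{\odot 2}\beta$ and which dominates $\TAP$ pointwise.

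The hard part will be the final step: showing that the supremum of $\tTAP$ is attained (approximately) at some $\bmm^\star$ satisfying Plefka's condition, so that at $\bmm^\star$ the truncation becomes trivial and $\tTAP(\bmm^\star)=\TAP(\bmm^\star)$. I would analyze the first-order conditions for $\bmm^\star$ and argue that if Plefka's condition were violated then the top eigenvalue of $\bb^{1/2}(\bQ-\bmm^\star \bmm^{\star\Trans})\bb^{1/2}$ would exceed $1/\sqrt{2}$, whence a suitable perturbation of $\bmm^\star$ along the corresponding eigenvector strictly reduces the truncation loss while affecting the remainder of $\tTAP$ only to second order, contradicting maximality. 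This parallels the analogous reduction in \cite[Section~4]{belius-kistler} but requires a multidimensional linear-algebra input to handle the $n\times n$ matrix structure. Once the reduction is established, $\sup_{\bmm}\tTAP(\bmm) \le \sup_{\bmm\in\mathrm{Plef}_N(\bQ,\beta)}\TAP(\bmm)+o(1)$, and combining with the per-stratum estimate and choosing $\varepsilon$ small enough that $c\sqrt{\varepsilon}<\eta$ yields \eqref{eq:TAP_UB}.
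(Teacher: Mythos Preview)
Your high-level plan matches the paper's: recenter around a magnetization $\bbm$, replace the recentered Hamiltonian by a proxy whose free energy is computable at \emph{all} temperatures, obtain a modified TAP functional, and finally show its maximizer satisfies Plefka's condition. But several of the concrete devices you propose do not work as stated.

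First, the stratification. A polynomial-sized $\varepsilon$-net in $\mathcal{S}_{N-1}^n$ does not exist: the covering number is $e^{\Theta(N)}$, so $\log M=o(N)$ fails. Moreover your subspace $V^\star(\bbm)$ depends on $\bbm$ through $h^{m,k}=\tfrac{\beta_k}{N}\nabla H_N(m^k)+h^k$, so there is no single low-dimensional subspace along which to discretize. The paper resolves this with Lemma~\ref{lem:subspace-construction}: it builds a \emph{fixed} subspace $\mathcal{M}_N\subset\R^N$ of dimension $O(N^{3/4})$ that is approximately invariant under $\bbm\mapsto (h^{m,k})_k$, writes the partition function as a double integral (Fubini) over $\mathcal{M}_N^n$ and its orthogonal complement, and applies the Laplace method to the resulting $o(N)$-dimensional outer integral (Proposition~\ref{prop:upperbound}). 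No net is needed.

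Second, the ``always at high temperature'' proxy. Truncating the top eigenvalues does not place the recentered model in the regime of Lemma~\ref{lem:dimensiondrop}: the condition $\beta_\bbm\in{\rm HT}(\hat\bQ(\bbm))$ depends on $\bb_\bbm$ and $\hat\bQ(\bbm)$, not on the Hamiltonian's spectrum, so Lemma~\ref{lem:dimensiondrop} remains inapplicable when Plefka fails. The paper instead passes to the \emph{binned} Hamiltonian $\tilde H_N^K$ of \eqref{eq: binned hamilt def}, whose limiting spectral measure $\mu_K$ is purely atomic so that $G_{\mu_K}(\lambda^*)=\infty$ and the HCIZ asymptotics \eqref{eq: guhu}--\eqref{eq:FKfunctional} yield a \emph{smooth} free energy $\mathcal{F}_K(\tilde\beta)$ valid for every $\tilde\beta>0$ (Lemmas~\ref{lem:sperhicalintidentity}--\ref{lem:sperhicalintgeneral}). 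This is what produces the modified Onsager term $\sum_k\mathcal{F}_K(\tilde\beta_k(\bbm))$ in $\tilde F^K_{\rm TAP}$.

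Third, the location of the maximizer. Your first-order perturbation sketch is circular: at a maximizer the full gradient of $\tilde F_{\rm TAP}$ vanishes, so any direction in which the truncation correction has nonzero first variation is exactly cancelled by the remaining terms. The paper's argument is second-order (Lemma~\ref{lem:critptcond}): compute the Hessian $\nabla^2\tilde F^K_{\rm TAP}(\bbm)$ via primary-matrix-function calculus (Lemmas~\ref{lem:matderiv}--\ref{lem: hessian}), observe that it equals a block-diagonal matrix with blocks $2\theta_{i/N}\bb - 2\bb^{1/2}v_{\mu_K}(2\bQ_\bbm)\bb^{1/2}$ plus a bounded-rank correction, and use Weyl interlacing to conclude that $\nabla^2\tilde F^K_{\rm TAP}(\bbm)\le 0$ forces $v_{\mu_K}(2\tilde\beta_k(\bbm))\ge\theta_{1-O(1)/N}\to\sqrt2$, whence $\tilde\beta_k(\bbm)\le\tfrac{1}{\sqrt2}-\delta_K$ by Lemma~\ref{lem:lambda bound}. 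This is the step that genuinely requires the multidimensional linear algebra you allude to, and it is second-order, not first.
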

 	
    The proof involves constructing, in Subsection \ref{section:with_external_field}, a low-dimensional subspace of magnetizations $\mathcal{M}_N^n$, with the property that after recentering around $\bbm\in\mathcal{M}_N^n$ the effective external field is again almost completely contained in $\mathcal{M}_N^n$. The set $A(\bm{m})$ described in Subsection \ref{subsec: proof outline} is here essentially the hyperplane $\bm{m} + (\mathcal{M}^n_N)^{\bot}$ intersected with the cartesian product $\mathcal{S}_{N-1}^n$, where $(\mathcal{M}^n_N)^{\perp}$ is the perpendicular space. We write the integral in $F_N^\varepsilon(\beta,\bh,\bQ)$ using Fubini's theorem as a double integral first over $\mathcal{M}_N^n$ and then over the perpendicular space $(\mathcal{M}_N^n)^\perp$, so that the inner integral is an integral of the recentered Hamiltonian over the sets $A(\bm{m})$. The latter lacks external field and has a higher effective temperature than the original model (as long as $\bm{m}\ne0$). However, as opposed to in the proof of the lower bound, for some $\bm{m}$ Plefka's condition may not be satisfied, which means that this recentered Hamiltonian is not at high temperature.
    
    Therefore we replace the effective Hamiltonian by an approximation whose partition function is essentially a {\emph{low rank}} Harish-Chandra-Itzykson-Zuber (HCIZ) integral, and is in some sense always at high temperature. In Subsection \ref{subsec:HCIZ} we estimate such integrals. Using those estimates in Subsection \ref{section:with_external_field} we integrate out the inner integral so that the remaining outer integral is now the integral of a modified TAP free energy, in which the Onsager term $\frac{N}{2} \beta^\Trans (\bQ - \bM)^{\odot 2} \beta$ is replaced by the asymptotics of the HCIZ integral. The integral in $F_N^{\varepsilon}(\beta,\bh,\bQ)$ thus reduces to an integral of the exponential of $N$ times the modified TAP free energy over the low-dimensional space $\mathcal{M}_N^n$, and by the Laplace method the log of the integral turns into the maximizer of the modified TAP free energy over all $\bbm$.

    In Subsection \ref{subsec:location_max} we then show that if the Hessian of the modified TAP free energy at a critical point is negative semi-definite, as it must be at the maximizer, then $\bbm$ satisfies Plefka's condition. Furthermore we show that the Onsager terms of the modified TAP free energy and the original TAP free energy $\TAP(\bbm)$ are close, so that the upper bound on the free energy $F_N^\varepsilon(\beta,\bh,\bQ)$ 
 in terms of the modified TAP free energy implies the upper bound \eqref{eq:TAP_UB} in terms of the original TAP free energy.

    To implement the above strategy we will have to rely more heavily on random matrix calculations than in Section \ref{section: lower bound}. Define a deterministic version of the Hamiltonian by
	\begin{equation}\label{eq:deterministicHamiltonian}
		\tilde H_N(\sigma) = N \sum_{i = 1}^N \theta_{i/N} \sigma_i^2,
	\end{equation}
	where $\theta_{i/N}$ are the classical locations from \eqref{eq: eval class location}.    
	If $\bU$ is the change of basis matrix that diagonalizes $\bJ+\bJ^{\Trans}$ then by \eqref{eq: eval class location}
	\begin{equation}\label{eq:unifboundprob}
		\lim_{N \to \infty} \sup_{\sigma \in \mathcal{S}_{N-1}} \bigg| H_N(\sigma) - \tilde H_N(\bU\sigma) \bigg| = 0\text{ in probability,}
	\end{equation}
	so it suffices to prove the upper bound of the free energy for the deterministic Hamiltonian 
	\begin{equation}\label{eq: det FE}
	\tilde F_N^\varepsilon(\beta,\bh,\bQ) = \frac{1}{N} \log \int_{\bQ_\varepsilon} e^{\sum_{k = 1}^n \beta_k \tilde H_N(\sigma^k) + \tilde h^k \cdot \sigma^k} \, d \bs,
	\end{equation}
	where $\tilde h^k$ is the external field $h^k$ in the diagonalizing basis of the disorder matrix $\bJ+\bJ^{\Trans}$. The upper bound will be in terms of a corresponding TAP free energy
    \begin{equation}\label{eq:tildeTAP}
        \tilde{F}_{\textrm{TAP}}(\bbm) =  \sum_{k = 1}^n \beta_k \tilde{H}_N  ( m^k ) + N \sum_{k = 1}^n \tilde{h}^k \cdot m^k 
        + \frac{N}{2} \log| \bQ - \bM|
        + \frac{N}{2} \beta^\Trans (\bQ - \bM)^{\odot 2} \beta,
	\end{equation}
    which is simply $F_{\rm{TAP}}$ from \eqref{eq:TAP} with the original Hamiltonian and external field replaced with the deterministic diagonal Hamiltonian and rotated external field.

		We further discretize the deterministic Hamiltonian $\tilde H_N(\sigma)$. Given $K \geq 2$, we consider $K$ equally spaced numbers in $[-\sqrt{2},\sqrt{2}]$
		\begin{equation}\label{eq:discrete1}
		    	-\sqrt{2} = x_1 < x_2 < \dots < x_K  = \sqrt{2} - \frac{2 \sqrt{2}}{K} \quad \text{and} \quad x_{k + 1} - x_k = \frac{2 \sqrt{2}}{K}
		\end{equation}
	and the corresponding partition $I_1, \dots, I_K$ of $\{1, \dots, N\}$ given by
	\begin{equation}\label{eq:discrete2}
	    	I_k = \{i : x_k \leq \theta_{i/N} < x_{k + 1} \} \quad \text{and} \quad I_K = \{ i: x_K \leq \theta_{i/N} \}.
	\end{equation}
	Consider the ``binned'' Hamiltonian
	\begin{equation}\label{eq: binned hamilt def}
    	\tilde H^K_N(\sigma) = N \sum_{k = 1}^K \sum_{i \in I_k} x_k \sigma_i^2,
	\end{equation}
	where the eigenvalues $\theta_{i/N}$ are replaced with the left end point of the ``bin'' it belongs to.	We will compute an upper bound for the free energy of the binned Hamiltonian
	\begin{equation}\label{eq: binned FE}
	\tilde F_{N,K}^\varepsilon(\beta,\bh,\bQ) = \frac{1}{N} \log \int_{\bQ_\varepsilon} e^{\sum_{k = 1}^n \beta_k \tilde H^K_N(\sigma^k) + \tilde h^k \cdot \sigma^k} \, d \bs,
	\end{equation}
	and by taking $K\to\infty$ obtain an upper bound for \eqref{eq: det FE}.

	We first prove an upper bound of the free energy $\tilde F_{N,K}$ in the absence of an external field, i.e. $\bh = 0$. 	For this we use a result from \cite{GuHu21} about the asymptotics of HCIZ \cite{Harish,Itzyksonzuber} integral of rank $n$ (or $n$ dimensional spherical integrals in the terminology of \cite{GuHu21,GuMa05,HussonKoSublinear}). 

	\subsection{Binnned Hamiltonian without external field}\label{subsec:HCIZ}
    In this subsection we compute the free energy of the binned Hamiltonian without external field.
	
 We begin by using \cite{GuHu21} to compute the free energy with identity constraint $\bQ = \bI$, which is essentially an HCIZ integral of rank $n$. We now recall the limiting formula of \cite{GuHu21} (which are simplified due to the absence of outlier eigenvalues here). Given any measure $\nu$ let $G_\nu$ denote its Stieltjes transform defined on $\mathbb{C} \setminus \supp(\nu)$,
	\begin{equation}\label{eq: stieltjes transform G}
	    G_\nu(z) = \int (z-x)^{-1} \, d \nu(x),
	\end{equation}	
	and if $\lambda^*$ is the rightmost point in the support of $\nu$, we define as in \cite[Proposition~1]{GuHu21} the function
	\[
	J_\nu(z) = \lambda^* z  + (v_\nu(z) - \lambda^*) G_\nu(v_\nu(z)) - \log z - \int \log |v_\nu(z) - x| \, d \nu(x) - 1\text{ for }z>0
	\]
	where
	\begin{equation}\label{eq:vfunctional}
		v_\nu(z) = \begin{cases}
			\lambda^* & \text{if } G_\nu(\lambda^*) \leq z\\
			G^{-1}_\nu(z), & \text{if } G_\nu(\lambda^*) > z.
		\end{cases}
	\end{equation}
    Let $E_{{\rm{Haar}}}$ denote the probability measure where $(\sigma^1,\ldots,\sigma^n)$ are uniformly sampled orthonormal vectors (i.e. the top $k$ rows of a Haar distributed orthogonal random matrix). Also, suppose that $\bX_N$ is a matrix with empirical spectral distribution $\nu$ and suppose that the extremal eigenvalues of $\bX_N$ converge to the corresponding smallest and largest points in the support of $\nu$. The result \cite[Proposition~1]{GuHu21} implies that
		\begin{equation}\label{eq: guhu}
		\lim_{N \to \infty} \frac{1}{N} \log E_{{\rm{Haar}}} \left[ \exp\bigg( N \sum_{k= 1}^n \beta_k ( \sigma^k   )^\Trans \bm{X}_N \sigma^k  \bigg) \right] 
		= \frac{1}{2} \sum_{k = 1}^n J_{\nu}(2\beta_k).
		\end{equation}

    The Hamiltonian in \eqref{eq: hamiltonian def} can be written as
    \begin{equation}\label{eq: disc hamilt in terms of X}
        \sum_{k= 1}^n \beta_k \tilde H^K_N(\sigma^k) = N \sum_{k= 1}^n \beta_k (\sigma^k)^\Trans \bm{X}_K \sigma^k,
    \end{equation}
    for 
    \begin{equation}\label{eq:binnedmatrix}
        \bm{X}_K = \diag\bigg(\underbrace{x_1, \dots, x_1}_{|I_1|},\underbrace{x_2, \dots, x_2}_{|I_2|}, \dots, \underbrace{x_K, \dots, x_K}_{|I_K|}\bigg),
    \end{equation}
    and that the limiting spectral distribution of $\bm{X}_K$ of is equal to 
	\begin{equation}\label{eq:ESDmuK}
		\mu_K = \sum_{k = 1}^K \rho_k \delta_{x_{k}} \quad \text{where} \quad \rho_k = \lim_{N \rightarrow\infty} \frac{|I_k|}{N} = \int_{x_k}^{x_{k + 1}} \, d \mu_{\rm{sc}}(x),
	\end{equation}       
    (recall \eqref{eq:discrete1}-\eqref{eq:discrete2}).
    Thus defining
	\begin{equation}\label{eq:FKfunctional}
		\cF_K(\beta) = \frac{1}{2} J_{\mu_{K}} (2 \beta) \text{ for } \beta>0,
	\end{equation}
     it follows from \eqref{eq: guhu} that
	\begin{equation}\label{eq: apply guhu}
		\lim_{N \to \infty} \frac{1}{N} \log E_{{\rm{Haar}}} \left[ \exp\bigg( N \sum_{k= 1}^n \beta_k ( \sigma^k   )^\Trans \bm{X}_K \sigma^k  \bigg) \right] 
        = \sum_{k = 1}^n \mathcal{F}_K(\beta_k).
	\end{equation}
 
    The upper bound for the free energy \eqref{eq: binned FE} of the binned Hamiltonian will be given in terms of a modified TAP free energy where the $\beta_k$ in the right-hand side of \eqref{eq: guhu} are replaced by the eigenvalues $\tilde{\beta}_1{(\bbm)},\dots,\tilde{\beta}_n{(\bbm)}$ of $\bb^{1/2}(\bQ - \bM)\bb^{1/2}$, namely 
	\begin{equation}\label{eq:TAPK}
		\tilde{F}^K_{\textrm{TAP}}(\bbm) = 
		\sum_{k=1}^n \left(
		\beta_k \tilde{H}_N (m^k) 
		+ N  \tilde{h}^k \cdot m^k
		\right)
        + \frac{N}{2} \log |\bQ - \bM|
		+ N \sum_{k =1}^n \mathcal{F}_K(\tilde{\beta}_k{(\bbm)}),		
	\end{equation}
    for $\bbm = (m^1,\dots,m^n) \in \R^{n \times N}$, cf. \eqref{eq:TAP}, \eqref{eq:tildeTAP}.

  
    For $\nu=\mu_K$ we have $\lambda^*=\sqrt{2}$ and $G_{\mu_{K}}(\lambda^*)=\infty$ so that
	    \begin{equation}\label{eq: case nu=mu_K}
          v_{\mu_{K}}\left(z\right)=G_{\mu_{K}}^{-1}\left(z\right),
	    \end{equation}
        which is smooth on $(0,\infty)$.
	    Therefore $J_{\mu_{K}}$ is a smooth function on $(0,\infty)$, and so is $\mathcal{F}_K$. Note that
    \begin{equation}\label{eq:thisFKandBKFK}
        \mathcal{F}_K\text{ coincides with the }\mathcal{F}_K\text{ from \cite[(4.15)]{belius-kistler}},
    \end{equation}
    which can be verified by comparing \cite[(4.13)-(4.15)]{belius-kistler} and \eqref{eq: stieltjes transform G}-\eqref{eq:FKfunctional}, where by \eqref{eq: case nu=mu_K} the $\lambda_K(\beta)$ of \cite{belius-kistler} is the same as $v_{\mu_K}(2\beta)$.
    The representation of \cite[Lemma 10]{belius-kistler}, or the case $n=1$ of \eqref{eq: sperhicalintidentity} below, implies that $\mathcal{F}_K(\beta)\to0$ as $\beta\to0$, so 
setting $\mathcal{F}_K(0)=0$ gives a continuous extension of the function to $[0,\infty)$ (in fact, $\mathcal{F}_K$ is smooth on $[0,\infty)$, but we refrain from proving or using this fact). Furthermore the representation of \cite[Lemma 10]{belius-kistler} (or the case $n=1$ of \eqref{eq: sperhicalintidentity}) implies that $\mathcal{F}_K$ is convex, so that
    \begin{equation}\label{eq:FK Lipschitz}
        \mathcal{F}_K(x)\text{ is Lipschitz on compact subsets of }[0,\infty).
    \end{equation}    

 We now use an approximation argument to derive a formula for the limiting free energy 
 $$ \tilde F_{N,K}^\varepsilon(\beta,0,\bI) = \frac{1}{N}\log \int_{\bI_\varepsilon} \exp\bigg( \sum_{k= 1}^n \beta_k \tilde H^K_N(\sigma^k) \bigg)$$
 of the binned Hamiltonian with an identity constraint from \eqref{eq: apply guhu}. Since the r.h.s. of \eqref{eq: sperhicalintidentity} is smooth we see that for all finite $K$ the partition function of the right-hand side is at high temperature for all $\beta$.
	\begin{lemma}\label{lem:sperhicalintidentity}For any $\beta \in [0,\infty)^n$ and $K\ge1$ 
		\begin{equation}\label{eq: sperhicalintidentity}
		\lim_{\varepsilon \to 0} \lim_{N \to \infty} \frac{1}{N} \log \int_{\bI_\varepsilon} \exp\bigg( \sum_{k= 1}^n \beta_k \tilde H^K_N(\sigma^k) \bigg) \, d \bs = \sum_{k = 1}^n \cF_K(\beta_k),
		\end{equation}
		where the region of integration $\bI_\varepsilon$ is the neighborhood of the identity matrix $\bI$ as defined in \eqref{eq: eps nbhood}.
	\end{lemma}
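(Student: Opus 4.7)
Plan. The strategy is to relate the integral over $\bI_{\varepsilon}$ to the rank-$n$ spherical integral against the Haar measure on orthonormal $n$-frames appearing in \eqref{eq: apply guhu}. For $\bs = (\sigma^{1},\dots,\sigma^{n})$ sampled from the product uniform measure on $\mathcal{S}_{N-1}^{n}$, let $\bs^{0} = \mathrm{GS}(\bs) = (\sigma^{0,1},\dots,\sigma^{0,n})$ denote the Gram--Schmidt orthogonalization of its rows (well defined $d\bs$-a.s.). The key structural observation is that $\bs^{0}$ is independent of $\bs\bs^{\Trans}$ and is distributed according to $E_{\mathrm{Haar}}$: for any $\bm{U}\in O(N)$ the transformation $\bs\mapsto\bs\bm{U}^{\Trans}$ preserves the law $d\bs$, leaves $\bs\bs^{\Trans}$ invariant, and satisfies $\mathrm{GS}(\bs\bm{U}^{\Trans})=\mathrm{GS}(\bs)\bm{U}^{\Trans}$; taking $\bm{U}$ Haar-distributed and independent of $\bs$ then shows that, conditional on $\bs\bs^{\Trans}$, $\bs^{0}$ is Haar-distributed on orthonormal $n$-frames. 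Since $\1_{\bI_{\varepsilon}}(\bs)$ is a function of $\bs\bs^{\Trans}$, this gives, for any bounded measurable $f$,
\begin{equation}\label{eq: plan_haar_decomp}
E\!\left[\1_{\bI_{\varepsilon}}(\bs)\,f(\bs^{0})\right] \;=\; E\!\left[\1_{\bI_{\varepsilon}}(\bs)\right]\cdot E_{\mathrm{Haar}}\!\left[f(\bs)\right].
\end{equation}

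Second, I would bound the Gram--Schmidt correction. When $\bs\in\bI_{\varepsilon}$ one has $|\sigma^{k}|=1$ and $|\sigma^{k}\cdot\sigma^{\ell}|\le\varepsilon$ for $k\ne\ell$, and a short recursive computation on the successive Gram--Schmidt steps yields $\|\sigma^{k}-\sigma^{0,k}\|\le c_{n}\varepsilon$ for all $k$, provided $\varepsilon$ is small depending on $n$. Using $\tilde H^{K}_{N}(\sigma)=N\sigma^{\Trans}\bm{X}_{K}\sigma$, $\|\bm{X}_{K}\|_{2}\le\sqrt{2}$, and $|\sigma^{k}|=|\sigma^{0,k}|=1$, the identity $\sigma^{\Trans}\bm{X}_{K}\sigma-(\sigma^{0})^{\Trans}\bm{X}_{K}\sigma^{0}=(\sigma-\sigma^{0})^{\Trans}\bm{X}_{K}(\sigma+\sigma^{0})$ then turns this into the pointwise bound
\begin{equation*}
\Bigl|\sum_{k=1}^{n}\beta_{k}\tilde H^{K}_{N}(\sigma^{k})-\sum_{k=1}^{n}\beta_{k}\tilde H^{K}_{N}(\sigma^{0,k})\Bigr|\le c_{n,\beta}\,N\varepsilon\quad\text{on }\bI_{\varepsilon}.
\end{equation*}

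Third, combining this pointwise bound with \eqref{eq: plan_haar_decomp} applied to $f(\bs)=\exp\!\bigl(N\sum_{k}\beta_{k}(\sigma^{k})^{\Trans}\bm{X}_{K}\sigma^{k}\bigr)$ and then taking $\tfrac{1}{N}\log$ gives
\begin{equation*}
\frac{1}{N}\log\int_{\bI_{\varepsilon}}\!e^{\sum_{k}\beta_{k}\tilde H^{K}_{N}(\sigma^{k})}\,d\bs \;=\; \frac{1}{N}\log E[\1_{\bI_{\varepsilon}}(\bs)]\;+\;\frac{1}{N}\log E_{\mathrm{Haar}}\!\Bigl[e^{N\sum_{k}\beta_{k}(\sigma^{k})^{\Trans}\bm{X}_{K}\sigma^{k}}\Bigr]\;+\;O_{n,\beta}(\varepsilon).
\end{equation*}
As $N\to\infty$ the first term on the right tends to $\tfrac{1}{2}\log|\bI|=0$ with an additional error of $O(\varepsilon)$ by Lemma \ref{lem:volumeconstraint} with $\bQ=\bI$, and the second tends to $\sum_{k=1}^{n}\mathcal{F}_{K}(\beta_{k})$ by \eqref{eq: apply guhu}. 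Sending $\varepsilon\to 0$ eliminates the $O(\varepsilon)$ error and proves the claim.

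The main technical point is establishing the decomposition \eqref{eq: plan_haar_decomp} via the $O(N)$-equivariance of Gram--Schmidt, together with the $O(\varepsilon)$ control on $\|\sigma^{k}-\sigma^{0,k}\|$; both are elementary for fixed $n$, though one must carefully propagate the corrections through the $n$ successive Gram--Schmidt steps. Once these are in hand, the rest of the argument is a direct invocation of already-established inputs, namely \eqref{eq: apply guhu} and Lemma \ref{lem:volumeconstraint} at $\bQ=\bI$.
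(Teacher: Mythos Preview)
Your proposal is correct and follows essentially the same approach as the paper's proof. The only difference is cosmetic: the paper orthogonalizes via the symmetric square root $\tilde\bs=(\bs\bs^{\Trans})^{-1/2}\bs$ rather than Gram--Schmidt, but the logic is identical --- show the orthogonalized frame is $E_{\mathrm{Haar}}$-distributed and independent of $\bs\bs^{\Trans}$ via $O(N)$-equivariance, control the orthogonalization error by $O(\varepsilon)$ on $\bI_\varepsilon$, then invoke \eqref{eq: apply guhu} and Lemma~\ref{lem:volumeconstraint}.
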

	\begin{proof}
		 We approximate the integral of the l.h.s. of \eqref{eq: sperhicalintidentity} by the expectation in \eqref{eq: apply guhu}.
		
		To this end let $E$ denote the measure under which $(\sigma^1,\ldots,\sigma^n)$ are independent uniform unit vectors on the sphere. Define $\tilde \bs = (\bs \bs^\Trans)^{-\frac{1}{2}} \bs$, which exists $E$-almost surely. By construction the matrix $\tilde \bs$ has orthogonal rows.
		Furthermore, if $\bO$ is an arbitrary $N\times N$ orthogonal matrix then $\bs \stackrel{d}{=} \bs \bO$ under $E$ by rotational symmetry of the uniform measures on the product of spheres. It follows that under $E$
		\[
		\tilde \bs \stackrel{d}{=} (\bs \bO (\bs \bO)^\Trans)^{-\frac{1}{2}} \bs \bO =  (\bs \bs^\Trans)^{-\frac{1}{2}} \bs \bO = \tilde \bs \bO,
		\]
		so that the $E$-law of $\tilde \bs$ is $E_{\rm{Haar}}$. Since $( \bs \bO ) (\bs \bO)^{\Trans}= \bs \bs^{\Trans} $ for any orthogonal $\bO$, so that $P\left(\bs \in A,\bs\in \bI_{\varepsilon}\right)=P\left(\bs \bO\in A,\bs \bO\in \bI_{\varepsilon}\right)=P\left(\bs \bO\in A,\bs \in \bI_{\varepsilon}\right)$ for any measurable set $A$ also the $E[\cdot|\bI_\varepsilon]$-law of $\tilde{\sigma}$ is $E_{\rm{Haar}}$.  Lemma~\ref{lem:volumeconstraint} implies that $\lim_{N \to \infty} \frac{1}{N} \log P(\bI_\varepsilon) = \frac{1}{2} \log|\bI| = 0$ for all $\epsilon > 0$.
 		Thus for all $\varepsilon>0$ it follows from \eqref{eq: apply guhu} that
		\begin{equation}\label{eq: apply guhu 2}
		\lim_{N \to \infty} \frac{1}{N} \log E\left[ \1_{\bI_\varepsilon} \exp\bigg( N \sum_{k= 1}^n \beta_k ( \tilde \sigma^k   )^\Trans \bm{X}_K \tilde \sigma^k  \bigg) \right] 
		=  \sum_{k = 1}^n \mathcal{F}_K(\beta_k).
		\end{equation}
        Since $\left(\bI+\bA\right)^{-1/2}=I+O\left(\|\bA\|_{2}\right)$ if
        $\|\bA\|_{2}\le\frac{1}{2}$ we have for $\bs\in\bI_{\varepsilon}$
        that 
        \[
        \|\tilde{\bs}-\bs\|_{F}=\|\left(\bs\bs^{\Trans}\right)^{-1/2}\bs-\bs\|_{F}\le\sqrt{n}\|\left(\bs\bs^{\Trans}\right)^{-1/2}-\bI\|_{2}\le c\|\bI-\bs\bs^{\Trans}\|_{2}\le c\|\bI-\bs\bs^{\Trans}\|_{\infty}\le c\varepsilon,
        \]
        and so for $\bs\in\bI_{\varepsilon}$
        \[
        \left|\left(\tilde{\sigma}^{k}\right)^{\Trans}\bX_{k}\tilde{\sigma}^{k}-\left(\sigma^{k}\right)^{\Trans}\bX_{k}\sigma^{k}\right|\le c\varepsilon.
        \]
        Therefore \eqref{eq: sperhicalintidentity} follows from \eqref{eq: apply guhu 2} and \eqref{eq: disc hamilt in terms of X}.

	\end{proof}

	
	We will extend this formula to general positive definite constraints $\bQ > 0$. For this we will need the next lemma which uses a change of variables to estimates integrals with a general constraint in terms of integrals with an identity constraint. Let $\mathcal{B}_{N}(r) \subset \mathbb{R}^N$ denote the closed ball of radius $r$.
	
	\begin{lemma}\label{lem: from I to Q}
        For any $\delta\in(0,1)$ there is a constant $c=c\left(\delta\right)$
        such that for any symmetric $\bQ\in\left[-1,1\right]^{n\times n}$
        with $1$'s on the diagonal and $\bQ>\delta\bI$, any $\varepsilon>(0,c^{-1})$, and any Lipschitz $f:(\mathcal{B}_{N}(1+ c^{-1}\varepsilon))^{n}  \to\mathbb{R}$ with Lipschitz constant $L$
        we have for $N\ge c(\varepsilon)$
        \begin{equation}\label{eq: from I to Q}
            \begin{array}{l}
           \frac{1}{N}\log \int\1_{\bI_{c^{-1}\varepsilon}}\exp\left( f\left(\bQ^{1/2}\bs\right) \right) d\bs + \frac{1}{2}\log\left|\bQ\right|-c\left(1+\frac{L}{N}\right)\varepsilon \\
           
           \le \frac{1}{N}\log\int\1_{\bQ_{\varepsilon}}\exp\left(f\left(\bs\right)\right)d\bs\\
            \le\frac{1}{N}\log \int\1_{\bI_{c\varepsilon}}\exp\left( f\left(\bQ^{1/2}\bs\right) \right) d\bs + \frac{1}{2}\log\left|\bQ\right|+c(1+\frac{L}{N})\varepsilon.
            \end{array}
        \end{equation}
	\end{lemma}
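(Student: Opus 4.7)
\emph{Plan.} The strategy is to realize both sides of \eqref{eq: from I to Q} as spherical-radial reductions of a single ambient Lebesgue integral on $\R^{n\times N}$, with the linear change of variables $\bs=\bQ^{1/2}\bm{\tau}$ producing the Jacobian $|\bQ|^{N/2}$.

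Use row-polar coordinates $\sigma^k=r_k\omega^k$ with $\omega^k\in\mathcal{S}_{N-1}$, and let $\bm{\omega}$ denote the matrix with rows $\omega^k$, so that $(\bs\bs^\Trans)_{k,k}=r_k^2$ and $(\bs\bs^\Trans)_{k,\ell}=r_kr_\ell(\bm{\omega}\bm{\omega}^\Trans)_{k,\ell}$. For $\bA\in[-1,1]^{n\times n}$ with unit diagonal, a direct calculation then yields the two-sided sandwich
\begin{equation*}
\{|r_k^2-1|\le\eta/c\}\times(\bA_{\eta/c}\cap(\mathcal{S}_{N-1})^n) \ \subset \ \bA_\eta \ \subset \ \{|r_k^2-1|\le\eta\}\times(\bA_{c\eta}\cap(\mathcal{S}_{N-1})^n),
\end{equation*}
for a constant $c=c(n)$, where $\bA_\eta$ is interpreted as a subset of $\R^{n\times N}$ as in \eqref{eq: eps nbhood}. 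Similarly, the identity $\bQ^{1/2}\bm{\tau}\bm{\tau}^\Trans\bQ^{1/2}-\bQ=\bQ^{1/2}(\bm{\tau}\bm{\tau}^\Trans-\bI)\bQ^{1/2}$ combined with $\|\bQ^{\pm1/2}\|_2\le c(\delta)$ (from $\bQ>\delta\bI$) gives $\bI_{\eta/c}\subset\{\bm{\tau}:\bQ^{1/2}\bm{\tau}\in\bQ_\eta\}\subset\bI_{c\eta}$ for some $c=c(n,\delta)$.

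Now I compute the Lebesgue integral $I:=\int_{\bQ_\eta}e^{f(\bs)}\,d\bs$ in two ways. Spherical-radial decomposition $d\bs=\prod_k r_k^{N-1}dr_k\cdot(d\sigma)^{\otimes n}$, combined with the Lipschitz estimate $|f(r\bm{\omega})-f(\bm{\omega})|\le cL\eta$ (valid for $|r_k-1|\le c\eta$), gives
\begin{equation*}
I = e^{O(L\eta)}R_\eta\int\1_{\bQ_{c\eta}}(\bs)e^{f(\bs)}(d\sigma)^{\otimes n},\qquad R_\eta:=\prod_k\int_0^\infty r_k^{N-1}\1_{|r_k^2-1|\le\eta}\,dr_k,
\end{equation*}
up to constants in the sandwich that may flip depending on which side of the bound one wants. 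Alternatively, substituting $\bs=\bQ^{1/2}\bm{\tau}$ (Jacobian $|\bQ|^{N/2}$) and reapplying spherical-radial to the $\bm{\tau}$-integral produces
\begin{equation*}
I = e^{O(L\eta)}|\bQ|^{N/2}R_{\eta'}\int\1_{\bI_{c\eta'}}(\bm{\tau})e^{f(\bQ^{1/2}\bm{\tau})}(d\sigma)^{\otimes n},
\end{equation*}
for some $\eta'\asymp\eta$.

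Equating the two expressions and using the elementary estimate $R_\eta\asymp\eta^n e^{O(N\eta)}$, so that $\tfrac{1}{N}\log(R_\eta/R_{\eta'})=O(\eta)+O(N^{-1}\log\eta^{-1})=O(\eta+\varepsilon)$ for $N\ge c\varepsilon^{-1}$, one gets after taking $\tfrac{1}{N}\log$ an estimate of the form
\begin{equation*}
\tfrac{1}{N}\log\int\1_{\bQ_{c\eta}}(\bs)e^{f(\bs)}(d\sigma)^{\otimes n} = \tfrac{1}{2}\log|\bQ| + \tfrac{1}{N}\log\int\1_{\bI_{c\eta'}}(\bm{\tau})e^{f(\bQ^{1/2}\bm{\tau})}(d\sigma)^{\otimes n} + O\big((1+L/N)\eta\big).
\end{equation*}
Choosing $\eta$ so that $c\eta=\varepsilon$ gives the upper bound in \eqref{eq: from I to Q}; choosing it differently (so that $c\eta'=\varepsilon/c$, say) gives the lower bound. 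The main obstacle is purely bookkeeping: one must track that the various constants coming from $\|\bQ^{\pm1/2}\|_2\le c(\delta)$ and from the two-sided sandwiches combine into a single constant $c=c(\delta)$ such that the $\varepsilon$-neighborhoods on the two sides of \eqref{eq: from I to Q} differ by at most the factor $c$ allowed in the statement.
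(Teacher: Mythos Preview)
Your argument is correct and shares the same skeleton as the paper's proof: lift the spherical integral to an ambient integral on $\mathbb{R}^{n\times N}$, apply the linear change of variables $\bs\mapsto\bQ^{1/2}\bs$ (which swaps the constraint $\bQ_\varepsilon$ for $\bI_{c\varepsilon}$ and produces the factor $|\bQ|^{N/2}$), then project back to the spheres. The difference is in the choice of ambient measure. The paper uses Gaussian vectors with covariance $N\bQ$, recycling the tilted measure $\mathbb{Q}$ from the proof of Lemma~\ref{lem:volumeconstraint}; the term $\tfrac12\log|\bQ|$ then appears as the change-of-measure density, and the ``radial'' part is handled implicitly because the Gaussian is already a probability measure. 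You instead use Lebesgue measure with explicit row-polar coordinates, so the $\tfrac12\log|\bQ|$ comes from the Jacobian and you must track the radial factors $R_\eta$ by hand. Your route is more self-contained and elementary; the paper's recycles earlier machinery and avoids the $R_\eta$ bookkeeping. One small correction: in your estimate $\tfrac{1}{N}\log(R_\eta/R_{\eta'})=O(\eta)+O(N^{-1}\log\eta^{-1})$, the second term is actually $O(N^{-1})$, since $\eta'/\eta$ is bounded by a constant depending only on $\delta$ (not on $\varepsilon$); this is what you need to absorb it into $O(\varepsilon)$ once $N\ge c\varepsilon^{-1}$.
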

	\begin{proof}
	    
Fix $\bQ$ with $\bQ>\delta\bI$. Let $\mathbb{Q}$ be the measure
under which $u_{1},\ldots,u_{N}$ are independent Gaussian vectors
in $\mathbb{R}^{n}$ with covariance $N\bQ$, and let $u^{k}=\left(u_{1,k},\ldots,u_{N,k}\right)$
for $k=1,\ldots,n$, as in the proof of Lemma \ref{lem:volumeconstraint}. Writing also $\bu=\left(u^{1},\ldots,u^{n}\right)\in\mathbb{R}^{n\times N}$
we have using the same change of measure as in \eqref{eq: lower bound} in that lemma that
\begin{equation}
\begin{array}{l}
\frac{1}{N}\log\mathbb{Q}\left(\exp\left(f\left(\frac{u^{1}}{\left|u^{1}\right|},\ldots,\frac{u^{n}}{\left|u^{n}\right|}\right)\right)\1_{\bu\bu^{\Trans}\in N\bQ_{\varepsilon c_{1}}}\right)+\frac{1}{2}\log\left|\bQ\right|-c\left(\delta\right)\varepsilon\\
\le\frac{1}{N}\log\int\1_{\bQ_{\varepsilon}}\exp\left(f\left(\bs\right)\right)d\bs\\
\le\frac{1}{N}\log\mathbb{Q}\left(\exp\left(f\left(\frac{u^{1}}{\left|u^{1}\right|},\ldots,\frac{u^{n}}{\left|u^{n}\right|}\right)\right)\1_{\bu\bu^{\Trans}\in N\bQ_{\varepsilon c_{2}}}\right)+\frac{1}{2}\log\left|\bQ\right|+c\left(\delta\right)\varepsilon,
\end{array}\label{eq: UB AND LB}
\end{equation}
for all $N$. Furthermore letting
$\mathbb{E}$ be the law of i.i.d. independent Gaussian vectors we
have that the $\mathbb{E}$-law of $\bQ^{1/2}\bu$ is the $\mathbb{Q}$-law
of $\bu$, so that for $l=1,2$
\begin{equation}
\begin{array}{l}
\mathbb{Q}\left(\exp\left(f\left(\frac{u^{1}}{\left|u^{1}\right|},\ldots,\frac{u^{n}}{\left|u^{n}\right|}\right)\right)\1_{\bu\in\bQ_{\varepsilon c_{l}}}\right)\\
=\mathbb{E}\left(\exp\left(f\left(\frac{\left(\bQ^{1/2}\bu\right)^{1}}{\left|\left(\bQ^{1/2}\bu\right)^{1}\right|},\ldots,\frac{\left(\bQ^{1/2}\bu\right)^{n}}{\left|\left(\bQ^{1/2}\bu\right)^{n}\right|}\right)\right)\1_{\bQ^{1/2}\bu\left(\bQ^{1/2}\bu\right)^{\Trans}\in N\bQ_{\varepsilon c_{l}}}\right).
\end{array}\label{eq: to E}
\end{equation}
Writing $a\asymp b$ if there is constant $c$ depending only on $n$
such that $c^{-1}\le\frac{a}{b}\le c$, and writing $a\asymp_{\delta}b$
if the constant is allowed to depend also on $\delta$, we have
\begin{equation}
\|\bQ^{1/2}\bu\bQ^{1/2}\bu^{\Trans}-N\bQ\|_{\infty}\asymp\|\bQ^{1/2}\bu\bQ^{1/2}\bu^{\Trans}-N\bQ\|_{2}\asymp_{\delta}\|\bu\bu^{\Trans}-N\bI\|_{2}\asymp\|\bu\bu^{\Trans}-N\bI\|_{\infty}.\label{eq: BLA}
\end{equation}
Let $\tilde{\sigma}^{i}=\frac{u^{i}}{\left|u^{i}\right|}$ so that
under $\mathbb{E}$ the $\tilde{\sigma}^{1},\ldots$,$\tilde{\sigma}^{n}$
are i.i.d. uniform on the unit sphere. The inequalities (\ref{eq: BLA})
imply that on the event in the indicator of (\ref{eq: to E}) we have
$\left|\frac{\left(\bQ^{1/2}\bu\right)^{i}}{\left|\left(\bQ^{1/2}\bu\right)^{i}\right|}-\bQ^{1/2}\tilde{\sigma}^{i}\right|\le c\left(\delta\right)\varepsilon$,
and that the bottom line of (\ref{eq: to E}) is bounded below by
\begin{equation}
\mathbb{E}\left(\exp\left(f\left(\bQ^{1/2}\tilde{\sigma}^{1},\ldots,\bQ^{1/2}\tilde{\sigma}^{n}\right)\right)\1_{\bu\bu^{\Trans}\in N\bI_{c\left(\delta\right)^{-1}\varepsilon}}\right)e^{-Lc\left(\delta\right)\varepsilon}.\label{eq: f to sigma tilde}
\end{equation}
We have
\begin{equation}
\left\{ \tilde{\bs}\tilde{\bs}^{\Trans}\in\bI_{c\varepsilon},\max_{i=1}^{n}\left|\left|u^{i}\right|-N\right|\le \varepsilon N\right\} \subset\left\{ \bu\bu^{\Trans}\in N\bI_{\varepsilon}\right\} ,\label{eq: inclusion LB}
\end{equation}
for a small enough $c$ and all $\varepsilon\in(0,c)$. Also $\tilde{\bs}$ is independent of $\left|u^{i}\right|$,
and assuming $\varepsilon\le\delta$ (as we may) we have $\mathbb{P}\left(\max_{i=1}^{n}\left|\left|u^{i}\right|-1\right|\le\varepsilon\right)\to1$
as $N\to\infty$, and we obtain from \eqref{eq: inclusion LB} with $c(\delta)^{-1}\varepsilon$ in place of $\varepsilon$ that (\ref{eq: f to sigma tilde})
is at least
\[
\frac{1}{2}\mathbb{E}\left(\exp\left(f\left(\bQ^{1/2}\tilde{\sigma}^{1},\ldots,\bQ^{1/2}\tilde{\sigma}^{n}\right)\right)\tilde{\bs}\tilde{\bs}^{\Trans}\in\bI_{c\left(\delta\right)^{-1}\varepsilon}\right)e^{-Lc\left(\delta\right)\varepsilon},
\]
for $N\ge c\left(\varepsilon\right)$. This implies the lower
bound of \eqref{eq: from I to Q}. The upper bound of \eqref{eq: from I to Q} follows similarly, with the simplification
that (\ref{eq: inclusion LB}) is replaced by the simpler $\left\{ \bu\bu^{\Trans}\in N\bI_{\varepsilon}\right\} \subset\left\{ \tilde{\bs}\tilde{\bs}^{\Trans}\in\bI_{c\varepsilon}\right\}$ for a large enough $c$ and all $\varepsilon \in (0,c^{-1})$, so that the independence of $\tilde{\bs}$ of $\left|u^{i}\right|$
need not be invoked.
	\end{proof}

	To extend Lemma \ref{lem:sperhicalintidentity} to non-identity constraints the next lemma will also be needed. Let $\tilde{\beta}_1,...,\tilde{\beta}_n$ denote the eigenvalues of $\bb^{1/2} \bQ \bb^{1/2}$.
	\begin{lemma}\label{lem:FK Lipschitz}
	    For any $\delta, C >0$ and $K\in\mathbb{N}$ there exists a constant $L=c(\delta,C,K)$ such that $(\beta, \bQ) \to \cF_{K}( \tilde \beta_k) + \frac{1}{2} \log|\bQ|$ is $L$-Lipschitz continuous for $\bQ \ge \delta \bI$ with $\|\bQ\|_\infty\le1$ and $|\beta| \le C$.
	\end{lemma}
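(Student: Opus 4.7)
The plan is to verify Lipschitz continuity of the two summands separately and combine. The map $\bQ \mapsto \tfrac{1}{2}\log|\bQ|$ is smooth on the open positive definite cone, so its gradient is uniformly bounded on the compact subset $\{\bQ:\delta\bI\le\bQ,\,\|\bQ\|_\infty\le1\}$, giving Lipschitz continuity with a constant depending only on $\delta$.

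For the eigenvalue term the key step is to invoke the identity \eqref{eq: QbQ bQb same eigenvalues} to rewrite $\tilde\beta_1,\dots,\tilde\beta_n$ as the eigenvalues of the symmetric matrix $\bQ^{1/2}\bb\bQ^{1/2}$ rather than $\bb^{1/2}\bQ\bb^{1/2}$. This reformulation is essential because $\bb^{1/2}$ is only H\"older-$1/2$ in $\beta$ near $\beta=0$, whereas $\bQ\mapsto\bQ^{1/2}$ is smooth, and hence Lipschitz with constant depending on $\delta$, on the compact set $\{\bQ:\delta\bI\le\bQ,\,\|\bQ\|_\infty\le1\}$, since the matrix square root is smooth on the strict interior of the positive definite cone. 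Combined with the obvious Lipschitz dependence of $\bb=\diag(\beta)$ on $\beta$, this shows that $(\beta,\bQ)\mapsto\bQ^{1/2}\bb\bQ^{1/2}$ is Lipschitz in the spectral norm, with constant depending on $\delta$ and $C$. Since the target matrix is symmetric, Weyl's inequality gives that its eigenvalues, ordered in decreasing order, are $1$-Lipschitz in this norm, so $(\beta,\bQ)\mapsto\tilde\beta_k$ is Lipschitz with constant depending on $\delta$ and $C$. On the region in question one has $0\le\tilde\beta_k\le C\|\bQ\|_2\le Cn$, so by \eqref{eq:FK Lipschitz} the function $\mathcal{F}_K$ is Lipschitz on $[0,Cn]$ with constant depending on $K$ and $C$. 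Composing these Lipschitz bounds and summing over $k$, then adding the log-determinant bound, yields the lemma with $L=L(\delta,C,K)$.

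The main (mild) obstacle is precisely to sidestep the non-Lipschitz behaviour of $\bb^{1/2}$ at $\beta=0$; this is exactly what the reformulation via \eqref{eq: QbQ bQb same eigenvalues} accomplishes by transferring the square root to the matrix $\bQ$, which is uniformly bounded away from the boundary of the positive definite cone. The remaining ingredients, namely smoothness of the matrix square root on compact subsets of the positive definite cone, Weyl's perturbation bound for symmetric matrices, and stability of Lipschitz continuity under composition on a compact domain, are standard.
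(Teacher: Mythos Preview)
Your proof is correct and follows essentially the same strategy as the paper: treat the log-determinant and eigenvalue terms separately, avoid the non-Lipschitz $\bb^{1/2}$ by a cyclic rearrangement of the eigenvalue problem, and then invoke \eqref{eq:FK Lipschitz}. The only difference is that the paper passes to the eigenvalues of the non-symmetric matrix $\bb\bQ$, while you pass to the symmetric matrix $\bQ^{1/2}\bb\bQ^{1/2}$; your choice is slightly cleaner, since Weyl's inequality applies directly, whereas justifying Lipschitz continuity of the eigenvalues of $\bb\bQ$ ultimately requires the same symmetrization anyway.
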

	\begin{proof}
	    The eigenvalues of $\bQ$  are Lipschitz continuous in the entries of $\bQ$ with Lipschitz constant depending only on $n$. Since $\bQ \ge \bI \delta$ implies that all eigenvalues lie in $(\delta,n]$ it follows that $\frac{1}{2} \log|\bQ|$ is Lipschitz in the entries of $\bQ$ for such $\bQ$.
    Furthermore the $\tilde{\beta}_1,\ldots,\tilde{\beta}_n$ are
    also the eigenvalues of $\bb \bQ$, so they are Lipschitz as functions of the entries of $\bQ$ and $\beta$
    with Lipschitz constant depending on $n$ and $C$, 
    and they are bounded in terms of $C$. Since $\cF_K$ is Lipschitz on compact intervals (recall \eqref{eq:FK Lipschitz}) the claim follows.
    

	\end{proof}
	\bigskip 
	
	We can now compute the limiting free energy
    \begin{equation}\label{eq: tilde FE with Q}
        \tilde F_{N,K}^\varepsilon(\beta,0,\bQ) = \frac{1}{N}\log \int_{\bQ_\varepsilon} \exp\bigg( \sum_{k= 1}^n \beta_k \tilde H^K_N(\sigma^k) \bigg) d \bs,
    \end{equation}
    of the binned model without external field and with general constraint $\bQ$. Similarily to in \eqref{eq: sperhicalintidentity} the smoothness of $\mathcal{F}_K$ means that for all finite $K$ the partition function in \eqref{eq: tilde FE with Q} is at high temperature for all $\beta$.
	\begin{lemma}[Limiting free energy of the binned model]\label{lem:sperhicalintgeneral}
		For every $\delta > 0$ and $C > 0$ 
		we have
		\begin{equation}\label{eq: binned free energy limit}
		\lim_{\varepsilon\to 0} \lim_{N\to\infty}
		\sup_{\bQ \ge \delta \bI} 
		\sup_{|\beta| \le C}
		\bigg| \frac{1}{N} \log \int_{\bQ_\varepsilon} \exp\bigg( \sum_{k= 1}^n \beta_k \tilde H^K_N(\sigma^k) \bigg) d \bs
		- \bigg( \sum_{k = 1}^n \cF_{K}( \tilde \beta_k) + \frac{1}{2} \log|\bQ| \bigg) \bigg|
		= 0,
		\end{equation}
		where the outermost $\sup$ is over symmetric $\bQ \in [-1,1]^n$ with $1$s on the diagonal and $\tilde \beta_1, \ldots,\tilde \beta_n$ 
        are the eigenvalues of $\bb^{1/2} \bQ \bb^{1/2}$.
	\end{lemma}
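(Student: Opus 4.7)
My plan is to reduce the general-$\bQ$ case to the identity-constraint case of Lemma \ref{lem:sperhicalintidentity} by combining two ingredients: the change-of-variable estimate of Lemma \ref{lem: from I to Q}, which trades a $\bQ$-constraint for an $\bI$-constraint at the cost of a $\tfrac12\log|\bQ|$ entropy term and replaces the diagonal temperature matrix $\bb$ by $\bQ^{1/2}\bb\bQ^{1/2}$, and the $O(n)$-left-invariance of the Stiefel Haar measure, which diagonalizes this non-diagonal matrix to produce the eigenvalues $\tilde\beta_1,\ldots,\tilde\beta_n$ (equal to those of $\bb^{1/2}\bQ\bb^{1/2}$ by \eqref{eq: QbQ bQb same eigenvalues}). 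The key algebraic identity $\sum_{k=1}^n\beta_k\tilde H^K_N(\sigma^k)=N\Tr(\bb\bs\bX_K\bs^\Trans)$ makes this a clean quadratic transformation.

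Writing $f(\bs)=N\Tr(\bb\bs\bX_K\bs^\Trans)$ and noting that $|\nabla_{\sigma^k}f|=2N|\beta_k||\bX_K\sigma^k|=O(CN)$ on $(\mathcal{B}_N(1+c^{-1}\varepsilon))^n$ so that $L=O(CN)$ and $L/N$ is bounded in terms of $C$, Lemma \ref{lem: from I to Q} produces matching upper and lower bounds of the form
\begin{equation*}
\tfrac{1}{N}\log\!\int_{\bQ_\varepsilon}\!e^{f(\bs)}\,d\bs=\tfrac{1}{N}\log\!\int\!\1_{\bI_{\varepsilon''}}\,e^{N\Tr(\bQ^{1/2}\bb\bQ^{1/2}\bs\bX_K\bs^\Trans)}\,d\bs+\tfrac12\log|\bQ|+O_C(\varepsilon),
\end{equation*}
where $\varepsilon''$ is a constant multiple of $\varepsilon$ (smaller for the lower bound of Lemma \ref{lem: from I to Q}, larger for the upper bound). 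Both resulting $\bI_{\varepsilon''}$-integrals are then handled identically by the Stiefel-projection argument from the proof of Lemma \ref{lem:sperhicalintidentity}: replacing $\bs\in\bI_{\varepsilon''}$ by $\tilde\bs=(\bs\bs^\Trans)^{-1/2}\bs$ introduces additive error $O(\varepsilon)$ in the exponent (after dividing by $N$), the $E[\cdot|\bI_{\varepsilon''}]$-law of $\tilde\bs$ equals the Stiefel Haar measure $E_{\rm Haar}$ by right $O(N)$-invariance of the product measure, and $E[\1_{\bI_{\varepsilon''}}]=e^{O(N\varepsilon)}$ by Lemma \ref{lem:volumeconstraint}. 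The reduction therefore expresses the integral, up to a multiplicative $e^{O(N\varepsilon)}$, as the Haar expectation $E_{\rm Haar}[\exp(N\Tr(\bQ^{1/2}\bb\bQ^{1/2}\bs\bX_K\bs^\Trans))]$.

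Invoking left $O(n)$-invariance of $E_{\rm Haar}$ with $\bO\in O(n)$ such that $\bO^\Trans\diag(\tilde\beta_1,\ldots,\tilde\beta_n)\bO=\bQ^{1/2}\bb\bQ^{1/2}$, the substitution $\bs\mapsto\bO\bs$ turns the exponent into $\sum_k\tilde\beta_k(\sigma^k)^\Trans\bX_K\sigma^k$, and \eqref{eq: apply guhu} with $\bX_K$ in place of $\bX_N$ and $\nu=\mu_K$ (see \eqref{eq:ESDmuK}) delivers the pointwise limit $\sum_k\cF_K(\tilde\beta_k)$ with total error $O(\varepsilon)+o_N(1)$. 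To upgrade this to uniformity over $\{\bQ\ge\delta\bI,\,\|\bQ\|_\infty\le1\}\times\{|\beta|\le C\}$ I would follow the covering scheme of Proposition \ref{prop:freenergy-whileplefka}: the right-hand side is Lipschitz by Lemma \ref{lem:FK Lipschitz}, the left-hand side is Lipschitz in $\beta$ (its $\beta_k$-derivative is a Gibbs average of $\tfrac{1}{N}\tilde H^K_N(\sigma^k)$, bounded by $\sqrt2$) and in $\bQ$ via the inclusion $\bQ_\varepsilon\subset\bQ'_{\varepsilon+n\|\bQ-\bQ'\|_\infty}$, and pointwise convergence on an $\varepsilon$-net extends by Lipschitz continuity to the full parameter set. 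The one delicate point is the passage from the product-of-spheres measure $(d\sigma)^{\otimes n}$ to the Stiefel Haar measure combined with the $O(n)$-rotation—but this is precisely the mechanism already developed in the proof of Lemma \ref{lem:sperhicalintidentity} and requires no new idea here.
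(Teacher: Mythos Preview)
Your proposal is correct and follows essentially the same route as the paper: apply Lemma~\ref{lem: from I to Q} with $f(\bs)=N\Tr(\bb\bs\bX_K\bs^\Trans)$ to reduce to an $\bI_{\varepsilon''}$-constraint, rewrite the exponent as $N\Tr(\bQ^{1/2}\bb\bQ^{1/2}\bs\bX_K\bs^\Trans)$, diagonalize $\bQ^{1/2}\bb\bQ^{1/2}$ to $\tilde\bb$, invoke Lemma~\ref{lem:sperhicalintidentity} for the pointwise limit, and then upgrade to uniformity by the $\varepsilon$-net argument of Proposition~\ref{prop:freenergy-whileplefka} together with Lemma~\ref{lem:FK Lipschitz}. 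The one place you are in fact more careful than the paper is the $O(n)$-rotation step: the product measure $(d\sigma)^{\otimes n}$ is \emph{not} invariant under $\bs\mapsto\bO\bs$ (the rows of $\bO\bs$ need not have unit norm), so the paper's sentence ``$\int g(\bO\bs)\,d\bs=\int g(\bs)\,d\bs$ by symmetry'' is only true approximately on $\bI_{\varepsilon''}$; your route via the Stiefel--Haar projection $\tilde\bs=(\bs\bs^\Trans)^{-1/2}\bs$ from the proof of Lemma~\ref{lem:sperhicalintidentity}, where genuine left-$O(n)$-invariance of $E_{\rm Haar}$ is available, is the rigorous way to justify this step.
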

	
	\begin{proof}
	
We use
\[
f\left(\bs\right)=\sum_{k=1}^{n}\beta_{k}\tilde{H}_{N}^{K}(\sigma^{k}),
\]
in Lemma \ref{lem: from I to Q}.
From \eqref{eq: binned hamilt def} and using $|\beta|\le C$
this $f$ has Lipschitz constant at most $c\left(C\right)N$, and
we obtain
\begin{equation}\label{eq: sandwich}
\begin{array}{l}
\frac{1}{N}\log\int\1_{\bI_{c^{-1}\varepsilon}}\exp\left(\sum_{k=1}^{n}\beta_{k}\tilde{H}_{N}^{K}((\bQ^{1/2}\bs)^{k})\right)d\sigma+\frac{1}{2}\log\left|\bQ\right|-c\varepsilon\\
\le\frac{1}{N}\log\int\1_{\bQ_{\varepsilon}}\exp\left(\sum_{k=1}^{n}\beta_{k}\tilde{H}_{N}^{K}(\sigma^{k})\right)\\
\le\frac{1}{N}\log\int\1_{\bI_{c\varepsilon}}\exp\left(\sum_{k=1}^{n}\beta_{k}\tilde{H}_{N}^{K}((\bQ^{1/2}\bs)^{k})\right)d\sigma+\frac{1}{2}\log\left|\bQ\right|+c\varepsilon,
\end{array}
\end{equation}
for any $\bQ$ as in the statement of the lemma. Next writing $\bQ^{1/2}\bb \bQ^{1/2} = \bO^{\Trans} \tilde \bb \bO$ for an $n \times n $ orthogonal matrix and $\tilde \bb$ the diagonal matrix of eigenvalues of $\bb^{1/2} \bQ \bb^{1/2}$ (recall \eqref{eq: QbQ bQb same eigenvalues}) we have using \eqref{eq: disc hamilt in terms of X} that
\[
\begin{array}{rcl}
    \sum_{k=1}^{n}\beta_{k}\tilde{H}_{N}^{K}((\bQ^{1/2}\bs)^{k})
    &=&N\text{Tr}\left(\bb(\bQ^{1/2}\bs) \bX_{K} (\bQ^{1/2}\bs)^{\Trans}\right)
    \\
    &=&N\text{Tr}\left(\bQ^{1/2}\bb \bQ^{1/2}\bs\bX_{K}\bs^{\Trans}\right)
    \\
    &=&N\text{Tr}\left(\bO ^{\Trans} \tilde{\bb}\bO\bs\bX_{K}\bs^{\Trans}\right)
    \\
    &=&N\text{Tr}\left(\tilde{\bb}\bO\bs\bX_{K}(\bO\bs)^{\Trans}\right)
    \\
    &=&\sum_{k=1}^{n}\tilde{\beta_{k}}\tilde{H}_{N}^{K}((\bO\bs)^{k}).
\end{array}
\]
We have $\int g(\bO\bs) d\bs = \int g(\bs) d\bs$ for any measurable $g$ by symmetry so we can use Lemma \ref{lem:sperhicalintidentity}
to estimate the first and last line of \eqref{eq: sandwich} we obtain that for any fixed $\bQ$ and $\beta$
\begin{equation}
    \lim_{\varepsilon\to 0}\lim_{N\to\infty}
    \left|
    \frac{1}{N}\log\int\1_{\bQ_{\varepsilon}}\exp\left(\sum_{k=1}^{n}\beta_{k}\tilde{H}_{N}^{K}(\sigma^{k})\right) d\bs
    -
    \left(\sum_{k=1}^n\cF_{K}( \tilde \beta_k) + \frac{1}{2} \log|\bQ|\right)
    \right|
    = 0
    .\label{eq: fixed Q}
\end{equation}
As in Proposition \ref{prop:freenergy-whileplefka} we can then deduce the uniformity in $\bQ$ and $\beta$ in \eqref{eq: binned free energy limit} by making two lattices of finitely many $\bA^{1},...,\bA^{M} \in [-1,1]^{n\times n}$ and $b^{1},...,b^{L} \in (0,C]^n$ and then use Lipschitz continuity (see Lemma \ref{lem:FK Lipschitz}). More precisely we can choose two lattices such that for all $\bQ$ we have
$$
    |\bQ - \bA^i| \le \varepsilon \quad \text{ and } \quad \bA_{\frac{\varepsilon}{2}}^i \subset \bQ_{\varepsilon} \subset \bA_{2\varepsilon}^i
$$
for some $i\in\{1,...,M\}$ (cf. \eqref{eq:A-Q small}-\eqref{eq: inclusion}), as well as for all $\beta$ 
$$
    \max_k|\beta_k - b^{j_k}| \le \varepsilon
    \quad \text{ and } \quad \left| \sum_{k=1}^n\beta_{k}\tilde{H}_{N}^{K}(\sigma^{k}) - \sum_{k=1}^n b^{j_k}\tilde{H}_{N}^{K}(\sigma^{k}) \right| \le c(C) \varepsilon N
$$
for some $j_1,\ldots,j_k\in\{1,...,L\}$ (cf. \eqref{eq: F(beta)-F(beta^j)}). Then using Lemma \ref{lem:FK Lipschitz} completes the proof (cf. \eqref{eq: F(Q)-F(A)}).
	\end{proof}

	To recover the free energy $\tilde{F}_{N}^\varepsilon$ of \eqref{eq: det FE} from the binned version $\tilde{F}_{N,K}^\varepsilon$, we will need to send the number of bins $K \to \infty$. The next lemma shows that if $\beta$ is in the high temperature region ${\rm HT}\left(\bQ\right)$ (recall \eqref{eq:HT-condition}) the sum $\sum_{k=1}^n\mathcal{F}_{K}	(\tilde{\beta}_{k})$ from \eqref{eq: binned free energy limit} converges to the simple expression that appears in the annealed free energy (recall \eqref{eq:firstmomentlemma}, \eqref{eq:freenergy-withplefka}). 
	\begin{lemma}\label{lem:Plefka}
	It holds that
	\begin{equation}
	\lim_{K\to\infty}\sup_{\beta\in{\rm HT}\left(\bQ\right)}\left|
	\sum_{k=1}^n\mathcal{F}_{K}
	(\tilde{\beta}_{k})-\frac{1}{2}\beta^{\Trans}\bQ^{\odot2}\beta\right|=0,\label{eq:Klimitnoexternalfield}
	\end{equation}
    uniformly over symmetric positive definite matrices $\bQ$,
	where $\tilde{\beta}_1,\ldots,\tilde{\beta}_n$ are the eigenvalues of  $\bb^{1/2} \bQ\bb^{1/2}$. 
    \end{lemma}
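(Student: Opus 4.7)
The plan is to reduce the claim to a purely one-dimensional statement about the scalar function $\mathcal{F}_K$. The key identity is
\begin{equation*}
\beta^{\Trans} \bQ^{\odot 2} \beta
= \sum_{k,\ell} \beta_k \beta_\ell Q_{k\ell}^2
= \Tr\bigl(\bb \bQ \bb \bQ\bigr)
= \Tr\bigl((\bb^{1/2}\bQ\bb^{1/2})^2\bigr)
= \sum_{k=1}^n \tilde\beta_k^2,
\end{equation*}
which uses the symmetry of $\bQ$ and the definition of the $\tilde\beta_k$ as the eigenvalues of $\bb^{1/2}\bQ\bb^{1/2}$. Combined with \eqref{eq:HT-condition} and \eqref{eq: QbQ bQb same eigenvalues}, the condition $\beta \in {\rm HT}(\bQ)$ is equivalent to $\max_k \tilde\beta_k \le 1/\sqrt{2}$, so
\begin{equation*}
\sup_{\beta \in {\rm HT}(\bQ)} \left| \sum_{k=1}^n \mathcal{F}_K(\tilde\beta_k) - \tfrac{1}{2} \beta^{\Trans} \bQ^{\odot 2} \beta \right|
\le n \sup_{b \in [0, 1/\sqrt{2}]} \Bigl| \mathcal{F}_K(b) - \tfrac{b^2}{2} \Bigr|,
\end{equation*}
uniformly in positive definite $\bQ$. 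The lemma therefore reduces to showing that the scalar supremum on the right tends to zero as $K \to \infty$.

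For that one-dimensional limit I would appeal directly to the $n=1$ analysis of \cite{belius-kistler}: by \eqref{eq:thisFKandBKFK} the present $\mathcal{F}_K$ is literally the object studied there, and the convergence $\mathcal{F}_K(b) \to b^2/2$ uniformly on $[0, 1/\sqrt{2}]$ is what underlies the high-temperature free energy computation for the one-replica model (cf.\ Remark \ref{ex:1}, with $\tilde q = 0$ the maximizer when Plefka's condition for $n=1$ is strict). If one wanted to verify it directly, the route is to use the weak convergence $\mu_K \to \mu_{\mathrm{sc}}$ together with the explicit form $v_{\mu_{\mathrm{sc}}}(2b) = b + 1/(2b)$ valid for $b \in (0, 1/\sqrt{2}]$, substitute into the definition of $J_{\mu_{\mathrm{sc}}}$, and evaluate the logarithmic potential $\int \log|v - x|\, d\mu_{\mathrm{sc}}(x)$ via its closed form to obtain $\mathcal{F}_{\mu_{\mathrm{sc}}}(b) = b^2/2$; then quantify convergence of the Stieltjes transforms and log-potentials of $\mu_K$ to pass to the limit. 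Continuity at $b = 0$ is absorbed into the extension $\mathcal{F}_K(0) = 0$.

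The main obstacle is the uniformity near the right endpoint $b = 1/\sqrt{2}$, where $v_{\mu_K}(2b)$ converges to the edge $\sqrt{2}$ of $\supp(\mu_{\mathrm{sc}})$. There both the square-root vanishing of the semicircle density and the atom of $\mu_K$ at $x_K$ make $\int \log|v_{\mu_K}(2b) - x|\, d\mu_K(x)$ delicate to control as $K \to \infty$ with $b$ simultaneously tending to $1/\sqrt{2}$. This edge analysis is precisely what is carried out in \cite{belius-kistler}, so citing that analysis is the cleanest route; once the scalar uniform convergence is in hand, summing over the $n$ eigenvalues delivers the claim uniformly in $\beta \in {\rm HT}(\bQ)$ and in the positive definite matrix $\bQ$.
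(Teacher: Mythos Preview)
Your proposal is correct and follows essentially the same route as the paper: both reduce via the trace identity $\sum_k \tilde\beta_k^2 = \Tr\bigl((\bb^{1/2}\bQ\bb^{1/2})^2\bigr) = \beta^{\Trans}\bQ^{\odot 2}\beta$ and the equivalence ${\rm HT}(\bQ) \Leftrightarrow \max_k \tilde\beta_k \le 1/\sqrt{2}$ to the scalar statement, and then cite \cite{belius-kistler} for the uniform convergence $\sup_{b\in[0,1/\sqrt{2}]}|\mathcal{F}_K(b)-b^2/2|\to 0$. The paper is simply more precise in its citation (Lemma~14 and (4.28) of \cite{belius-kistler}) than your pointer to ``the $n=1$ analysis''.
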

	\begin{proof}
	Recall \eqref{eq:thisFKandBKFK}. By \cite[Lemma 14 + (4.28)]{belius-kistler} we get that 
	\begin{equation}\label{eq:K-bin convergence}
	    \lim_{K \to \infty} 
	    \sup_{\tilde{\beta} \in [0,\frac{1}{\sqrt{2}}]}
	    \left|\cF_K(\tilde{\beta}) - \frac{\tilde{\beta}^2}{2}\right|
	    =
	    0.
	\end{equation}
        If $\beta \in {\rm{HT}}(\bQ)$ then $\tilde{\beta}_k \le \frac{1}{\sqrt{2}}$ for all $k \in\{1,\ldots,n\}$ by the definition \eqref{eq:HT-condition}.
		Thus by \eqref{eq:K-bin convergence} we have that 
		$$
		\lim_{K \to \infty} 
	    \sup_{{\beta} \in {\rm{HT}}(\bQ)}
	    \left|\sum_{k=1}^{n}\mathcal{F}_K(\tilde{\beta}_k) - \sum_{k=1}^{n} \frac{\tilde{\beta}_k^2}{2}\right|
	    =
	    0.
		$$
 		We can now write $\tilde \beta$ back in terms of $\beta$ and $\bQ$ using
 		\[
\sum_{k=1}^{n}\tilde{\beta}_{k}^{2}=\Tr\left(\left(\bb^{1/2}\bQ\bb^{1/2}\right)^{2}\right)=\text{Tr}\left(\bQ\bb\bQ\bb\right)=\sum_{ij}\left(\bQ\bb\right)_{ij}\left(\bQ\bb\right)_{ji}
=\beta^{\Trans}\bQ^{\odot2}\beta.
\]
	\end{proof}

	\subsection{Upper bound in terms of modified TAP free energy}	\label{section:with_external_field}
	In this subsection, we prove an upper bound of the free energy in the presence of external fields in terms of a modified TAP free energy.
    
    The main idea is to divide each of the $n$ spheres into two parts: A subspace $\mathcal{M}_N$ of dimension much smaller than $N$, where most of the effect of the external fields is felt, and the complementary space $\mathcal{M}_N^\perp$ which is almost orthogonal to all the external fields (as in \cite[Section 4]{belius-kistler}). We write the partition function integral as a double integral over first the lower dimensional $\mathcal{M}_N$ and then the higher dimensional $\mathcal{M}_N$, where the inner integral is the partition function of the recentered the Hamiltonian. The inner integral is essentially a partition function without external field, so it can be estimated using the results of the previous subsection. In this way we obtain an estimate for the partition function where the remaining outer integral is now the integral of $N$ times the exponential of a modified TAP free energy whose Onsager term is an expression involving $\mathcal{F}_K$ rather than $\frac{N}{2} \beta^\Trans (\bQ - \bM)^{\odot 2} \beta$. Since the dimension of the outer integral is much smaller than $N$ we can then estimate it in terms of the maximum of the modified TAP free energy using the Laplace method.

	The following lemma constructs the spaces $\mathcal{M}_N$. Recall that the external fields are denoted by $\bh \in \mathbb{R}^{n \times N}$ and satisfy $|h^k| = h_k$ for each $k \in\{1,...,n\}$ for fixed values $h_1,...,h_n \ge 0$, and that the external fields in the diagonalizing basis of the Hamiltonian is denoted by $\tilde{\bh} = (\tilde{h}^1,...,\tilde{h}^n)$.    
	\begin{lemma}\label{lem:subspace-construction}
        Let $N\ge1$. For any $\beta_1,\ldots, \beta_k$ and $h^1,\ldots,h^n \in \mathbb{R}^N$, there exists a sequence of linear subspaces $\mathcal{M}_{1}, \mathcal{M}_{2}, ...$ such that $\mathcal{M}_{N} \subset \R^{N}$,
		\[
		\dim(\mathcal{M}_{N}) \le n N^{3/4}
		\]  
		and $\mathcal{M}_{N}^n = (\mathcal{M}_{N})^n$ is approximately invariant under the map 
		$$\bbm = (m^1,...,m^n) \rightarrow \left(\beta_1\tfrac{1}{N}\nabla\tilde{H}_{N}(m^1)+\tilde{h}^1, \ldots, 
		\beta_n\tfrac{1}{N}\nabla\tilde{H}_{N}(m^n)+\tilde{h}^n
		\right)$$
		in the sense that
		\begin{equation}\label{eq: disappearing eff ext field}
			\lim_{N\rightarrow\infty} \sup_{\stackrel{m\in\mathcal{M}_{N}^n}{|m^1|,...,|m^n| \le 1}}
			\max_{k=1,...,n} 
			\bigg|
			P^{\mathcal{M}_{N}^\perp}\left(
			\frac{\beta_k}{N}\nabla\tilde{H}_{N}(m^k)+\tilde{h}^k
			\right)
			\bigg | = 0.
		\end{equation}
	\end{lemma}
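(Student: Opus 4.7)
The plan is to build $\mathcal{M}_N$ as the multi-replica analogue of the subspace constructed in \cite[Section~4]{belius-kistler}, tailored so that the diagonal multiplication operator $\bm{D} = \diag(\theta_{i/N})$ (which is essentially $\tfrac{1}{2N}\nabla\tilde H_N$) approximately preserves it, while still containing each $\tilde h^k$. Concretely, fix $L_N = \lfloor N^{3/4}\rfloor$ and partition $\{1,\dots,N\}$ into $L_N$ consecutive index blocks $J_1,\dots,J_{L_N}$ of sizes differing by at most one (so each $|J_\ell| \le N^{1/4}+1$). Writing $P_{J_\ell}$ for the coordinate projection onto $J_\ell$, set
\[
V_\ell \;=\; \mathrm{span}\bigl(P_{J_\ell}\tilde h^1,\ldots,P_{J_\ell}\tilde h^n\bigr)\subset \R^N,
\qquad
\mathcal{M}_N \;=\; \bigoplus_{\ell=1}^{L_N} V_\ell.
\]
Since the $V_\ell$ are supported on pairwise disjoint blocks this direct sum is orthogonal, so $\dim\mathcal{M}_N \le n L_N \le n N^{3/4}$ as required, and because $\sum_\ell P_{J_\ell}=\bI$, each $\tilde h^k=\sum_\ell P_{J_\ell}\tilde h^k \in \mathcal{M}_N$.

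To check the invariance \eqref{eq: disappearing eff ext field}, note that $\tfrac{1}{N}\nabla \tilde H_N(m)=2\bm{D}m$, so it suffices to control $P^{\mathcal{M}_N^\perp}\bm{D}m^k$ for $m^k\in\mathcal{M}_N$ with $|m^k|\le1$. Pick representative values $\bar\theta_\ell = \theta_{\min J_\ell/N}$ and let $\bar{\bm{D}} = \sum_\ell \bar\theta_\ell P_{J_\ell}$. For any $m\in\mathcal{M}_N$ one has $P_{J_\ell}m\in V_\ell\subset\mathcal{M}_N$, so
\[
\bar{\bm{D}}m \;=\;\sum_{\ell=1}^{L_N}\bar\theta_\ell\, P_{J_\ell}m\;\in\;\mathcal{M}_N,
\]
and hence $P^{\mathcal{M}_N^\perp}\bm{D}m = P^{\mathcal{M}_N^\perp}(\bm{D}-\bar{\bm{D}})m$. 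Bounding by the operator norm gives
\[
\bigl|P^{\mathcal{M}_N^\perp}\bm{D}m\bigr|\;\le\;\|\bm{D}-\bar{\bm{D}}\|_2\,|m|\;\le\;\eta_N\,|m|,
\qquad
\eta_N := \max_{\ell\le L_N}\max_{i\in J_\ell}\bigl|\theta_{i/N}-\bar\theta_\ell\bigr|.
\]
Combining with $\tilde h^k\in\mathcal{M}_N$ yields $|P^{\mathcal{M}_N^\perp}(\tfrac{\beta_k}{N}\nabla\tilde H_N(m^k)+\tilde h^k)|\le 2|\beta_k|\eta_N$, uniformly in $k$ and in $m\in\mathcal{M}_N^n$ with $|m^k|\le 1$.

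It remains to show $\eta_N\to0$. Indices $i,j$ in a common block satisfy $|i-j|\le N^{1/4}+1$, so $|i-j|/N\to 0$ uniformly, and thus by \eqref{eq:classical error} (equivalently, by uniform continuity of the classical quantile function on $[0,1]$) we have $\eta_N\to 0$ as $N\to\infty$. This gives \eqref{eq: disappearing eff ext field}. The main (minor) obstacle is just making sure the direct sum decomposition really is orthogonal and that $P_{J_\ell}$ maps $\mathcal{M}_N$ into itself — both of which are immediate from the block-diagonal construction — so the argument is essentially a soft combination of uniform continuity of the semicircle quantiles with the freedom in choosing $L_N\le N^{3/4}$.
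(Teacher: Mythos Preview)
Your proof is correct. The approach differs in organization from the paper's: the paper invokes \cite[Lemma~17]{belius-kistler} as a black box to produce, for each single field $\tilde h^k$, a subspace $\mathcal{M}_{N,k}$ with the approximate invariance property, and then sets $\mathcal{M}_N=\mathcal{M}_{N,1}+\cdots+\mathcal{M}_{N,n}$, checking invariance by writing $m^k=v^1+\cdots+v^n$ with $v^l\in\mathcal{M}_{N,l}$ and using linearity of $\nabla\tilde H_N$. You instead give the block-diagonal construction directly for all $n$ fields at once, which is essentially what lies inside the cited lemma, adapted to multiple external fields. What your route buys is self-containment and a cleaner bookkeeping: because the $V_\ell$ sit on disjoint coordinate blocks the sum is genuinely orthogonal, so you never need to worry about the size of the components $v^l$ in a non-orthogonal decomposition (a point the paper's argument glosses over). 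The paper's route buys brevity if one is willing to cite \cite{belius-kistler}. Both yield the same dimension bound $nN^{3/4}$, and the key analytic input---that $\theta_{i/N}$ varies by $o(1)$ over index windows of width $o(N)$, equivalently uniform continuity of the semicircle quantile on $[0,1]$---is the same in both.
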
	

	\begin{proof}
	    By \cite[Lemma 17]{belius-kistler} with $\beta=1$ there exists for each $k$ a subspace
		$\mathcal{M}_{N, k} \subset \mathbb{R}^N$ such that
		\begin{equation}\label{eq: MNk invariance}
    		\lim_{N\rightarrow\infty} \sup_{\stackrel{m\in\mathcal{M}_{N,k}}{|m^k| \le 1}}
    		\bigg |
    		P^{\mathcal{M}_{N,k}^\perp}\left(
    		\frac{1}{N}\nabla\tilde{H}_{N}(m^k)+\tilde{h}^k
    		\right)
    		\bigg | = 0.
		\end{equation}
		Letting $\mathcal{M}_N := \mathcal{M}_{N, 1}+ \ldots + \mathcal{M}_{N, n}$ we have
		that $\dim(\mathcal{M}_{N}) \le n N^{\frac{3}{4}}$, 
        and for any $k$ and $m^k\in\mathcal{M}_{N}$ with $|m^k| < 1$ one can decompose
		\begin{equation}\label{eq: m decomposition on Mnks}
		    m^k = v^1 + ... +v^n
		\end{equation}
		for some $v^l\in\mathcal{M}_{N,l}$, $|v^k| < 1,l=1,\ldots,n$. Therefore (using that $\nabla \tilde{H}_N(m)$ is linear in $m$ and $\mathcal{M}_{N,l}\subset\mathcal{M}_{N}$ for all $l$) 
\[
\begin{array}{l}
\sup_{\substack{m^{k}\in\mathcal{M}_{N},|m^{k}|<1}
}\bigg|P^{\mathcal{M}_{N}^{\perp}}\left(\frac{\beta_{k}}{N}\nabla\tilde{H}_{N}(m^{k})+\tilde{h}^{k}\right)\bigg|\\
=\sup_{\substack{\forall l:v^{l}\in\mathcal{M}_{N,l},|m^{l}|<1}
}\bigg|P^{\mathcal{M}_{N}^{\perp}}\left(\frac{\beta_{k}}{N}\sum_{l=1}^{n}\nabla\tilde{H}_{N}(v^{l})+\tilde{h}^{k}\right)\bigg|\\
\le \beta_{k} \sum_{l=1}^{n}\sup_{\substack{v\in\mathcal{M}_{N,l},|v|<1}
} \bigg|P^{\mathcal{M}_{N,l}^{\perp}}\left(\frac{1}{N}\nabla\tilde{H}_{N}(v^{l})\right)\bigg| +\left|P^{\mathcal{M}_{N,k}^{\perp}}\tilde{h}^{k}\right|\\
\le\beta_{k} \sum_{l=1}^{n}\sup_{\substack{v\in\mathcal{M}_{N,l},|v|<1}
} \bigg|P^{\mathcal{M}_{N,l}^{\perp}}\left(\frac{1}{N}\nabla\tilde{H}_{N}(v^{l})+\tilde{h}^{l}\right)\bigg| +c\left(\beta\right)\max_{l=1,...,n}\left|P^{\mathcal{M}_{N,l}^{\perp}}\tilde{h}^{l}\right|
\end{array}
\]
and thus by \eqref{eq: MNk invariance} 
    we obtain \eqref{eq: disappearing eff ext field}.
	\end{proof}	
	\bigskip

	\noindent 
    The next lemma shows that in the absence of external fields, the  partition function restricted to the complements of the previously constructed subsets satisfies the same approximation as the unrestricted partition function. Recall from the beginning of Subsection~\ref{subsec:lwbdextfield} that $E^U$ denotes the expectation with respect to $\bs \in \mathcal{S}_{N-1}^n$ conditioned on $\bs \in U$ for some set $U$.
	\begin{lemma}\label{lem:outersubspace-difference}
		For any $C > 0, K>0,\delta>0$
		\[
		\lim_{\varepsilon \to 0} \limsup_{N\rightarrow\infty} 
		\sup_{\bQ \ge \delta \bI}\sup_{|\beta|\le C}
		\Bigg |
		\frac{1}{N} \log 
		E^{(\mathcal{M}_{N}^n)^\perp}\left[ \1_{\bQ_{\varepsilon}}
		e^{\sum_{k= 1}^n \beta_k \tilde H_N(\sigma^k)}
		\right]
		- \sum_{k=1}^n\mathcal{F}_K(\tilde \beta_k) - \frac{1}{2} \log |\bQ|
		\Bigg | \le \frac{c}{K}
		\]
		where $(\mathcal{M}^n_{N})_{N \ge 1}$ is the sequence of subspaces from Lemma \ref{lem:subspace-construction} and $\tilde{\beta}_1,\ldots,\tilde{\beta}_n$ are the eigenvalues of $\bb^{1/2} \bQ\bb^{1/2}$.
	\end{lemma}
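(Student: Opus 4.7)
The plan: first I would reduce from $\tilde H_N$ to the binned Hamiltonian $\tilde H_N^K$ via the uniform bound $\sup_{|\sigma|\le 1}|\tilde H_N(\sigma)-\tilde H_N^K(\sigma)|\le cN/K$, which follows directly from the bin width $2\sqrt{2}/K$ in \eqref{eq:discrete1} and \eqref{eq: binned hamilt def}, together with $|\beta|\le C$. This contributes the $c/K$ error that already appears in the right-hand side of the claim, and leaves the task of showing that for some error that tends to $0$ as $\varepsilon\to 0$ and then $N\to\infty$,
\[
\frac{1}{N}\log E^{(\mathcal{M}_N^n)^\perp}\!\left[\1_{\bQ_\varepsilon}e^{\sum_k \beta_k\tilde H_N^K(\sigma^k)}\right]\longrightarrow \sum_{k=1}^n\mathcal{F}_K(\tilde\beta_k)+\tfrac{1}{2}\log|\bQ|,
\]
uniformly over $\bQ\ge\delta\bI$ and $|\beta|\le C$.

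Second, I would rewrite the restricted integral as an unconstrained spherical integral in one fewer ambient dimension. Let $d_N=\dim\mathcal{M}_N\le nN^{3/4}$, pick an orthonormal basis $w_1,\ldots,w_{N-d_N}$ of $\mathcal{M}_N^\perp$, and parametrise each $\sigma^k\in\mathcal{M}_N^\perp\cap\mathcal{S}_{N-1}$ as $\sigma^k=\sum_i\tau_{k,i}w_i$ with $\tau^k\in\mathcal{S}_{N-d_N-1}$. Then $\sigma^k\cdot\sigma^\ell=\tau^k\cdot\tau^\ell$ (so $\bQ_\varepsilon$ rewrites identically in the new variables) and $\tilde H_N^K(\sigma^k)=N\,(\tau^k)^\Trans A_K\tau^k$, where $A_K=(w_i^\Trans\bm{X}_K w_j)_{i,j}$ is the compression of $\bm{X}_K$ from \eqref{eq:binnedmatrix} to $\mathcal{M}_N^\perp$. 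Under $E^{(\mathcal{M}_N^n)^\perp}$ the vectors $\tau^k$ are i.i.d.\ uniform on $\mathcal{S}_{N-d_N-1}$.

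Third, I would control the spectrum of $A_K$ using Cauchy's eigenvalue interlacing (\cite[Theorem 10.1.1]{symmetric-eigenvalue-problem}): the sorted eigenvalues $a_1\le\cdots\le a_{N-d_N}$ of $A_K$ satisfy $\lambda_i(\bm{X}_K)\le a_i\le \lambda_{i+d_N}(\bm{X}_K)$. Since $d_N/N=O(N^{-1/4})\to 0$, the empirical spectral distribution of $A_K$ converges weakly to $\mu_K$ from \eqref{eq:ESDmuK} and the extreme eigenvalues still converge to $\pm\sqrt{2}$. The Hamiltonian in the new variables is $(N-d_N)\sum_k \beta_k\tfrac{N}{N-d_N}(\tau^k)^\Trans A_K\tau^k$, and $\tfrac{N}{N-d_N}=1+O(N^{-1/4})$.

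Fourth, I would run the proofs of Lemmas~\ref{lem:sperhicalintidentity}--\ref{lem:sperhicalintgeneral} verbatim but with $(N-d_N,A_K)$ in place of $(N,\bm{X}_K)$: the Gaussian-Haar trick of Lemma~\ref{lem:sperhicalintidentity} still gives the identity-constraint formula, the change of variables of Lemma~\ref{lem: from I to Q} handles a general $\bQ\ge\delta\bI$, and the Lipschitz-discretization argument based on Lemma~\ref{lem:FK Lipschitz} gives uniformity in $\bQ$ and $\beta$. The only nontrivial input is \cite[Proposition~1]{GuHu21}, which applies to any sequence of matrices whose ESD converges and whose extremal eigenvalues converge, both of which hold for $A_K$ by the interlacing argument; the limit of the spherical integral is therefore $\sum_k\tfrac12 J_{\mu_K}(2\beta_k)=\sum_k\mathcal{F}_K(\tilde\beta_k)$. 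The mismatches between $\tfrac{1}{N}$ and $\tfrac{1}{N-d_N}$ in normalisation, and between $\beta$ and $\tfrac{N}{N-d_N}\beta$ in effective temperature, contribute only $o(1)$ since $d_N/N\to 0$ and $\mathcal{F}_K$ is Lipschitz on compacts by \eqref{eq:FK Lipschitz}.

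The main obstacle is step four: keeping track of uniformity in $(\bQ,\beta)$ while simultaneously changing the underlying matrix from $\bm{X}_K$ to $A_K$ (whose spectral distribution agrees with $\mu_K$ only up to an $O(N^{-1/4})$-fraction of mass due to interlacing). Since \cite[Proposition~1]{GuHu21} yields only weak convergence rather than a uniform rate, I would handle this by first fixing $\bQ$ and $\beta$ on a lattice of spacing $\varepsilon$ (applying the weak limit pointwise, as in Lemma~\ref{lem:sperhicalintgeneral}) and then extending to all $\bQ\ge\delta\bI,|\beta|\le C$ via Lemma~\ref{lem:FK Lipschitz} and Lemma~\ref{lem: from I to Q}, accepting an $O(\varepsilon)$ error that is absorbed in the $\varepsilon\to0$ limit.
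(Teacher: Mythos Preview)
Your proposal is correct and rests on the same ingredients as the paper --- eigenvalue interlacing to control the compression to $\mathcal{M}_N^\perp$, the HCIZ asymptotics from \cite{GuHu21}, and the Lipschitz/lattice argument for uniformity in $(\bQ,\beta)$ --- but you order the reductions differently. You bin first and then compress, which forces you in step four to re-run the proofs of Lemmas~\ref{lem:sperhicalintidentity}--\ref{lem:sperhicalintgeneral} with the non-diagonal compressed matrix $A_K$ in place of $\bm{X}_K$. The paper instead compresses the \emph{unbinned} diagonal Hamiltonian first: because the classical locations $\theta_{i/N}$ have vanishing gaps \eqref{eq:classical error}, interlacing gives $Na_j=(N-N')\theta_{j/(N-N')}+o(N)$ uniformly in $j$, so the restricted Hamiltonian is exactly $\tilde H_{N-N'}$ up to $o(N)$. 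Only then does the paper bin, incurring the same $O(N/K)$ error, and this lands on precisely the quantity computed in Lemma~\ref{lem:sperhicalintgeneral} at size $N-N'$, which can be invoked as a black box with all the uniformity in $(\bQ,\beta)$ already built in. Your route works but requires you to verify that the proofs go through with $A_K$ (they do, since \cite{GuHu21} only needs ESD and edge convergence), whereas the paper's ordering avoids that repetition entirely.
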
	
	\begin{proof} 
	     Recall $N' = \dim(\mathcal{M}_{N}^n) \le n N^{\frac{3}{4}}$.
	     Similarly to in the proof of Lemma \ref{lem:dimensiondrop}, let $w_1,...,w_N$ be an orthonormal basis of $\R^{N}$ such that the space $\mathcal{M}_{N}$ is spanned by the last $N-N'$ of these vectors. Let $\bm{D}$ be the diagonal matrix with $D_{jj} = N \theta_{j/N}$ so that $\tilde{H}_N(\sigma) = \sigma^{\Trans} \bm{D} \sigma$. Let $\bA$ be the $(N-N')\times(N-N')$ minor of $\bm{D}$ when written in the basis $w_1,\ldots,w_N$. By the eigenvalue interlacing inequality and \eqref{eq:classical error} the eigenvalues $Na_1,...,Na_{N-N'}$ of $\bA$ satisfy $Na_j = N\theta_{j/N} + o(1) = (N-N')\theta_{j/(N-N')} + o(N).$ We have
		\begin{align*}
			&E^{(\mathcal{M}_{N}^n)^\perp} \left[ \1_{\bQ_\varepsilon}
			\exp\left(
			{\sum_{k=1}^n \beta_k 
			                            \tilde H_N(\sigma^{k})}
			\right)
			\right] 
			\\
	    	=
			& E^{N-N'} \left[ \1_{\bQ_{\varepsilon}}
			\exp\left(
			{(N-N') \sum_{k=1}^n \beta_k \sum_{j=1}^{N-N'} \theta_{j/N-N'}({\sigma_j^{k}})^2}
			\right)
			\right] e^{o(N)}.
		\end{align*}
		\noindent
		Also
		$$
		| \tilde H^K_N(\sigma^i) - \tilde H_N(\sigma^i) |
		= 
		\bigg | N \sum_{k = 1}^K \sum_{j \in I_k} (x_k-\theta_{j/N}) (\sigma_j^i)^2
		\bigg |
		\le
		\frac{2\sqrt{2}}{K} N
		\sum_{k = 1}^K \sum_{j \in I_k} (\sigma^i_j)^2
		=
	    N	\frac{2\sqrt{2}}{K}, 
		$$
		so we get for bounded $\beta$
		\begin{align*}
			&E^{N-N'} \left[ \1_{\bQ_\varepsilon}
			\exp\left(
			{(N-N') \sum_{k=1}^n \beta_k \sum_{j=1}^{N-N'} \theta_{j/(N-N')}({\sigma_j^{k}})^2}
			\right)
			\right] 
			\\
			= &
			E^{N-N'} \left[ \1_{\bQ_\varepsilon}
			\exp\left(
			{\sum_{k=1}^n \beta_k \tilde H^K_{N-N'}(\sigma^k)}
			\right)
			\right] e^{\mathcal{O}(\frac{N}{K})}.
		\end{align*}
		The claim follows from Lemma \ref{lem:sperhicalintgeneral}.
	\end{proof}	
	\bigskip
	
    The next lemma will be used to show that $\bbm$ with some $|m^k|$ close to 1 give a negligible contribution to the partition function.
	
	\begin{lemma}\label{lem:on too large m}
	    Let $\mathcal{U}_N \subset \mathbb{R}^N$ be a sequence of linear subspaces of dimension 
	    $N' = o\left(\frac{N}{\log N}\right)$.
	    For all $\eta \in (0,1)$ it holds that
	    $$
	    \limsup_{N\to\infty}
	        \frac{1}{N}\log \int_{\mathcal{S}_{N-1}^n} 
	        \1_{\left\{\bs : \ \exists j \in\{1,\ldots,n\} : \left|P^{\mathcal{U}_N}(\sigma^j)\right|^2 > 1 - \eta \right\}}  
	        d\bs
	        < \frac{1}{2} \log \eta.
	    $$
	\end{lemma}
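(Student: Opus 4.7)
The plan is to reduce by a union bound to a one-sphere event and then exploit rotational invariance together with the explicit Beta tail of the squared norm of a projection.

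First, a union bound yields
\begin{equation*}
\int_{\mathcal{S}_{N-1}^n}\1_{\{\bs:\exists j:|P^{\mathcal{U}_N}\sigma^j|^2>1-\eta\}}\,d\bs\le n\cdot\P\bigl(|P^{\mathcal{U}_N}\sigma|^2>1-\eta\bigr)
\end{equation*}
for $\sigma$ uniform on $\mathcal{S}_{N-1}$; the prefactor $n$ is negligible on the $\tfrac{1}{N}\log$ scale. By rotational invariance of the uniform measure, one may assume that $\mathcal{U}_N=\mathrm{span}(e_1,\ldots,e_{N'})$ with $N':=\dim\mathcal{U}_N$, so that $|P^{\mathcal{U}_N}\sigma|^2=\sigma_1^2+\cdots+\sigma_{N'}^2$ has the $\mathrm{Beta}(N'/2,(N-N')/2)$ distribution.

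Second, writing out the Beta density, substituting $u=1-x$, and bounding $(1-u)^{N'/2-1}\le 1$ before integrating gives
\begin{equation*}
\P\bigl(|P^{\mathcal{U}_N}\sigma|^2>1-\eta\bigr)\le \frac{\Gamma(N/2)}{\Gamma(N'/2)\Gamma((N-N')/2)}\cdot\frac{\eta^{(N-N')/2}}{(N-N')/2}.
\end{equation*}
Taking $\tfrac{1}{N}\log$ and applying Stirling's formula, the Gamma-ratio contributes $\tfrac{1}{2}h(N'/N)+O(\tfrac{\log N}{N})$, where $h(p):=-p\log p-(1-p)\log(1-p)$ is the binary entropy. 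The hypothesis $N'=o(N/\log N)$ is precisely what ensures $h(N'/N)=O(\tfrac{N'\log N}{N})=o(1)$, so the combinatorial prefactor contributes nothing at the exponential scale.

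Third, the main exponential term $\tfrac{N-N'}{2N}\log\eta$ converges to $\tfrac{1}{2}\log\eta$ as $N\to\infty$, so combining the estimates gives the desired upper bound on $\limsup_{N\to\infty}\tfrac{1}{N}\log\int\1\,d\bs$. The strict inequality claimed in the statement is then obtained by inspecting the sub-leading corrections: the $O(\tfrac{\log N}{N})$ Stirling error and the $-\tfrac{1}{N}\log((N-N')/2)$ term leave a strictly negative slack below $\tfrac{1}{2}\log\eta$ for all sufficiently large $N$. The main (and quite mild) obstacle is the careful Stirling bookkeeping at the borderline scaling $N'=o(N/\log N)$, which is precisely the rate of growth calibrated to keep the combinatorial prefactor subexponential in $N$.
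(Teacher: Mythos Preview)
Your approach is essentially the same as the paper's: a union bound to reduce to a single sphere, then an analysis of the tail of $|P^{\mathcal{U}_N}\sigma|^2$ via its explicit density. The paper cites a projection identity from \cite{belius-kistler} rather than invoking the Beta law directly, but the computations are equivalent, and both show that the Gamma prefactor is $e^{o(N)}$ under the hypothesis $N'=o(N/\log N)$.

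Two remarks. First, a minor one: your bound $(1-u)^{N'/2-1}\le 1$ requires $N'\ge 2$; for $N'=1$ use $(1-u)^{-1/2}\le(1-\eta)^{-1/2}$ on $[0,\eta]$, which introduces only an $\eta$-dependent constant and does not affect the exponential rate.

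Second, and more substantively: your justification of the \emph{strict} inequality is wrong. The sub-leading Stirling term and the $-\tfrac{1}{N}\log((N-N')/2)$ term both tend to $0$ as $N\to\infty$, so they cannot produce a strictly negative slack in the $\limsup$. Indeed, for bounded $N'$ (say $N'=1$, which certainly satisfies $N'=o(N/\log N)$) one checks that $\tfrac{1}{N}\log\P(|P^{\mathcal U_N}\sigma|^2>1-\eta)\to\tfrac{1}{2}\log\eta$ exactly, so strict inequality fails in general. The paper's own proof also only yields ``$\le$''; the ``$<$'' in the statement is a minor imprecision. In the sole application (bounding the contribution of $\bbm$ with some $|m^k|$ close to $1$ in the proof of Proposition~\ref{prop:upperbound}) only the weak inequality $\le\tfrac{1}{2}\log\eta+o(1)$ is needed, and both your argument and the paper's deliver that.
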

	\begin{proof} First note that
	    \begin{align*}
	        \int_{\mathcal{S}_{N-1}^n} 
	        \1_{\left\{\bs : \ \exists j \in\{1,\ldots,n\} : \left|P^{\mathcal{U}_N}(\sigma^j)\right|^2 > 1 - \eta \right\}}   d\bs
	        \le &
	        \sum_{j=1}^n
	        \int_{\mathcal{S}_{N-1}^n} \1_{\left\{\bs : \left|P^{\mathcal{U}_N}(\sigma^j)\right|^2 > 1 - \eta \right\}}  d\bs
	        \numberthis\label{eq:integral with some large m}
	        \\
	        =&
	        \sum_{j=1}^n
	        \int_{\mathcal{S}_{N-1}} \1_{\left\{\sigma^j : \left|P^{\mathcal{U}_N}(\sigma^j)\right|^2 > 1 - \eta \right\}}  d\sigma^j.
	    \end{align*}
	    By \cite[(2.9)]{belius-kistler}
		    \begin{align*}
		        \int_{\mathcal{S}_{N-1}} \1_{\left\{\sigma^j : \left|P^{\mathcal{U}_N}(\sigma^j)\right|^2 > 1 - \eta \right\}}  d\sigma^j
		        =&
		        \frac{\Gamma\left(\frac{N}{2}\right)}{\pi^{\frac{N'}{2}} \Gamma\left(\frac{N-N'}{2}\right)}
		        \int_{\mathcal{B}_{N'}} \1_{\{m: |m|^2 > 1 - \eta\}}
		        (1-|m|^2)^{\frac{N-N'-2}{2}} 
		        d m,
		    \numberthis\label{eq:integral with some large m 2}
		    \end{align*}
		    where $dm$ denotes Lebesgue measure on $\mathcal{B}_{N'} = \{m \in\mathbb{R}^{N'}: |m| < 1\}$.
		    Since
		    $$
		        \frac{1}{N}
		        \log\left(\frac{\Gamma\left(\frac{N}{2}\right)}{\pi^{\frac{N'}{2}} \Gamma\left(\frac{N-N'}{2}\right)}\right) = o(1)
		    $$
		    and
		    \begin{align*}
		        \frac{1}{N} \log \int_{\mathcal{B}_{N'}} \1_{\{m: |m|^2 > 1 - \eta\}}  (1-|m|^2)^{\frac{N-N'-2}{2}} 
		        d m
		        \ &= \ 
		        \frac{1}{2} \log \eta 
		        + o(1)
		        + 
		            \frac{1}{N} \log \int_{\mathcal{B}_{N'}} d m
		            \\
		        \ &= \ 
		       \frac{1}{2}\log \eta + o(1)
		        + \frac{1}{N}\underbrace{
		      \log\left(\frac{\pi^{\frac{N'}{2}}}{\Gamma(\frac{N'}{2}+1)}\right)
		     }_{=o(N)},
		    \end{align*}
		  the claim follows from \eqref{eq:integral with some large m} and \eqref{eq:integral with some large m 2}.
	\end{proof}
	\bigskip

	\bigskip 

    We now prove that the free energy \eqref{eq:deterministicHamiltonian} of the deterministic Hamiltonian \eqref{eq: binned hamilt def} is bounded above by the corresponding modified TAP free energy from \eqref{eq:TAPK}.
		
	\begin{proposition}\label{prop:upperbound}
        For $K \ge 2$ there is a $C=C(\beta, \bh, \bQ, K)$, such that for $\varepsilon \in (0,C)$, $N$ large enough and $c=c(\beta)$
		\begin{equation}\label{eq:upperbound}
		\tilde{F}^{\varepsilon}_N(\beta, \bh, \bQ)
		\le \frac{1}{N} \sup_{ \bbm: \bM < \bQ } 
		\tilde{F}^K_{\textrm{TAP}}(\bbm) + \frac{c}{K}.
		\end{equation}
	\end{proposition}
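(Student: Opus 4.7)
The plan is to use the subspaces $\mathcal{M}_N$ from Lemma~\ref{lem:subspace-construction} to split the partition function defining $\tilde F_N^\varepsilon$ into a low-dimensional outer integral over $\bbm=(m^1,\ldots,m^n)\in\mathcal{M}_N^n$ and an inner integral over $\mathcal{S}_{N-1}^n$-slices orthogonal to $\mathcal{M}_N$, and then to recenter around $\bbm$. Slicing each sphere $\mathcal{S}_{N-1}$ by $P^{\mathcal{M}_N}\sigma^k=m^k$ produces a Jacobian $(1-|m^k|^2)^{(N-N'-2)/2}$ (where $N'=\dim\mathcal{M}_N\le nN^{3/4}$), and the identity $\tilde H_N(\sigma)=\tilde H_N(m)+\nabla\tilde H_N(m)\cdot(\sigma-m)+\tilde H_N(\sigma-m)$ recenters the Hamiltonian as in \eqref{eq:recentering with ext field}. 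Lemma~\ref{lem:subspace-construction} guarantees that the resulting cross term $\sum_k(\beta_k\nabla\tilde H_N(m^k)+N\tilde h^k)\cdot(\sigma^k-m^k)$ is $o(N)$ uniformly in $\bbm$ with $|m^k|\le 1$, because $\sigma^k-m^k\in\mathcal{M}_N^\perp$ while the effective field is asymptotically contained in $\mathcal{M}_N$.

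The constraint $\bs\bs^\Trans\in\bQ_\varepsilon$ forces $y^k\cdot y^\ell=(\bQ-\bM)_{k,\ell}+O(\varepsilon)$ with $y^k=\sigma^k-m^k$, which after normalising $\hat\sigma^k=y^k/|y^k|$ translates to $\hat\bs\hat\bs^\Trans\in\hat\bQ(\bbm)_{c\varepsilon}$ exactly as in the lower bound (see \eqref{eq: eff constr 1}). Using $2$-homogeneity of $\tilde H_N$ and $|y^k|^2=1-|m^k|^2+O(\varepsilon)$, the inner integral is
\[
e^{O(\varepsilon N)}\cdot E^{(\mathcal{M}_N^n)^\perp}\!\Big[\1_{\hat\bQ(\bbm)_{c\varepsilon}}\exp\Big(\textstyle\sum_k\beta_k(1-|m^k|^2)\tilde H_N(\hat\sigma^k)\Big)\Big].
\]
I then apply Lemma~\ref{lem:outersubspace-difference} with $\hat\bQ(\bbm)$ in place of $\bQ$ and $\beta_{\bbm}$ (see \eqref{def: beta_m}) in place of $\beta$. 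Since $\bb_{\bbm}^{1/2}\hat\bQ(\bbm)\bb_{\bbm}^{1/2}=\bb^{1/2}(\bQ-\bM)\bb^{1/2}$, these matrices share eigenvalues, which are the $\tilde\beta_k(\bbm)$ appearing in \eqref{eq:TAPK}. Combining with the slicing Jacobian and using $\sum_k\log(1-|m^k|^2)+\log|\hat\bQ(\bbm)|=\log|\bQ-\bM|$ (cf.~\eqref{eq: determinant transformation Q hat to Q-M}), the integrand of the outer integral equals $\exp\!\big(\tilde F^K_{\textrm{TAP}}(\bbm)+O(\varepsilon N)+O(N/K)\big)$ up to a $\bbm$-independent prefactor of $e^{o(N)}$.

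Because $\dim(\mathcal{M}_N^n)=nN'=o(N)$, the volume of $\{\bbm\in\mathcal{M}_N^n:|m^k|\le 1\}$ is at most $e^{o(N)}$, so bounding the outer integral by its supremum gives
\[
\tilde F_N^\varepsilon(\beta,\bh,\bQ)\ \le\ \frac{1}{N}\sup_{\bbm\in\mathcal{M}_N^n,\,|m^k|\le 1}\tilde F^K_{\textrm{TAP}}(\bbm)+O(\varepsilon)+O(1/K)+o(1).
\]
The restriction to $\bM<\bQ$ is automatic: outside this set no $y$ with $yy^\Trans\approx\bQ-\bM$ exists, so the inner integral vanishes; the thin boundary layer where $\bM\to\bQ$ contributes negligibly via the $\frac{N}{2}\log|\bQ-\bM|\to-\infty$ term, and $\bbm$ with some $|m^k|$ near $1$ is discarded using Lemma~\ref{lem:on too large m}. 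Choosing $\varepsilon$ small enough depending on $K,\beta,\bh,\bQ$ absorbs $O(\varepsilon)$ into the required $c/K$ slack.

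The principal obstacles are making every approximation uniform in $\bbm$: Lemma~\ref{lem:outersubspace-difference} requires $\hat\bQ(\bbm)\ge\delta\bI$ and a bounded effective temperature, which forces one to first excise the region where $\hat\bQ(\bbm)$ degenerates (via a variant of Lemma~\ref{lem: small eval} together with Lemma~\ref{lem:on too large m}); and one must verify that the cross-term vanishing from Lemma~\ref{lem:subspace-construction} is $o(N)$ uniformly in $\bbm$, which is achieved by a finite covering of $\mathcal{M}_N^n$ combined with the Lipschitz continuity afforded by Lemma~\ref{lem:FK Lipschitz}, analogously to the discretization step in the proof of Proposition~\ref{prop:freenergy-whileplefka}.
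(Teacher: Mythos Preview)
Your proposal is correct and follows essentially the same route as the paper: split via $\mathcal{M}_N$, recenter, kill the effective external field with Lemma~\ref{lem:subspace-construction}, rewrite the constraint as $\hat{\bQ}(\bbm)_{c\varepsilon}$, apply Lemma~\ref{lem:outersubspace-difference} on the region $\hat{\bQ}(\bbm)\ge\delta\bI$, combine the slice Jacobian with $\log|\hat{\bQ}(\bbm)|$ into $\log|\bQ-\bM|$, and bound the $o(N)$-dimensional outer integral by its supremum. Two minor remarks: the covering argument you propose for cross-term uniformity is unnecessary since the supremum over $\bbm$ is already inside the limit in Lemma~\ref{lem:subspace-construction}; and the paper disposes of the region where $\hat{\bQ}(\bbm)$ degenerates not via Lemma~\ref{lem: small eval} but by crudely bounding the energy by $e^{cN}$ and the inner volume $E^{(\mathcal{M}_N^n)^\perp}[\1_{\hat{\bQ}(\bbm)_{\varepsilon'}}]$ by \eqref{eq: constrained volume small eval}, then choosing $\delta$ small enough that this falls below $\exp(N\tilde F^K_{\textrm{TAP}}(0))$ --- which is the same mechanism you invoke, just executed directly on the partition function rather than on the TAP free energy.
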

	\begin{proof}
		Let $\mathcal{M}_{N}^n = \mathcal{M}_{N} \times ... \times \mathcal{M}_{N}$ be the space from Lemma \ref{lem:subspace-construction} with each of the $n$ components having dimension $N' \le n N^{\frac{3}{4}}$. For any $\bs = (\sigma^{1},...,\sigma^{n}) \in \R^{n \times N}$ let $\bbm$ be the projection onto $\mathcal{M}_{N}^n$, i.e. $\forall i\in\{1,...,n\}, $ $m^{i} := P^{\mathcal{M}_{N}} \sigma^{i}$ and $\bbm = (m^{1},...,m^{n})$.
		\\
		By recentering $\tilde{H}_N(\sigma^k)$ around the $m^k$ as in \eqref{eq:LB-recentering} we get
		\begin{align*}
			& \color{white}=\color{black}
			\int_{\bQ_{\varepsilon}}\exp\left(\sum_{k = 1}^n \left( \beta_k \tilde{H}_N(\sigma^k) + N h^k \cdot \sigma^k \right) \right) d\bs
			\\
			& =
			E\left[
			\1_{\bQ_{\varepsilon}}
			e^{
				\sum_{k = 1}^n \left(
				\beta_k \tilde{H}_N(m^k) + N \tilde{h}^k \cdot m^k
				\right)
			}
			e^{
				\sum_{k = 1}^n \left(
				N \left(\tfrac{\beta_k}{N}\nabla\tilde{H}_N(m^k) + \tilde{h}^k \right) \cdot (\sigma^k - m^k) + \beta_k\tilde{H}_N(\sigma^k - m^k)\right)
			}
			\right].
			\numberthis\label{eq:prop-eq-01}
		\end{align*} 
		Since
		Lemma \ref{lem:subspace-construction} implies that
		\[
		\lim_{N\rightarrow\infty}
		\sup_{m \in \mathcal{M}_{N}^n}
		\sup_{\stackrel{\hat{\bs}  \in (\mathcal{M}^n_{N})^\perp}{|\sigma^{i}|\le 1, \forall i \in \{1,...,n\}}} 
		\left|
		\sum_{k = 1}^n \left(\tfrac{\beta_k}{N}\nabla\tilde{H}_N(m^k) + \tilde{h}^k \right)\cdot (\sigma^k-m^k)
	    \right|
		= 0,
		\]
		the effective external field vanishes and
		\eqref{eq:prop-eq-01} is at most
		\begin{align*}
			E\left[
			\1_{\bQ_{\varepsilon}}
			e^{
				\sum_{k = 1}^n
				\left(
				\beta_k \tilde{H}_N(m^k) + N \tilde{h}^k \cdot m^k
				\right)
			}
			\exp{\left(
				\sum_{k = 1}^n \beta_k\tilde{H}_N(\sigma^k - m^k)
				\right)}
			\right]e^{o(N)}.
			\numberthis\label{eq:prop-eq-02b}
		\end{align*}
		The expectation equals
		\begin{align*}
			E\left[
			e^{\sum_{k = 1}^n 
			\left(
				\beta_k \tilde{H}_N(m^k) + N \tilde{h}^k \cdot m^k
				\right)
				}
			E\left[\1_{\bQ_\varepsilon}\exp\left(
			\sum_{k = 1}^n 
			\beta_k\tilde{H}_N(\sigma^k-m^k)
			\right) \bigg | \bbm \right]\right]	\numberthis\label{eq:prop-eq-03b}
		\end{align*} 
		where the $E[\cdot | \bbm ]$-law of $\bs - \bbm$ is the uniform distribution on the cartesian product of the $n$ spheres $\mathcal{M}_{N}^\perp \cap \mathcal{S}_{N-1}(\sqrt{1 - |m^{k}|^2})$ for $k\in\{1,...,n\}$. 
        
		Note that for all $k,\ell \in \{1,...,n\}$
		\begin{align*}
			&(\sigma^k-m^k) \cdot (\sigma^\ell-m^\ell) - (Q_{k,\ell} - (\bM)_{k,\ell})
			\\ = &
			\sigma^k \cdot \sigma^\ell - Q_{k,\ell} - 
			\underbrace{(\sigma^k - m^k) \cdot m^\ell}_{=0} -
			\underbrace{(\sigma - m)^\ell \cdot m^k}_{=0} -
			\underbrace{(m^k \cdot m^\ell - (\bM)_{k,\ell})}_{=0}
			,
		\end{align*}
		since $m^k=P^{\mathcal{M}_{N}} \sigma^{k} \in \mathcal{M}_N$ and $\sigma^k-m^k \in \mathcal{M}_N^\perp$,
		so
		\begin{equation}\label{eq: Qeps inclusion}
			\{\bs: \bs \in \bQ_\varepsilon\} = \{\bs: \bs-\bbm \in (\bQ-\bM)_\varepsilon\}.
		\end{equation}
		Let $\hat{\sigma}^{k} = \tfrac{\sigma^k-m^k}{\sqrt{1-|m^k|^2}}$. Using also that $\tilde{H}_N$ is $2$-homogeneous (recall \eqref{eq:deterministicHamiltonian}) the expression in \eqref{eq:prop-eq-03b} equals

		\begin{align}\label{eq:prop-eq-04b}
			E\left[ 
			e^{\sum_{k = 1}^n 
			    \left(
				    \beta_k \tilde{H}_N(m^k) + N \tilde{h}^k \cdot m^k
				\right)
			}
			E\left[\1_{\{ \bs-\bbm \in (\bQ-\bM)_\varepsilon\}}
            e^{\sum_{k=1}^{n} \beta_k (1-|m^k|^2) \tilde{H}_{N}\left(\hat{\sigma}^{k}\right)}
            \bigg | \bbm \right]\right].
		\end{align}
		Let $\eta > 0$ and define $W_j(\eta) = \{\bs: |m^j|^2 \le 1 - \eta\}$ and $W(\eta)=\bigcap_{j=1}^n W_j(\eta)$. Using that $N^{-1}\tilde{H}_N(\sigma), \beta_k, |\tilde{h}^k|$ are all bounded and Lemma \ref{lem:on too large m} we obtain
        \begin{equation}\label{eq:W int}
        \begin{array}{l}
        E\left[\1_{W_j\left(\eta\right)^{c}}e^{\sum_{k=1}^{n}\left(\beta_{k}\tilde{H}_{N}(m^{k})+N\tilde{h}^{k}\cdot m^{k}\right)}E\left[\1_{\{\bs-\bbm \in (\bQ-\bM)_\varepsilon\}}
        e^{\sum_{k=1}^{n}\beta_{k}(1-|m^{k}|^{2})\tilde{H}_{N}\left(\hat{\sigma}^{k}\right)}
        \bigg|\bbm\right]\right]\\
        \le e^{cN}E\left[\1_{{W_j(\eta)}^{c}}\right]\le e^{N\left(c+\log\eta\right)}\le\exp\left(N\tilde{F}_{{\rm TAP}}^{K}\left(0\right)\right),
        \end{array}
        \end{equation}
        if $\eta$ is picked small enough depending on $\bQ,\beta$, and $N$ is large enough. To conclude \eqref{eq:upperbound} it thus suffices to bound
       $$ E\left[\1_{W\left(\eta\right)}e^{\sum_{k=1}^{n}\left(\beta_{k}\tilde{H}_{N}(m^{k})+N\tilde{h}^{k}\cdot m^{k}\right)}E\left[\1_{\{\bs-\bbm \in (\bQ-\bM)_\varepsilon\}}
       e^{\sum_{k=1}^{n}\beta_{k}(1-|m^{k}|^{2})\tilde{H}_{N}\left(\hat{\sigma}^{k}\right)}
       \bigg|\bbm\right]\right].$$       
       	Recall the matrix $\hat{\bQ}(\bbm)$ given by
		\begin{align*}
		    \hat{\bQ}(\bbm)_{ij} = \frac{Q_{ij} - m^i \cdot m^j}{\sqrt{1-|m^i|^2}\sqrt{1-|m^j|^2}}.
		\end{align*}
		Let $\varepsilon' = \varepsilon\eta^{-1}$. For $\bs \in W(\eta)$	\begin{align}\label{eq:Q-inclusion}
    		\{\bs: \bs-\bbm \in (\bQ-\bM)_\varepsilon\}
    		\subset
    		\{\bs: \hat{\bs} \in \hat{\bQ}(\bbm)_{\varepsilon'}\}.
		\end{align}
		Using this we can bound \eqref{eq:W int} from above by
		\begin{align}\label{eq:prop-eq-05b}
			E\left[ 
			\1_{W(\eta)}
			e^{\sum_{k = 1}^n 
				\beta_k \tilde{H}_N(m^k) + N \tilde{h}^k \cdot m^k}
			E\left[\1_{\{\hat{\bs}\in\hat{\bQ}(\bbm)_{\varepsilon'}\}}\exp\left(
			\sum_{k=1}^{n} \beta_k (1-|m^k|^2) \tilde{H}_{N}\left(\hat{\sigma}^{k}\right)
			\right)\bigg | \bbm \right]\right].
		\end{align}
		Because $\hat{\sigma}^{k}$ is distributed uniformly on $\mathcal{S}_{N-1}^n \cap (\mathcal{M}_{N}^n)^\perp$ under $E[\cdot|\bm{m}]$ we can also write this as
		\begin{align}\label{eq:prop-eq-06b}
			E\left[ 
			\1_{W(\eta)}
			e^{\sum_{k = 1}^n 
				\beta_k \tilde{H}_N(m^k) + N \tilde{h}^k \cdot m^k}
			E^{(\mathcal{M}_{N}^n)^{\perp}}\left[\1_{\hat{\bQ}(\bbm)_{\varepsilon'}}\exp\left(
			\sum_{k=1}^{n} \beta_k (1-|m^k|^2) \tilde{H}_{N}\left(\hat{\sigma}^{k}\right)
			\right)\right]\right].
		\end{align}
		Note that \eqref{eq:prop-eq-06b} is bounded from above by
		\begin{equation}\label{eq:prop-eq-indicator-on-Q}
			\begin{array}{rcl}
				&&
				E\left[ 
				\1_{W(\eta)}
				\1_{\{\bbm: \hat{\bQ}(\bmm)>\delta\bI\}}
				e^{\sum_{k = 1}^n 
					\beta_k \tilde{H}_N(m^k) + N \tilde{h}^k \cdot m^k}
				E^{(\mathcal{M}_{N}^n)^{\perp}}\left[\1_{\hat{\bQ}(\bbm)_{\varepsilon'}}e^{
					\sum_{k=1}^{n} \beta_k (1-|m^k|^2) \tilde{H}_{N}\left(\hat{\sigma}^{k}\right)}
				\right]\right]
				\\
				&& +
				e^{cN} E\left[
				\1_{\{\bbm: \hat{\bQ}(\bmm)>\delta\bI\}^c}
			    E^{(\mathcal{M}_{N}^n)^{\perp}}\left[\1_{\hat{\bQ}(\bbm)_{\varepsilon'}}
				\right]\right],
			\end{array}
		\end{equation}
		for any $\delta>0$, where we have crudely bounded all terms in $\exp$ by $cN$ to arrive at the second term. Since under $E^{(\mathcal{M}_{N}^n)^\Trans}$ the $\hat{\sigma}^k$ are \iid uniformly distributed on a sphere of radius $1$ in the subspace $\mathcal{M}_{N}^\perp$ of dimension $N-N'$ we have by \eqref{eq: constrained volume small eval} (with $N-N'$ in place of $N$) and \eqref{eq: mat det general} that $E^{(\mathcal{M}_{N}^n)^{\perp}}[\1_{\hat{\bQ}(\bbm)_{\varepsilon'}}] \le e^{ \frac{1}{2}\log (2\delta n^{n-1}) (N-N')}$ for $\epsilon' \le \delta$, so there is $\delta>0$ such that the second term of \eqref{eq:prop-eq-indicator-on-Q} is at most $\exp(N \tilde{F}_{{\rm TAP}}^{K}(0))$. It thus suffices to bound the first term of \eqref{eq:prop-eq-indicator-on-Q} to prove \eqref{eq:upperbound} (cf. \eqref{eq:W int}).
		Now we can apply Lemma \ref{lem:outersubspace-difference} with $({\beta}_{\bbm})_k=\beta_k (1 - | m^k|^2)$ in place of $\beta_k$,
		$\bb_{\bbm} = \diag\, {\beta}_{\bbm}\in \mathbb{R}^{n \times n}$ in place of $\bb$ and $\hat{\bQ}$ in place of $\bQ$ to bound the first term of \eqref{eq:prop-eq-indicator-on-Q} by 
		\begin{align*}
    		E\Bigg[\1_{W(\eta)}\1_{\{\bbm: \hat{\bQ}(\bmm)>\delta\bI\}} \exp\Bigg(
    		\sum_{k = 1}^n \beta_k & \tilde{H}_N(m^k) + N \tilde{h}^k \cdot m^k
    		\numberthis\label{prop-eq-05}
    		\\
    		&
    		+
    		N \sum_{k=1}^n \mathcal{F}_K( \tilde{\beta}_k{(\bbm)} ) +  \frac{N}{2}\log |\hat{\bQ}(\bbm)| \Bigg)
    		\Bigg] e^{o(N) + \frac{c N}{K}},
		\end{align*}
  recalling from \eqref{eq:TAPK} that $\tilde{\beta}_k(\bbm)$ are the eigenvalues of the symmetric positive semi-definite matrix 
		$$\bb^{1/2}(\bQ - \bM) \bb^{1/2}=\bb_{\bbm}^{\frac{1}{2}}  \hat\bQ(\bbm) \bb_{\bbm}^{\frac{1}{2}}.$$   
		Since each $m^k$ is a projection onto $\mathcal{M}_{N}$ we can use \cite[(2.9)]{belius-kistler} to write the expectation as
		\begin{align*}
			& \left(\frac{1}{\pi^{\frac{N'}{2}}}\frac{\Gamma\left(\frac{N}{2}\right)}{\Gamma\left(\frac{N-N'}{2}\right)}\right)^n 
			\int_{(\mathcal{B}_{N}(\sqrt{1-\eta})\cap \mathcal{M}_{N})^n} 
			    \1_{\{\bbm: \hat{\bQ}(\bmm)>\delta\bI\}}
			\prod_{k=1}^n  (1-|m^{k}|^2)^{\frac{N-N'-2}{2}}\color{black}
			\numberthis\label{eq:prop-eq-06}
			\\
			& \quad
			\times
			\exp\left(
			\sum_{k = 1}^n \left(\beta_k\tilde{H}_N(m^k) + N \tilde{h}^k \cdot m^k\right)
			+ N \sum_{k =1}^n \mathcal{F}_K(\tilde{\beta}_k{(\bbm)}) + \frac{N}{2}\log |\hat{\bQ}(\bbm)|\right) d \bbm,
		\end{align*}
		where $\mathcal{B}_N(r)$ denotes the ball of radius $n$ in $\mathbb{R}^N$ and $d \bbm$ is the $n N'$-dimensional Lebesgue measure on $\mathcal{M}_{N}^n$.
        We have
        \begin{equation*}
			\log |\hat{\bQ}(\bbm)| + \sum_{k=1}^n \log(1-|m^k|^2)
			 \stackrel{\eqref{eq: determinant transformation Q hat to Q-M}}{=}
			\log |\bQ - \bM|.
		\end{equation*}
		%
		%
		Therefore recalling \eqref{eq:TAPK} we have that \eqref{eq:prop-eq-06} equals
		\[
		\left(\frac{1}{\pi^{\frac{N'}{2}}}\frac{\Gamma\left(\frac{N}{2}\right)}{\Gamma\left(\frac{N-N'}{2}\right)}\right)^n
		\int_{{(\mathcal{B}_{N}(\sqrt{1-\eta})\cap \mathcal{M}_{N})^n}} 
		    \1_{\{\bbm: \hat{\bQ}(\bmm)>\delta\bI\}}
		\exp\left(\tilde{F}_{\text{TAP}}^K(\bbm) - \tfrac{N'+2}{2}\sum_{k=1}^n \log(1-|m^{k}|^2) \right) d\bbm.
		\numberthis\label{eq:prop-eq-07}
		\]
		Since the prefactor in \eqref{eq:prop-eq-07} is at most $e^{o(N)}$ and $\int_{{\mathcal{M}_{N}^n}} d\bbm \le \int_{B_{nN'}(n)} d\bbm = \frac{\pi^{\frac{nN'}{2}}}{\Gamma\left(\frac{nN'}{2}+1\right)} n^{n N'} = e^{o(N)}$ the expectation in \eqref{prop-eq-05} is bounded from above by
		\[
		\exp\left(
		\sup_{\bm{m}\in{\mathcal{M}_{N}^n}, \hat{\bQ}(\bbm)>\delta\bI} \tilde{F}_{\text{TAP}}^K(\bm{m)} +  \frac{cN}{K}\right),
		\numberthis\label{eq:prop-eq-09}
		\]
		for $N$ large enough. As $\bm{D}^{\Trans} \bA \bm{D}>0$ and $\bm{D}$ invertible implies that $\bA>0$ we have that $\hat{\bQ} > \delta \bI$ implies
		$\bQ - \bM > 0$, so the claim \eqref{eq:upperbound} follows.
	\end{proof}

	\subsection{Location of the maximizer}	\label{subsec:location_max}
    
    In this subsection we will derive Proposition~\ref{prop:TAP_UB} for the free energy in terms of the TAP free energy $F_{\text{TAP}}^K(\bbm)$ from the upper bound Proposition~\ref{prop:upperbound} for the free energy in terms of the modified TAP free energy $\tilde{F}_{\text{TAP}}^K(\bbm)$. To do so we will show that the maximum of $\tilde{F}_{\text{TAP}}^K(\bbm)$, is attained at some $\bbm \in \cM$. 
    Similarly to \eqref{eq: mmT plefka equivalence} we have
    \begin{equation}\label{eq:plefka threeway}
        \bbm \in \cM
        \ \stackrel{\eqref{eq:HT-condition}}{\Leftrightarrow} \ 
        \beta \in {\rm{HT}}(\bQ-\bbm\bbm^\Trans)
        \ \Leftrightarrow \  
        \tilde{\beta}(\bbm) \in {\rm{HT}}(\bQ),
    \end{equation}
    i.e. $m$ satisfies the Plefka condition if and only if the ``effective temperature after recentering" $\tilde{\beta}(\bbm)$ lies in the high temperature region ${\rm{HT}}(\bQ)$.    
    Therefore once we have proven that the maximizer of $\tilde{F}_{\text{TAP}}^K(\bbm)$ satisfies $\bbm \in \cM$ we will be able to derive the upper bound Proposition~\ref{prop:TAP_UB} for the free energy $F_N^\varepsilon$ from Proposition~\ref{prop:upperbound} and Lemma~\ref{lem:Plefka} by taking the limit $K\to\infty$.

	To obtain nice formulas for the derivatives of $\tilde{F}_{\text{TAP}}^K(\bbm)$, we will interpret the terms
	\[
	\sum_{k =1}^n \mathcal{F}_K(\tilde{\beta}_k{(\bbm)})
	\quad \text{and} \quad  \log|\bQ-\bM|
	\] 
	of \eqref{eq:TAPK} as traces of primary matrix functions \cite[Chapter~1]{HighamFunctionsMatrices}. 
	
	\begin{defn}
		Given a scalar function $f$ and a real symmetric matrix $\bA = \bU \bd \bU^\Trans \in \R^{n\times n}$ we define the primary matrix function $f(\bA)$ associated with $f$ by
		\[
		f(\bA) := \bU f(\bd) \bU^\Trans \qquad \text{where} \qquad f(\bd) = \diag(f(\lambda_1), \dots, f(\lambda_n)).
		\]
		These matrix valued functions are well-defined if $f(\lambda_i)$ is well-defined for all $i \leq n$.
	\end{defn}
	
	\begin{remark}
		The primary matrix functions of \cite[Chapter~1.2]{HighamFunctionsMatrices} are defined more generally in terms of the Jordan canonical form. However, in this work, we only deal with diagonalizable matrices, so the definition simplifies.
	\end{remark}

	It follows that
	\[
	   \sum_{k = 1}^n \cF_K(\tilde{\beta}_k (\bbm) ) = \Tr \big(  \cF_K (\bb^{\frac{1}{2}} ( \bQ - \bM ) \bb^{\frac{1}{2}})   \big),
	\]
	where $\cF_K (\bA)$ is the primary matrix function associated with $\cF_K(x)$, and (for $\bmm < \bQ$)
	\[
	   \log|\bQ-\bM| = \Tr\big( \log ( \bQ - \bM))  \big),
	\]
	 where $\log (\bA)$ is the primary matrix functions associated with $\log(x)$.
	Replacing the corresponding terms of \eqref{eq:TAPK} we arrive at the matrix form of $\tTAP^K$
	\begin{align}
		\tTAP^K (\bbm) &=  \sum_{k = 1}^n \bigg( \beta_k \tilde H_N ( m^k ) + N m^k \cdot \tilde{h}^k \bigg) \notag
		+ N \Tr ( \cF_K( \bb^{\frac{1}{2}} (\bQ - \bM) \bb^{\frac{1}{2}} ) )
		\\&\quad
		+ \frac{N}{2}  \Tr ( \log ( \bQ - \bM )) \label{eq:TAPevform}.
	\end{align}
	We want to study the critical point condition of the maximizers of this function, which will require a formula to differentiate primary matrix functions.  
	
\begin{lemma}\label{lem:matderiv} 
Let $f$ be a
scalar function which is smooth in its domain
, and let $\bA(\alpha)$ be a smooth map from a subset of $\mathbb{R}$ into the subset of $\mathbb{R}^{n\times n}$ on which $f(\bA)$ is well-defined. Then $f_{ab}(\bA(\alpha)),a,b=1,\ldots,n$ is smooth and
\begin{equation}\label{eq:matrixderivativefact}
    \partial_{\alpha}\Tr (f\left(\bA\left(\alpha\right)\right)) = \Tr\left(f'\left(\bA\left(\alpha\right)\right)\partial_{\alpha}\bA(\alpha)\right).
\end{equation}

In particular, for positive definite $\bA(\alpha)$
	\begin{equation}\label{eq: tr log deriv}
		\partial_\alpha \Tr ( \log(\bA(\alpha)) ) = \Tr ( \bA(\alpha)^{-1} \partial_{\alpha}\bA(\alpha) )
	\end{equation}
		and
	\begin{equation}\label{eq: tr F deriv}
		\partial_\alpha \Tr ( \cF_K(\bA(\alpha)) ) = \Tr (  v_{\mu_K} (2\bA(\alpha)) \partial_{\alpha}\bA(\alpha) )  - \frac{1}{2} \Tr (  \bA(\alpha)^{-1} \partial_{\alpha}\bA(\alpha) ).
	\end{equation}
\end{lemma}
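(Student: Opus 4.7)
The plan is to first establish the general trace--derivative identity \eqref{eq:matrixderivativefact} and then derive \eqref{eq: tr log deriv} and \eqref{eq: tr F deriv} by computing $f'$ in the two specific cases $f=\log$ and $f=\cF_K$.

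For \eqref{eq:matrixderivativefact}, I would argue by polynomial approximation. When $f(x)=x^k$ the product rule gives
\[
\partial_\alpha\bA(\alpha)^k=\sum_{j=0}^{k-1}\bA(\alpha)^j\,\partial_\alpha\bA(\alpha)\,\bA(\alpha)^{k-1-j};
\]
taking $\Tr$ and using cyclicity, each summand collapses to $\Tr(\bA(\alpha)^{k-1}\partial_\alpha\bA(\alpha))$, so $\partial_\alpha\Tr(\bA(\alpha)^k)=k\,\Tr(\bA(\alpha)^{k-1}\partial_\alpha\bA(\alpha))=\Tr(f'(\bA(\alpha))\,\partial_\alpha\bA(\alpha))$, and by linearity the identity extends to all polynomials. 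For a general smooth $f$, fix $\alpha_0$ and a compact neighborhood $I\ni\alpha_0$ so that the spectra of $\bA(\alpha)$ for $\alpha\in I$ lie in a fixed compact $K$ on which $f$ is smooth. By Stone--Weierstrass I choose polynomials $p_m$ with $p_m\to f$ and $p_m'\to f'$ uniformly on $K$. Either via the spectral decomposition (the application matrices are symmetric) or via the Cauchy contour representation $f(\bA)=\frac{1}{2\pi i}\oint_\Gamma f(z)(z\bI-\bA)^{-1}\,dz$ for a contour $\Gamma$ surrounding $K$, it follows that $p_m(\bA(\alpha))\to f(\bA(\alpha))$ and $p_m'(\bA(\alpha))\to f'(\bA(\alpha))$ uniformly for $\alpha\in I$. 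Passing to the limit $m\to\infty$ in the polynomial identity yields \eqref{eq:matrixderivativefact}. The asserted smoothness of $f_{ab}(\bA(\alpha))$ follows from the same contour representation (the integrand is smooth in $\alpha$) or, equivalently, from the fact that each $p_m\circ\bA(\alpha)$ is smooth and the convergence is uniform in $C^k$ norms locally.

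Formula \eqref{eq: tr log deriv} is then immediate from \eqref{eq:matrixderivativefact} with $f=\log$ and $f'(x)=1/x$. For \eqref{eq: tr F deriv} it remains to compute $\cF_K'$. Since $\cF_K(x)=\tfrac12 J_{\mu_K}(2x)$, I need $J_{\mu_K}'$. Writing $v:=v_{\mu_K}$, \eqref{eq: case nu=mu_K} gives $G_{\mu_K}(v(z))=z$, so $\int(v(z)-x)^{-1}\,d\mu_K(x)=z$ and the chain rule yields $G_{\mu_K}'(v(z))v'(z)=1$. Differentiating
\[
J_{\mu_K}(z)=\lambda^* z+(v(z)-\lambda^*)G_{\mu_K}(v(z))-\log z-\int\log|v(z)-x|\,d\mu_K(x)-1
\]
in $z$ (differentiation under the integral is justified because $v(z)$ stays bounded away from $\supp(\mu_K)$ on the relevant interval), the term $v'(z)\cdot z$ arising from the second summand exactly cancels the integral contribution $v'(z)\int(v(z)-x)^{-1}d\mu_K(x)=v'(z)\cdot z$, and $(v(z)-\lambda^*)G_{\mu_K}'(v(z))v'(z)$ collapses to $v(z)-\lambda^*$. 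What remains is $J_{\mu_K}'(z)=\lambda^*+(v(z)-\lambda^*)-1/z=v_{\mu_K}(z)-1/z$. Hence $\cF_K'(x)=v_{\mu_K}(2x)-1/(2x)$, and inserting this into \eqref{eq:matrixderivativefact} produces \eqref{eq: tr F deriv}. The only mildly delicate steps are the justifications of differentiation under the integral and of the polynomial approximation limit; the rest is routine matrix calculus.
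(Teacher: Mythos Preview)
Your proof is correct but takes a different route from the paper on both parts.

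For the general identity \eqref{eq:matrixderivativefact}, the paper invokes the Fr\'echet derivative machinery of \cite[Chapter~3.2]{HighamFunctionsMatrices}, writing $\partial_\alpha f(\bA(\alpha))$ via the Daleckii--Krein type formula $\bU(\bD\odot\bU^\Trans\partial_\alpha\bA\,\bU)\bU^\Trans$ with the divided-difference matrix $\bD$, and then uses the identity $\Tr(\bA(\bB\odot\bC))=\Tr((\bA\odot\bB)\bC)$ together with $\bI\odot\bD=\diag(f'(\lambda_1),\ldots,f'(\lambda_n))$ to collapse the trace. Your polynomial-approximation argument (product rule for $\bA^k$ plus trace cyclicity, then Stone--Weierstrass in $C^1$) is the more elementary and self-contained route; it avoids the external reference entirely, though the paper's approach has the advantage that it gives the full entrywise Fr\'echet derivative, which is reused later in the Hessian computation of Lemma~\ref{lem: hessian}.

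For $\cF_K'$, the paper simply cites \cite[Lemma~12]{belius-kistler} or \cite[Theorem~6]{GuMa05} for $\cF_K'(\beta)=v_{\mu_K}(2\beta)-\tfrac{1}{2\beta}$, whereas you compute $J_{\mu_K}'$ directly from its defining formula using $G_{\mu_K}(v(z))=z$. Your computation is correct (the cancellation of the $v'(z)\cdot z$ terms is exactly as you describe) and again more self-contained; the paper's citation is shorter but leans on prior work.
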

\begin{proof}
    By linearity we have
	\begin{equation}
		\partial_{\alpha}\Tr (f\left(\bA\left(\alpha\right)\right)) = \Tr (\partial_{\alpha} f\left(\bA\left(\alpha\right)\right)).
	\end{equation}
	
	To manipulate the right-hand side we use the concepts of \cite[Chapter~3.2]{HighamFunctionsMatrices}. Let $L(\bA,\bC)$ denote the Fr\'echet derivative of $f(\bA)$  in the direction $\bC$ defined in \cite[(3.6)]{HighamFunctionsMatrices}. Then
	\begin{equation}\label{eq: gateaux}
		\partial_{\alpha} f\left(\bA\left(\alpha\right)\right)  = L(\bA(\alpha),\partial_{\alpha}\bA(\alpha)).
	\end{equation}
	We write $\bA(\alpha) = \bU(\alpha) \bd(\alpha) \bU(\alpha)^\Trans$ in its eigendecomposition where $\bd(\alpha) = \diag(\lambda_1(\alpha), \dots, \lambda_n(\alpha))$ are the eigenvalues of $\bA$. Let $\odot$ denote the Hadamard product. By \cite[Corollary~3.12 (see also the top of p. 61 and the remark before equation (3.13))]{HighamFunctionsMatrices} we have
	\[
	L(\bA(\alpha),\partial_{\alpha}\bA(\alpha)) = \bU(\alpha) ( \bD(\alpha) \odot \bU(\alpha)^\Trans \partial_{\alpha}\bA(\alpha) \bU(\alpha)) \bU(\alpha)^\Trans
	\]
	where $\bD$ is given by
	\begin{equation}\label{eq:diffmatrix}
		\bD = \bD_{f(\bA)}= [ \Delta f(\lambda_i, \lambda_j) ]_{i,j \leq n}  \quad\text{and}\quad \Delta f(\lambda, \lambda') = \begin{cases}
			\frac{f(\lambda) - f(\lambda')}{ \lambda - \lambda'} & \lambda \neq \lambda'\\
			f'(\lambda)& \lambda = \lambda'.
		\end{cases} 
\end{equation}
    Thus using the invariance of the trace under cyclic permutations
	\[
	   \Tr( \partial_{\alpha} f(\bA(\alpha)) )= \Tr( \bU ( \bD \odot \bU^\Trans \partial_{\alpha}\bA \bU) \bU^\Trans  ) =  \Tr( \bI ( \bD \odot \bU^\Trans \partial_{\alpha}\bA \bU) ) = \Tr( \bU  ( \bI \odot \bD) \bU^\Trans \partial_{\alpha}\bA ),
	\]
	 where the last inequality follows since for any symmetric matrices $\bA, \bB, \bC$ 
	\[
	   \Tr( \bA (\bB \odot \bC) ) = \Tr( (\bA \odot \bB) \bC) ).
	\]
	The claim then follows since $\bI \odot \bD = \diag(f'( \lambda_1 ), \dots, f'(\lambda_n))$ so
	\[
	   \bU  ( \bI \odot \Delta ) \bU^\Trans = f'(\bA).
	\]
	This proves \eqref{eq:matrixderivativefact}.
	From \eqref{eq:matrixderivativefact} we now derive \eqref{eq: tr log deriv}-\eqref{eq: tr F deriv}.  Recall that both the scalar functions $\log(x)$ 	
and $\cF_K(x)$ are smooth on $(0,\infty)$.
To prove the first formula, we have $\frac{d}{dx} \log(x) = \frac{1}{x}$, so \eqref{eq:matrixderivativefact} implies
	\[
	   \partial_\alpha \Tr ( \log(\bA(\alpha)) ) = \Tr ( \bA(\alpha)^{-1} \partial_{\alpha}\bA(\alpha) ),
	\]
	where $\bA^{-1}$ is the primary matrix function arising from $f(x) = x^{-1}$ applied to $\bA$, which coincides with the usual matrix inverse of $\bA$.
 
    By \cite[Lemma~12]{belius-kistler} or \cite[Theorem~6]{GuMa05} it holds that
     	\begin{equation}\label{eq: F deriv}
     	\cF_K'(\beta) = v_{\mu_K}(2\beta) - \frac{1}{2\beta}\text{ for all }z>0,
        \end{equation}
	 so \eqref{eq:matrixderivativefact} implies
	\[
	\partial_\alpha \Tr ( \cF_K(\bA(\alpha)) ) = \Tr (  v_{\mu_K} (2\bA(\alpha)) \partial_{\alpha}\bA(\alpha) )  - \frac{1}{2} \Tr (  \bA(\alpha)^{-1} \partial_{\alpha}\bA(\alpha) ),
	\]
	where again we can interpret $\bA^{-1}$ arising from the primary matrix function $x^{-1}$ as the usual inverse.
\end{proof}

	We now study the maximizers of $\tTAP^K (\bbm)$ defined in \eqref{eq:TAPevform}. First note that the set
	\begin{equation}\label{eq: open set}
	    \{ \bbm : \bM < \bQ \}
	\end{equation}
	is an open set. Also because $\TAP^K(\bbm)$ diverges to $-\infty$ as $|\bQ - \bM| \to 0$ the global maximum lies in \eqref{eq: open set}.
	We vectorize the matrix $\bbm = (m_1^1,\dots,m_1^n,\dots, m_N^1,\dots,m_N^n) \in \R^{Nn}$ and treat $\tilde{F}_{\rm{TAP}}^K$ as a function from $\R^{Nn} \mapsto \R$. With this vectorization the gradient of $\tTAP^K (\bbm)$ is a vector in $\R^{Nn}$ and its Hessian is an $Nn \times Nn$ block matrix which consists of $N \times N$ blocks of size $n \times n$. That is, for any sufficiently regular function $f: \R^{Nn} \to \R$,
	\[
	\nabla  f(\bbm) = \big( \partial_{m_1^1} f(\bbm), \cdots, \partial_{m_1^n} f(\bbm), \cdots, \partial_{m_N^{1}} f(\bbm), \cdots , \partial_{m_N^n} f(\bbm) \big)^\Trans \in \mathbb{R}^{Nn}
	\]
	and
	\[
	\nabla^2  f(\bbm) = \begin{bmatrix} 
	\bm{f}_{1,1} & \cdots & \bm{f}_{1,N}\\
	\vdots  & \ddots & \vdots\\
	\bm{f}_{N,1}  & \cdots & \bm{f}_{N,N}
	\end{bmatrix} \in \mathbb{R}^{Nn \times Nn}
    , \
		\bm{f}_{i, j} = \begin{bmatrix} 
	\partial_{m_i^1} \partial_{m_j^1} f(\bbm)  &\cdots & \partial_{m_i^1} \partial_{m_j^n} f (\bbm)\\
	\vdots & \ddots & \vdots\\
	\partial_{m_i^n} \partial_{m_j^1}f (\bbm) & \cdots & \partial_{m_i^n} \partial_{m_j^n} f (\bbm)
	\end{bmatrix}\in \mathbb{R}^{n\times n}.
	\]

    Since $\tTAP^K$ is
    smooth
    its local maximizers $\bbm^*$ satisfy
	\[
	\nabla \tTAP^K(\bbm^*)=0 \quad \text{and} \quad \nabla^2  \tTAP^K (\bbm^*) \leq 0.
	\]
    \begin{remark}
    Since we formally only proved that $\mathcal{F}_K$ is smooth on $(0,\infty)$ and not on $[0,\infty)$ we can strictly speaking only claim that the term of $\tTAP^K$ involving $\mathcal{F}_K$ (and hence $\tTAP^K$ itself) is smooth when all entries of $\beta$ are positive, so that $\bb^{1/2}(\bQ-\bM)\bb^{1/2}$ is positive definite. In the proofs below we assume that $\beta$ has positive entries (which also simplifies the arguments) and later remove the assumption by approximation. With additional effort one could prove that $\mathcal{F}_K$ is in fact smooth on $[0,\infty)$ and extend all the arguments below to cover $\beta$ with zero components, but we refrain from this.
    \end{remark}
    
	The part 
	$$
	    f(\bbm) = 
	    \sum_{\ell=1}^n \beta_{\ell} \tilde H_N(m^\ell),
	$$
	of $\tTAP^K (\bbm)$ that depends on the Hamiltonian has a simple Hessian given by the $Nn \times Nn$ matrix
	\begin{equation}\label{eq: hamilt hess}
	\nabla^2 f(\bbm)
	= 2N\diag( \theta_1 \bb, \theta_{\frac{N - 1}{N}} \bb, \dots, \theta_{\frac{1}{N}} \bb) = 2N\begin{bmatrix}
			\theta_{1} \bb & \cdots  & \bm 0_n\\
			\vdots & \ddots  & \vdots\\
			\bm 0_n & \cdots  & \theta_{\frac{1}{N}} \bb
		\end{bmatrix}
    \end{equation}
    (recall \eqref{eq:discrete1}-\eqref{eq: binned hamilt def} and \eqref{eq:TAPevform}).
		
The other part of $\tTAP^K(\bbm)$ 
is $Ng(\bM)$ for
$$ g(\bA) = \Tr(\cF_{K}(\bb^{\frac{1}{2}}(\bQ-\bA)\bb^{\frac{1}{2}}))+\frac{1}{2}\Tr(\log(\bQ-\bA)).$$
Its Hessian is given by the next lemma. 
\begin{lemma}\label{lem: hessian}
Assume that $\beta_k>0$ for $k=1,\ldots,n$.
Then for $\bQ - \bM > 0$, the Hessian $\nabla^2 g(\bM)$ is the $Nm \times Nm$ matrix
	\begin{equation}\label{eq:Hessianformula}
		-2 \begin{bmatrix}
			\bb^{\frac{1}{2}} v_{\mu_K}(2 \bm{Q}_{\bm{m}} )\bb^{\frac{1}{2}} & \cdots  & \bm 0_n\\
			\vdots & \ddots  & \vdots\\
			\bm 0_n & \cdots  &  \bb^{\frac{1}{2}} v_{\mu_K}(2 \bm{Q}_{\bm{m}} )\bb^{\frac{1}{2}} 
		\end{bmatrix}+\bm{L},
	\end{equation}
    where
    \begin{equation}\label{eq:Q_m}
        \bm{Q}_{\bm{m}} := \bb^{\frac{1}{2}} (\bQ - \bM) \bb^{\frac{1}{2}},
    \end{equation}
	and $\bm{L}$ is a matrix of rank at most $n^4$.
\end{lemma}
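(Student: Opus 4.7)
The plan is to differentiate $g$ twice using the trace-derivative identities in Lemma \ref{lem:matderiv} and \eqref{eq: F deriv}, organizing the result into the main block-diagonal term plus a structured low-rank correction. Write $\bm{W}(\bbm):=\bb^{1/2}v_{\mu_K}(2\bm{Q}_{\bm{m}})\bb^{1/2}$ and let $r_i:=(m_i^1,\ldots,m_i^n)^{\Trans}\in\mathbb{R}^n$ denote the $i$-th row of $\bbm$, so that $\bM=\sum_i r_i r_i^{\Trans}$ and $\partial_{m_i^k}\bM=e_k r_i^{\Trans}+r_i e_k^{\Trans}$ is a rank-two matrix.

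For the gradient, apply \eqref{eq: tr F deriv} and \eqref{eq: tr log deriv} with $\bA(\alpha):=\bm{Q}_{\bm{m}+\alpha e^{(i,k)}}$ to each term of $g$, and use \eqref{eq: F deriv} to evaluate $\mathcal{F}_K'$. The $-\tfrac{1}{2x}$ piece of $\mathcal{F}_K'(x)$ produces a contribution $+\tfrac{1}{2}\Tr(\bm{Q}_{\bm{m}}^{-1}\bb^{1/2}\partial_{m_i^k}\bM\,\bb^{1/2})=+\tfrac{1}{2}\Tr((\bQ-\bM)^{-1}\partial_{m_i^k}\bM)$ (using $\bb^{1/2}\bm{Q}_{\bm{m}}^{-1}\bb^{1/2}=(\bQ-\bM)^{-1}$), which exactly cancels the derivative of $\tfrac{1}{2}\Tr\log(\bQ-\bM)$. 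Combined with the symmetry of $\bm{W}(\bbm)$ and the rank-two form of $\partial_{m_i^k}\bM$, this leaves $\partial_{m_i^k}g=-2(\bm{W}(\bbm)r_i)_k$. Differentiating once more and using $\partial_{m_j^\ell}r_i=\delta_{i,j}e_\ell$ gives
\[
\partial_{m_j^\ell}\partial_{m_i^k}g=-2\delta_{i,j}\bm{W}(\bbm)_{k,\ell}-2\bigl[\bigl(\partial_{m_j^\ell}\bm{W}(\bbm)\bigr)r_i\bigr]_k,
\]
whose first term precisely reproduces the block-diagonal main term of \eqref{eq:Hessianformula}, so the second term defines $\bm{L}$.

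For the rank bound, observe that $\bm{W}(\bbm)$ depends on $\bbm$ only through $\bM$, and $v_{\mu_K}$ is smooth on $(0,\infty)$ by \eqref{eq: case nu=mu_K}, so the Fr\'{e}chet derivative $D\bm{W}|_{\bM}$ is a well-defined linear map on symmetric $n\times n$ matrices. Linearity together with $\partial_{m_j^\ell}\bM=e_\ell r_j^{\Trans}+r_j e_\ell^{\Trans}$ yields $\partial_{m_j^\ell}\bm{W}(\bbm)=\sum_{a=1}^{n}m_j^a\,R^{(a,\ell)}(\bM)$ for matrices $R^{(a,\ell)}(\bM)\in\mathbb{R}^{n\times n}$ that do not depend on $j$. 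Setting $u^c\in\mathbb{R}^N$ by $u^c_i:=m_i^c$ and $S^{(a,c)}_{k,\ell}:=R^{(a,\ell)}_{k,c}(\bM)$, the correction assumes the Kronecker-product form
\[
\bm{L}=-2\sum_{a,c=1}^{n}\bigl(u^c(u^a)^{\Trans}\bigr)\otimes S^{(a,c)}
\]
in the block convention $(A\otimes B)_{(i,k),(j,\ell)}=A_{i,j}B_{k,\ell}$ used for $\nabla^2 g$. Each summand has rank at most $1\cdot n=n$ and there are $n^2$ of them, giving $\mathrm{rank}(\bm{L})\le n^3\le n^4$.

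The main obstacle is purely organizational: one must carry the chain rule through the quadratic map $\bbm\mapsto\bM$ and through the primary matrix function $\bm{W}(\bbm)$ while keeping the indices aligned with the $N\times N$-of-$n\times n$ block convention of $\nabla^2 g$, so that the Kronecker product structure above, and hence the rank bound on $\bm{L}$, is manifest. No new analytic input is required beyond Lemma \ref{lem:matderiv}, \eqref{eq: F deriv}, and the smoothness of $v_{\mu_K}$ on $(0,\infty)$.
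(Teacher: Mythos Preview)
Your proof is correct and follows essentially the same route as the paper: both compute the gradient via Lemma~\ref{lem:matderiv}, observe that the $-\tfrac{1}{2x}$ piece of $\mathcal{F}_K'$ cancels the $\log$-derivative, then differentiate again and split the Hessian into the block-diagonal term plus a correction $\bm{L}$ coming from the dependence of $v_{\mu_K}(2\bm{Q}_{\bm{m}})$ on $\bbm$. The only difference is bookkeeping in the rank bound: the paper writes $\bm{L}$ directly as a sum over $a,b,c,d\in\{1,\dots,n\}$ of rank-one matrices $v^{c,d}(w^{a,b})^{\Trans}$, giving rank $\le n^4$, whereas you exploit linearity of the Fr\'echet derivative in $\partial_{m_j^\ell}\bM$ to obtain the Kronecker form $\bm{L}=-2\sum_{a,c}(u^c(u^a)^{\Trans})\otimes S^{(a,c)}$, yielding the slightly sharper $\mathrm{rank}(\bm{L})\le n^3$.
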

\begin{proof}
We have
\[
\frac{1}{2}\Tr(\log(\bQ-\bM)) = \frac{1}{2}  \Big( \Tr ( \log ( \bb^{\frac{1}{2}} (\bQ - \bM) \bb^{\frac{1}{2}} ) ) - \Tr ( \log(\bb) ) \Big).
\]
By Lemma~\ref{lem:matderiv},
$$ \partial_{m^\ell_i} \frac{1}{2} \Tr\Big( \log ( \bb^{\frac{1}{2}} (\bQ - \bM) \bb^{\frac{1}{2}} \Big) = - \frac{1}{2} \Tr( \bm{Q}_{\bm{m}}^{-1}  \bb^{\frac{1}{2}} \partial_{m_i^\ell} \bM \bb^{\frac{1}{2}} ),$$
and
$$ \partial_{m^\ell_i} \Tr \Big( \cF_K( \bb^{\frac{1}{2}} (\bQ - \bM) \bb^{\frac{1}{2}} ) \Big) = -\Tr(v_{\mu_K}(2\bm{Q}_{\bm{m}}) \bb^{\frac{1}{2}} \partial_{m_i^\ell} \bM \bb^{\frac{1}{2}} )  + \frac{1}{2} \Tr( \bm{Q}_{\bm{m}}^{-1}  \bb^{\frac{1}{2}} \partial_{m_i^\ell} \bM \bb^{\frac{1}{2}} ).$$
Thus the first derivatives of $g(\bmm)$ equal
\begin{align*}
	\partial_{m_i^\ell} g(\bM)
	&= - \Tr(v_{\mu_K}(2\bm{Q}_{\bm{m}}) \bb^{\frac{1}{2}} \partial_{m_i^\ell} \bM \bb^{\frac{1}{2}} ).
\end{align*}
To obtain the second derivatives let $h:\mathbb{R}^{n\times n}\to\mathbb{R}^{n\times n}$ be given
by $$h(\bA)=v_{\mu_{K}}(2\bb^{1/2}(\bm{Q}-\bA)\bb^{1/2}).$$ By the
product and chain rules
\begin{equation}
\begin{array}{rcl}
\partial_{m_{j}^{\ell'}}\Tr(h(\bM)\bb^{\frac{1}{2}}\partial_{m_{i}^{\ell}}\bM\bb^{\frac{1}{2}})&=&
\sum_{ab}h_{ab}(\bM)\partial_{m_{j}^{\ell'}}(\bb^{\frac{1}{2}}\partial_{m_{i}^{\ell}}\bM\bb^{\frac{1}{2}})_{ab})+\\
& &\sum_{abcd}\partial_{\bA_{cd}}h_{ab}(\bM)\partial_{m_{j}^{\ell'}}(\bM)_{cd}(\bb^{\frac{1}{2}}\partial_{m_{i}^{\ell}}\bM\bb^{\frac{1}{2}})_{ab}).
\end{array}\label{eq: sec deriv}
\end{equation}
Therefore
\begin{equation}\label{eq:hessianWL}
    \nabla^{2}g(\bM)=\bm{W}+\bm{L},
\end{equation}
where
\[
\bm{W}_{((i,\ell),(j,\ell'))}=-\sum_{ab}h_{ab}(\bM)\partial_{m_{j}^{\ell'}}(\bb^{\frac{1}{2}}\partial_{m_{i}^{\ell}}\bM\bb^{\frac{1}{2}})_{ab}),
\]
and
\begin{equation}\label{eq:Lmatrix}
    \bm{L}_{((i,\ell),(j,\ell'))}=-\sum_{abcd}\partial_{\bA_{cd}}h_{ab}(\bM)\partial_{m_{j}^{\ell'}}(\bM)_{cd}(\bb^{\frac{1}{2}}\partial_{m_{i}^{\ell}}\bM\bb^{\frac{1}{2}})_{ab}).
\end{equation}
We have
\[ \bm{W}_{((i,\ell),(j,\ell'))}=-\Tr( v_{\mu_K}(2 \bm{Q}_{\bm{m}}) \bb^{\frac{1}{2}} \partial_{m_i^\ell}\partial_{m_j^{\ell'}} \bM \bb^{\frac{1}{2}} ).
\]
Also
\[
\partial_{m_j^{\ell'}} \bM = \begin{bmatrix}
	0 & \cdots & m_j^1 &\cdots & 0\\
	\vdots & \ddots & \vdots &\ddots & \vdots\\
	m_j^1 & \cdots & 2 m_j^{\ell'} &\cdots &m_j^n\\
	\vdots & \ddots & \vdots &\ddots & \vdots\\
	0 & \cdots & m_j^n &\cdots & 0
\end{bmatrix}
\]
where only the $\ell'$-th row and column is non-zero, and
\begin{equation}\label{eq:hessianM}
	\partial_{m_i^\ell} \partial_{m_j^{\ell'}} \bM = ( \delta_{i = j} ( \delta_{(\ell,\ell') = (a, b)} + \delta_{(\ell,\ell') = (b, a)}  ))_{a,b \leq n}
\end{equation}
which is a zero matrix if $i\ne j$ and if $i=j$ it is a zero matrix except for the entries $(\ell,\ell')$ and $(\ell',\ell)$ which takes values $1$ if $\ell \neq \ell'$ (on the off-diagonal) and value $2$ if $\ell = \ell'$ (on the diagonal). 
Using this and the symmetry of the matrix $v_{\mu_K}(2 \bm{Q}_{\bm{m}} )$ we obtain

\[
\bm{W}_{((i,\ell),(j,\ell'))}= - \delta_{i=j}2 v_{\mu_K}(2 \bm{Q}_{\bm{m}})_{\ell,\ell'}\beta^{1/2}_\ell \beta^{1/2}_{\ell'} =  -2 \delta_{i=j} (\bb^{\frac{1}{2}} v_{\mu_K}(2 \bm{Q}_{\bm{m}} )\bb^{\frac{1}{2}})_{\ell,\ell'}.
\]
This gives the first term in  \eqref{eq:Hessianformula}. 

As for $\bm{L}$, we can write its entries as
\[
\bm{L}_{((i,\ell),(j,\ell'))}=-\left(\sum_{a,b,c,d=1}^{n}d_{a,b,c,d}v^{c,d}\left(w^{a,b}\right)^{\Trans}\right)_{((i,\ell),(j,\ell'))}
\]
where
\[
d=\partial_{\bA_{cd}}h_{ab}(\bM),
\]
and $v^{c,d}\in\mathbb{R}^{Nn}$ is given by
\[
v_{j,\ell'}^{cd}=\left(\partial_{m_{j}^{\ell'}}\bM\right)_{cd},
\]
and $w^{a,b}\in\mathbb{R}^{Nn}$ by
\[
w_{i,\ell}^{ab}=(\bb^{\frac{1}{2}}\partial_{m_{i}^{\ell}}\bM\bb^{\frac{1}{2}})_{ab},
\]
so that $v^{c,d}\left(w^{a,b}\right)^{\Trans}$ is an $Nn \times Nn$ matrix. Thus $\bm{L}$ is the sum of $n^{4}$ terms of rank at most $1$,
so it has rank at most $n^{4}$. 
\end{proof}

The remainder of the proof of Proposition \ref{prop:TAP_UB} involves a slightly stronger version of Plefka's condition given by,
\[
			  \begin{array}{rcl}			  	{\rm{Plef}}_{N}^{\delta}(\bQ,\beta)&=& \left\{ \bbm \in \R^{n \times N} : \bM < \bQ,  \| \bb^{\frac{1}{2}} (\bQ - \bM) \bb^{\frac{1}{2}} \|_{2} \leq \frac{1}{\sqrt{2}} - \delta \right\}.
			  \end{array}
		  \]
Note that
\begin{equation}\label{eq: Plefka inc} {\rm{Plef}}_{N}^{\delta}(\bQ,\beta) \subset {\rm{Plef}}_{N}^{0}(\bQ,\beta) = {\rm{Plef}}_{N}(\bQ,\beta)\text{ for all }\delta\ge0,\bQ,\beta.
\end{equation}
This stronger Plefka condition is a device to allow the derivation of the upper bound for all $\beta$ from an upper bound for $\beta$ with only non-zero entries using continuity in the proof of Proposition \ref{prop:TAP_UB} below. The next lemma is a slight strengthening of \cite[Lemma~13]{belius-kistler}, and will be used below to prove that any maximizer of $\tTAP^K(\bbm)$ must satisfy the stronger Plefka condition.

\begin{lemma}\label{lem:lambda bound}
    For all $K \ge 2$ there is an $\varepsilon \in (0, \frac{2 \sqrt{2}}{K})$ and an $\delta_K > 0$ such that
    $$
        v_{\mu_K}(\beta) \ge \sqrt{2} - \varepsilon 
        \ \Rightarrow \
        \beta \le \frac{1}{\sqrt{2}} - \delta_K
    $$
\end{lemma}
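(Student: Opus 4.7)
The plan is to unwrap the hypothesis via the relation $v_{\mu_K} = G_{\mu_K}^{-1}$ and reduce the claim to a strict scalar inequality for the Stieltjes transform at the edge $\sqrt{2}$, which is essentially the content of the one-replica version already proved in \cite{belius-kistler}.

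First I would observe that, since $\mu_K = \sum_{k=1}^K \rho_k \delta_{x_k}$ has rightmost support point $\lambda^* = x_K = \sqrt{2} - \tfrac{2\sqrt{2}}{K}$ and this is an atom, $G_{\mu_K}(\lambda^*) = +\infty$. Therefore \eqref{eq: case nu=mu_K} gives $v_{\mu_K} = G_{\mu_K}^{-1}$ on all of $(0,\infty)$, with $G_{\mu_K}$ smooth and strictly decreasing on $(x_K, \infty)$, mapping it bijectively onto $(0,\infty)$. For any $\varepsilon \in (0, \tfrac{2\sqrt{2}}{K})$ one has $\sqrt{2}-\varepsilon > x_K$, so the hypothesis $v_{\mu_K}(\beta) \ge \sqrt{2}-\varepsilon$ is exactly equivalent to $\beta \le G_{\mu_K}(\sqrt{2}-\varepsilon)$.

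Next I would reduce matters to the limit $\varepsilon \to 0$ by continuity. It suffices to establish the strict inequality $G_{\mu_K}(\sqrt{2}) < \tfrac{1}{\sqrt{2}}$ for each fixed $K\ge 2$: since $G_{\mu_K}$ is continuous at $\sqrt{2}$, one may then set $\delta_K := \tfrac{1}{2}\bigl(\tfrac{1}{\sqrt{2}} - G_{\mu_K}(\sqrt{2})\bigr) > 0$ and pick $\varepsilon \in (0, \tfrac{2\sqrt{2}}{K})$ small enough that $G_{\mu_K}(\sqrt{2}-\varepsilon) \le G_{\mu_K}(\sqrt{2}) + \delta_K = \tfrac{1}{\sqrt{2}} - \delta_K$, which completes the implication.

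The main obstacle is therefore the scalar strict inequality $G_{\mu_K}(\sqrt{2}) < \tfrac{1}{\sqrt{2}}$. This is precisely the content of \cite[Lemma~13]{belius-kistler}, which studies the same discretized measure $\mu_K$ in the single-replica setting; since the present lemma is described as a slight strengthening of that result, I would invoke it as a black box rather than reprove it. The ``strengthening'' is only the $\varepsilon$-cushion in the hypothesis, which the continuity argument above absorbs without extra work. Morally, the strict inequality $G_{\mu_K}(\sqrt{2}) < $ its continuum counterpart reflects strict convexity of $x \mapsto (\sqrt{2}-x)^{-1}$ on $(-\infty,\sqrt{2})$, which makes the left-endpoint discretization of each bin $I_k$ strictly underestimate the corresponding integral against $d\mu_{\mathrm{sc}}$.
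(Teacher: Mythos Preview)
Your proof is correct and follows essentially the same approach as the paper. Both arguments use $v_{\mu_K}=G_{\mu_K}^{-1}$ and monotonicity to reduce the claim to the strict inequality $v_{\mu_K}(\tfrac{1}{\sqrt{2}})<\sqrt{2}$ (equivalently, your $G_{\mu_K}(\sqrt{2})<\tfrac{1}{\sqrt{2}}$) from \cite[Lemma~13]{belius-kistler}, and then absorb the $\varepsilon$/$\delta_K$ cushion by continuity; the paper fixes $\delta_K$ first and sets $\varepsilon=\sqrt{2}-v_{\mu_K}(\tfrac{1}{\sqrt{2}}-\delta_K)$ explicitly, whereas you fix $\delta_K$ explicitly and choose $\varepsilon$ by continuity of $G_{\mu_K}$, which is a purely cosmetic difference.
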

\begin{proof}
    We may set $\varepsilon = \sqrt{2} - v_{\mu_K}(\tfrac{1}{\sqrt{2}}-\delta_K)$ since
    $$
        v_{\mu_K}(\beta) \ge v_{\mu_K}\left(\tfrac{1}{\sqrt{2}}-\delta_K\right)
        \Rightarrow \beta \le \tfrac{1}{\sqrt{2}}-\delta_K,
    $$
    and
    \begin{equation}\label{eq: BKlemma13 inequality}
        x_K < v_{\mu_K}\left(\tfrac{1}{\sqrt{2}}-\delta_K\right)
        < \sqrt{2},
    \end{equation}
    where the second inequality follows for some $\delta_K > 0$ small enough, because 
    $x_K < v_{\mu_K}(\tfrac{1}{\sqrt{2}}) < \sqrt{2}$ by \cite[Lemma~13]{belius-kistler} and $v_{\mu_K}$ is continuous.
\end{proof}

We now show that all maximizers of $\tTAP^K(\bbm)$ must satisfy the stronger Plefka condition. 
				
				
				\begin{lemma}[Critical point condition] \label{lem:critptcond} 
    
    Assume that $\beta_k>0$ for $k=1,\ldots,n$.
					Let $K \geq 1$. There exists a constant $c(K)$ such that if $N \geq c(K)$ then	\begin{equation}\label{eq:plefkahessian}
					\bM < \bQ \text{ and }\nabla^2 \tTAP^K(\bbm) \leq \bm{0} \implies \bmm \in {\rm{Plef}}_{N}^{\delta_K}(\bQ,\beta).
					\end{equation}
                    for $\delta_K$ as in Lemma~\ref{lem:lambda bound}.
				\end{lemma}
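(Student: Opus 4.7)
The plan is to decompose the Hessian $\nabla^{2}\tTAP^{K}(\bbm)$ into a block-diagonal main part plus a low-rank correction, then apply an inertia count. Combining Lemma \ref{lem: hessian} (applicable since $\bM<\bQ$ makes $\bm{Q}_{\bbm}$ positive definite, so $v_{\mu_{K}}(2\bm{Q}_{\bbm})$ is well defined) with \eqref{eq: hamilt hess} (the linear external-field term in \eqref{eq:TAPK} has zero Hessian) yields
\[
\nabla^{2}\tTAP^{K}(\bbm)=2N\bm{P}+N\bm{L},
\]
where $\bm{P}$ is block diagonal with $i$-th block $M_{i}:=\theta_{i/N}\bb-\bb^{1/2}v_{\mu_{K}}(2\bm{Q}_{\bbm})\bb^{1/2}$, and $\bm{L}$ is symmetric with $\mathrm{rank}(\bm{L})\le n^{4}$ (symmetry is forced, since both $\nabla^{2}\tTAP^{K}(\bbm)$ and the block-diagonal main part are symmetric).

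The key step is the Weyl-type inertia inequality $\mathrm{rank}_{+}(X+Y)\le\mathrm{rank}_{+}(X)+\mathrm{rank}_{+}(Y)$ for symmetric $X,Y$, where $\mathrm{rank}_{+}$ counts positive eigenvalues. Applied to $2N\bm{P}=\nabla^{2}\tTAP^{K}(\bbm)+(-N\bm{L})$ together with the hypothesis $\nabla^{2}\tTAP^{K}(\bbm)\le\bm{0}$, this gives $\mathrm{rank}_{+}(2N\bm{P})\le 0+\mathrm{rank}(\bm{L})\le n^{4}$. Since $\bm{P}$ is block diagonal, $\mathrm{rank}_{+}(\bm{P})=\sum_{i=1}^{N}\mathrm{rank}_{+}(M_{i})$, so $M_{i}\le\bm{0}$ for at least $N-n^{4}$ indices $i$. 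Choose $\varepsilon,\delta_{K}$ as in Lemma \ref{lem:lambda bound} and let $S=\{i:\theta_{i/N}\ge\sqrt{2}-\varepsilon\}$; the classical location asymptotics \eqref{eq: eval class location}-\eqref{eq:classical} yield $|S|/N\to\mu_{\mathrm{sc}}([\sqrt{2}-\varepsilon,\sqrt{2}])>0$, so for $N\ge c(K)$ we have $|S|>n^{4}$, and hence some $i_{0}\in S$ satisfies $M_{i_{0}}\le\bm{0}$.

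For this $i_{0}$ we have $\theta_{i_{0}/N}\bb\le\bb^{1/2}v_{\mu_{K}}(2\bm{Q}_{\bbm})\bb^{1/2}$; conjugating by $\bb^{-1/2}$ (licit since all $\beta_{k}>0$) gives $(\sqrt{2}-\varepsilon)\bI\le v_{\mu_{K}}(2\bm{Q}_{\bbm})$. Since $v_{\mu_{K}}=G_{\mu_{K}}^{-1}$ is strictly decreasing, the smallest eigenvalue of $v_{\mu_{K}}(2\bm{Q}_{\bbm})$ equals $v_{\mu_{K}}(2\mu_{\max})$, where $\mu_{\max}=\|\bb^{1/2}(\bQ-\bM)\bb^{1/2}\|_{2}$. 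Hence $v_{\mu_{K}}(2\mu_{\max})\ge\sqrt{2}-\varepsilon$, and Lemma \ref{lem:lambda bound} applied with the variable $\beta=2\mu_{\max}$ yields $2\mu_{\max}\le\tfrac{1}{\sqrt{2}}-\delta_{K}$; in particular $\mu_{\max}\le\tfrac{1}{2\sqrt{2}}-\tfrac{\delta_{K}}{2}\le\tfrac{1}{\sqrt{2}}-\delta_{K}$, the last inequality holding as soon as $\delta_{K}\le\tfrac{1}{\sqrt{2}}$ (which we arrange by shrinking $\delta_{K}$ if needed). This is exactly $\bbm\in{\rm{Plef}}_{N}^{\delta_{K}}(\bQ,\beta)$.

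The conceptual heart—and hence the main potential obstacle—is coupling the low-rank structure of $\bm{L}$ to the block-diagonal structure of $\bm{P}$; the inertia inequality above does this in one line, turning the global condition $\nabla^{2}\tTAP^{K}(\bbm)\le\bm{0}$ into local matrix inequalities $M_{i}\le\bm{0}$ valid for almost every $i$. The remaining ingredients—symmetry of $\bm{L}$, strict monotonicity of $v_{\mu_{K}}$, and the calibration of $\varepsilon,\delta_{K}$ in Lemma \ref{lem:lambda bound}—are essentially formal.
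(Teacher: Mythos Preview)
Your proof is correct and follows essentially the same route as the paper: decompose the Hessian via \eqref{eq: hamilt hess} and Lemma~\ref{lem: hessian} into a block-diagonal part plus a rank-$\le n^4$ correction, use a Weyl-type count (your inertia inequality is exactly the form the paper invokes as ``Weyl's inequality'') to force $M_i\le\bm{0}$ for some block with $\theta_{i/N}$ near $\sqrt{2}$, conjugate by $\bb^{-1/2}$, and conclude via Lemma~\ref{lem:lambda bound}. The only cosmetic differences are that the paper indexes the good block as one of the top $2n^4$ (arriving at \eqref{eq:fullmatrix} with $\theta_{1-2n^4/N}$) rather than via your set $S$, and it absorbs the factor of $2$ directly into its restatement of Lemma~\ref{lem:lambda bound} rather than carrying it through as you do; your handling of the factor via $\tfrac{1}{2\sqrt2}-\tfrac{\delta_K}{2}\le\tfrac{1}{\sqrt2}-\delta_K$ is fine since $\delta_K$ in Lemma~\ref{lem:lambda bound} may be taken small.
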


				\begin{proof}	
					By \eqref{eq: hamilt hess} and Lemma \ref{lem: hessian} we have	\begin{equation}\label{eq:Hessianformula used}
						\nabla^2 \tTAP^K(\bbm)= N \left( \bA +\bm{L} \right) \leq \bm{0},
					\end{equation}
					for 
					$$ \bA = 
					2\begin{bmatrix}
						\theta_{1} \bb & \cdots  & \bm 0_n\\
						\vdots & \ddots  & \vdots\\
						\bm 0_n & \cdots  & \theta_{\frac{1}{N}} \bb
					\end{bmatrix}
					-2 \begin{bmatrix} \bb^{\frac{1}{2}} v_{\mu_K}(2 \bm{Q}_{\bm{m}} )\bb^{\frac{1}{2}} & \cdots  & \bm 0_n\\
						\vdots & \ddots  & \vdots\\
						\bm 0_n & \cdots  &  \bb^{\frac{1}{2}} v_{\mu_K}(2 \bm{Q}_{\bm{m}} )\bb^{\frac{1}{2}} 
					\end{bmatrix},$$
					and $\bm{L}$ has rank at most $n^4$ (recall that $\bm{Q}_{\bm{m}} := \bb^{\frac{1}{2}} (\bQ - \bM) \bb^{\frac{1}{2}} $).
					
					Since $\bA$ is block diagonal its eigenvalues are the eigenvalues of its blocks. By Weyl's inequality \cite[Theorem~4.3.1]{HornMatrixAnalysis} all but $n^4 + 1$ of the eigenvalues of the matrix $\bA$ are bounded above by the largest eigenvalue of the entire Hessian $\nabla^2 \tTAP^K$. This means that there is a block among the last $n^4+2\le 2n^4$ that has all eigenvalues bounded by the largest eigenvalue of $\nabla^2 \tTAP^K$.
					
                Thus if $\nabla^2 \tTAP^K \le \bm{0}$ then
					\begin{equation}
						\label{eq:fullmatrix}
						2 \theta_{1 - \frac{2n^4}{N}} \bb  -2  \bb^{\frac{1}{2}} v_{\mu_K}(2 \bm{Q}_{\bm{m}} )\bb^{\frac{1}{2}} 
						\leq \bm{0}.
					\end{equation}
					If $ \bb^{1/2} \bm{B} \bb^{1/2} \le \bm{0}$ for a matrix $\bm{B}$ then $\bm{B} \le \bm{0}$, since we have assumed that $\bb$ is diagonal with positive entries on the diagonal. Therefore \eqref{eq:fullmatrix} implies that
					\[
					\theta_{1 - \frac{2n^4}{N}} \leq  v(\tilde{\beta}_i(\bbm))  \qquad \forall i \leq n.
					\]
					The properties of $v_{\mu_K}(\cdot)$ in Lemma~\ref{lem:lambda bound} imply  that there exists a $C(K)$ and $\delta_K > 0$ such that for all $\varepsilon \leq C(K)$,
					\[
					v_{\mu_K}(2\beta) \geq \sqrt{2} - \varepsilon \implies \beta \leq \frac{1}{\sqrt{2}} - \delta_K.
					\]
					Since $\theta_{1 - \frac{2n^4}{N}} = \sqrt{2} + o_N(1)$, it follows that for $N$ sufficiently large depending on $K$,
					\[
					\tilde{\beta}_{i}(\bbm) \leq \frac{1}{\sqrt{2}} - \delta_K \quad  \forall i \leq n
					\]
					(recall that $\tilde{\beta}_i(\bmm)$ are the eigenvalues of $\bQ_m$, defined in \eqref{eq:Q_m}) which implies that $\bm{m} \in {\rm{Plef}}_{K,N}(\bQ,\beta) \subseteq {\rm{Plef}}_{N}(\bQ,\beta)$.
				\end{proof}
		
	To conclude, we give the proof of Proposition~\ref{prop:TAP_UB}.

	\begin{proof}[Proof of Proposition~\ref{prop:TAP_UB}]
        We first assume that $\beta \in (0,\infty)^n$
        .
        By \eqref{eq:unifboundprob}, it suffices to study the free energy of the deterministic diagonalized Hamiltonian $\tilde H_N(\bs)$. 
		Starting from the upper bound Proposition~\ref{prop:upperbound} we have that for any $K\ge 2$ and $0 < \varepsilon \le C(\beta, \bh, \bQ, K)$
        as well as $N \ge c(\varepsilon,K)$ that
		\[
		\tilde F_N^\varepsilon (\beta, \bh, \bQ) \leq \frac{1}{N}  \sup_{ \bbm: \bM < \bQ }  \tTAP^K(\bbm) + \frac{c(\beta)}{K}.
		\]
		Recall from below \eqref{eq: open set} that $\tTAP^K$ has a global maximizer in the set \eqref{eq: open set}. This maximizer must satisfy
		\[
		    \nabla^2 \tTAP^K(\bbm) \leq 0.
		\]
		Thus it follows by Lemma~\ref{lem:critptcond} that
		\begin{equation}\label{eq: on the way}
		\tilde F_N^\varepsilon (\beta, \bh, \bQ) \leq \frac{1}{N} \sup_{\bbm \in {\rm{Plef}}_{N}^{\delta_K}(\bQ,\beta)} \tTAP^K(\bbm) + \frac{c(\beta)}{K}.
		\end{equation}
		By the definition \eqref{eq:Plefkadefn}, the equivalence in \eqref{eq:plefka threeway} and the uniform bound on $\cF_K$ from 
		Lemma~\ref{lem:Plefka} implies that for all $m \in \cM$
		\[
		\bigg|\cF_K( \bb^{\frac{1}{2}}(\bQ - \bM) \bb^{\frac{1}{2}} ) - \frac{1}{2} \beta^\Trans \bQ^{\odot 2} \beta \bigg| \leq o_K(1),
		\]
        where the term $o_K(1)$ does not depend on any parameters and tends to zero as $K\to\infty$. This allows us to replace $\cF_K$ of $\tTAP^K(\bbm)$ in \eqref{eq: on the way} with the Onsager correction term of $\tilde{F}_{\textrm{TAP}}$ (see \eqref{eq:tildeTAP}) in the upper bound, so that we obtain from \eqref{eq: on the way} that
		\begin{equation}\label{eq: UB at finite K with aK}
		  \tilde F_N^\varepsilon (\beta, \bh, \bQ) \leq \frac{1}{N} \sup_{\bbm \in {\rm{Plef}}_{N}^{\delta_K}(\bQ,\beta)} \tilde{F}_{\textrm{TAP}}(\bbm) + o_K(1),
		\end{equation}
        where the term $o_K(1)$ depends on $\beta$ and  tends to zero as $K\to\infty$ for fixed $\beta$.
		This upper bound holds for all $0 < \varepsilon < C(\beta, \bh,\bQ)$, all $K\ge 2$ and all $N\ge c(\varepsilon,K)$. 
        Using \eqref{eq: eval class location} we can bound the difference between the normalized Hamiltonian $\tfrac{1}{N}H_N$ and its diagonalized and deterministic counterpart $\tfrac{1}{N}\tilde{H}_N$ by any ${{{\eta}}} > 0$ with probability going to $1$. Thus, we get for $\TAP$ (recall \eqref{eq:TAP}) 
        \begin{equation}\label{eq: UB at finite K with aK non-deterministic}
             F_N^\varepsilon (\beta, \bh, \bQ) \leq \frac{1}{N} \sup_{\bbm \in {\rm{Plef}}_{N}^{\delta_K}(\bQ,\beta)} {F}_{\textrm{TAP}}(\bbm) + o_K(1)
        \end{equation}
        with probability going to $1$. Using \eqref{eq: Plefka inc} and picking $K$ large enough depending on ${{{\eta}}}$ we arrive at \eqref{eq:TAP_UB}. 
        We have thus proven \eqref{eq:TAP_UB} provided $\beta\in(0,\infty)^n$. 

        To handle $\beta$ with vanishing entries, 
			  note that if $\beta_{1},\beta_{2}\in[0,\infty)^{n}$ then by \eqref{def:FE} and \eqref{eq: hamilt UB} we
			  have
			  \begin{equation}
			  	\left|F_{N}^{\varepsilon}(\beta_{1},\bh,\bQ)-F_{N}^{\varepsilon}(\beta_{2},\bh,\bQ)\right|\le c\left|\beta_{1}-\beta_{2}\right|,\label{eq: first}
			  \end{equation}
			  with probability tending to $1$. Write $F_{\rm{TAP}}(\bmm;\beta)$ for $\TAP$ with the dependence on $\beta$ made explicit (recall \eqref{eq:TAP}). We similarly have
\begin{equation}\label{eq:second}
\left|\TAP(\bbm;\beta_1)-\TAP(\bbm;\beta_2)\right|\le c\left|\beta_{1}-\beta_{2}\right|,
\end{equation}
for bounded $\beta_{1},\beta_{2}$, using also that 
\begin{equation}
\bbm \to 
\beta^{\Trans} \left(\bQ-\bM\right)^{\odot2}\beta
\text{ is Lipschitz on compact subsets of }\mathbb{R}^n\label{eq: Lipschitz}
\end{equation}
uniformly in $\|\bQ\|_{\infty}\le1$ and $\bm{m}$ with $\bM \le \bQ$. Using \eqref{eq: Lipschitz} again we have that some small enough constant $\rho(K)$ depending only on $K$
\begin{equation}
\label{eq: inclusion plef}
{\rm Plef}_{N}^{\delta_K}\left(\bQ,\beta_{2}\right)\subset{\rm Plef}_{N}^{0}\left(\bQ,\beta_{1}\right)={\rm Plef}_{N}\left(\bQ,\beta_{1}\right)\text{\,for }\left|\beta_{1}-\beta_{2}\right|\le \rho\left(K\right),
\end{equation}
for bounded $\beta_1,\beta_2$.

Therefore for any $\beta_{1}$ with zero entries and ${{{\eta}}}>0$ we
can pick $K$ large enough depending on $\beta_1$ and ${{{\eta}}}$, and $\beta_{2}$ with all positive entries
close enough to $\beta_{1}$, such that 
\[
\begin{array}{ccccc}
F_{N}^{\varepsilon}(\beta_{1},\bh,\bQ) & \overset{\eqref{eq: first}}{\le} & {F}_{N}^{\varepsilon}(\beta_{2},\bh,\bQ)+\frac{{{{\eta}}}}{3} & \overset{\eqref{eq: UB at finite K with aK non-deterministic}}{\le} & {\displaystyle \sup_{\bbm\in{\rm {Plef}}_{N}^{\delta_K}(\bQ,\beta_{2})}}{F}_{\textrm{TAP}}(\bbm;\beta_2)+\frac{2{{{\eta}}}}{3}\\
 &  &  & \overset{\eqref{eq:second},\eqref{eq: inclusion plef}}{\le}
  & {\displaystyle \sup_{\bbm\in{\rm {Plef}}_{N}(\bQ,\beta_{1})}}\TAP(\bbm;\beta_1)+{{{\eta}}}
\end{array}
\]
with probability tending to one. This proves \eqref{eq:TAP_UB} for $\beta$ with
vanishing entries.

	\end{proof}
	
	Combining the TAP lower bound Proposition~\ref{prop: TAP LB} and the TAP upper bound Proposition~\ref{prop:TAP_UB} completes the proof of Theorem~\ref{thm:TAPformula}.
	
	\section{Ground State Energy}\label{section: solution variational problem}
	
	All that remains is to prove the ground state formula in Theorem~\ref{thm:groundstate}. To avoid technical issues with the invertibility of matrices, we will first assume that $\beta$ and $\bh$ are non-zero, then extend to all $\beta$ and external fields using continuity. 
    
    By the uniform bound \eqref{eq:unifboundprob}, we can write Hamiltonian and external field in terms of its diagonalizing basis, so it suffices to compute the limit of
    \begin{equation}\label{eq:fgroundstate}
        \sup_{\bbm \bbm^\Trans = \tilde{\bm{Q}}} f(\bbm, \beta,\bh) := \sup_{\bbm \bbm^\Trans = \tilde{\bm{Q}}} \bigg( \frac{1}{N} \sum_{k = 1}^n \beta_k \tilde H_N ( m^k) + \sum_{k = 1}^n m^k \cdot \tilde h^k  \bigg)
    \end{equation}
				where $\tilde H_N$ is the deterministic counterpart of $H_N$ defined in \eqref{eq:deterministicHamiltonian} and $\tilde h^k$ is the vector $h^k$ written in the diagonalizing basis of the disorder matrix $J$, as in the previous section. We define the following variational form of the ground state functional 
					\begin{equation}
					{\rm{\widetilde {GSE}}}(\beta,h,\tilde{\bm{Q}}) = \inf_{\bL - \sqrt{2} \bI \geq 0}\bigg( \frac{1}{4}  h^\Trans \bb^{-1/2} \big( \bL - \sqrt{ \bL^2 - 2 \bI } \big) \bb^{-1/2} h   + \Tr( \bL \bb^{1/2} \tilde{\bm{Q}} \bb^{1/2} )  \bigg).
				\end{equation}
				We will now show that $\sup_{\bbm \bbm^\Trans = \tilde{\bm{Q}}} f(\bbm, \bb,\bh)$ converges in probability to ${\rm{\widetilde{GSE}}}(\beta,h,\tilde{\bm{Q}})$.
				
				\begin{proposition}\label{prop:groundstate} For $\beta_1, \dots, \beta_n, h_1, \dots, h_n \neq 0$, we have
					\begin{equation}
					\sup_{\bbm \bbm^\Trans = \tilde{\bm{Q}}} \bigg( \frac{1}{N} \sum_{k = 1}^n \beta_k \tilde H_N ( m^k) + \sum_{k = 1}^n m^k \cdot \tilde h^k  \bigg) \stackrel{\mathbb{P}}{\to} 	{\rm{\widetilde{GSE}}}(\beta,h,\tilde{\bm{Q}}).
					\end{equation}
				\end{proposition}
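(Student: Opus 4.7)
The plan is to apply Lagrange duality to the constrained maximization and identify the resulting infimum with $\widetilde{\mathrm{GSE}}$ via a random-matrix limit.

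Write each column of $\bbm$ as $\bm m_i=(m_i^1,\dots,m_i^n)^\Trans\in\mathbb R^n$ and set $\tilde{\bm h}_i=(\tilde h^1_i,\dots,\tilde h^n_i)^\Trans$. The objective and constraint become column-wise,
\[
f(\bbm,\beta,\bh)=\sum_{i=1}^N\bigl[\theta_{i/N}\bm m_i^\Trans\bb\bm m_i+\bm m_i\cdot\tilde{\bm h}_i\bigr],\qquad \bbm\bbm^\Trans=\sum_{i=1}^N\bm m_i\bm m_i^\Trans.
\]
Introduce a symmetric Lagrange multiplier $\bm M\in\mathbb R^{n\times n}$ for the constraint and form
\[
\mathcal L(\bbm,\bm M)=\sum_i\bigl[\bm m_i^\Trans(\theta_{i/N}\bb-\bm M)\bm m_i+\bm m_i\cdot\tilde{\bm h}_i\bigr]+\Tr(\bm M\tilde{\bm Q}).
\]
The inner sup over $\bbm$ is finite iff $\bm M\ge\theta_{\max}\bb$ with $\theta_{\max}=\max_i\theta_{i/N}\to\sqrt 2$, in which case $\bm m_i^*=\tfrac12(\bm M-\theta_{i/N}\bb)^{-1}\tilde{\bm h}_i$ yields
\[
\sup_\bbm\mathcal L(\bbm,\bm M)=\sum_i\tfrac14\tilde{\bm h}_i^\Trans(\bm M-\theta_{i/N}\bb)^{-1}\tilde{\bm h}_i+\Tr(\bm M\tilde{\bm Q}),
\]
and weak duality gives $\sup f\le\inf_{\bm M\ge\theta_{\max}\bb}[\,\cdots]$.

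To pass to the limit, use the rank-one assumption $\tilde h^k=h_k\tilde u$ with $\tilde u=\bm U u$ and $\bm U$ the (Haar-distributed) diagonalizing orthogonal matrix of $\bJ+\bJ^\Trans$. Then $\tilde{\bm h}_i=\tilde u_i h$ and the dual objective rewrites as
\[
\tfrac14 h^\Trans\!\Bigl(\sum_i\tilde u_i^2(\bm M-\theta_{i/N}\bb)^{-1}\Bigr)h+\Tr(\bm M\tilde{\bm Q}).
\]
Orthogonal invariance of the GOE makes $\tilde u$ uniform on $\mathcal S_{N-1}$ independently of the eigenvalues, so concentration against the semicircle law yields $\sum_i\tilde u_i^2(\bm M-\theta_{i/N}\bb)^{-1}\to\int(\bm M-x\bb)^{-1}d\mu_{\mathrm{sc}}(x)$ in probability, uniformly for $\bm M$ in compact subsets of $\{\bm M>\sqrt 2\bb\}$. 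After the substitution $\bm M=\bb^{1/2}\bL\bb^{1/2}$ with $\bL\ge\sqrt 2\bI$, the matrix Stieltjes transform of $\mu_{\mathrm{sc}}$ gives $\int(\bL-x\bI)^{-1}d\mu_{\mathrm{sc}}(x)=G_{\mu_{\mathrm{sc}}}(\bL)=\bL-\sqrt{\bL^2-2\bI}$, which together with $\Tr(\bm M\tilde{\bm Q})=\Tr(\bL\,\bb^{1/2}\tilde{\bm Q}\bb^{1/2})$ identifies the limiting infimum exactly with $\widetilde{\mathrm{GSE}}(\beta,h,\tilde{\bm Q})$.

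The main obstacle is the matching lower bound $\sup f\ge\widetilde{\mathrm{GSE}}-o_N(1)$. When a dual near-minimizer $\bm M^*$ lies in the interior $\{\bm M>\theta_{\max}\bb\}$, the KKT candidate $\bm m_i^*=\tfrac12(\bm M^*-\theta_{i/N}\bb)^{-1}\tilde{\bm h}_i$ is automatically feasible, since the stationarity condition $\partial_{\bm M}[\cdots]|_{\bm M^*}=0$ reads $\sum_i\bm m_i^*(\bm m_i^*)^\Trans=\tilde{\bm Q}$, and evaluating $f$ at this $\bbm^*$ recovers the dual value. The delicate case is when the dual minimizer touches the boundary $\sqrt 2\bb$ along some eigenspace: then $(\bm M^*-\theta_{i/N}\bb)^{-1}$ blows up for $i$ near the spectral edge, and one must regularize by perturbing to $\bm M^*+\varepsilon\bb$, rescale $\bbm^*$ slightly to enforce the constraint exactly, and then send $\varepsilon\to 0$. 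Uniform control of this limiting procedure relies on the $\mu_{\mathrm{sc}}$-integrability of $(\sqrt 2-x)^{-1}$, which follows from the square-root vanishing of the semicircle density at the edge, together with eigenvalue rigidity \eqref{eq: eval class location} to control the edge behaviour of the $\theta_{i/N}$.
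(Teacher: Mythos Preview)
Your overall strategy---Lagrange duality, column-wise optimization, and identification of the limit via the Stieltjes transform of the semicircle law---is the same as the paper's. The upper bound via weak duality and the computation of $\int(\bL-x\bI)^{-1}d\mu_{\mathrm{sc}}(x)=\bL-\sqrt{\bL^2-2\bI}$ are also handled correctly.

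The gap is in the lower bound. Your regularization scheme for the boundary case does not close: once you perturb the dual variable to $\bm M^*+\varepsilon\bb$, the candidate $\bbm^*_i=\tfrac12(\bm M^*+\varepsilon\bb-\theta_{i/N}\bb)^{-1}\tilde{\bm h}_i$ is no longer a stationary point of the Lagrangian in $\bm M$, so $\sum_i\bbm_i^*(\bbm_i^*)^\Trans$ has no reason to equal $\tilde{\bm Q}$. ``Rescaling slightly'' does not fix an $n\times n$ equality constraint in any obvious way, and you give no argument that the resulting feasibility defect is $o(1)$ uniformly as $\varepsilon\to 0$. This is the step where the proof currently fails.

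The paper avoids this entirely by establishing strong duality at \emph{finite} $N$, before taking any limit. The key observation is that under the hypothesis $h_1,\dots,h_n\neq 0$, almost surely $\tilde h_N\neq 0$ (the last coordinate vector, corresponding to the top eigenvalue), so the dual objective $\sup_{\bbm}f(\bbm,\bL)$ diverges to $+\infty$ as $\bL\downarrow\sqrt{2}\bI$; it also diverges as $\|\bL\|\to\infty$ because $\tilde{\bm Q}>0$. Hence the infimum over $\{\bL>\sqrt{2}\bI\}$ is attained at an interior point $\bL^*$, the pair $(\bbm(\bL^*),\bL^*)$ is a genuine critical point of the Lagrangian, stationarity in $\bL$ forces $\bbm(\bL^*)\bbm(\bL^*)^\Trans=\tilde{\bm Q}$ exactly, and the saddle-point equality $\sup_{\bbm\bbm^\Trans=\tilde{\bm Q}}f=\inf_{\bL>\sqrt{2}\bI}\sup_{\bbm}f$ holds for each $N$ almost surely. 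Only then does one pass to the limit in $N$. This is where the assumption that the $h_k$ are nonzero actually enters, and it is what you should use instead of the boundary regularization.
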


				\begin{proof}

            We use Lagrange multipliers to explicitly solve the constrained maximization problem. Consider the Lagrangian 
				\begin{align*}
				f(\bbm,\bL)&=\sum_{k=1}^{n}\beta_{k}\sum_{i=1}^{N}\theta_{i/N}(m_{i}^{k})^{2}+\sum_{k=1}^{n}\tilde{h}^{k}\cdot m^{k}+ \Tr(  \bL \bb^{\frac{1}{2}} (\tilde{\bm{Q}} - \bbm \bbm^\Trans) \bb^{\frac{1}{2}} ) 
				\\&=  \sum_{i = 1}^N \left(      (\bb^{\frac{1}{2}} m_{i})^\Trans (\theta_{i/N} \bI - \bL) (\bb^{\frac{1}{2}} m_i)  + (\bb^{-\frac{1}{2}} \tilde h_i) \cdot \bb^{\frac{1}{2}} m_i \right) + \Tr(\bL \bb^{\frac{1}{2}} \tilde{\bm{Q}} \bb^{\frac{1}{2}}).
				\end{align*}
				Note that
				\[
				\partial_{\Lambda_{ij}}f(\bbm,\bL)=\left(1+\delta_{i\ne j}\right)\sqrt{\beta_i \beta_j}\left(\tilde{\bQ}-\bbm \bbm^{\Trans}\right)_{ij}.
				\]
				Thus if $\bL^{*}>\sqrt{2}\bI$ and $\bmm^{*}$ is a critical point of $f(\bbm,\bL)$
				then 
				\[
				f(\bbm^{*},\bL^{*})\le\sup_{\bbm\bbm^{\Trans}=\tilde{\bm{Q}}}f(\bbm, \beta,\bh).
				\]
				Also
				\[
				\sup_{\bbm\bbm^{\Trans}=\tilde{\bm{Q}}}f(\bbm, \beta,\bh)\le\inf_{\Lambda>\sqrt{2} \bI}\sup_{\bbm\bbm^{\Trans}=\tilde{\bm{Q}}}f(\bbm,\bL)\le\inf_{\Lambda>\sqrt{2}\bI}\sup_{\bbm}f(\bbm,\bL).
				\]
				Therefore since $f$ is differentiable for fixed $N,\tilde{h}$, if
				a finite optimizer of the r.h.s. such that $\bL^{*}>\sqrt{2}\bI$
				exists then 
				\begin{equation}
					\sup_{\bbm\bbm^{\Trans}=\tilde{\bm{Q}}}f(\bbm, \beta,\bh)=\inf_{\Lambda>\sqrt{2}\bI}\sup_{\bbm}f(\bbm,\bL).\label{eq: lagrange equal}
				\end{equation}
				Consider $\sup_{\bbm}f(\bbm,\bL)$ for fixed $\bL$. We have
				\[
				\partial_{m_{i}}f(\bbm,\bL)= 2    (\theta_{i/N} \bI - \bL) \bb^{1/2} m_i + \bb^{-1/2} \tilde h_i.
				\]
				If $\bL>\sqrt{2}\bI$ then the unique critical point of $\bbm\to f(\bbm,\bL)$
				is thus
				\begin{equation}\label{eq:critptconditionlagrange}
					m_{i}\left(\bL\right)= \frac{1}{2} \bb^{-1/2}  ( \bL - \theta_{i/N} \bI)^{-1} \bb^{-1/2} \tilde  h_i
				\end{equation}
				and by concavity this critical point corresponds to a local maximizer, and thus
				\begin{equation}\label{eq:supinm}
					\sup_{\bbm}f(\bbm,\bL)=\sum_{i = 1}^N \bigg( \frac{1}{4}  \tilde h_i^\Trans \bb^{-1/2} (\bL - \theta_{i/N} \bI)^{-1} \bb^{-1/2} \tilde h_i \bigg)  + \Tr( \bL \bb^{1/2} \tilde{\bm{Q}} \bb^{1/2} ).
				\end{equation}
				Note that if $\bb^{1/2} \tilde{\bm{Q}} \bb^{1/2}>0$ then since $\left(\bU^{\Trans}\bb^{1/2} \tilde{\bm{Q}}\bb^{1/2} \bU\right)_{ii}=u_{i}^{T}\bb^{1/2} \tilde{\bm{Q}} \bb^{1/2}u_{i}>0$
				for all orthogonal $\bU$ we have 
				\[
				\sup_{\bbm}f(\bbm,\bL)\ge\Tr(\bL\bb^{1/2}\tilde{\bm{Q}}\bb^{1/2})\to\infty\text{ if }\bL>\sqrt{2}\bI,~ \sup_{k}\lambda_{k}\left(\bL\right)\to\infty.
				\]
				Also if $\bL\to\sqrt{2}\bI$ and
				$\bL>\sqrt{2}\bI$ then almost surely
				\[
				\begin{array}{ccl}
					\sup_{\bbm}f(\bbm,\bL) & \ge & \frac{1}{4}\tilde{h}_{N}^{\Trans}\bb^{-1/2} (\bL - \theta_{i/N} \bI)^{-1} \bb^{-1/2} \tilde{h}_{N}\\
					& \ge & \frac{1}{4}\frac{\left|\tilde{h}_{N}\right|^{2}}{\lambda_{\max}\left(\bb^{1/2}(\bL-\sqrt{2}\bI) \bb^{1/2}\right)}\\
					& \to & \infty,
				\end{array}
				\]
				since $\tilde{h}_{N}\ne0$ a.s. and $\lambda_{\max}\left(\bb^{1/2}(\bL-\sqrt{2}\bI) \bb^{1/2}\right)\to0$.
				This shows that minimizer of 
				\[
				\inf_{\bL>\sqrt{2}\bI}f(\bm{m}\left(\bL\right),\bL),
				\]
				is attained at a point in $\left\{ \bL:\bL>\sqrt{2}\bb\right\} $,
				and thus that there exists an optimizer of 
				\[
				\inf_{\bL>\sqrt{2}\bI}\sup_{\bm{m}}f(\bm{m},\bL),
				\]
				which is a critical point of $f$, so that \eqref{eq: lagrange equal}
				holds.
				
				We now show that $f(\bm{m}\left(\bL\right),\bL)$ converges to the
				limiting function of $\bL$ so that
				\[
				\inf_{\bL>\sqrt{2}\bI}\sup_{\bm{m}}f(\bm{m},\bL)\to\inf_{\bL>\sqrt{2}\bI} \bigg( \frac{1}{4}  h^\Trans \bb^{-1/2} \big( \bL - \sqrt{ \bL^2 - 2 \bI } \big) \bb^{-1/2} h   + \Tr( \bL \bb^{1/2} \tilde{\bm{Q}} \bb^{1/2} )  \bigg) .
				\]
				By Proposition~\ref{lem:cont} and \eqref{eq: lagrange equal}, this convergence is uniform on compact subsets of $\bb$ and $\bh$ and the limit is $\sqrt{2}$-Lipschitz because the left hand side is. 

                Recall \eqref{eq:supinm}. Note that since $\mathbb{E}[\tilde{h}_{i,k}\tilde{h}_{i,l}]=h_{i}h_{l}$
it holds that
\[
\mathbb{E}\left[\frac{1}{4}\tilde{h}_{i}^{\Trans}\bb^{-1/2}(\bL-\theta_{i/N}\bI)^{-1}\bb^{-1/2}\tilde{h}_{i}\right]=\frac{1}{4}h^{\Trans}\bb^{-1/2}(\bL-\theta_{i/N}\bI)^{-1}\bb^{-1/2}h,
\]
and also the $\tilde{h}_i$ are independent, so by the law of large
numbers
				\begin{align*}
					f(\bL,\bbm) 
					&= \sum_{i = 1}^N \bigg( \frac{1}{4}  \tilde h_i^\Trans \bb^{-1/2} (\bL - \theta_{i/N} \bI)^{-1} \bb^{-1/2} \tilde h_i \bigg)  + \Tr( \bL \bb^{1/2} \tilde{\bm{Q}} \bb^{1/2} )
					\\&\to  \frac{1}{4}  h^\Trans \bb^{-1/2} \bigg( \int_{-\sqrt{2}}^{\sqrt{2}} (\bL - x \bI)^{-1} \, d\mu_{\rm{sc}}(x) \bigg) \bb^{-1/2} h  + \Tr( \bL \bb^{1/2} \tilde{\bm{Q}} \bb^{1/2} ),
				\end{align*}
				in probability.
    For $\bL$ such that $\lambda_{min}(\bL) > \sqrt{2}$, we can compute the integral explicitly. Let $\bL = \bU \bd_\lambda \bU^\Trans$. We see that
				\[
				\int_{-\sqrt{2}}^{\sqrt{2}} ( \bL - x \bI )^{-1}  \mu_{\rm{sc}}(x) \, dx = \int_{-\sqrt{2}}^{\sqrt{2}} \bU ( \bd_\lambda - x \bI )^{-1} \bU^\Trans \mu_{\rm{sc}}(x) \, dx =  \bU \int_{-\sqrt{2}}^{\sqrt{2}} ( \bd_\lambda - x \bI )^{-1} \mu_{\rm{sc}}(x) \, dx \bU^\Trans
				\]
				and the integral on the inside is easy to compute. In fact, using the formula for the one dimensional case, we see that
				\[
				\int_{-\sqrt{2}}^{\sqrt{2}} ( \bL - x \bI )^{-1}  \mu_{\rm{sc}}(x) \, dx = \bU \bd_{\lambda - \sqrt{\lambda^2 - 4}} \bU^\Trans = \bU \bd_{\lambda} \bU^\Trans - \bU \bd_{\sqrt{\lambda^2 - 4}} \bU^\Trans = \bL - \bU \bd_{\sqrt{\lambda^2 - 4}} \bU^\Trans.
				\]
				Since
				\[
				\bU \bd_{\sqrt{\lambda^2 - 2}} \bU^\Trans = \sqrt{ \bU \bd_{\lambda^2 - 2} \bU^\Trans } = \sqrt{ \bU \bd_{\lambda^2}  \bU^\Trans - 2 \bI } = \sqrt{ (\bU \bd_{\lambda}  \bU^\Trans)^2 - 2 \bI } = \sqrt{ \bL^2 - 2 \bI }
				\]
				we have
				\[
				\int_{-\sqrt{2}}^{\sqrt{2}} ( \bL - x \bI )^{-1}  \mu_{\rm{sc}}(x) \, dx = \bL - \sqrt{ \bL^2 - 2 \bI }.
				\]
				With this formula, it follows that
				\[
				\sup_{\bbm} f(\bL,\bbm) \to  \frac{1}{4}  h^\Trans \bb^{-1/2} \big( \bL - \sqrt{ \bL^2 - 2 \bI } \big) \bb^{-1/2} h   + \Tr( \bL \bb^{1/2} \tilde{\bm{Q}} \bb^{1/2} ) 
				\]
				and that the critical point corresponds to a maximum. Notice that this formula is well defined for $\bL  \geq \sqrt{2} \bI$.  It follows from \eqref{eq: lagrange equal} that the maximum of \eqref{eq:fgroundstate} is attained at
				\begin{equation}\label{eq:optimization}
					\inf_{\bL \geq \sqrt{2} \bI}\bigg( \frac{1}{4}  h^\Trans \bb^{-1/2} \big( \bL - \sqrt{ \bL^2 - 2 \bI } \big) \bb^{-1/2} h   + \Tr( \bL \bb^{1/2} \tilde{\bm{Q}} \bb^{1/2} )  \bigg)
				\end{equation}
				in the limit.
				\end{proof}
	We now explicitly solve the optimization in ${\rm{\widetilde{GSE}}}(\beta,h,\tilde{\bm{Q}})$ to arrive at the closed form expression from \eqref{eq:Lfunctopt}.

	\subsection{The One Dimensional Case}\label{sec:highdimvar}
    We first address the case $n=1$ as a warm-up. 
	Solving the variational problem in this case is considerably easier because we do not have to worry about the non-commutativity of the matrices. When $n=1$ the variational problem is
	\begin{equation}\label{eq: 1d case}
	\inf_{\lambda  \geq \sqrt{2}} \bigg( \frac{1}{4}  \frac{h^2}{\beta}  \big( \lambda - \sqrt{ \lambda^2 - 2} \big) + \lambda \beta \tilde{q}  \bigg).
	\end{equation}    

    Let $A=\frac{1}{4}\frac{h^{2}}{\beta}$ and $B=\beta \tilde{q}$. With the
    change of variables $\lambda=\frac{1}{\sqrt{2}}\left(x+\frac{1}{x}\right),x\in\left(0,1\right],$
    and using that $\sqrt{\lambda^{2}-2}=\frac{1}{\sqrt{2}}\left(\frac{1}{x}-x\right)$
    one obtains that \eqref{eq: 1d case} equals
    \[
    \inf_{x\in\left(0,1\right]}\left(A\sqrt{2}x+B\frac{1}{\sqrt{2}}\left(\frac{1}{x}+x\right)\right)=\sqrt{2B\left(2A+B\right)}=\sqrt{\tilde{q}h^{2}+2\beta^{2}\tilde{q}^{2}}
    \]
 	which proves \eqref{eq: case n=1} and is indeed the formula from \cite[(1.6) and Lemma 20]{belius-kistler}. 
	
	\subsection{The \texorpdfstring{$n$}{TEXT} Dimensional Case}
    A matrix version of this change of variables allows one to solve also the case $n>1$, giving rise to ${\rm{GSE}}(\beta,h,\tilde{\bm{Q}})$ from \eqref{eq:Lfunctopt}.
    \begin{proposition}\label{prop:LFormula}
        For $\tilde \bQ > 0$ and $\bb > 0$ it holds that 
     \begin{equation}\label{eq:GSE}
         {\rm{\widetilde{GSE}}}(\beta,h,\tilde{\bm{Q}})  = {\rm{GSE}}(\beta,h,\tilde{\bm{Q}}).
     \end{equation}
				\end{proposition}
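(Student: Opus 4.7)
My plan is to solve the matrix infimum in $\widetilde{\rm{GSE}}$ by the operator analogue of the substitution used for the one-dimensional case. Specifically, I would parameterize
$$\bL = \tfrac{1}{\sqrt{2}}\bigl(\bX+\bX^{-1}\bigr),$$
where $\bX$ ranges over symmetric positive definite matrices with $\bX\le\bI$. A direct calculation (using that $\bX$ and $\bX^{-1}$ commute) shows that $\bL^{2}-2\bI=\tfrac{1}{2}(\bX^{-1}-\bX)^{2}$ and therefore $\sqrt{\bL^{2}-2\bI}=\tfrac{1}{\sqrt{2}}(\bX^{-1}-\bX)$, so that $\bL-\sqrt{\bL^{2}-2\bI}=\sqrt{2}\,\bX$. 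One also checks that the map $\bX\mapsto \bL$ is a bijection from $\{\bX=\bX^{\Trans}>0,\,\bX\le\bI\}$ onto $\{\bL=\bL^{\Trans},\,\bL\ge\sqrt{2}\,\bI\}$, since the scalar function $x+1/x$ is a bijection of $(0,1]$ onto $[2,\infty)$ and we can read everything off in the eigenbasis of $\bL$ (or $\bX$).

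After substitution, and setting
$$\bC \;=\; \tfrac{1}{2}\bb^{-1/2}hh^{\Trans}\bb^{-1/2}+\bb^{1/2}\tilde{\bm{Q}}\bb^{1/2}, \qquad \bD \;=\; \bb^{1/2}\tilde{\bm{Q}}\bb^{1/2},$$
the functional becomes
$$\widetilde{\rm{GSE}}(\beta,h,\tilde{\bm{Q}}) \;=\; \tfrac{1}{\sqrt{2}}\inf_{0<\bX\le\bI}\bigl(\Tr(\bC\bX)+\Tr(\bD\bX^{-1})\bigr),$$
using cyclic invariance of the trace on the first term. The unconstrained critical-point equation (differentiating in the space of symmetric matrices) is $\bC=\bX^{-1}\bD\bX^{-1}$, i.e.\ $\bX\bC\bX=\bD$, whose unique positive definite solution is
$$\bX^{*}\;=\;\bC^{-1/2}\bigl(\bC^{1/2}\bD\bC^{1/2}\bigr)^{1/2}\bC^{-1/2}.$$
Convexity of $\bX\mapsto\Tr(\bD\bX^{-1})$ on the PD cone, plus linearity of $\Tr(\bC\bX)$, ensures $\bX^{*}$ is the global minimizer of the \emph{unconstrained} problem.

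Next I would verify the constraint $\bX^{*}\le\bI$ is satisfied, which is where the assumption $h,\bb>0$ matters. Since $\bC-\bD=\tfrac{1}{2}\bb^{-1/2}hh^{\Trans}\bb^{-1/2}\ge0$, we have $\bC\ge\bD$, whence $\bC^{1/2}\bD\bC^{1/2}\le\bC^{2}$; operator monotonicity of the square root (Löwner--Heinz) gives $(\bC^{1/2}\bD\bC^{1/2})^{1/2}\le\bC$, and conjugating by $\bC^{-1/2}$ yields $\bX^{*}\le\bI$. This is the step I expect to require the most care to justify cleanly. With $\bX^{*}$ feasible, the minimum value equals $\Tr(\bC\bX^{*})+\Tr(\bD(\bX^{*})^{-1})=2\Tr((\bC^{1/2}\bD\bC^{1/2})^{1/2})$, the two summands being equal because $\bX^{*}\bC\bX^{*}=\bD$.

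Finally I would identify this trace with the right-hand side of \eqref{eq:Lfunctopt}. Writing $\bm{M}=\tfrac{1}{2}hh^{\Trans}+\bb\tilde{\bm{Q}}\bb$, one has $\bC=\bb^{-1/2}\bm{M}\bb^{-1/2}$ and $\bD=\bb^{1/2}\tilde{\bm{Q}}\bb^{1/2}$, so $\bC\bD=\bb^{-1/2}\bm{M}\tilde{\bm{Q}}\bb^{1/2}$ is similar to $\bm{M}\tilde{\bm{Q}}$, which in turn has the same eigenvalues as $\bm{M}^{1/2}\tilde{\bm{Q}}\bm{M}^{1/2}$. Since $\bC^{1/2}\bD\bC^{1/2}$ and $\bC\bD$ share eigenvalues, we conclude
$$\Tr\bigl((\bC^{1/2}\bD\bC^{1/2})^{1/2}\bigr)\;=\;\Tr\bigl((\bm{M}^{1/2}\tilde{\bm{Q}}\bm{M}^{1/2})^{1/2}\bigr),$$
giving $\widetilde{\rm{GSE}}=\sqrt{2}\,\Tr((\bm{M}^{1/2}\tilde{\bm{Q}}\bm{M}^{1/2})^{1/2})={\rm{GSE}}(\beta,h,\tilde{\bm{Q}})$, as claimed.
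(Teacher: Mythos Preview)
Your proposal is correct and follows the same overall strategy as the paper: the substitution $\bL=\frac{1}{\sqrt{2}}(\bX+\bX^{-1})$ with $0<\bX\le\bI$, a convexity argument to identify the minimizer, and an eigenvalue chain to match the result with \eqref{eq:Lfunctopt}.

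The tactical difference is in how the minimizer is handled. You first combine the two linear terms into $\Tr(\bC\bX)$ with $\bC=2\bA+\bB$ (in the paper's notation), obtain the Riccati equation $\bX\bC\bX=\bD$, and write down its unique positive definite solution $\bX^{*}=\bC^{-1/2}(\bC^{1/2}\bD\bC^{1/2})^{1/2}\bC^{-1/2}$, which is manifestly symmetric. The paper instead keeps the terms separate, writes the critical point condition as $\bY^{\Trans}\bY=2\bA+\bB$ for $\bY=\bB^{1/2}\bX^{-1}$, and then uses the singular value decomposition of $\bB^{1/2}(2\bA+\bB)^{1/2}$ to select an orthogonal $\bU$ making $\bX=(2\bA+\bB)^{-1/2}\bU\bB^{1/2}$ symmetric. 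Your route is shorter and avoids the SVD machinery. Moreover, you explicitly verify the constraint $\bX^{*}\le\bI$ via $\bC\ge\bD$, conjugation, and L\"owner--Heinz; the paper does not check this feasibility condition, so your argument is in fact more complete on this point. The final eigenvalue identification is the same in spirit in both proofs.
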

				
				\begin{proof}
Let 
$$\bA =  \frac{1}{4} \bb^{-1/2} h h^\Trans \bb^{-1/2} \text{ and } \bB = \bb^{1/2} \tilde{\bm{Q}} \bb^{1/2}.$$ 
                Writing the first term of ${\rm{GSE}}(\beta,h,\tilde{\bm{Q}})$ as the trace of a $1\times1$ matrix and using the cyclical property of the trace we have

                $$                 {\rm{\widetilde{GSE}}}(\beta,h,\tilde{\bm{Q}}) =\inf_{\bL - \sqrt{2} \bI \geq 0}\bigg( \frac{1}{4} \Tr ( \big( \bL - \sqrt{ \bL^2 - 2 \bI } \big)\bA) + \Tr( \bL \bB )  \bigg).
                $$
                We use the change of variables $\bL = \frac{1}{\sqrt{2}} (\bX + \bX^{-1})$ where $\bm{0} < \bX \leq \bI$ and $\bX$ is symmetric (to see that $\bL$ can always be written in this form recall that $\bL$ is symmetric and has eigenvalues larger or equal to $\sqrt{2}$). It follows that $(\bL^2 - 2\bI)^{\frac{1}{2}} = \frac{1}{\sqrt{2}} (\bX^{-1} - \bX)$. Thus
				\begin{equation}\label{eq: trace form}
                    {\rm{\widetilde{GSE}}}(\beta,h,\tilde{\bm{Q}}) = \inf_{\bm{0} < \bX \leq \bI}  \bigg(\sqrt{2} \Tr\big(   \bX \bA \big)  + \frac{1}{\sqrt{2}} \Tr( ( \bX + \bX^{-1}) \bB ) \bigg).
				\end{equation}
            
		      Consider the critical point equation for the quantity in the $\inf$:
				\[
				\partial_{\bX} \bigg(\sqrt{2} \Tr\big(   \bX \bA \big)  + \frac{1}{\sqrt{2}} \Tr( \bB( \bX + \bX^{-1}) ) \bigg) = \sqrt{2} \bA + \frac{1}{\sqrt{2}} \bB - \frac{1}{\sqrt{2}} \bX^{-1} \bB \bX^{-1} = \bm{0}.
				\]
				Using the change of variables $\bY = \bB^{\frac{1}{2}} \bX^{-1}$, it is equivalent to
				\[
				\bY^\Trans \bY= 2 \bA + \bB.
				\]
				If we diagonalize $2 \bA + \bB= \bU_1 \bd \bU_1^\Trans$, it is further equivalent to $\bY = \bU_{2} \bd^{\frac{1}{2}} \bU^\Trans_1$ for some orthogonal $\bU_2$, 
                and therefore to $\bX = \bU_{1} \bd^{-\frac{1}{2}} \bU^\Trans_2 \bB^{\frac{1}{2}} = (2 \bA + \bB)^{-\frac{1}{2}} \bU \bB^{\frac{1}{2}}$, where $\bU = \bU^\Trans_1 \bU_2^\Trans$
                Thus if $\bX$ is symmetric and
                \begin{equation}\label{eq:criticalptU}
				\bX = (2 \bA + \bB)^{-\frac{1}{2}} \bU \bB^{\frac{1}{2}},
				\end{equation}
                for some orthogonal $\bU$ it is a critical point. Such a $\bU$ can be found as follows. The symmetry condition is equivalent to
			\[
    			(2 \bA + \bB)^{-\frac{1}{2}} \bU \bB^{\frac{1}{2}} =  \bB^{\frac{1}{2}} \bU^\Trans (2 \bA + \bB)^{-\frac{1}{2}} \iff \bU \bB^{\frac{1}{2}} (2 \bA + \bB)^{\frac{1}{2}} = (2 \bA + \bB)^{\frac{1}{2}}  \bB^{\frac{1}{2}} \bU^\Trans.
			\]
			Write $\bB^{\frac{1}{2}} (2 \bA + \bB)^{\frac{1}{2}} = \bm{S} \bm{\Sigma} \bm{T}^\Trans$ in its singular value decomposition. The condition then becomes,
			\[
			 \bU \bm{S} \bm{\Sigma} \bm{T}^\Trans  = \bm{T} \bm{\Sigma} \bm{S}^\Trans   \bU^\Trans,
			\]            
            so $\bU = \bm{T} \bm{S}^\Trans$ is orthogonal and makes $\bX$ symmetric. We have thus proven than this $\bX$ is a critical point of the expression in the infiumum of \eqref{eq: trace form}.

            Now note that $\bX \mapsto \Tr(\bB \bX^{-1})$ is convex because $\bX \mapsto \bX^{-1}$ is convex and $\bX \mapsto \Tr(\bB \bX)$ is increasing for positive definite $\bB$, see \cite[Corollary~V.2.6]{bhatia1996matrix}.  Therefore the expression in the infiumum of \eqref{eq: trace form} is convex in $\bX$, and thus the exhibited critical point is a global minimizer.
            
			Next we compute the value at this minimizer. Substituting it into \eqref{eq: trace form} and using $\bX^{-1} = (\bX^{-1})^{\Trans} = (2 \bA + \bB)^{\frac{1}{2}} \bU \bB^{-\frac{1}{2}}$ one obtains
                \begin{align*}
                    &{\rm{\widetilde{GSE}}}(\beta,h,\tilde{\bm{Q}})
                    \\
                    =& \frac{1}{\sqrt{2}}  \bigg(
                            2 \Tr\big(  \bA (2 \bA + \bB)^{-\frac{1}{2}} \bU \bB^{\frac{1}{2}} \big)
                            +  \Tr(  (2 \bA + \bB)^{-\frac{1}{2}} \bU \bB^{\frac{1}{2}} \bB)
                            + \Tr( (2 \bA + \bB)^{\frac{1}{2}} \bU \bB^{-\frac{1}{2}} \bB )
                        \bigg)\\					
                    =& \frac{1}{\sqrt{2}}  \bigg( 2 \Tr\big( \bB^{\frac{1}{2}} \bA (2 \bA + \bB)^{-\frac{1}{2}} \bU   \big)  +  \Tr( \bB^{\frac{3}{2}} (2 \bA + \bB)^{-\frac{1}{2}} \bU ) + \Tr( \bB^{\frac{1}{2}}  (2 \bA + \bB)^{\frac{1}{2}} \bU)\bigg)
					\\=& \frac{1}{\sqrt{2}} \Tr\bigg( \bB^{\frac{1}{2}} \bigg( 2\bA  + \bB +  (2 \bA + \bB)  \bigg) \bigg( (2\bA + \bB)^{-\frac{1}{2}} \bU \bigg) \bigg)
					\\=& \sqrt{2} \Tr\big( \bB^{\frac{1}{2}} ( 2\bA  + \bB)^{\frac{1}{2}}  \bU \big)
                    \\=& \sqrt{2} \Tr\big( \bm{S} \bm{\Sigma} \bm{T}^\Trans \bm{T} \bm{S}^\Trans \big) = \sqrt{2}\Tr(\bm{\Sigma}).
				\end{align*}

    Note that $\bm{\bm{\Sigma}}$ is the diagonal matrix of singular values
of $\bB^{1/2}\left(2\bA+\bB\right)^{1/2}$, i.e. of square roots of
the eigenvalues of 
\[
\bB^{1/2}\left(2\bA+\bB\right)^{1/2}\left(\bB^{1/2}\left(2\bA+\bB\right)^{1/2}\right)^{T}.
\]
Using repeatedly the property that $\bC\bm{D}$ and $\bm{D}\bC$ have the
same eigenvalues for square $\bC,\bm{D}$ we get that these eigenvalues
coincide with those of 
\[
\left(2\bA+\bB\right)\bB=\left(2\bA+\bB\right)\bb^{1/2}\tilde{\bQ}\bb^{1/2}.
\]
Using the same property again this r.h.s. in turn has the same eigenvalues
as 
\[
\bb^{1/2}\left(2\bA+\bB\right)\bb^{1/2}\tilde{\bQ}=\Big(\frac{1}{2}hh^{\Trans}+\bb\tilde{\bm{Q}}\bb\Big)\tilde{\bQ},
\]
which in turn has the same eigenvalues as 
\[
\Big(\frac{1}{2}hh^{\Trans}+\bb\tilde{\bm{Q}}\bb\Big)^{\frac{1}{2}}\tilde{\bQ}\Big(\frac{1}{2}hh^{\Trans}+\bb\tilde{\bm{Q}}\bb\Big)^{\frac{1}{2}}.
\]
This proves that
\[
\Tr\left(\bm{\Sigma}\right)=\Tr\left(\sqrt{\Big(\frac{1}{2}hh^{\Trans}+\bb\tilde{\bm{Q}}\bb\Big)^{\frac{1}{2}}\tilde{\bQ}\Big(\frac{1}{2}hh^{\Trans}+\bb\tilde{\bm{Q}}\bb\Big)^{\frac{1}{2}}}\right),
\]
and recalling the definition \eqref{eq:Lfunctopt} of ${\rm{GSE}}$ this completes the proof.
            \end{proof}
Thus for $\beta$ and $h$ with only non-zero components
\begin{equation}\label{eq:conv non-zero}
    \sup_{\bbm \bbm^\Trans = \tilde{\bm{Q}}} \bigg( \frac{1}{N} \sum_{k = 1}^n \beta_k \tilde H_N ( m^k) + \sum_{k = 1}^n m^k \cdot \tilde h^k  \bigg) \stackrel{\mathbb{P}}{\to} 	{\rm{GSE}}(\beta,h,\tilde{\bm{Q}})
\end{equation}
(by combining Propositions \ref{prop:groundstate} and \ref{prop:LFormula}). 
The formula for ${\rm{GSE}}(\beta,h,\tilde{\bm{Q}})$ is well-defined also if some entry of $\beta$ or $h$ is zero. To extend \eqref{eq:conv non-zero} to this case we will using a continuity argument enabled by the next lemma, which shows that \eqref{eq:fgroundstate} is Lipschitz in $\beta$ and $h$.

				\begin{lemma}\label{lem:cont}
					If $\tilde \bQ$ is positive definite with entries bounded by $1$, then 
					\[
					\Big|\sup_{\bbm \bbm^\Trans = \tilde{\bm{Q}}} f(\bbm, \beta^1,\bh^1) - \sup_{\bbm \bbm^\Trans = \tilde{\bm{Q}}} f(\bbm,  \beta^2,\bh^2) \Big| \leq \sqrt{2} \| \beta^1 - \beta^2 \|_\infty + \| h^1 - h^2 \|_\infty .
					\]
				\end{lemma}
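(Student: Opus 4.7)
The plan is to exploit the fact that $f(\bbm, \beta, \bh)$ is linear in $(\beta, \bh)$ for fixed $\bbm$, with coefficients that are uniformly bounded over the constraint set $\{\bbm : \bbm\bbm^\Trans = \tilde{\bm{Q}}\}$. Thus the estimate reduces to a pointwise Lipschitz bound on $f$, followed by the elementary observation that if $|g_1(\bbm) - g_2(\bbm)| \le C$ uniformly on some set, then $|\sup g_1 - \sup g_2| \le C$.

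First I would note that any $\bbm$ with $\bbm\bbm^\Trans = \tilde{\bm{Q}}$ satisfies $|m^k|^2 = \tilde{Q}_{k,k} \le 1$ by the hypothesis that $\tilde{\bm{Q}}$ has entries bounded by $1$. Then I would apply the triangle inequality to split
\[
\bigl|f(\bbm, \beta^1, \bh^1) - f(\bbm, \beta^2, \bh^2)\bigr| \le \bigl|f(\bbm, \beta^1, \bh^1) - f(\bbm, \beta^2, \bh^1)\bigr| + \bigl|f(\bbm, \beta^2, \bh^1) - f(\bbm, \beta^2, \bh^2)\bigr|.
\]

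For the first term, the difference equals $\tfrac{1}{N}\sum_{k=1}^n(\beta^1_k - \beta^2_k)\tilde{H}_N(m^k)$. Using the definition $\tilde{H}_N(m^k) = N\sum_i \theta_{i/N}(m^k_i)^2$ together with $|\theta_{i/N}| \le \sqrt{2}$ (recall \eqref{eq:semicircledistribtuion}, \eqref{eq:classical}), we have $|\tilde{H}_N(m^k)/N| \le \sqrt{2}|m^k|^2 \le \sqrt{2}$, which gives the contribution bounded by $\sqrt{2}\|\beta^1 - \beta^2\|_\infty$. For the second term, Cauchy--Schwarz together with $|m^k| \le 1$ gives $|m^k \cdot (\tilde{h}^{1,k} - \tilde{h}^{2,k})| \le |\tilde{h}^{1,k} - \tilde{h}^{2,k}|$, bounding the contribution by $\|h^1 - h^2\|_\infty$ (noting that orthogonal rotation preserves the relevant norms since it just rewrites $h^k$ in the diagonalizing basis).

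Finally I would conclude by noting that the pointwise uniform bound transfers to the suprema: writing $C = \sqrt{2}\|\beta^1 - \beta^2\|_\infty + \|h^1 - h^2\|_\infty$, for any $\bbm$ with $\bbm\bbm^\Trans = \tilde{\bm{Q}}$ we have $f(\bbm, \beta^1, \bh^1) \le f(\bbm, \beta^2, \bh^2) + C \le \sup_{\bbm' \bbm'^\Trans = \tilde{\bm{Q}}} f(\bbm', \beta^2, \bh^2) + C$, and taking supremum over $\bbm$ yields one direction; the symmetric argument closes the proof. There is no genuine obstacle here---the argument is essentially a pointwise calculation leveraging the linearity in $(\beta, \bh)$ and the boundedness of the classical locations $\theta_{i/N}$ and the magnetizations $|m^k|$ on the constraint manifold.
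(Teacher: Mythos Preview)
Your proof is correct and follows essentially the same approach as the paper's: both use the pointwise Lipschitz bound on $f$ coming from $|\tilde H_N(m^k)/N|\le\sqrt{2}$ and $|m^k|\le 1$, then transfer it to the suprema. The paper is slightly more explicit in one place: it invokes the standing assumption $h^{k}=h_k u$ to compute $|(h^{k})^{1}-(h^{k})^{2}|=|h_k^{1}-h_k^{2}|$, which is what allows the external-field term to be controlled by $\|h^1-h^2\|_\infty$; your remark that ``orthogonal rotation preserves the relevant norms'' gets $|\tilde h^{1,k}-\tilde h^{2,k}|=|h^{1,k}-h^{2,k}|$, but the further identification with $|h_k^1-h_k^2|$ silently uses that same collinearity assumption.
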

				
        \begin{proof}

        This follows since
            \begin{align*}\sup_{\bbm\bbm^{\Trans}=\tilde{\bm{Q}}}\left|f(\bbm,\beta^{1},\bh^{1})-f(\bbm,\beta^{2},\bh^{2})\right| & \leq\sup_{|m^{1}|,\dots,|m^{n}|\leq1}\left|f(\bbm,\beta^{1},\bh^{1})-f(\bbm,\beta^{2},\bh^{2})\right|\\
             & \leq\sqrt{2}\|\beta^{1}-\beta^{2}\|_{\infty}+\sup_{k \leq n}|(h^k)^{1}-(h^k)^{2}|\\
             & \leq\sqrt{2}\|\beta^{1}-\beta^{2}\|_{\infty}+\|h^{1}-h^{2}\|_{\infty},
            \end{align*}
            because 
            \[
        |(h^k)^{1}-(h^k)^{2}| = | h_k^{1} u -h_k^{2} u|^2 = |h_k^1 - h_k^2|^2.
            \]
        \end{proof}
        Note also from the formula in 
        \eqref{eq:Lfunctopt} that
        \begin{equation}\label{eq:GSE Lipschitz} 
            \text{ for all } \tilde{\bQ}>0 \text{ the map } (h,\beta)\to{\rm{GSE}}(\beta,h,\tilde{\bm{Q}})\text{ is continuous}.
        \end{equation}
        
        Theorem~\ref{thm:groundstate} is now immediate 
from Propositions \ref{prop:groundstate} and \ref{prop:LFormula} and continuity.
        \begin{proof}[Proof of Theorem~\ref{thm:groundstate}]
            The reduction above \eqref{eq:fgroundstate} and
            Propositions \ref{prop:groundstate} and \ref{prop:LFormula} prove the claim \eqref{eq: max TAP free energy} when all entries of $\beta$ and $h$ are non-zero. A simple approximation argument using Lemma \ref{lem:cont} 
            and \eqref{eq:GSE Lipschitz} extends this to all $\beta,h$.
        
        \end{proof}
 
	\printbibliography

@article{belius-kistler,
AUTHOR = {Belius, David and Kistler, Nicola},
     TITLE = {The {TAP}-{P}lefka variational principle for the spherical
              {SK} model},
   JOURNAL = {Comm. Math. Phys.},
  FJOURNAL = {Communications in Mathematical Physics},
    VOLUME = {367},
      YEAR = {2019},
    NUMBER = {3},
     PAGES = {991--1017},
      ISSN = {0010-3616},
   MRCLASS = {82D30},
  MRNUMBER = {3943486},
MRREVIEWER = {Giuseppe Genovese},
       DOI = {10.1007/s00220-019-03304-y},
       URL = {https://doi.org/10.1007/s00220-019-03304-y},
}

@book{concentration-inequalities,
AUTHOR = {Boucheron, St\'{e}phane and Lugosi, G\'{a}bor and Massart, Pascal},
     TITLE = {Concentration inequalities},
      NOTE = {A nonasymptotic theory of independence,
              With a foreword by Michel Ledoux},
 PUBLISHER = {Oxford University Press, Oxford},
      YEAR = {2013},
     PAGES = {x+481},
      ISBN = {978-0-19-953525-5},
   MRCLASS = {60E15 (60D05 60G15 60G50 60G70 62G20)},
  MRNUMBER = {3185193},
MRREVIEWER = {Sreenivasan Ravi},
       DOI = {10.1093/acprof:oso/9780199535255.001.0001},
       URL = {https://doi.org/10.1093/acprof:oso/9780199535255.001.0001},
}

@book{symmetric-eigenvalue-problem,
  author =       "Beresford N. Parlett",
  title =        "The Symmetric Eigenvalue Problem",
  journal =      "",
  volume =       "",
  number =       "",
  pages =        "",
  year =         "1987",
  DOI =          "",
  keywords =     "symmetric-eigenvalue-problem"
}

@article{ev_rigidity,
  author =       "L. Erdos and H.-T. Yau and J. Yin",
  title =        "Rigidity of Eigenvalues of Generalized Wigner Matrices",
  journal =      "Adv. Math.",
  volume =       "229",
  number =       "3",
  pages =        "1435--1515",
  year =         "2012",
  DOI =          "",
  keywords =     "ev_rigidity"
}

@article{plefka1982convergence,
  author =       "T. Plefka",
  title =        "Convergence condition of the TAP equation for the infinite-ranged
Ising spin glass model.",
  journal =      "Journal of Physics A: Mathematical and General",
  volume =       "15",
  number =       "6",
  pages =        "1971",
  year =         "1982",
  DOI =          "",
  keywords =     "plefka2"
}

@misc{GuHu21,
  doi = {10.48550/ARXIV.2101.01983},
  
  url = {https://arxiv.org/abs/2101.01983},
  
  author = {Guionnet, Alice and Husson, Jonathan},
  
  keywords = {Probability (math.PR), FOS: Mathematics, FOS: Mathematics, 60B20, 60F10},
  
  title = {Asymptotics of k dimensional spherical integrals and applications},
  
  publisher = {arXiv},
  
  year = {2021},
  
  copyright = {Creative Commons Attribution 4.0 International}
}

@book{HornMatrixAnalysis,
    AUTHOR = {Horn, Roger A. and Johnson, Charles R.},
     TITLE = {Matrix analysis},
   EDITION = {Second},
 PUBLISHER = {Cambridge University Press, Cambridge},
      YEAR = {2013},
     PAGES = {xviii+643},
      ISBN = {978-0-521-54823-6},
   MRCLASS = {15-01},
  MRNUMBER = {2978290},
MRREVIEWER = {Mohammad Sal Moslehian},
}

@book{HighamFunctionsMatrices,
    AUTHOR = {Higham, Nicholas J.},
     TITLE = {Functions of matrices},
      NOTE = {Theory and computation},
 PUBLISHER = {Society for Industrial and Applied Mathematics (SIAM),
              Philadelphia, PA},
      YEAR = {2008},
     PAGES = {xx+425},
      ISBN = {978-0-89871-646-7},
   MRCLASS = {15-02 (93B40 93C05)},
  MRNUMBER = {2396439},
MRREVIEWER = {Daniel Kressner},
       DOI = {10.1137/1.9780898717778},
       URL = {https://doi.org/10.1137/1.9780898717778},
}

@article{GuMa05,
    AUTHOR = {Guionnet, A. and Maida, M.},
     TITLE = {A {F}ourier view on the {$R$}-transform and related
              asymptotics of spherical integrals},
   JOURNAL = {J. Funct. Anal.},
  FJOURNAL = {Journal of Functional Analysis},
    VOLUME = {222},
      YEAR = {2005},
    NUMBER = {2},
     PAGES = {435--490},
      ISSN = {0022-1236},
   MRCLASS = {60F10 (46L54)},
  MRNUMBER = {2132396},
MRREVIEWER = {Sompong Dhompongsa},
       URL = {https://doi.org/10.1016/j.jfa.2004.09.015},
}

@article{guerra,
  title={Broken replica symmetry bounds in the mean field spin glass model},
  author={Guerra, Francesco},
  journal={Communications in mathematical physics},
  volume={233},
  number={1},
  pages={1--12},
  year={2003},
  publisher={Springer}
}

@article{talagrand,
  AUTHOR = {Talagrand, Michel},
     TITLE = {The {P}arisi formula},
   JOURNAL = {Ann. of Math. (2)},
  FJOURNAL = {Annals of Mathematics. Second Series},
    VOLUME = {163},
      YEAR = {2006},
    NUMBER = {1},
     PAGES = {221--263},
      ISSN = {0003-486X},
   MRCLASS = {82B44 (82D30)},
  MRNUMBER = {2195134},
MRREVIEWER = {Fran\c{c}ois Germinet},
       DOI = {10.4007/annals.2006.163.221},
       URL = {https://doi-org.myaccess.library.utoronto.ca/10.4007/annals.2006.163.221},
}

@article {PUltra,
    AUTHOR = {Panchenko, Dmitry},
     TITLE = {The {P}arisi ultrametricity conjecture},
   JOURNAL = {Ann. of Math. (2)},
  FJOURNAL = {Annals of Mathematics. Second Series},
    VOLUME = {177},
      YEAR = {2013},
    NUMBER = {1},
     PAGES = {383--393},
      ISSN = {0003-486X},
   MRCLASS = {60G57 (82D30)},
  MRNUMBER = {2999044},
MRREVIEWER = {Flora Koukiou},
       DOI = {10.4007/annals.2013.177.1.8},
       URL = {http://dx.doi.org.myaccess.library.utoronto.ca/10.4007/annals.2013.177.1.8},
}

@article{PTSPHERE,
  AUTHOR = {Panchenko, Dmitry and Talagrand, Michel},
     TITLE = {On the overlap in the multiple spherical {SK} models},
   JOURNAL = {Ann. Probab.},
  FJOURNAL = {The Annals of Probability},
    VOLUME = {35},
      YEAR = {2007},
    NUMBER = {6},
     PAGES = {2321--2355},
      ISSN = {0091-1798},
   MRCLASS = {60K35 (82B44)},
  MRNUMBER = {2353390},
MRREVIEWER = {Nicoletta Cancrini},
       DOI = {10.1214/009117907000000015},
       URL = {https://doi-org.myaccess.library.utoronto.ca/10.1214/009117907000000015},
}

@article{kosphere,
author = "Ko, Justin",
doi = "10.1214/20-EJP431",
fjournal = "Electronic Journal of Probability",
journal = "Electron. J. Probab.",
pages = "34 pp.",
pno = "28",
publisher = "The Institute of Mathematical Statistics and the Bernoulli Society",
title = "Free energy of multiple systems of spherical spin glasses with constrained overlaps",
url = "https://doi.org/10.1214/20-EJP431",
volume = "25",
year = "2020"
}

@article{kocs,
  title={The Crisanti--Sommers Formula for Spherical Spin Glasses with Vector Spins.},
  author={Ko, Justin},
  journal={arXiv preprint arXiv:1911.04355},
  year={2019}
}

@article{PVS,
  AUTHOR = {Panchenko, Dmitry},
     TITLE = {Free energy in the mixed {$p$}-spin models with vector spins},
   JOURNAL = {Ann. Probab.},
  FJOURNAL = {The Annals of Probability},
    VOLUME = {46},
      YEAR = {2018},
    NUMBER = {2},
     PAGES = {865--896},
      ISSN = {0091-1798},
   MRCLASS = {60F10 (60G15 60K35 82B44)},
  MRNUMBER = {3773376},
       DOI = {10.1214/17-AOP1194},
       URL = {https://doi-org.myaccess.library.utoronto.ca/10.1214/17-AOP1194},
}

@article{PPotts,
  AUTHOR = {Panchenko, Dmitry},
     TITLE = {Free energy in the {P}otts spin glass},
   JOURNAL = {Ann. Probab.},
  FJOURNAL = {The Annals of Probability},
    VOLUME = {46},
      YEAR = {2018},
    NUMBER = {2},
     PAGES = {829--864},
      ISSN = {0091-1798},
   MRCLASS = {60K35 (60F10 60G15 82B44)},
  MRNUMBER = {3773375},
       DOI = {10.1214/17-AOP1193},
       URL = {https://doi-org.myaccess.library.utoronto.ca/10.1214/17-AOP1193},
}

@article{CASS,
 AUTHOR = {Chen, Wei-Kuo},
     TITLE = {The {A}izenman-{S}ims-{S}tarr scheme and {P}arisi formula for
              mixed {$p$}-spin spherical models},
   JOURNAL = {Electron. J. Probab.},
  FJOURNAL = {Electronic Journal of Probability},
    VOLUME = {18},
      YEAR = {2013},
     PAGES = {no. 94, 14},
      ISSN = {1083-6489},
   MRCLASS = {60K35 (82B44)},
  MRNUMBER = {3126577},
MRREVIEWER = {Jos\'e Trashorras},
       DOI = {10.1214/EJP.v18-2580},
       URL = {https://doi-org.myaccess.library.utoronto.ca/10.1214/EJP.v18-2580},
}

@article{ASS,
  title = {Extended variational principle for the Sherrington-Kirkpatrick spin-glass model},
  author = {Aizenman, Michael and Sims, Robert and Starr, Shannon L.},
  journal = {Phys. Rev. B},
  volume = {68},
  issue = {21},
  pages = {214403},
  numpages = {4},
  year = {2003},
  month = {12},
  publisher = {American Physical Society},
  doi = {10.1103/PhysRevB.68.214403},
  url = {https://link.aps.org/doi/10.1103/PhysRevB.68.214403}
}

@article {tucaproperties,
    AUTHOR = {Auffinger, Antonio and Zhou, Yuxin},
     TITLE = {On properties of the spherical mixed vector {$p$}-spin model},
   JOURNAL = {Stochastic Process. Appl.},
  FJOURNAL = {Stochastic Processes and their Applications},
    VOLUME = {146},
      YEAR = {2022},
     PAGES = {382--413},
      ISSN = {0304-4149},
   MRCLASS = {60K35 (82D30)},
  MRNUMBER = {4383209},
       DOI = {10.1016/j.spa.2022.02.001},
       URL = {https://doi.org/10.1016/j.spa.2022.02.001},
}

@article{replicaonandoff,
author = {Franz, S. and Parisi, Giorgio and Virasoro, Miguel},
year = {1992},
month = {10},
pages = {},
title = {The replica method on and off equilibrium},
volume = {2},
journal = {http://dx.doi.org/10.1051/jp1:1992115},
doi = {10.1051/jp1:1992115}
}

@misc{BeliusUpperBound,
  doi = {10.48550/ARXIV.2204.00681},
  
  url = {https://arxiv.org/abs/2204.00681},
  
  author = {Belius, David},
  
  keywords = {Probability (math.PR), Mathematical Physics (math-ph), FOS: Mathematics, FOS: Mathematics, FOS: Physical sciences, FOS: Physical sciences, 60K40, 82B44, 82D30},
  
  title = {High temperature TAP upper bound for the free energy of mean field spin glasses},
  
  publisher = {arXiv},
  
  year = {2022},
  
  copyright = {arXiv.org perpetual, non-exclusive license}
}

@article{sherrington1975solvable,
	title = {Solvable model of a spin-glass},
	volume = {35},
	number = {26},
	journal = {Physical review letters},
	author = {Sherrington, David and Kirkpatrick, Scott},
	year = {1975},
	note = {Publisher: APS},
	pages = {1792},
}

@article{derridaRandomEnergyModelLimit1980,
	title = {Random-{Energy} {Model}: {Limit} of a {Family} of {Disordered} {Models}},
	volume = {45},
	issn = {0031-9007},
	shorttitle = {Random-{Energy} {Model}},
	url = {https://link.aps.org/doi/10.1103/PhysRevLett.45.79},
	doi = {10.1103/PhysRevLett.45.79},
	language = {en},
	number = {2},
	urldate = {2021-06-11},
	journal = {Physical Review Letters},
	author = {Derrida, B.},
	month = jul,
	year = {1980},
	pages = {79--82}
}

@article{grossSimplestSpinGlass1984,
	title = {The simplest spin glass},
	volume = {240},
	issn = {05503213},
	url = {https://linkinghub.elsevier.com/retrieve/pii/0550321384902372},
	doi = {10.1016/0550-3213(84)90237-2},
	language = {en},
	number = {4},
	urldate = {2021-06-17},
	journal = {Nuclear Physics B},
	author = {Gross, D.J. and Mezard, M.},
	month = nov,
	year = {1984},
	pages = {431--452}
}

@article{talagrandMultipleLevelsSymmetry2000,
	title = {Multiple levels of symmetry breaking},
	volume = {117},
	issn = {0178-8051},
	url = {http://link.springer.com/10.1007/s004400000070},
	doi = {10.1007/s004400000070},
	language = {en},
	number = {4},
	urldate = {2021-06-17},
	journal = {Probability Theory and Related Fields},
	author = {Talagrand, Michel},
	month = aug,
	year = {2000},
	pages = {449--466}
}

@article{kosterlitzSphericalModelSpinGlass1976,
	title = {Spherical {Model} of a {Spin}-{Glass}},
	volume = {36},
	issn = {0031-9007},
	url = {https://link.aps.org/doi/10.1103/PhysRevLett.36.1217},
	doi = {10.1103/PhysRevLett.36.1217},
	language = {en},
	number = {20},
	urldate = {2021-06-11},
	journal = {Physical Review Letters},
	author = {Kosterlitz, J. M. and Thouless, D. J. and Jones, Raymund C.},
	month = may,
	year = {1976},
	pages = {1217--1220}
}

@article{crisantiSphericalpspinInteractionSpin1992,
	title = {The spherical $p$-spin interaction spin glass model: the statics},
	volume = {87},
	issn = {0722-3277, 1434-6036},
	shorttitle = {The spherical $p$-spin interaction spin glass model},
	url = {http://link.springer.com/10.1007/BF01309287},
	doi = {10.1007/BF01309287},
	language = {en},
	number = {3},
	urldate = {2021-06-17},
	journal = {Zeitschrift f\"ur Physik B Condensed Matter},
	author = {Crisanti, A. and Sommers, H. -J.},
	year = {1992},
	pages = {341--354},
}

@article{crisantiSphericalSpinglassModel2004,
  title = {Spherical $2+p$ Spin-Glass Model: An Exactly Solvable Model for Glass to Spin-Glass Transition},
  author = {Crisanti, A. and Leuzzi, L.},
  journal = {Phys. Rev. Lett.},
  volume = {93},
  issue = {21},
  pages = {217203},
  numpages = {4},
  year = {2004},
  publisher = {American Physical Society},
  doi = {10.1103/PhysRevLett.93.217203},
  url = {https://link.aps.org/doi/10.1103/PhysRevLett.93.217203}
}

@article{talagrandFreeEnergySpherical2006,
	title = {Free energy of the spherical mean field model},
	volume = {134},
	issn = {0178-8051, 1432-2064},
	url = {http://link.springer.com/10.1007/s00440-005-0433-8},
	doi = {10.1007/s00440-005-0433-8},
	abstract = {We compute at any temperature the free energy of the multi p-spin spherical model when only terms for p even are considered.},
	language = {en},
	number = {3},
	urldate = {2020-05-13},
	journal = {Probability Theory and Related Fields},
	author = {Talagrand, Michel},
	month = mar,
	year = {2006},
	pages = {339--382},
}

@article{parisiInfiniteNumberOrder1979,
	title = {Infinite {Number} of {Order} {Parameters} for {Spin}-{Glasses}},
	volume = {43},
	issn = {0031-9007},
	url = {https://link.aps.org/doi/10.1103/PhysRevLett.43.1754},
	doi = {10.1103/PhysRevLett.43.1754},
	language = {en},
	number = {23},
	urldate = {2021-08-03},
	journal = {Physical Review Letters},
	author = {Parisi, G.},
	month = dec,
	year = {1979},
	pages = {1754--1756},
}

@article{parisi1980sequence,
	title = {A sequence of approximated solutions to the {SK} model for spin glasses},
	volume = {13},
	number = {4},
	journal = {Journal of Physics A: Mathematical and General},
	author = {Parisi, Giorgio},
	year = {1980},
	note = {Publisher: IOP Publishing},
	pages = {L115},
	annote = {Parisi quote
It would be very interesting to understand if and how equation (19) can be derived, starting from the TAP equations of Thouless et a1 (1977
}
}

@book{mezard1987spin,
	title = {Spin glass theory and beyond: {An} introduction to the replica method and its applications},
	volume = {9},
	publisher = {World Scientific Publishing Company},
	author = {Mézard, Marc and Parisi, Giorgio and Virasoro, Miguel},
	year = {1987},
}

@article {panchenkoParisiFormulaMixed2014,
    AUTHOR = {Panchenko, Dmitry},
     TITLE = {The {P}arisi formula for mixed {$p$}-spin models},
   JOURNAL = {Ann. Probab.},
  FJOURNAL = {The Annals of Probability},
    VOLUME = {42},
      YEAR = {2014},
    NUMBER = {3},
     PAGES = {946--958},
      ISSN = {0091-1798},
   MRCLASS = {60K35 (82B20)},
  MRNUMBER = {3189062},
MRREVIEWER = {Pablo A. Ferrari},
       DOI = {10.1214/12-AOP800},
       URL = {https://doi.org/10.1214/12-AOP800},
}

@article{thoulessSolutionSolvableModel1977,
	title = {Solution of '{Solvable} model of a spin glass'},
	volume = {35},
	issn = {0031-8086},
	url = {http://www.tandfonline.com/doi/abs/10.1080/14786437708235992},
	doi = {10.1080/14786437708235992},
	language = {en},
	number = {3},
	urldate = {2020-07-05},
	journal = {Philosophical Magazine},
	author = {Thouless, D. J. and Anderson, P. W. and Palmer, R. G.},
	month = mar,
	year = {1977},
	pages = {593--601}
}

@incollection{bolthausenMoritaTypeProof2018,
	address = {Cham},
	title = {A {Morita} {Type} {Proof} of the {Replica}-{Symmetric} {Formula} for {SK}},
	volume = {293},
	ISBN = {978-3-030-29077-1},
	url = {http://link.springer.com/10.1007/978-3-030-29077-1_4},
	language = {en},
	urldate = {2022-02-01},
	booktitle = {Statistical {Mechanics} of {Classical} and {Disordered} {Systems}},
	publisher = {Springer International Publishing},
	author = {Bolthausen, Erwin},
	editor = {Gayrard, Véronique and Arguin, Louis-Pierre and Kistler, Nicola and Kourkova, Irina},
	year = {2019},
	doi = {10.1007/978-3-030-29077-1_4},
	note = {Series Title: Springer Proceedings in Mathematics \& Statistics},
	keywords = {Mathematics - Probability},
	pages = {63--93}
}

@article{brennecke2022replica,
  title={The replica symmetric formula for the SK model revisited},
  author={Brennecke, Christian and Yau, Horng-Tzer},
  journal={Journal of Mathematical Physics},
  volume={63},
  number={7},
  pages={073302},
  year={2022},
  publisher={AIP Publishing LLC}
}

@article{subagGeometryGibbsMeasure2017,
	title = {The geometry of the {Gibbs} measure of pure spherical spin glasses},
	volume = {210},
	issn = {0020-9910, 1432-1297},
	url = {http://link.springer.com/10.1007/s00222-017-0726-4},
	doi = {10.1007/s00222-017-0726-4},
	language = {en},
	number = {1},
	urldate = {2022-03-22},
	journal = {Inventiones mathematicae},
	author = {Subag, Eliran},
	month = oct,
	year = {2017},
	pages = {135--209}
}

@article{chenGeneralizedTAPFree2022,
	title = {Generalized {TAP} {Free} {Energy}},
	issn = {0010-3640, 1097-0312},
	url = {https://onlinelibrary.wiley.com/doi/10.1002/cpa.22040},
	doi = {10.1002/cpa.22040},
	language = {en},
	urldate = {2022-03-22},
	journal = {Communications on Pure and Applied Mathematics},
	author = {Chen, Wei‐Kuo and Panchenko, Dmitry and Subag, Eliran},
	month = jan,
	year = {2022},
	pages = {cpa.22040}
}

@article{subagFreeEnergySpherical2021,
	title = {The free energy of spherical pure $p$-spin models -- computation from the {TAP} approach},
	url = {http://arxiv.org/abs/2101.04352},
	abstract = {We compute the free energy at all temperatures for the spherical pure \$p\$-spin models from the generalized Thouless-Anderson-Palmer representation. This is the first example of a mixed \$p\$-spin model for which the free energy is computed in the whole replica symmetry breaking phase, without appealing to the famous Parisi formula.},
	urldate = {2021-06-17},
	journal = {arXiv:2101.04352 [cond-mat]},
	author = {Subag, Eliran},
	month = apr,
	year = {2021},
	note = {arXiv: 2101.04352},
	keywords = {Mathematics - Probability, Condensed Matter - Statistical Mechanics}
}

@article{subagFreeEnergyLandscapes2020,
	title = {Free energy landscapes in spherical spin glasses},
	url = {http://arxiv.org/abs/1804.10576},
	abstract = {We introduce and analyze free energy landscapes defined by associating to any point inside the sphere a free energy calculated on a thin spherical band around it, using many orthogonal replicas. This allows us to reinterpret, rigorously prove and extend for general spherical models the main ideas of the Thouless-Anderson-Palmer (TAP) approach originally introduced in the 70s for the Sherrington-Kirkpatrick model. In particular, we establish a TAP representation for the free energy, valid for any overlap value which can be sampled as many times as we wish in an appropriate sense. We call such overlaps multi-samplable. The correction to the Hamiltonian in the TAP representation arises in our analysis as the free energy of a certain model on an overlap dependent band. For the largest multi-samplable overlap it coincides with the Onsager reaction term from physics. For smaller multi-samplable overlaps the formula we obtain is new. We also derive the corresponding TAP equation for critical points. We prove all the above without appealing to the celebrated Parisi formula or the ultrametricity property. We prove that any overlap value in the support of the Parisi measure is multi-samplable. For generic models, we further show that the set of multi-samplable overlaps coincides with a certain set that arises in the characterization for the Parisi measure by Talagrand. The ultrametric tree of pure states can be embedded in the interior of the sphere in a natural way. For this embedding, we show that the points on the tree uniformly maximize the free energies we define. From this we conclude that the Hamiltonian at each point on the tree is approximately maximal over the sphere of same radius, and that points on the tree approximately solve the TAP equations for critical points.},
	urldate = {2020-07-13},
	journal = {arXiv:1804.10576 [math]},
	author = {Subag, Eliran},
	month = jun,
	year = {2020},
	note = {arXiv: 1804.10576},
	keywords = {Mathematics - Probability}
}

@article{bolthausenIterativeConstructionSolutions2014,
	title = {An {Iterative} {Construction} of {Solutions} of the {TAP} {Equations} for the {Sherrington}–{Kirkpatrick} {Model}},
	volume = {325},
	issn = {0010-3616, 1432-0916},
	url = {http://link.springer.com/10.1007/s00220-013-1862-3},
	doi = {10.1007/s00220-013-1862-3},
	language = {en},
	number = {1},
	urldate = {2021-06-11},
	journal = {Communications in Mathematical Physics},
	author = {Bolthausen, Erwin},
	month = jan,
	year = {2014},
	pages = {333--366},
	file = {Bolthausen - An iterative construction of solutions of the TAP equations for the SK model.pdf:C\:\\Users\\david\\Dropbox\\matematik\\forskning\\2014 spin glasses\\TAP approach\\papers\\Bolthausen\\Bolthausen - An iterative construction of solutions of the TAP equations for the SK model.pdf:application/pdf;Godtagen version:C\:\\Users\\david\\Zotero\\storage\\UHLWPM75\\Bolthausen - 2014 - An Iterative Construction of Solutions of the TAP .pdf:application/pdf},
}

@book{bhatia1996matrix,
  title={Matrix Analysis},
  author={Bhatia, R.},
  isbn={9780387948461},
  lccn={96032217},
  series={Graduate Texts in Mathematics},
  url={https://books.google.fr/books?id=F4hRy1F1M6QC},
  year={1996},
  publisher={Springer New York}
}

@article {Harish,
    AUTHOR = {Harish-Chandra},
     TITLE = {Invariant differential operators on a semisimple {L}ie
              algebra},
   JOURNAL = {Proc. Nat. Acad. Sci. U.S.A.},
    VOLUME = {42},
      YEAR = {1956},
     PAGES = {252--253}
}

@article {Itzyksonzuber,
    AUTHOR = {Itzykson, C. and Zuber, J. B.},
     TITLE = {The planar approximation. {II}},
   JOURNAL = {J. Math. Phys.},
    VOLUME = {21},
      YEAR = {1980},
     PAGES = {411--421}
}

@article{HussonKoSublinear,
  doi = {10.48550/ARXIV.2208.03642},
  url = {https://arxiv.org/abs/2208.03642},
  author = {Husson, Jonathan and Ko, Justin},
  journal={arXiv preprint arXiv:2208.03642},
  keywords = {Probability (math.PR), FOS: Mathematics, FOS: Mathematics, 60B20, 60F10},
  title = {Spherical Integrals of Sublinear Rank},
  publisher = {arXiv},
  year = {2022},
  copyright = {arXiv.org perpetual, non-exclusive license}
}

@article {JagStatisticalThresholds,
    AUTHOR = {Jagannath, Aukosh and Lopatto, Patrick and Miolane, L\'{e}o},
     TITLE = {Statistical thresholds for tensor {PCA}},
   JOURNAL = {Ann. Appl. Probab.},
  FJOURNAL = {The Annals of Applied Probability},
    VOLUME = {30},
      YEAR = {2020},
    NUMBER = {4},
     PAGES = {1910--1933},
      ISSN = {1050-5164},
   MRCLASS = {62F05 (62F10 62H25 82B26 82D30)},
  MRNUMBER = {4132641},
       DOI = {10.1214/19-AAP1547},
       URL = {https://doi.org/10.1214/19-AAP1547},
}

@article {ChenChaos2,
    AUTHOR = {Chen, Wei-Kuo and Panchenko, Dmitry},
     TITLE = {Temperature chaos in some spherical mixed {$p$}-spin models},
   JOURNAL = {J. Stat. Phys.},
  FJOURNAL = {Journal of Statistical Physics},
    VOLUME = {166},
      YEAR = {2017},
    NUMBER = {5},
     PAGES = {1151--1162},
      ISSN = {0022-4715},
   MRCLASS = {82D30 (60F10 60G15 60K35 82B44)},
  MRNUMBER = {3610208},
       DOI = {10.1007/s10955-016-1709-3},
       URL = {https://doi.org/10.1007/s10955-016-1709-3},
}

@article {JagSphericalSpectralGap,
    AUTHOR = {Jagannath, Aukosh},
     TITLE = {Dynamics of mean field spin glasses on short and long
              timescales},
   JOURNAL = {J. Math. Phys.},
  FJOURNAL = {Journal of Mathematical Physics},
    VOLUME = {60},
      YEAR = {2019},
    NUMBER = {8},
     PAGES = {083305, 9},
      ISSN = {0022-2488},
   MRCLASS = {82D30},
  MRNUMBER = {3993758},
MRREVIEWER = {Henryk Arod\'{z}},
       DOI = {10.1063/1.5094173},
       URL = {https://doi.org/10.1063/1.5094173},
}

@article {JagSpectralGap,
    AUTHOR = {Ben Arous, G\'{e}rard and Jagannath, Aukosh},
     TITLE = {Spectral gap estimates in mean field spin glasses},
   JOURNAL = {Comm. Math. Phys.},
  FJOURNAL = {Communications in Mathematical Physics},
    VOLUME = {361},
      YEAR = {2018},
    NUMBER = {1},
     PAGES = {1--52},
      ISSN = {0010-3616},
   MRCLASS = {82D30 (58J50 60K35)},
  MRNUMBER = {3825934},
       DOI = {10.1007/s00220-018-3152-6},
       URL = {https://doi.org/10.1007/s00220-018-3152-6},
}

@INPROCEEDINGS{FlorentAhmedFranzParisi,
  author={Alaoui, Ahmed El and Krzakala, Florent},
  booktitle={2018 IEEE International Symposium on Information Theory (ISIT)}, 
  title={Estimation in the Spiked Wigner Model: A Short Proof of the Replica Formula}, 
  year={2018},
  volume={},
  number={},
  pages={1874-1878},
  doi={10.1109/ISIT.2018.8437810}
}

@article {PanchenkoChaos,
    AUTHOR = {Panchenko, Dmitry},
     TITLE = {Chaos in temperature in generic {$2p$}-spin models},
   JOURNAL = {Comm. Math. Phys.},
  FJOURNAL = {Communications in Mathematical Physics},
    VOLUME = {346},
      YEAR = {2016},
    NUMBER = {2},
     PAGES = {703--739},
      ISSN = {0010-3616},
   MRCLASS = {82D30},
  MRNUMBER = {3535899},
MRREVIEWER = {Henryk Arod\'{z}},
       DOI = {10.1007/s00220-016-2585-z},
       URL = {https://doi.org/10.1007/s00220-016-2585-z},
}

@misc{JagMetastable,
  doi = {10.48550/ARXIV.2104.08299},
  
  url = {https://arxiv.org/abs/2104.08299},
  
  author = {Arous, Gérard Ben and Jagannath, Aukosh},
  
  keywords = {Probability (math.PR), Mathematical Physics (math-ph), FOS: Mathematics, FOS: Mathematics, FOS: Physical sciences, FOS: Physical sciences},
  
  title = {Shattering Versus Metastability in Spin Glasses},
  
  publisher = {arXiv},
  
  year = {2021},
  
  copyright = {arXiv.org perpetual, non-exclusive license}
}

@article{Franzlargedeviations,
	doi = {10.1088/1751-8121/ab9aeb},
  
	%url = {https://doi.org/10.1088%2F1751-8121%2Fab9aeb},
  
	year = 2020,
  
	publisher = {{IOP} Publishing},
  
	volume = {53},
  
	number = {48},
  
	pages = {485002},
  
	author = {Silvio Franz and Jacopo Rocchi},
  
	title = {Large deviations of glassy effective potentials},
  
	journal = {Journal of Physics A: Mathematical and Theoretical}
}

@book{talagrand2003spin,
  title={Spin glasses: a challenge for mathematicians: cavity and mean field models},
  author={Talagrand, Michel},
  volume={46},
  year={2003},
  publisher={Springer Science \& Business Media}
}
\end{document}